\newtheorem{thm}{Theorem}[section]
\newtheorem*{thm*}{Theorem}
\newtheorem{claim}[thm]{Claim}
\newtheorem{cor}[thm]{Corollary}
\newtheorem{lem}[thm]{Lemma}
\newtheorem*{lem*}{Lemma}
\newtheorem{mainthm}{Theorem}
\newtheorem*{mainthm*}{Theorem}
\newtheorem{maincor}[mainthm]{Corollary}
\newtheorem{prop}[thm]{Proposition}
\theoremstyle{definition}
\newtheorem*{case*}{Case}
\newtheorem{defn}[thm]{Definition}
\newtheorem*{defn*}{Definition}
\newtheorem*{exmp*}{Example}
\newtheorem{method}{Method}
\newtheorem{step}{Step}\renewcommand{\thestep}{}
\theoremstyle{remark}
\renewcommand{\thecase}{}
\newtheorem{rmk}[thm]{Remark}
\newtheorem*{rmk*}{Remark}
\def\alphenumi{
  \def\theenumi{\alph{enumi}}
  \def\p@enumi{\theenumi}
  \def\labelenumi{(\@alph\c@enumi)}}
\def\thecase{\@arabic\c@case}
\def\thestep{\@arabic\c@step}
\def\hhmm{\number\hh:\ifnum\mm<10{}0\fi\number\mm}
\let\oldmarginpar\marginpar
\renewcommand\marginpar[1]{\-\oldmarginpar[\raggedleft\footnotesize #1]%
{\raggedright\footnotesize #1}}
\renewcommand\emptyset{\varnothing}
\newcommand\CC{\mathbb{C}}
\newcommand\HH{\mathbb{H}}
\newcommand\NN{\mathbb{N}}
\newcommand\RR{\mathbb{R}}
\newcommand\ZZ{\mathbb{Z}}
\newcommand\cH{{\mathcal{H}}}
\newcommand\cS{{\mathcal{S}}}
\newcommand\cX{{\mathcal{X}}}
\newcommand\fg{{\mathfrak{g}}}
\newcommand\sA{{\mathscr{A}}}
\newcommand\sB{{\mathscr{B}}}
\newcommand\sC{{\mathscr{C}}}
\newcommand\sD{{\mathscr{D}}}
\newcommand\sE{{\mathscr{E}}}
\newcommand\sF{{\mathscr{F}}}
\newcommand\sH{{\mathscr{H}}}
\newcommand\sN{{\mathscr{N}}}
\newcommand\sT{{\mathscr{T}}}
\newcommand\sU{{\mathscr{U}}}
\newcommand\sV{{\mathscr{V}}}
\newcommand\sW{{\mathscr{W}}}
\newcommand\sX{{\mathscr{X}}}
\newcommand\balpha{{\boldsymbol{\alpha}}}
\newcommand\beps{{\boldsymbol{\varepsilon}}}
\newcommand\blambda{{\boldsymbol{\lambda}}}
\newcommand\bE{{\mathbf{E}}}
\newcommand\bg{{\mathbf{g}}}
\newcommand\bx{{\mathbf{x}}}
\newcommand\by{{\mathbf{y}}}
\newcommand\bz{{\mathbf{z}}}
\newcommand{\cov}{\nabla}
\newcommand\eps{\varepsilon}
\newcommand\SO{\operatorname{SO}}
\newcommand\SU{\operatorname{SU}}
\newcommand\U{\operatorname{U}}
\newcommand\less{\setminus}
\newcommand\ad{{\operatorname{ad}}}
\newcommand\Ad{{\operatorname{Ad}}}
\newcommand\Aut{\operatorname{Aut}}
\DeclareMathOperator{\BCrit}{BCrit}
\newcommand\Center{\operatorname{Center}}
\newcommand\Conf{\operatorname{Conf}}
\DeclareMathOperator{\Crit}{Crit}
\DeclareMathOperator{\cyl}{cyl}
\newcommand\dist{\operatorname{dist}}
\newcommand\End{\operatorname{End}}
\newcommand{\esssup}{\operatornamewithlimits{ess\ sup}}
\newcommand\Fr{\operatorname{Fr}}
\DeclareMathOperator{\Inj}{Inj}
\newcommand\Ker{\operatorname{Ker}}
\DeclareMathOperator{\Mass}{Mass}
\newcommand\rank{\operatorname{rank}}
\newcommand\Riem{\operatorname{Riem}}
\newcommand\Scale{\operatorname{Scale}}
\newcommand\Sym{\operatorname{Sym}}
\DeclareMathOperator{\UCrit}{UCrit}
\newcommand\vol{\operatorname{vol}}
\newcommand\annulus{{\mathrm{annulus}}}
\newcommand\apriori{{\emph{a priori }}}
\newcommand\Apriori{{\emph{A priori }}}
\newcommand\adhoc{{\emph{ad hoc }}}
\newcommand\background{{\mathrm{background}}}
\newcommand\ball{{\mathrm{ball}}}
\newcommand\mycenter{{\mathrm{center}}}
\newcommand\coul{{\mathrm{coul}}}
\newcommand\coulomb{{\mathrm{coulomb}}}
\newcommand\euclid{{\mathrm{euclid}}}
\newcommand\loc{{\mathrm{loc}}}
\newcommand\mutatis{{\emph{mutatis mutandis }}}
\newcommand\myref{{\textsc{ref}}}
\newcommand\round{{\mathrm{round}}}
\newcommand\scale{{\mathrm{scale}}}
\newcommand\sphere{\mathrm{sphere}}
\newcommand\tube{{\mathrm{tube}}}
\newcommand\ym{\textsc{ym}}
\numberwithin{equation}{section}
\begin{document}

\title[Discreteness for energies of Yang-Mills connections]{Discreteness for energies of Yang-Mills connections over four-dimensional manifolds}

\author[Paul M. N. Feehan]{Paul M. N. Feehan}
\address{Department of Mathematics, Rutgers, The State University of New Jersey, 110 Frelinghuysen Road, Piscataway, NJ 08854-8019, United States of America}
\email{feehan@math.rutgers.edu}

\date{This version: May 21, 2015}
\date{May 26, 2015}

\begin{abstract}
We generalize our previous results \cite[Theorem 1 and Corollary 2]{Feehan_yangmillsenergygap},
\cite[Theorem 1]{Feehan_yangmillsenergygapflat} on the existence of an $L^2$-energy gap for Yang-Mills connections over closed four-dimensional manifolds and energies near the ground state (occupied by flat, anti-self-dual, or self-dual connections) to the case of Yang-Mills connections with arbitrary energies. We prove that for any principal bundle with compact Lie structure group over a closed, four-dimensional, Riemannian manifold, the $L^2$ energies of Yang-Mills connections on that principal bundle form a discrete sequence without accumulation points. Our proof employs a version of our {\L}ojasiewicz-Simon gradient inequality for the Yang-Mills $L^2$-energy functional from our monograph \cite{Feehan_yang_mills_gradient_flow} and extensions of our previous results on the bubble-tree compactification for the moduli space of anti-self-dual connections \cite{FeehanGeometry} to the moduli space of Yang-Mills connections with a uniform $L^2$ bound on their energies.
\end{abstract}


\subjclass[2010]{Primary 58E15, 57R57; secondary 37D15, 58D27, 70S15, 81T13}

\keywords{Energy discretization, gauge theory, {\L}ojasiewicz-Simon gradient inequality, Morse theory on Banach manifolds, smooth four-dimensional manifolds, Yang-Mills connections, minimal and non-minimal Yang-Mills connections}

\thanks{The author is grateful for research support provided by the Department of Mathematics at Rutgers University and hospitality provided by the Department of Mathematics at Columbia University during the preparation of this article.}

\maketitle
\tableofcontents

\section{Introduction}
\label{sec:Introduction}
Suppose that $G$ is a Lie group and $A$ is a connection on a principal $G$-bundle $P$ over an oriented manifold, $X$, endowed with a Riemannian metric, $g$. The \emph{Yang-Mills $L^2$-energy functional} is defined by
\begin{equation}
\label{eq:Yang-Mills_energy_functional}
\sE_g(A)  := \int_X |F_A|^2\,d\vol_g,
\end{equation}
where $F_A \in \Omega^2(X;\ad P)$ is the curvature of $A$. We extend the results of our previous two articles \cite{Feehan_yangmillsenergygap,  Feehan_yangmillsenergygapflat} on $L^2$-energy gap results for Yang-Mills connections in several ways, building on the method of \cite{Feehan_yangmillsenergygapflat} which relies on the {\L}ojasiewicz-Simon gradient inequality \cite{Feehan_yang_mills_gradient_flow, Simon_1983} and compactness of the moduli space of flat connections over a closed Riemannian manifold of arbitrary dimension greater than or equal to two. We specialize here to the case of a base manifold of dimension four but generalize the results of \cite{Feehan_yangmillsenergygap} by removing the constraints described in that article on the compact Lie structure group, genericity or positivity of the Riemannian metric, topology of the base four-manifold, or topology of the principal bundle supporting the Yang-Mills connections. Our approach in \cite{Feehan_yangmillsenergygapflat} relied on the compactness of the moduli space of flat connections to ensure that the {\L}ojasiewicz-Simon gradient inequality constants were uniform with respect to the critical point. More generally, the moduli space of Yang-Mills connections with a uniform $L^2$ bound on their curvatures are non-compact due to the phenomenon of energy bubbling in dimension four, a property better known in the context of moduli spaces of connections attaining the absolute minimum value of the $L^2$-energy functional predicted by topology, namely the moduli spaces of anti-self-dual or self-dual connections \cite{AHS, DK, FU, FrM}. Many examples of non-minimal Yang-Mills connections are now known to exist, as we discuss in Section \ref{subsec:Existence_non-minimal_Yang-Mills_connections}, but they remain mysterious even when the base four-manifold is the sphere with its standard round metric of radius one, and non-compactness makes it difficult to understand their possible energy values, including the fundamental question as to whether those values are discrete or form continua.

\subsection{Main results}
\label{subsec:Main_results}
We refer the reader to Section \ref{sec:Preliminaries} for the definitions and notation employed in the statements of our main results (Theorems \ref{mainthm:Discreteness_Yang-Mills_energies} and \ref{mainthm:Separation_strata_Yang-Mills_connections} and Corollary \ref{maincor:Yang-Mills_energy_near_ground_state}) and ensuing discussion.

\begin{mainthm}[Discreteness of critical values of the Yang-Mills $L^2$-energy functional for connections over four-dimensional manifolds]
\label{mainthm:Discreteness_Yang-Mills_energies}
Let $G$ be a compact Lie group and $P$ be a smooth principal $G$-bundle over a closed, four-dimensional, oriented, smooth manifold, $X$, endowed with a smooth Riemannian metric, $g$. Then the subset of critical values of the $L^2$-energy functional, $\sE_g:\sB(P,g) \to [0,\infty)$, is closed and discrete, depending at most on $g$, $G$, and the homotopy equivalence class, $[P]$. In particular, if $\{c_i\}_{i\in\NN} \subset [0,\infty)$ denotes the strictly increasing sequence of critical values of $\sE_g$ and $A$ is a $g$-Yang-Mills connection on $P$ with
\begin{equation}
\label{eq:Energy_Yang-Mills_connection_near_lower_critical_value}
c_i \leq \sE_g(A) < c_{i + 1},
\end{equation}
for some $i \geq 0$, then $\sE_g(A) = c_i$.
\end{mainthm}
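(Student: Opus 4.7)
The plan is to argue by contradiction, combining the bubble-tree compactification for Yang-Mills connections with the {\L}ojasiewicz-Simon gradient inequality. Suppose the set of critical values of $\sE_g$ fails to be closed and discrete; then there exist distinct critical values $\{c_i\}_{i\in\NN}$ accumulating at some $c_\infty \in [0,\infty)$, with $g$-Yang-Mills connections $A_i$ on $P$ realizing $\sE_g(A_i) = c_i$. Since $\|F_{A_i}\|_{L^2}^2 \leq c_\infty + 1$ for $i$ large, I would apply the bubble-tree compactification for Yang-Mills connections with uniformly bounded $L^2$-energy, promised in the abstract as an extension of the author's anti-self-dual compactification in \cite{FeehanGeometry}. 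After passing to a subsequence and gauge transformations, the $A_i$ bubble-tree converge to a limit configuration $(A_\infty, B_1, \ldots, B_N)$, where $A_\infty$ is a smooth Yang-Mills connection on some principal $G$-bundle $P_\infty$ over $X$ and each $B_j$ is a Yang-Mills bubble on a principal $G$-bundle over $(S^4, g_{\std})$ attached at a concentration point $x_j \in X$. Bubble-tree energy conservation yields
\[
c_\infty = \sE_g(A_\infty) + \sum_{j=1}^N \sE_{g_{\std}}(B_j).
\]

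Next I would apply the {\L}ojasiewicz-Simon gradient inequality of \cite{Feehan_yang_mills_gradient_flow} at each component of the bubble-tree limit. For $A_\infty$, and similarly for each bubble $B_j$, there exist a suitable Sobolev neighborhood, a constant $C > 0$, and an exponent $\theta \in [1/2, 1)$ such that
\[
\|\sE_g'(A)\|_{L^2} \geq C \left|\sE_g(A) - \sE_g(A_\infty)\right|^{\theta}
\]
throughout that neighborhood. Any Yang-Mills connection $A$ in such a neighborhood has vanishing gradient and hence energy equal to the base point; the energy functional is thus locally constant on the Yang-Mills set near every component of the bubble-tree limit.

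To close the argument, I would proceed by strong induction on a measure of bundle complexity that strictly decreases whenever bubbling is nontrivial, for instance the Chern--Weil invariant $\langle p_1(\ad P),[X]\rangle$ controlling the topological lower bound on energy. In the base case of no bubbling, $A_i \to A_\infty$ modulo gauge in a sufficiently strong topology on $P$ itself, so the {\L}ojasiewicz-Simon rigidity at $A_\infty$ forces $c_i = \sE_g(A_\infty) = c_\infty$ for $i$ large, contradicting distinctness. In the inductive step, both $P_\infty$ and each bubble bundle are strictly simpler, so by induction $\sE_g(A_\infty)$ and each $\sE_{g_{\std}}(B_j)$ lie in a discrete set; combined with bubble-tree convergence and the separation-of-strata Theorem \ref{mainthm:Separation_strata_Yang-Mills_connections}, this pins $c_i$ to the single value $c_\infty$ for large $i$, again a contradiction.

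The principal obstacle is the interface between the two main ingredients: the {\L}ojasiewicz-Simon inequality demands strong Sobolev proximity to a single critical point, whereas bubble-tree convergence is strong only on compact subsets of $X$ disjoint from the concentration points and, after rescaling by the bubble scales, on compact subsets of each bubble domain. Bridging this gap requires decomposing $\sE_g(A_i)$ into body, bubble, and neck contributions and invoking the small-energy regularity and removable-singularities theorems for Yang-Mills connections to show that the neck energies vanish in the limit, so that the local rigidities of the limit components together account for the full energy of $A_i$. This technical bookkeeping, rather than either ingredient taken in isolation, is the delicate core of the argument.
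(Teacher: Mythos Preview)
Your overall architecture matches the paper's---contradiction, bubble-tree compactness, {\L}ojasiewicz--Simon---but there is a genuine gap precisely where you yourself flag the ``principal obstacle,'' and your proposed resolution does not close it. Applying {\L}ojasiewicz--Simon separately at $A_\infty$ on $X$ and at each bubble $B_j$ on $S^4$ tells you nothing about the $A_i$ themselves: $A_i$ does \emph{not} lie in the $W^{1,2}(X)$-neighborhood of $A_\infty$ (that is exactly what bubbling means), and the rescaled restrictions of $A_i$ to bubble regions are not Yang--Mills connections on a closed manifold. Decomposing $\sE_g(A_i)$ into body, bubble, and neck pieces and showing the neck energies vanish recovers only the energy \emph{conservation} $c_i\to c_\infty$, never the energy \emph{equality} $c_i=c_\infty$ that you need. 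Your induction on bundle complexity is also ill-posed for general Yang--Mills connections: non-minimal Yang--Mills bubbles (e.g.\ those of Sibner--Sibner--Uhlenbeck) can live on the \emph{trivial} $\SU(2)$-bundle over $S^4$, so $P_\infty$ need not be topologically simpler than $P$, and there is no integer invariant that strictly decreases. Finally, invoking Theorem~\ref{mainthm:Separation_strata_Yang-Mills_connections} is circular in spirit---its proof in the paper rests on the very same adapted {\L}ojasiewicz--Simon inequality you are trying to avoid, and in any case it requires $W^{1,2}(X)$-proximity, which again fails under bubbling.

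The paper closes the gap by a device you do not use: conformal invariance of the Yang--Mills equation in dimension four. Rather than apply {\L}ojasiewicz--Simon on each piece, the paper pulls back $A$ and $A'$ by conformal blow-up diffeomorphisms $h_{\bz,\blambda}, h_{\bz',\blambda'}\in\Conf(X,g)$ to connections on $X\#_\sT S^4$ equipped with a metric conformally equivalent to $g$; these pullbacks remain Yang--Mills. Proposition~\ref{prop:Taubes_1988_proposition_5-4_coarse} shows that for any two Yang--Mills connections in a \emph{coarse} $W^{1,2}_\loc$ bubble-tree neighborhood, the blown-up connections are globally close in an ad hoc $\overline{W}^{1,2}$-norm on $X\#_\sT S^4$. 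A single {\L}ojasiewicz--Simon inequality adapted to that norm (Proposition~\ref{prop:Lojasiewicz-Simon_gradient_inequality_coarse_bubble-tree_neighborhood_conformal_blow-up}) then forces their energies to coincide. Since $\Crit(P,g,\sC)$ admits a finite cover by such coarse neighborhoods (Corollary~\ref{cor:Taubes_1988_proposition_5-4_coarse_conformal}), discreteness follows. The essential idea you are missing is that the bubble-tree limit should be regarded as a \emph{single} critical point on the connected sum, with a \emph{single} {\L}ojasiewicz--Simon neighborhood that the full connection $A_i$ (after conformal blow-up) actually enters.
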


\begin{rmk}[Discreteness of critical values of coupled Yang-Mills $L^2$-energy functionals for pairs of connections and spinor fields over four-dimensional manifolds]
\label{rmk:Discreteness_coupled_Yang-Mills_energies}
A version of Theorem \ref{mainthm:Discreteness_Yang-Mills_energies} should hold more generally for solutions to coupled Yang-Mills equations for pairs, $(A,\Phi)$, consisting of a connection $A$ on $P$ and a section $\Phi$ of an associated vector bundle; see the discussion of such equations by Parker in \cite[Section 2]{ParkerGauge}. We shall consider such generalizations elsewhere.
\end{rmk}

A brief introduction to the main ideas underlying the proof of Theorem \ref{mainthm:Discreteness_Yang-Mills_energies} is given in Section \ref{subsec:Discussion_main_ideas}, especially Sections \ref{subsubsec:Lojasiewicz-Simon_gradient_inequality_moduli_space_flat_connections} and \ref{subsubsec:Lojasiewicz-Simon_gradient_inequality_Yang-Mills_connections_sphere}. Theorem \ref{mainthm:Discreteness_Yang-Mills_energies} yields the following special case, generalizing our previous results \cite[Theorem 1 and Corollary 2]{Feehan_yangmillsenergygap}.

\begin{maincor}[$L^2$-energy gap for Yang-Mills connections over four-dimensional manifolds with energies near the ground state]
\label{maincor:Yang-Mills_energy_near_ground_state}
Let $G$ be a compact Lie group and $P$ be a principal $G$-bundle over a closed, four-dimensional, oriented, smooth manifold, $X$, endowed with a smooth Riemannian metric, $g$. Then there is a positive constant, $\eps \in (0, 1]$ depending at most on $g$, $G$, and the homotopy class, $[P]$ of the principal bundle, with the following significance.
If $A$ is a smooth $g$-Yang-Mills connection on $P$ and the $g$-self-dual component, $F_A^{+,g}$, of its curvature, $F_A$, obeys
\begin{equation}
\label{eq:Curvature_self-dual_L2_small}
\|F_A^{+,g}\|_{L^2(X)} < \eps,
\end{equation}
then $A$ is a $g$-anti-self-dual connection, that is, $F_A^{+,g} = 0$.
\end{maincor}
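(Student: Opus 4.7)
The plan is to deduce the corollary directly from Theorem \ref{mainthm:Discreteness_Yang-Mills_energies} via a Chern--Weil identity. For every smooth connection $A$ on $P$ one has
\[
\sE_g(A) \;=\; \|F_A^{+,g}\|_{L^2(X)}^2 + \|F_A^{-,g}\|_{L^2(X)}^2,
\]
while the quantity $\kappa(P) := \|F_A^{-,g}\|_{L^2(X)}^2 - \|F_A^{+,g}\|_{L^2(X)}^2$ equals, up to sign and universal constant, the Chern--Weil integral $-\int_X \langle F_A \wedge F_A\rangle$ (using a choice of $\Ad$-invariant inner product on $\fg$) and therefore depends only on the homotopy equivalence class $[P]$. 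Consequently
\[
\sE_g(A) \;=\; \kappa(P) + 2\,\|F_A^{+,g}\|_{L^2(X)}^2 \qquad \text{for every smooth connection $A$ on $P$.}
\]
If $\kappa(P) < 0$, then $\|F_A^{+,g}\|_{L^2}^2 \geq -\kappa(P)/2 > 0$ for every connection, and setting $\eps := \min\{1, \sqrt{-\kappa(P)/2}\}$ makes \eqref{eq:Curvature_self-dual_L2_small} impossible to satisfy, rendering the conclusion vacuous. Hence assume $\kappa(P) \geq 0$, so that $\sE_g(A) \geq \kappa(P)$ with equality iff $A$ is $g$-anti-self-dual.

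By Theorem \ref{mainthm:Discreteness_Yang-Mills_energies}, the set $\{c_i\}_{i\geq 0}$ of critical values of $\sE_g:\sB(P,g) \to [0,\infty)$ is closed and discrete and depends only on $g$, $G$, $[P]$. In particular the minimum $c_0 := \min_i c_i$ exists and satisfies $c_0 \geq \kappa(P)$. If $c_0 > \kappa(P)$, meaning no Yang--Mills connection on $P$ attains the topological lower bound, then every Yang--Mills $A$ obeys
\[
\|F_A^{+,g}\|_{L^2}^2 \;=\; \tfrac{1}{2}\bigl(\sE_g(A) - \kappa(P)\bigr) \;\geq\; \tfrac{1}{2}\bigl(c_0 - \kappa(P)\bigr) \;>\; 0,
\]
so that choosing $\eps := \min\{1, \sqrt{(c_0 - \kappa(P))/2}\}$ again renders the hypothesis \eqref{eq:Curvature_self-dual_L2_small} vacuous.

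There remains the principal case $c_0 = \kappa(P)$, in which an ASD connection on $P$ exists. Since the closed discrete set $\{c_i\}$ has $c_0$ as an isolated point, there exists $\delta > 0$ with $(c_0, c_0 + \delta) \cap \{c_i\} = \emptyset$; set $\eps := \min\{1, \sqrt{\delta/2}\}$, which depends only on $g$, $G$, $[P]$. If $A$ is a smooth $g$-Yang--Mills connection on $P$ with $\|F_A^{+,g}\|_{L^2} < \eps$, then
\[
\sE_g(A) \;=\; \kappa(P) + 2\|F_A^{+,g}\|_{L^2}^2 \;<\; c_0 + 2\eps^2 \;\leq\; c_0 + \delta,
\]
so $\sE_g(A)$ is a critical value lying in $[c_0, c_0 + \delta)$; by the choice of $\delta$, this forces $\sE_g(A) = c_0 = \kappa(P)$, whence $\|F_A^{+,g}\|_{L^2} = 0$ and $F_A^{+,g} \equiv 0$. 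All of the analytic content of the argument thus resides in Theorem \ref{mainthm:Discreteness_Yang-Mills_energies}; granted discreteness, the deduction of the corollary is purely algebraic, the only nontrivial step being the identification of the appropriate gap $\delta$ furnished by discreteness at the minimal critical value.
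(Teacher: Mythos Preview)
Your proof is correct and follows exactly the route the paper intends: the paper does not give a separate proof of Corollary~\ref{maincor:Yang-Mills_energy_near_ground_state} but simply declares it a ``special case'' of Theorem~\ref{mainthm:Discreteness_Yang-Mills_energies}, and your Chern--Weil computation $\sE_g(A)=\kappa(P)+2\|F_A^{+,g}\|_{L^2}^2$ together with the discreteness gap at $c_0$ is precisely how that specialization is realized. Your case analysis (including the vacuous cases $\kappa(P)<0$ and $c_0>\kappa(P)$) is more thorough than anything stated explicitly in the paper.
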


\begin{rmk}[On the conclusions of Corollary \ref{maincor:Yang-Mills_energy_near_ground_state}]
\label{rmk:Yang-Mills_energy_near_ground_state}
By reversing orientations on $X$, the analogous conclusion in Corollary \ref{maincor:Yang-Mills_energy_near_ground_state} is true for $g$-self-dual rather than $g$-anti-self-dual connections on $P$: If \eqref{eq:Curvature_self-dual_L2_small} holds for $F_A^{-,g}$ in place of $F_A^{+,g}$, then $A$ is $g$-self-dual, that is, $F_A^{-,g} = 0$.
\end{rmk}

Corollary \ref{maincor:Yang-Mills_energy_near_ground_state} does not provide a new method of establishing the existence of anti-self-dual connections extending previous existence results due to Donaldson \cite{DonASD}, Taubes \cite{TauSelfDual, TauIndef}, or Taylor \cite{Taylor_2002} since the problem of proving existence of Yang-Mills connections is considerably harder than that of proving the existence of anti-self-dual connections. For example, see Taubes \cite{TauFrame} for the case of a base manifold of dimension four and Brendle \cite{Brendle_2003arxiv} for the case of base manifolds of dimension greater than four.

Theorem \ref{mainthm:Discreteness_Yang-Mills_energies} acquires further context from the more familiar setting of Morse theory on finite-dimensional manifolds. If $M$ is closed, finite-dimensional, smooth manifold, then its Euler characteristic can be computed with the aid of a Morse function, $f:M\to\RR$, via the identity \cite[Theorem 5.2]{Milnor_morse_theory}
\[
\chi(M) = \sum_i (-1)^i n_i,
\]
where $n_i$ is the number of critical points of $f$ of index $i$. 

The proof of Theorem \ref{mainthm:Discreteness_Yang-Mills_energies} on the separation of critical values also yields a result on the separation of strata of critical points. For $p \geq 1$ and integers $k \geq 0$, we define a $W^{k,p}$ \emph{distance function} on the stratified Banach manifold, $\sB(P,g) = \sA(P)/\Aut(P)$, by
\begin{equation}
\label{eq:Wkp_distance_on_quotient_space_connections}
\dist_{W_g^{k,p}}([A_1],[A_0])
:=
\frac{1}{2}\inf_{u,v\in\Aut(P)}\left(\|u(A_1)-A_0\|_{W_{A_0}^{k,p}(X,g)}
+ \|v(A_0)-A_1\|_{W_{A_1}^{k,p}(X,g)}\right),
\end{equation}
and, for any non-empty subsets $\sU_0, \sU_1 \subset \sB(P,g)$,
\begin{equation}
\label{eq:Wkp_distance_between_subsets_quotient_space_connections}
\dist_{W_g^{k,p}}(\sU_1,\sU_0) := \inf_{[A_i]\in\sU_i, \, i=0,1} \dist_{W_g^{k,p}}([A_1],[A_0]).
\end{equation}
and when $k=0$, we write $\dist_{W_g^{0,p}} = \dist_{L_g^p}$ in \eqref{eq:Wkp_distance_on_quotient_space_connections} and  \eqref{eq:Wkp_distance_between_subsets_quotient_space_connections}. We suppress explicit notation for the Riemannian metric $g$ in \eqref{eq:Wkp_distance_on_quotient_space_connections} and  \eqref{eq:Wkp_distance_between_subsets_quotient_space_connections} when there is no ambiguity.

\begin{mainthm}[Uniform $W^{1,2}$-distance between Yang-Mills connections over four-dimensional manifolds with distinct critical values]
\label{mainthm:Separation_strata_Yang-Mills_connections}
Let $G$ be a compact Lie group and $P$ be a principal $G$-bundle over a closed, four-dimensional, oriented, smooth manifold, $X$, endowed with a smooth Riemannian metric, $g$, and $E > 0$ be a constant. Then there is a positive constant, $\delta \in (0, 1]$, depending at most on $E$, $g$, $G$, and $[P]$, with the following significance. If $c, c' \in [0,E]$ and
\begin{equation}
\label{eq:W12_distance_between_critical_sets_connections_small}
\dist_{W^{1,2}}(\Crit(P,g;c), \Crit(P,g;c')) < \delta,
\end{equation}
then $c' = c$.
\end{mainthm}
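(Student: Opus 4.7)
The plan is to argue by contradiction, combining Theorem \ref{mainthm:Discreteness_Yang-Mills_energies} with the bubble-tree compactification for the moduli space of Yang-Mills connections. Suppose the conclusion fails: for every $n \geq 1$ there exist $c_n, c'_n \in [0, E]$ with $c_n \neq c'_n$ and critical points $[A_n] \in \Crit(P,g;c_n)$, $[A'_n] \in \Crit(P,g;c'_n)$ with $\dist_{W^{1,2}}([A_n],[A'_n]) < 1/n$.

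First, I would apply the extension of \cite{FeehanGeometry} from anti-self-dual to Yang-Mills moduli with a uniform $L^2$-energy bound (alluded to in the abstract), separately to each of the sequences $\{[A_n]\}$ and $\{[A'_n]\}$. After passing to further subsequences, each converges in the bubble-tree topology to an ideal Yang-Mills limit consisting of a background Yang-Mills connection on some principal $G$-bundle over $X$, together with finitely many Yang-Mills bubble connections on round four-spheres attached at finitely many concentration points. Because $\sE_g$ is continuous in the bubble-tree topology, $c_n$ converges to the total energy $\sE_\infty$ of the first limit and $c'_n$ to the total energy $\sE'_\infty$ of the second.

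Second, I would show that the two ideal limits coincide. The hypothesis $\dist_{W^{1,2}}([A_n],[A'_n]) \to 0$, combined with Sobolev embedding in dimension four, yields (after choosing suitable gauge representatives) $\|F_{A_n} - F_{A'_n}\|_{L^2} \to 0$. A differing bubble point, or a differing bubble type over a common bubble point, would force the measures $|F_{A_n}|^2\,d\vol_g$ and $|F_{A'_n}|^2\,d\vol_g$ to concentrate unequal amounts of mass near that point, yielding a strictly positive lower bound on $\|F_{A_n} - F_{A'_n}\|_{L^2}$ that contradicts the proximity assumption; a similar argument matches the background connections. In particular $\sE_\infty = \sE'_\infty$. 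Theorem \ref{mainthm:Discreteness_Yang-Mills_energies} then asserts that the critical values of $\sE_g$ on $\sB(P,g)$ form a closed and discrete subset of $[0, \infty)$, so any convergent sequence of critical values is eventually constant; hence $c_n = \sE_\infty = \sE'_\infty = c'_n$ for all large $n$, contradicting $c_n \neq c'_n$.

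The main obstacle is the first step: the extension of the bubble-tree compactification of \cite{FeehanGeometry} from the anti-self-dual setting to general Yang-Mills moduli of uniformly bounded $L^2$-energy. The anti-self-duality hypothesis in \cite{FeehanGeometry} is used to exploit the topological (and hence quantized) nature of the $L^2$-energy; for general Yang-Mills connections this must be replaced by Uhlenbeck's $\varepsilon$-regularity theorem for the Yang-Mills equation and by the discreteness of Yang-Mills energies on the round four-sphere, itself a consequence of the {\L}ojasiewicz--Simon apparatus used in the proof of Theorem \ref{mainthm:Discreteness_Yang-Mills_energies}. Once this extended compactification is in hand, the $W^{1,2}$-matching carried out in step two is largely routine, because bubble-tree convergence refines $L^2$-convergence of curvatures.
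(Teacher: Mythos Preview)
Your contradiction argument is correct, and in fact simpler than you realize. The core of your proof is this: if $\|v_n(A'_n)-A_n\|_{W^{1,2}_{A_n}}\to 0$ then, writing $a_n:=v_n(A'_n)-A_n$ and using $F_{A_n+a_n}=F_{A_n}+d_{A_n}a_n+a_n\wedge a_n$ together with the Kato inequality \eqref{eq:FU_6-20_first-order_Kato_inequality} and the Sobolev embedding $W^{1,2}\hookrightarrow L^4$ (whose constant is independent of $A_n$), one gets $\|F_{A'_n}-F_{A_n}\|_{L^2}\to 0$ and hence $|c_n-c'_n|\to 0$. Since $c_n,c'_n\in[0,E]$ are critical values and, by Theorem~\ref{mainthm:Discreteness_Yang-Mills_energies}, the critical values form a closed discrete set, both sequences are eventually constant and equal, contradicting $c_n\neq c'_n$. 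Your entire Step~1 (bubble-tree limits of each sequence) and most of Step~2 (matching of ideal limits, bubble points, bubble types) are unnecessary: you only need $|c_n-c'_n|\to 0$, not $\sE_\infty=\sE'_\infty$ via equal ideal limits. The claimed matching of bubble \emph{types} from $L^2$-proximity of curvatures is in fact the shakiest part of what you wrote (equal energy concentration at a point does not determine the bubble connection), but luckily you never use it. Likewise, the ``main obstacle'' you flag is a non-issue: the bubble-tree compactification for Yang-Mills connections is established in the paper (Section~\ref{sec:Bubble-tree_compactness_Yang-Mills_and_anti-self-dual_connections}), but more to the point your argument does not require it.

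This is a genuinely different route from the paper's proof. The paper does \emph{not} invoke Theorem~\ref{mainthm:Discreteness_Yang-Mills_energies} as a black box; instead it re-runs the same machinery used to prove that theorem: a finite cover of $\Crit(P,g,[0,E])$ by coarse $W^{1,2}_{\loc}$ bubble-tree neighborhoods (Corollary~\ref{cor:Taubes_1988_proposition_5-4_coarse_conformal}), a {\L}ojasiewicz--Simon gradient inequality adapted to each such neighborhood (Proposition~\ref{prop:Lojasiewicz-Simon_gradient_inequality_coarse_bubble-tree_neighborhood_conformal_blow-up}), and a stability lemma showing that the {\L}ojasiewicz--Simon triple $(Z,\sigma,\theta)$ for a given Yang-Mills connection serves, with $\sigma$ replaced by $\sigma/4$, for every nearby one. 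This yields an explicit lower bound $\delta$ in terms of the finitely many {\L}ojasiewicz--Simon radii arising in the cover. Your approach is shorter and conceptually cleaner once Theorem~\ref{mainthm:Discreteness_Yang-Mills_energies} is in hand; the paper's approach is more constructive and makes the dependence of $\delta$ on the {\L}ojasiewicz--Simon data transparent.
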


\subsection{Discreteness of critical values of the Yang-Mills energy functional over closed Riemannian smooth manifolds of dimension two or three}
\label{subsec:Discreteness_critical_values_Yang-Mills_energy_over_two_or_three-manifolds}
The energies of Yang-Mills connections on a principal $G$-bundle $P$ over a closed Riemann surface are necessarily discrete, just as in Theorem \ref{mainthm:Discreteness_Yang-Mills_energies} for the case of a four-dimensional manifold, when $X$ is a closed, Riemannian smooth manifold of dimension two or three and $G$ is any compact Lie group. Indeed, for $X$ of dimension two or three, the conclusion of Theorem \ref{mainthm:Discreteness_Yang-Mills_energies} is a straightforward consequence of the {\L}ojasiewicz-Simon gradient inequality for the Yang-Mills energy functional \cite[Proposition 7.2]{Rade_1992} (see Simon \cite[Theorem 3]{Simon_1983} for the original version) and Uhlenbeck compactness for Yang-Mills connections with an $L^2$ bound on curvature \cite{UhlLp, UhlRem}, as observed by R\r{a}de \cite[p. 127]{Rade_1992}. This observation will be explained in more detail when we discuss the key ideas in the proof of Theorem \ref{mainthm:Discreteness_Yang-Mills_energies} in Section \ref{subsec:Discussion_main_ideas}. This paradigm underlies the proof of our main result \cite[Theorem 1]{Feehan_yangmillsenergygapflat} concerning the gap between the energies of non-flat Yang-Mills connections and zero (the energy of any flat connection), where we exploit compactness of the moduli space of flat connections over a closed Riemannian smooth manifold of arbitrary dimension greater than or equal to two and our version of the {\L}ojasiewicz-Simon gradient inequality for the Yang-Mills energy functional over such manifolds \cite[Theorem 21.8]{Feehan_yang_mills_gradient_flow}, quoted here as Theorem \ref{thm:Rade_proposition_7-2}. However, to extend those ideas to prove Theorem \ref{mainthm:Discreteness_Yang-Mills_energies} one must address the fundamental problem of non-compactness, due to \emph{bubbling}, of moduli spaces of Yang-Mills connections with bounded energy.

\subsection{Discreteness of critical values of the Yang-Mills energy functional over compact K\"ahler manifolds}
\label{subsec:Discreteness_critical_values_Yang-Mills_energy_over_Kaehler_manifolds}
For a compact Lie group, $G\subset \U(n)$, one can also see that the energies of Yang-Mills connections on a principal $G$-bundle $P$ over a closed Riemann surface, $\Sigma$, are necessarily discrete, just as in Theorem \ref{mainthm:Discreteness_Yang-Mills_energies} for the case of a four-dimensional manifold, as a consequence of the results of Atiyah and Bott \cite{Atiyah_Bott_1983}. (The author is grateful to George Daskalopoulos and Richard Wentworth for their explanations of what is known with regard to discreteness of $L^2$ energies of integrable Yang-Mills connections over K\"ahler manifolds of complex dimension one and two.)

In the case of a closed Riemann surface, the discreteness follows from results of Atiyah and Bott \cite{Atiyah_Bott_1983}. They show that every Yang-Mills connection on Hermitian vector bundle $E$ over $\Sigma$ is a direct sum of Hermitian-Yang-Mills connections, where the key idea is to rewrite the Yang-Mills equation using the K\"ahler identities. The $L^2$ energy of a Hermitian-Yang-Mills connection is the square of the \emph{slope} of $E$, up to some fixed overall constant depending on the volume of the Riemann surface. The slope of $E$ is equal to the degree of $E$ divided by its rank. Hence, the collection of possible slopes that appear in the Harder-Narasimhan decomposition are rational with a denominator bounded above by the rank of the initial bundle, $E$. This proves discreteness in the case of Yang-Mills connections over Riemann surfaces.

A version of this argument applies to compact K\"ahler surfaces, $Z$, but not necessarily in the generality provided by Theorem \ref{mainthm:Discreteness_Yang-Mills_energies}, as we must restrict to holomorphic vector bundles given the possibility that there may be Yang-Mills connections that are not sums of Hermitian-Yang-Mills connections. For K\"ahler surfaces, there could exist Yang-Mills connections whose curvature is not of type $(1,1)$.

Bubbling occurs in real dimension four, so one must consider the energies of Yang-Mills connections and \emph{ideal} Yang-Mills connections arising, for example, as Uhlenbeck limits of Yang-Mills connections. Here one can rely on work of Daskalopoulos and Wentworth \cite{Daskalopoulos_Wentworth_2004, Daskalopoulos_Wentworth_2007}, where they show that the Harder-Narasimhan type of the Uhlenbeck limit is the same as the Harder-Narasimhan type of the original holomorphic bundle, $(E,\bar\partial_{A_0})$. For a bundle $E$ of prescribed topological type, the possible slopes are again discrete. The denominator is bounded by the rank, and the degrees are obtained by  intersecting elements of a lattice $H^2(Z;\ZZ)$ (the first Chern classes) with the K\"ahler class.

It is possible that the preceding argument for K\"ahler surfaces might extend to the case of compact K\"ahler manifolds of arbitrary complex dimension based on work of Sibley \cite{Sibley_2013arxiv}, Tian \cite{TianGTCalGeom}, and others concerning Yang-Mills connections over higher-dimensional manifolds. However, methods based on generalizations of those of Atiyah and Bott \cite{Atiyah_Bott_1983} giving discreteness in the case of Riemann surfaces would not appear to yield the conclusion of Theorem \ref{mainthm:Discreteness_Yang-Mills_energies} in full generality when $Z$ is a compact K\"ahler surface and
$G \subset \U(n)$ or, more generally, when $Z$ is a compact K\"ahler manifold of complex dimension three or higher.

While our \cite[Theorem 1]{Feehan_yangmillsenergygapflat} yields a gap between zero and the energy of non-flat Yang-Mills connections over a base manifold of any dimension, through Uhlenbeck compactness and our {\L}ojasiewicz-Simon gradient inequality, it would be a challenging problem to generalize Theorem \ref{mainthm:Discreteness_Yang-Mills_energies} to the case of dimensions greater than four.

\subsection{Discreteness and non-discreteness of critical values of the Yang-Mills energy functional over complete, non-compact four-dimensional manifolds}
\label{subsec:Discreteness_critical_values_Yang-Mills_energy_over_complete_manifolds}
Dodziuk and Min-Oo \cite{Dodziuk_Min-Oo_1982}, Gerhardt \cite{Gerhardt_2010}, Shen \cite{Shen_1982}, and Xin \cite{Xin_1984} establish $L^2$-energy gap results near the \emph{ground state} (or absolute minimum) energy in the case of four-dimensional, non-compact, smooth manifolds equipped with a \emph{complete} Riemannian metric that is positive in the sense of \eqref{eq:Freed_Uhlenbeck_page_174_positive_metric}. It is possible that Theorem \ref{mainthm:Discreteness_Yang-Mills_energies} might extend, perhaps with a suitable weighting at spatial infinity, to four-dimensional, complete, non-compact manifolds which are not conformally compact and whose Riemannian metric does not obey the positivity condition \eqref{eq:Freed_Uhlenbeck_page_174_positive_metric}.

Our proof of Theorem \ref{mainthm:Discreteness_Yang-Mills_energies} makes essential use of the hypotheses that $(X,g)$ is conformally compact and that $G$ is a compact Lie group. Using the conformal invariance of Yang-Mills equations over four-dimensional manifolds, Chaohao \cite[Theorem 1]{Chaohao_1980} has proved that over a complete, connected, simply-connected manifold of constant negative sectional curvature (namely, the hyperbolic space $\HH^4$ with sectional curvature $-1$), there exist Yang-Mills connections with $L^2$-energy $c$ equal to \emph{any} non-negative real number; see Chaohao \cite{Chaohao_1981} for further discussion. As Sadun points out, some condition at spatial infinity should be required in order to achieve discrete values of the Yang-Mills $L^2$-energy functional in the absence of a positivity condition on the Riemannian metric.

\subsection{Relevance to previous results on Morse theory for the Yang-Mills $L^2$-energy functional over closed four-dimensional manifolds}
\label{subsec:Comparison_results_Taubes}
The primary difficulty one must confront when analyzing the $L^2$-energy functional on a principal $G$-bundle over a four-dimensional manifold is non-compactness of the critical sets, $\Crit(P,g,\sC)$ in Definition \ref{defn:Moduli_space_Yang-Mills_connections_L2-energy_compact_range}, due to the well-known phenomenon of energy \emph{bubbling} \cite{DK, FU} and indeed this is the difficulty addressed by Taubes in \cite{TauFrame} in his approach to Morse theory for the Yang-Mills $L^2$-energy functional; see also \cite{TauPath, TauStable} for related results.

We recall some of the ideas underlying the proof of his main result \cite[Theorem 1.1]{TauFrame}, essentially following his description in \cite[Section 2]{TauFrame}, and describe the relationship between the `approximate critical sets' $\Sigma(E, \delta)$ defined by Taubes in \eqref{eq:Taubes_1988_2-12} and the traditional critical sets, $\Crit(P,g,c)$, that we specify in Definition \ref{defn:Moduli_space_Yang-Mills_connections_L2-energy_c}. Our summary of Taubes' approach in \cite{TauFrame} is highly simplified and we omit mention of many deep technicalities discussed in detail his article.

\subsubsection{Taubes' framework for Morse theory for the Yang-Mills $L^2$-energy functional over closed four-dimensional manifolds}
\label{subsubsec:Taubes_framework_Morse_theory_Yang-Mills_energy functional}
The purpose of Taubes' article is to explain \cite[p. 329]{TauFrame} how the Morse theory for the Yang-Mills $L^2$-energy functional can be recovered by examining the restriction of the functional to a countable set of finite-dimensional, non-compact subvarieties of the Banach manifold of connections with base point modulo gauge transformations, $\sB'(P,g) = (\sA(P,g)\times P|_{x_0})/\Aut(P)$, for some fixed $x_0 \in X$. (The stratified Banach manifold, $\sB(P,g)$, is recovered as the quotient, $\sB'(P,g)/G$; the open subset $\sB^*(P,g) \subset \sB(P,g)$ of points $[A]\in \sB(P,g)$ with stabilizer (isotropy group) of $A$ in $\Aut(P)$ isomorphic to $\Center(G)$ is a Banach manifold \cite[p. 132]{DK}.)

Taubes first considers a smooth function, $\sF:\sX \to \RR$, on a smooth Banach manifold, $\sX$, for which the Palais-Smale Condition C is \emph{not} assumed to hold. Rather than examine the exact critical sets of $\sF$, he considers sets of approximate critical points \cite[Equation (2.1)]{TauFrame},
\begin{equation}
\label{eq:Taubes_1988_2-1}
\Crit(E,\delta) := \left\{x\in \sX: |\sF(x) - E| < \delta \text{ and } \|\sF'(x)\| < \delta \right\},
\end{equation}
where $\sF'(x):T_x\sX \to T_x^*\sX$ is the gradient of $\sF$ at each $x \in \sX$, and also introduces the set
\begin{equation}
\label{eq:Taubes_1988_2-2}
\Crit(E,\delta)^- := \left\{x \in \Crit(E, \delta): \sF(x) < E \right\}.
\end{equation}
When $\delta > \delta' > 0$ are given, there is a natural inclusion of the pairs,
\begin{equation}
\label{eq:Taubes_1988_2-3}
\iota: (\Crit(E, \delta'), \Crit(E, \delta')^-) \to  (\Crit (E, \delta), \Crit(E, \delta)^-).
\end{equation}
It is the effect of the map $\iota$ on the relative homotopy groups of the pairs in \eqref{eq:Taubes_1988_2-3} which determines whether $\Crit(E, \delta)$ contributes to the Morse theory of $\sF$ on $\sX$. Taubes calls the number $E$ a \emph{regular value} of $\sF$ if there exists $\delta > 0$ and $\delta' \in (0, \delta]$ such that the map $\iota$ in \eqref{eq:Taubes_1988_2-3} induces the zero map between the relative homotopy groups. If $E$ is not a regular value (in the preceding sense), then it is called a \emph{critical value}. When $\sX$ is a compact manifold, these definitions coincide with the usual definitions of regular value and critical value.

We emphasize that in Theorem \ref{mainthm:Discreteness_Yang-Mills_energies} and throughout our article, we \emph{only} employ the standard definitions of regular value and critical value for the Yang-Mills $L^2$ energy functional on $\sA(P,g)$ or $\sB'(P,g)$, irrespective of the fact that $\sA(P,g)$ and $\sB'(P,g)$ are infinite-dimensional Banach manifolds.

Taubes constructs certain finite-dimensional subvarieties $\sN \subset \sB'(P,g)$ parameterized by the following \emph{gluing data} and a splicing map, where we restrict our attention here to a special case (one rather than iterated levels of splicing) of his construction outlined in \cite[Equations (2.5) through (2.10)]{TauFrame} for the sake of exposition:
\begin{enumerate}
\item Set of Yang-Mills connections, $A_0$, on a principal $G$-bundle $P_0$ over $X$;

\item Sets of Yang-Mills connections, $A_\alpha$, on principal $G$-bundles $P_\alpha$ over $S^4$ corresponding to points $x_\alpha \in X$, where $1 \leq \alpha \leq l$;

\item Set of oriented $g$-orthonormal frames, $f_\alpha$ for $(TX)_{x_\alpha}$, where $1 \leq \alpha \leq l$;

\item Set of points $q_\alpha \in P_0|_{x_\alpha}$, for $1 \leq \alpha \leq l$;

\item Set of points $p_\alpha \in P_\alpha|_s$, for $1 \leq \alpha \leq l$, where $s \in S^4$ is the south pole; and

\item Set of scales $\lambda_\alpha \in (0,1]$, for $1 \leq \alpha \leq l$.
\end{enumerate}
The automorphism groups which act on the gluing data are described in \cite[Sections 2 and 4]{TauFrame}. In general, as in \cite[Sections 2 and 4]{TauFrame}, the preceding construction must be iterated by prescribing a tree with root vertex for each point $x_\alpha \in X$ and a copy of the preceding gluing data for $S^4$ assigned to each vertex in the tree.

The gluing data (which is subject to certain constraints described in \cite[Sections 2 and 4]{TauFrame}) is used to build spliced connections, $A$, that are approximate solutions to the $g$-Yang-Mills equation on a principal $G$-bundle $P$ over $X$. In the $W^{1,2}$ -Sobolev neighborhood of the embedded, finite-dimensional submanifold, $\sN \subset \sB'(P,g)$, of spliced connections corresponding to the gluing data, the value of the Yang-Mills $L^2$ energy functional, $\sE_g$, is approximately $E$ and the $W^{-1,2}$-norm of the gradient of $\sE_g$ is approximately zero, and so, in this neighborhood, a suitably defined Hessian of $\sE_g$ determines the behavior of $\sE_g$. The Hessian of $\sE_g$ has at most $n^*$ small or negative eigenvalues, and all of the other eigenvalues are bounded from below by a positive constant. It is shown in \cite[Section 7]{TauFrame} that the number, $n*$ has an \apriori bound, given $E$. The eigenvalues of the Hessian are weakly continuous on $\sB'(P,g)$ with respect to the $W^{1,2}$ topology.

Taubes' splicing construction produces, for each $[A] \in \sN$, a vector space of dimension $n^*$ or less which approximates the span of those eigenvectors of the Hessian of $\sE_g$ which have small or negative eigenvalue, called the \emph{obstruction space}. At each point $[A] \in \sN$, let $\Pi_A$ denote the projection onto the span of those eigenvectors of the Hessian of $\sE_g$ which have small or negative eigenvalue. (For the sake of exposition, we shall ignore issues related to approximate reducibility of connections $A$ defined by the splicing construction.) The open set $\sN$ has two locally finite covers by open sets $\{\sN_i\}$ and $\{\sN_i'\}$ such that $\sN_i' \subset \sN_i$ and such that over $\sN_i$, the set of projections $\{\Pi_A: [A] \in \sN_i\}$ is smoothly varying. The set of images of $\Pi_A$, as $[A]$ varies through $\sN_i$, defines a vector subbundle, $V_i \subset T\sN_i$, of rank $n^*$ or less, called the \emph{obstruction bundle}.

At each point in this subset, the Hessian of $\sE_g$ is uniformly convex on the complement of $V_i$ in $T\sN_i$. This fact, together with the Contraction Mapping Theorem, allows one to construct a homotopy of $\sB'(P,g)$ which is $\sE_g$-decreasing, and which deforms $\sN_i'$ onto the $\rank(V_i)$-dimensional submanifold of $\sN_i$ consisting of the set of points $[A] \in \sN_i$ where
\begin{equation}
\label{eq:Taubes_1988_2-11}
(1 - \Pi_{A+a})\sE_g'(A+a) = 0,
\end{equation}
for suitable $a \in \Omega^1(X;\ad P)$. This submanifold is denoted $\Sigma_0(\sN_i)$. (We again ignore issues related to approximate reducibility of connections $A$ defined by the splicing construction.) Equation \eqref{eq:Taubes_1988_2-11} is the Yang-Mills equation for $A+a$ composed with an $L^2$-orthogonal projection onto the $L^2$-orthogonal complement of the finite-dimensional obstruction space.

On $\sN_i \cap \sN_j$, either $V_i \subset V_j$ or $V_i = V_j$ or $V_i \supset V_j$. Thus, either $\Sigma_0(\sN_i)\cap \sN_j$ is a smooth submanifold of $\Sigma_0(\sN_j)$, or vice versa. The union of the manifolds $\{\Sigma_0(\sN_i)\}$ defines a smooth variety, $\Sigma_0 \cap \sN$ with maximum dimension $n^*$.

Given $\delta > 0$, one defines, by analogy with \eqref{eq:Taubes_1988_2-1},
\begin{equation}
\label{eq:Taubes_1988_2-12}
\Sigma(E, \delta) := \left\{[A] \in \Sigma_0\cap\sN: |\sE_g(A) - E| < \delta \text{ and } \|\sE_g'(A)\| < \delta\right\}.
\end{equation}
By analogy with \eqref{eq:Taubes_1988_2-2} and \eqref{eq:Taubes_1988_2-3}, one defines
\begin{equation}
\label{eq:Taubes_1988_2-13}
\Sigma(E, \delta)^- := \{[A] \in \Sigma(E,\delta): \sE_g(A) < E \},
\end{equation}
and for $\delta > \delta' > 0$, one has the inclusion of pairs,
\begin{equation}
\label{eq:Taubes_1988_2-14}
\iota: \left(\Sigma(E, \delta'), \Sigma(E, \delta')^-\right) \to \left(\Sigma(E, \delta), \Sigma(E, \delta)^-\right).
\end{equation}
By construction, the effect of the inclusion map in \eqref{eq:Taubes_1988_2-3} on the relative homology and homotopy groups of the pairs of spaces in \eqref{eq:Taubes_1988_2-3} can be calculated from the effect of the inclusion map in \eqref{eq:Taubes_1988_2-14} on the relative groups for the pairs of spaces in \eqref{eq:Taubes_1988_2-14}. This is Taubes' main result in \cite{TauFrame} and the detailed statement is given by \cite[Theorem 1.1]{TauFrame}. It is in this sense that Morse theory for the Yang-Mills $L^2$-energy functional is recovered by the addition of the finite-dimensional variational problems which are implicit in calculating the effect of the map in \eqref{eq:Taubes_1988_2-14} on the relative homology groups of the pairs of finite dimensional varieties in \eqref{eq:Taubes_1988_2-14}.

\subsubsection{Relationship with Theorem \ref{mainthm:Discreteness_Yang-Mills_energies}}
\label{subsubsec:Relationship_between_Taubes_main_theorem_and_discreteness_Yang-Mills_energies}
As is clear from \eqref{eq:Taubes_1988_2-12} and Definition \ref{defn:Moduli_space_Yang-Mills_connections_L2-energy_c}, the subvariety $\Sigma(E, \delta) \subset \sB'(P,g)$ serves as a finite-dimensional neighborhood of a piece of the set $\Crit(P,g,E)$ of critical points of the Yang-Mills $L^2$-energy functional, $\sE_g:\sB'(P,g)\to \RR$, with critical value $E \in [0,\infty)$.

It is important to note that the conclusions in Taubes' \cite[Theorem 1.1]{TauFrame} concerning Morse theory for $\sE_g:\sB'(P,g) \to [0,\infty)$ and the topology of $\sB'(P,g)$ follow from the existence of a countable collection of finite-dimensional varieties, $\{\Sigma_j\}_{j\in\NN} \subset \sB'(P,g)$, as described in Section \ref{subsubsec:Taubes_framework_Morse_theory_Yang-Mills_energy functional}. However, neither \cite[Theorem 1.1]{TauFrame} nor the discussion in \cite[p. 329]{TauFrame} assert that $L^2$ energies of Yang-Mills connections on $P$ form a discrete sequence.

\subsection{Relevance to quantum Yang-Mills field theory over four-dimensional manifolds}
\label{subsec:Relevance_quantum_Yang-Mills_4_manifolds}
While a mathematically rigorous approach to quantum Yang-Mills field theory over conformally compact four-dimensional manifolds remains a distant vision at present, an analysis of the geometry of the critical sets and discreteness of critical values of the Yang-Mills $L^2$ energy functional values should nonetheless have a role in such endeavours. Research aimed at developing the mathematical foundation of quantum Yang-Mills field theory over Riemann surfaces serves to shed light on the corresponding theory in the case of four-dimensional manifolds. There has been considerable interest in the case of Yang-Mills theory over Riemann surfaces and, rather than review previous research here, we shall mention only the work of Atiyah and Bott \cite{Atiyah_Bott_1983} and Witten \cite{Witten_1992} together with the references included in their bibliographies and the many articles citing their work. Witten notes \cite[p. 303]{Witten_1992} that the contributions to the partition function from the unstable classical solutions are `exponentially small' and we might expect that the same is true in the setting of Yang-Mills theory over four-dimensional manifolds.

\subsection{Examples of Schr\"odinger operators with dense point spectrum and examples of energy functionals with dense critical values}
\label{subsec:Examples_Schrodinger_operators_dense_point spectrum_and_energy_functionals_dense_critical_values}
The $L^2$ energy gap results in \cite{Feehan_yangmillsenergygap} are proved with the aid of spectral gap results for Laplace operator, $d_A^{+,g}d_A^{+,*,g}$ on $\Omega^+(X;\ad P)$. Thus it is instructive to compare our gap results for critical values of the Yang-Mills $L^2$-energy functional with related results for energy functionals defined by linear self-adjoint operators on a Hilbert space. To explain those results, it is useful to recall some basic terminology from the spectral theory for linear operators on Banach and Hilbert spaces \cite{Kato, Reed_Simon_v1, Rudin}.

Let $B(\cX)$ be the Banach algebra of all bounded linear operators on a Banach space $\cX$. The \emph{spectrum} $\sigma(L)$ of an operator $L \in B(\cX)$ is the set of all $\lambda \in \CC$ such that $L - \lambda I$ is \emph{not} invertible \cite[Definition 4:17]{Rudin}. Thus $\lambda \in \sigma(L)$ if and only if at least one of the following two statements is true:
\begin{inparaenum}[\itshape i\upshape)]
\item \label{item:not_onto} The range of $L - \lambda I$ is not all of X.
\item \label{item:not_one_to_one} $L - \lambda I$ is not one-to-one.
\end{inparaenum}
If \eqref{item:not_one_to_one} holds, $\lambda$ is said to be an eigenvalue of $L$; the corresponding eigenspace is $\Ker(L - \lambda I)$; each $\xi \in \Ker(L - \lambda I)$ (except $\xi = 0$) is an eigenvector of $L$; it satisfies the equation
\[
L\xi = \lambda\xi.
\]
The \emph{point spectrum} $\sigma_p(L)$ of an operator $L \in B(\cX)$ is the set of all eigenvalues of $L$ \cite[Definition 10.32]{Rudin}. Thus $\lambda \in \sigma_p(L)$ if and only if the null space $\Ker(L - \lambda I)$ of $L - \lambda I$ has positive dimension.

Naboko \cite{Naboko_1986} constructs self-adjoint one-dimensional Schr\"odinger and Dirac operators with dense point spectrum in $[0, \infty)$ and $\RR$, respectively. For the case of the self-adjoint one-dimensional Schr\"odinger operator, one has the

\begin{thm}
\label{thm:Naboko_1}
\cite[Theorem 1 and remark below Theorem 2]{Naboko_1986}, \cite[Theorem 1]{Simon_1997}
Let $\{\kappa_n\}_{n=1}^\infty$ be a sequence of rationally independent positive real numbers. Let $g(x)$ be a monotone function on $[0,\infty)$ with $g(0) = 1$ and $\lim_{x\to\infty} g(x) = \infty$. Then there exists a $C^\infty$ potential $V(x)$ on $[0,\infty)$ such that
\begin{enumerate}
\item For each $n \geq 1$, the equation $-d^2u/dx^2 + Vu = \kappa_n^2u$ on $[0,\infty)$ has a solution $u_n \in L^2(0,\infty)$ with boundary condition $u_n(0) = 0$;

\item $|V(x)| \leq g(x)/(x+1)$, for all $x \in [0,\infty)$.
\end{enumerate}
\end{thm}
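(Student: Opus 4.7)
The plan is to build $V$ as an infinite sum $V=\sum_{n=1}^\infty V_n$ of Wigner--von Neumann type pieces, where each $V_n$ is supported in a tail interval $[a_n,\infty)$ with $a_n\uparrow\infty$ chosen very rapidly, and is designed to produce a single square-integrable solution at energy $\kappa_n^2$ while leaving the embedded eigenvalues already constructed at $\kappa_1^2,\ldots,\kappa_{n-1}^2$ intact. Smoothness is then arranged by a final mollification, which only perturbs the construction at negligibly high frequencies.

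The single-energy mechanism is the following. In modified Pr\"ufer variables $u=R\sin\theta$, $u'=\kappa R\cos\theta$ for the equation $-u''+Vu=\kappa^2 u$, one has $(\log R)' = -(V/\kappa)\sin\theta\cos\theta$ and $\theta' = \kappa -(V/\kappa)\sin^2\theta$. Choosing a potential whose dominant oscillation is at the resonant frequency $2\kappa$, of the form $V(x)\approx -c(x)\sin(2\kappa x+\varphi)$ with $c(x)\approx 2/(x+1)$, makes the right-hand side of the amplitude equation have a non-oscillatory mean driving $R(x)\to 0$ like $1/(x+1)$, so that $u\in L^2$. This is the standard construction producing one eigenvalue embedded in the continuous spectrum of $-d^2/dx^2$.

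Inductively, assume $V_1,\ldots,V_{n-1}$ have already been chosen so that each energy $\kappa_j^2$ with $j<n$ possesses an $L^2$ eigenfunction $u_j$ satisfying $u_j(0)=0$. Choose $a_n$ so large that the $L^2$ mass of each $u_j$, $j<n$, on $[a_n,\infty)$ is tiny, and add a Wigner--von Neumann bump $V_n$ on $[a_n,\infty)$ tuned to the resonant frequency $2\kappa_n$. The Pr\"ufer amplitude equation for the $j$-th solution now picks up a forcing term proportional to $\sin(2\kappa_n x+\varphi_n)\sin(2\kappa_j x+\psi_j)$. Rational independence of $\{\kappa_m\}$ means $\kappa_n\ne \kappa_j$ for $j<n$, so this forcing is a sum of oscillations at frequencies $2(\kappa_n\pm\kappa_j)\ne 0$; van der Corput/stationary-phase type integration by parts against the factor $1/(x+1)$ then shows the induced change in $\log R_j(x)$ converges as $x\to\infty$, hence $u_j$ remains $L^2$. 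A small finite-rank correction at the left endpoint $a_n$ (adjusting phases of the earlier eigenfunctions) preserves the boundary condition $u_j(0)=0$ while remaining pointwise tiny. Choosing the amplitude $c_n(x)$ of $V_n$ bounded by $g(a_n)/(x+1)$ and the $a_n$ increasing fast enough that $g(a_n)$ only grows mildly between levels, one achieves the pointwise bound $|V(x)|\le g(x)/(x+1)$.

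The main obstacle is the simultaneous preservation of all previously constructed embedded eigenvalues under every subsequent perturbation, against the competing requirement that $V$ decay no faster than the inverse-linear rate permitted by $g$. This is the technical heart of Naboko's argument: one must quantify the non-resonance provided by rational independence precisely enough (via iterated integration by parts against the oscillatory kernels) to sum the perturbative corrections geometrically, while keeping the amplitudes $c_n$ at the borderline decay rate. The phase-adjustment step, which maintains the Dirichlet condition $u_n(0)=0$ without inflating $\|V\|_\infty$ locally, is the other delicate point; it is handled by exploiting a free phase in the choice of $\varphi_n$ together with a short transition layer in $V_n$ near $a_n$.
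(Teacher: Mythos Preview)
The paper does not prove this theorem at all: it is quoted from the literature, with explicit citations to Naboko \cite{Naboko_1986} and Simon \cite{Simon_1997}, and appears only in Section~\ref{subsec:Examples_Schrodinger_operators_dense_point spectrum_and_energy_functionals_dense_critical_values} as background context to contrast Schr\"odinger operators (which can have dense point spectrum) with the Yang--Mills energy functional (whose critical values are shown in this paper to be discrete). There is therefore no proof in the paper to compare your proposal against.

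That said, your sketch is a reasonable summary of the Naboko--Simon strategy: a superposition of Wigner--von~Neumann type potentials at resonant frequencies $2\kappa_n$, with supports pushed out along a rapidly increasing sequence $a_n\to\infty$, and rational independence used to ensure the off-resonant oscillatory cross-terms integrate to bounded contributions so earlier embedded eigenvalues survive. If you were actually asked to prove this result, you would need to make the iterated integration-by-parts estimates and the phase-matching at $x=0$ quantitative, but for the purposes of this paper the theorem is simply invoked, not established.
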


Simon \cite{Simon_1997} uses a different method to construct Schr\"odinger operators with dense point spectrum in $[0,\infty)$.

\begin{thm}
\label{thm:Simon_2}
\cite[Theorem 2]{Simon_1997}
Let $\{\kappa_n\}_{n=1}^\infty$ be a sequence of arbitrary distinct positive real numbers. Let $g(x)$ be a monotone function on $[0,\infty)$ with $g(0) = 1$ and $\lim_{x\to\infty} g(x) = 1$. Let $\{\theta_n\}_{n=1}^\infty$ be a sequence of angles in $[0,\pi)$. Then there exists a potential $V(x)$ on $[0,\infty)$ such that
\begin{enumerate}
\item For each $n \geq 1$, the equation $-d^2u/dx^2 + Vu = \kappa_n^2u$ on $[0,\infty)$ has a solution $u_n \in L^2(0,\infty)$ with boundary condition,
    \[
    \frac{u'(0)}{u(0)} = \cot\theta_n.
    \]
\item $|V(x)| \leq g(x)/(x+1)$, for all $x \in [0,\infty)$.
\end{enumerate}
\end{thm}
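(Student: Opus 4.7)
The plan is a Wigner--von Neumann type construction, working in Prüfer variables and placing a sequence of resonant oscillatory bumps of $V$ on pairwise disjoint intervals $I_j=[a_j,b_j]$ tending to infinity, each bump tuned to one prescribed eigenvalue $\kappa_{n_j}^2$, with the phase at the left endpoint of $I_j$ chosen so that the resonant half-angle term drives the Prüfer amplitude of $u_n$ down along the subsequence of $j$ with $n_j=n$.

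First, for each $n$ I would encode $-u_n''+Vu_n=\kappa_n^2u_n$ in Prüfer variables
\[
u_n(x)=R_n(x)\sin\bigl(\kappa_n x+\phi_n(x)\bigr),\qquad u_n'(x)=\kappa_n R_n(x)\cos\bigl(\kappa_n x+\phi_n(x)\bigr),
\]
so that $\rho_n:=\log R_n$ and $\phi_n$ obey the first-order system
\[
\rho_n'=\frac{V(x)}{2\kappa_n}\sin\bigl(2\kappa_n x+2\phi_n\bigr),\qquad \phi_n'=-\frac{V(x)}{\kappa_n}\sin^2\bigl(\kappa_n x+\phi_n\bigr).
\]
The boundary condition $u_n'(0)/u_n(0)=\cot\theta_n$ translates into $\kappa_n\cot\phi_n(0)=\cot\theta_n$ and thus uniquely fixes $\phi_n(0)\in[0,\pi)$; the requirement $u_n\in L^2(0,\infty)$ reduces, after oscillation-averaging, to $\int_0^\infty e^{2\rho_n(x)}\,dx<\infty$.

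Second, I would enumerate the pairs $(n,k)\in\NN\times\NN$ as $j\mapsto(n_j,k_j)$ so that each $n$ appears infinitely often, choose pairwise disjoint intervals $I_j=[a_j,b_j]$ with $a_{j+1}>b_j$ and $a_j\to\infty$, and set
\[
V(x)=\sum_{j\ge 1}\frac{c_j\,w_j(x)}{x+1}\sin\bigl(2\kappa_{n_j}x+\psi_j\bigr),
\]
where $w_j$ is a smooth cutoff supported in $I_j$, $c_j\in(0,g(a_j)]$, and $\psi_j$ is a phase to be determined; the pointwise bound $|V|\le g(x)/(x+1)$ is then immediate. Inductively on $j$, having fixed $V|_{[0,a_j)}$ and hence the exact values $\phi_n(a_j)$ for each already-relevant $n$, I would set $\psi_j:=2\phi_{n_j}(a_j)+\pi$; a product-to-sum identity then gives on $I_j$, for $n=n_j$,
\[
\rho_n(b_j)-\rho_n(a_j)\ \approx\ -\,\frac{c_j}{4\kappa_n}\int_{a_j}^{b_j}\frac{w_j(x)}{x+1}\,dx,
\]
while for $n\ne n_j$ a single integration by parts yields $|\rho_n(b_j)-\rho_n(a_j)|\le C c_j/\bigl(|\kappa_n-\kappa_{n_j}|(a_j+1)\bigr)$ and a parallel bound for $\phi_n(b_j)-\phi_n(a_j)$.

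Third, a diagonal choice of $\{a_j,b_j,c_j\}$ is needed so that these two effects cooperate: rapidly spaced $a_j$'s and shrinking $c_j/(a_j+1)$ ratios force the total non-resonant corrections $\sum_{j:\,n_j\ne n}|\rho_n(b_j)-\rho_n(a_j)|$ and the analogous $\phi_n$-corrections to be finite for each fixed $n$, while the infinitely many resonant bumps associated with a given $n$, taken sufficiently long and with $c_j$ as large as the bound $g(a_j)$ allows, drive $\rho_n(x)\to-\infty$ at a rate summable enough to yield $R_n\in L^2$. With this done, the initial phase $\phi_n(0)$ is correct for every $n$ by construction, and each $u_n$ solves the eigenvalue equation in $L^2(0,\infty)$ with the prescribed boundary datum $\cot\theta_n$. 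The main obstacle, and the technical heart of the construction, is the simultaneous calibration of three competing scales---the pointwise bound $|V|\le g(x)/(x+1)$, the resonant drop per bump (only logarithmic in $b_j/a_j$ and inversely proportional to $\kappa_n$, so larger $\kappa_n$ demand proportionally longer intervals or larger amplitudes), and the summability of the cross-frequency non-resonant errors over the entire countable family $\{\kappa_n\}$---which I expect to handle by an inductive diagonal selection of $\{a_j,b_j,c_j,\psi_j\}$ exploiting the mutual distinctness of the $\kappa_n$ via the uniform oscillatory-integral bound $\bigl|\int_{a_j}^{b_j}(x+1)^{-1}e^{\pm 2i(\kappa_n-\kappa_{n_j})x}w_j(x)\,dx\bigr|\le C/(|\kappa_n-\kappa_{n_j}|(a_j+1))$.
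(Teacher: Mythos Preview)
This statement is not proved in the paper. It is quoted verbatim from Simon's 1997 article (cited as \cite[Theorem 2]{Simon_1997}) purely as background, to illustrate that quadratic energy functionals of the form $\sF(u)=\int_0^\infty(|\nabla u|^2+Vu^2)\,dx$ can have dense critical values, in contrast with the Yang--Mills situation treated in Theorem~\ref{mainthm:Discreteness_Yang-Mills_energies}. There is therefore no proof in the paper to compare your proposal against.

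That said, your sketch is faithful to the method Simon actually uses: Pr\"ufer variables, a Wigner--von~Neumann oscillatory potential supported on disjoint blocks, phase-tuning so that on each block the resonant $\sin(2\kappa_{n_j}x+\psi_j)$ term drives $\rho_{n_j}$ downward, and integration by parts to show the off-resonant contributions are summable. The one point to flag is the hypothesis on $g$: as printed in the paper the condition reads $g(0)=1$ and $\lim_{x\to\infty}g(x)=1$, which for a monotone $g$ forces $g\equiv 1$; Simon's original statement has $\lim_{x\to\infty}g(x)=\infty$ (as in the Naboko version quoted just above it), and your amplitude choice $c_j\le g(a_j)$ implicitly relies on $g$ being allowed to grow. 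With that corrected hypothesis your outline is the right one.
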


In particular, Schr\"odinger operators such as those in Theorems \ref{thm:Naboko_1} (Naboko) or \ref{thm:Simon_2} (Simon) can have dense point spectrum in $[0,\infty)$ and thus no spectral gaps anywhere. See also the articles of Christ, Kiselev, and Remling \cite{Christ_Kiselev_1998, Christ_Kiselev_Remling_1997, Remling_1998}, Deift and Killip \cite{Deift_Killip_1999}, Simon \cite{Simon_1997}, as well as the article by Killip and Simon \cite{Killip_Simon_2009} which extend the results of Naboko \cite{Naboko_1986} and Simon \cite{Simon_1997}.

It is interesting to now contrast Theorems \ref{thm:Naboko_1} and \ref{thm:Simon_2} with Theorem \ref{mainthm:Discreteness_Yang-Mills_energies}. Suppose $L$ is a self-adjoint operator on a Hilbert space $\cH$ with domain $\sD(L) \subset \cH$, let $S_\cH := \{\xi \in \cH: \|\xi\| = 1\}$ be the unit sphere in $\cH$, and define an energy functional, $\sF:\sD(L) \cap S_\cH \to \RR$, by setting
\[
\sF(\xi) := \langle L\xi,\xi\rangle, \quad\forall\, \xi \in \sD(L) \cap S_\cH.
\]
The tangent space at $\xi\in S_\cH$ is given by $(TS_\cH)_\xi = \xi^\perp = \{\eta \in \cH: \langle \xi,\eta\rangle = 0\}$. In particular, the expression for the differential map at $\xi \in \sD(L) \cap S_\cH$ is
\[
(D\sF)_\xi(\eta) = 2\langle L\xi,\eta\rangle, \quad\forall\, \eta \in (TS_\cH)_\xi.
\]
Consequently, $\xi \in \sD(L) \cap S_\cH$ is a critical point of $\sF:\sD(L) \cap S_\cH\to \RR$ if and only if $\xi$ is an eigenvector of $L$, that is $L\xi = \lambda\xi$ for some $\lambda\in\RR$. Letting $\sD(L) = W_0^{1,2}(0,\infty) \subset L^2(0,\infty)$ denote the domain of $L$ on $L^2(0,\infty)$ with homogeneous Dirichlet boundary condition at $x=0$, the work of Naboko \cite{Naboko_1986} and Simon \cite{Simon_1997} allows one to construct energy functionals of the form
\[
\sF(u) := \int_0^\infty (|\nabla u|^2 + Vu^2)\,dx, \quad u \in \sD(L)
\text{ and } \|u\|_{L^2(0,\infty)} = 1,
\]
which do not exhibit gaps between their critical values of the kind asserted by Theorem \ref{mainthm:Discreteness_Yang-Mills_energies} for the Yang-Mills $L^2$-energy functional.

\subsection{Existence of non-minimal Yang-Mills connections}
\label{subsec:Existence_non-minimal_Yang-Mills_connections}
Theorem \ref{mainthm:Discreteness_Yang-Mills_energies} has significance because the existence of non-minimal Yang-Mills connections over closed four-dimensional, Riemannian, smooth manifolds has by now long been well-established. We survey some of the known results.

One of the earliest examples (in 1980) of non-minimal Yang-Mills $\SU(2)$-connections over a closed four-dimensional manifold, namely $S^1\times S^3$, is due to Chaohao \cite[Theorem 2]{Chaohao_1980}. Constructions of non-minimal Yang-Mills $\SU(2)$-connections over $S^1\times S^3$ were later given by Urakawa \cite{Urakawa_1988} (in 1988), Wang \cite{Wang_1991} (in 1991), and Parker \cite{Parker_1992invent} (in 1992). Urakawa \cite{Urakawa_1988} and Wang \cite{Wang_1991} also proved the existence of non-minimal Yang-Mills $\SU(2)$-connections on $S^2\times S^2$.

For many years the question of existence or otherwise of non-minimal Yang-Mills $\SU(2)$-connections over $S^4$ remained open until their existence on the product bundle on $P = S^4\times \SU(2)$ was first proved by Sibner, Sibner, and Uhlenbeck \cite{SibnerSibnerUhlenbeck} (published in 1992) and, more generally (in a series of articles published between 1990 and 1994), principal $\SU(2)$-bundles, $P$, over $S^4$ for any $c_2(P) \geq 2$ by Bor and Montgomery \cite{Bor_1992, Bor_Montgomery_1990} and Sadun and Segert \cite{Sadun_1994, Sadun_Segert_1991, Sadun_Segert_1992cmp, Sadun_Segert_1992cpam, Sadun_Segert_1993}.

More recently Gritsch \cite{Gritsch_2000} (in 2000) has proved existence of non-minimal Yang-Mills connections over a broader family of four-dimensional manifolds. We recall her construction of those manifolds. Let $F_{2g}$ be a Riemann surface of genus $2g$ with involution given by reflection in the mid-circle and $S^2$ the $2$-sphere with the antipodal map. These involutions combine to give an involution $\sigma:F_{2g}\times S^2 \to F_{2g}\times S^2$ with no fixed points. Then $M_{2g} := F_{2g}\times_\sigma S^2$ is a closed, oriented, smooth, spin four-dimensional manifold. The second Chern classes of the $\pm$ spinor bundles $W^\pm$ over $M_{2g}$ are $c_2(W^\pm) = \pm(2g-1)$. Gritsch uses homotopy theory to prove that there exist non-minimal Yang-Mills connections on the bundles $W^\pm$.

Given one isolated non-minimal Yang-Mills connection (where isolated means that the Hessian of Yang-Mills functional is non-degenerate), Wang \cite[Theorem 1]{Wang_1997} has proved using the min-max method that there are then infinitely many other non-minimal Yang-Mills connections.

Finally, geometric questions concerning non-minimal Yang-Mills connections have also been explored by Stern \cite{Stern_2010}.

\subsection{Discussion of the main ideas underlying the proof of Theorem \ref{mainthm:Discreteness_Yang-Mills_energies}}
\label{subsec:Discussion_main_ideas}
In Sections \ref{subsubsec:Lojasiewicz-Simon_gradient_inequality_moduli_space_flat_connections} and \ref{subsubsec:Lojasiewicz-Simon_gradient_inequality_Yang-Mills_connections_sphere} we describe some of the key ideas used in the proof of Theorem \ref{mainthm:Discreteness_Yang-Mills_energies} --- where we impose no conditions on the Riemannian metric $g$, compact Lie group $G$, or topologies of the principal $G$-bundle $P$ and the closed manifold $X$ --- and contrast them in Sections \ref{subsubsec:Spectral_gaps_and_moduli_space_anti-self-dual_connections} and \ref{subsubsec:Lojasiewicz-Simon_gradient_inequality_moduli_space_flat_connections} with earlier results yielding an $L^2$-energy gap near the ground state but only under rather strong hypotheses on $g$, $G$, $P$, and $X$.

\subsubsection{Spectral gap for a Laplace operator and energy gap near the moduli space of anti-self-dual or self-dual connections}
\label{subsubsec:Spectral_gaps_and_moduli_space_anti-self-dual_connections}
In our \cite[Theorem 1 and Corollary 2]{Feehan_yangmillsenergygap}, we establish that the $L^2$ energies of non-minimal $g$-Yang-Mills connections on a principal $G$-bundle, with compact Lie structure group $G$, over a closed, connected, four-dimensional, oriented, Riemannian, smooth manifold $(X,g)$, are separated from the $L^2$ energy of the minimal $g$-Yang-Mills connections by a uniform positive constant depending at most on the Riemannian metric $g$ and the homotopy type $[P]$ of $P$. In particular, rather than require that $g$ be \emph{positive} in the sense of \eqref{eq:Freed_Uhlenbeck_page_174_positive_metric}, as assumed by Bourguignon, Lawson, and Simon \cite{Bourguignon_Lawson_1981, Bourguignon_Lawson_Simons_1979} in their $L^\infty$-energy gap results
or by Min-Oo \cite[Theorem 2]{Min-Oo_1982} and Parker \cite[Proposition 2.2]{ParkerGauge} in their $L^2$-energy gap results (see also Donaldson and Kronheimer \cite[Lemma 2.3.24]{DK}) --- which constrains $X$ to have negative definite intersection form --- we instead assume in \cite[Corollary 2]{Feehan_yangmillsenergygap} that the Lie structure group is $\SU(2)$ or $\SO(3)$ and the Riemannian metric is generic in the sense of \cite{FU}, and impose mild conditions on the topologies of $P$ and $X$ inspired by those employed in the most general definitions of the Donaldson invariants \cite{DonPoly,DK,  KMStructure, MorganMrowkaPoly} of $X$.

It is enlightening to recall the main ideas underlying the proofs of \cite[Theorem 2]{Min-Oo_1982}, \cite[Proposition 2.2]{ParkerGauge}, \cite[Theorem 1 and Corollary 2]{Feehan_yangmillsenergygap}. Let $R_g(x)$ denote the scalar curvature of $g$ at a point $x \in X$ and let $\sW_g^\pm(x) \in \End(\Lambda_x^\pm)$ denote its self-dual and anti-self-dual Weyl curvature tensors at $x$, where $\Lambda_x^2 = \Lambda_x^+\oplus \Lambda_x^-$. Define
\[
w_g^\pm(x) := \text{Largest eigenvalue of } \sW_g^\pm(x), \quad\forall\, x \in X.
\]
We recall the following Bochner-Weitzenb\"ock formula
\cite[Equation (6.26) and Appendix C, p. 174]{FU}, \cite[Equation (5.2)]{GroisserParkerSphere},
\begin{equation}
\label{eq:Freed_Uhlenbeck_6-26}
2d_A^{+,g}d_A^{+,*_g}v = \nabla_A^{*_g}\nabla_Av + \left(\frac{1}{3}R_g - 2w_g^+\right)v + \{F_A^{+,g}, v\},
\quad\forall\, v \in \Omega^{+,g}(X; \ad P).
\end{equation}
We call a Riemannian metric, $g$, on $X$ \emph{positive} if
\begin{equation}
\label{eq:Freed_Uhlenbeck_page_174_positive_metric}
\frac{1}{3}R_g - 2w_g^+ > 0 \quad\hbox{on } X.
\end{equation}
Consequently, if $\|F_A^{+,g}\|_{L^2(X,g)} < \eps(g) \in (0,1]$, then the Laplace operator, $d_A^{+,g}d_A^{+,*_g}$ on $\Omega^{+,g}(X; \ad P)$ exhibits a spectral gap in the sense that its least eigenvalue, $\mu_g(A)$, has a positive lower bound $\mu_0=\mu_0(g)$ that is uniform with respect to $[A] \in \sB(P,g)$ obeying
\[
\|F_A^{+,g}\|_{L^2(X,g)} < \eps(g).
\]
Standard arguments (see the proofs by Min-Oo of \cite[Theorem 2]{Min-Oo_1982} and Parker of \cite[Proposition 2.2]{ParkerGauge}) imply that if $A$ is also $g$-Yang-Mills, then one must have
\[
F_A^{+,g} = 0 \quad\text{on } X,
\]
and so $A$ is a $g$-anti-self-dual connection on $P$, that is, an absolute minimum of the Yang-Mills $L^2$ energy functional on $\sB(P,g)$. By reversing orientations of $X$, one also sees that if $A$ is a $g$-Yang-Mills connection $A$ with $\|F_A^{-,g}\|_{L^2(X,g)} < \eps(g)$, then $F_A^{-,g} = 0$ on $X$ and $A$ is a $g$-self-dual connection on $P$, also an absolute minimum of the Yang-Mills $L^2$ energy functional on $\sB(P,g)$. Results of this kind were first obtained by Bourguignon, Lawson, and Simon \cite{Bourguignon_Lawson_1981, Bourguignon_Lawson_Simons_1979} but under the stronger hypotheses that $\|F_A^{+,g}\|_{L^\infty(X,g)} < \eps(g)$ or $\|F_A^{-,g}\|_{L^\infty(X,g)} < \eps(g)$.

The structure of the Bochner-Weitzenb\"ock formula \eqref{eq:Freed_Uhlenbeck_6-26} reveals that the method of proof of \cite[Theorem 2]{Min-Oo_1982} or \cite[Proposition 2.2]{ParkerGauge} is unlikely to yield any information concerning energy gaps near higher critical values of the Yang-Mills $L^2$-energy functional, where $\|F_A^{+,g}\|_{L^2(X,g)}$ or $\|F_A^{-,g}\|_{L^2(X,g)}$ is large and the positivity condition \eqref{eq:Freed_Uhlenbeck_page_174_positive_metric} can no longer ensure that $\mu_g(A) \geq \mu_0(g) > 0$ for all $[A] \in \sB(P,g)$ with $\|F_A^{+,g}\|_{L^2(X,g)} < \eps(g)$.

The main idea in the proof of our \cite[Corollary 2]{Feehan_yangmillsenergygap} is to show that the positivity hypothesis \eqref{eq:Freed_Uhlenbeck_page_174_positive_metric} for the Riemannian metric $g$ can be replaced by the requirement that $g$ be generic and $G$ have dimension three, together one of the following combinations of hypotheses on $G$ and the topologies of $P$ and $X$:
\begin{enumerate}
\item $b^+(X) = 0$, the fundamental group $\pi_1(X)$ has no non-trivial representations in $G$, and $G = \SU(2)$ or $G = \SO(3)$; or

\item $b^+(X) > 0$, the fundamental group $\pi_1(X)$ has no non-trivial representations in $G$, and $G = \SO(3)$, and the second Stiefel-Whitney class, $w_2(P) \in H^2(X;\ZZ/2\ZZ)$, is non-trivial; or

\item $b^+(X) \geq 0$, and $G = \SO(3)$, and no principal $\SO(3)$-bundle $P_l$ over $X$ appearing in the Uhlenbeck compactification of the moduli space of $g$-anti-self-dual connections on $P$, namely $\bar M(P,g)$, admits a flat connection.
\end{enumerate}
Unfortunately, while the preceding combinations of hypotheses on $g$, $G$, $P$, and $X$ again yield an $L^2$-energy gap result for the ground state,
\[
c_0 < c_1,
\]
where $c_0$ is the $L^2$-energy of a $g$-anti-self-dual (or $g$-self-dual) connection on $P$ and $c_1$ is the minimum $L^2$-energy of a non-minimal Yang-Mills connection on $P$, our proof of \cite[Corollary 2]{Feehan_yangmillsenergygap} still requires that $\|F_A^{+,g}\|_{L^2(X,g)}$ (or $\|F_A^{-,g}\|_{L^2(X,g)}$) be less than $\eps(g,[P]) \in (0,1]$ in order to establish the existence of a spectral gap for the Laplace operator, $d_A^{+,g}d_A^{+,*_g}$, in the sense that $\mu_g(A) \geq \mu_0(g,[P]) > 0$ for all $[A] \in \sB(P,g)$ with $\|F_A^{+,g}\|_{L^2(X,g)} < \eps(g,[P])$. Therefore, this alternative method yields no information either regarding higher-order critical values of the Yang-Mills $L^2$-energy functional.

\subsubsection{{\L}ojasiewicz-Simon gradient inequality and energy gap near the moduli space of flat connections}
\label{subsubsec:Lojasiewicz-Simon_gradient_inequality_moduli_space_flat_connections}
Our sequel \cite{Feehan_yangmillsenergygapflat} to our article \cite{Feehan_yangmillsenergygap} establishes an $L^{d/2}$-energy gap \cite[Theorem 1]{Feehan_yangmillsenergygapflat} for a $g$-Yang-Mills connection $A$ on a principal $G$-bundle, with compact Lie structure group $G$, over a closed, connected, smooth manifold, $X$, of dimension $d \geq 2$ and endowed with a Riemannian metric $g$. Indeed, if
\[
\|F_A\|_{L^{d/2}(X,g)} < \eps,
\]
where $\eps = \eps(d,g,[P]) \in (0,1]$ and $A$ is $g$-Yang-Mills, then \cite[Theorem 1]{Feehan_yangmillsenergygapflat} asserts that $F_A = 0$ on $X$, that is, $A$ is a flat connection. When $d=4$, we obtain an $L^2$-energy gap result for the ground state,
\[
c_0 = 0 < c_1,
\]
where $c_0$ is the $L^2$-energy of a flat connection on $P$ and $c_1$ is the minimum $L^2$-energy of a non-minimal Yang-Mills connection on $P$.

Our proof of \cite[Theorem 1]{Feehan_yangmillsenergygapflat} uses our version of the {\L}ojasiewicz-Simon gradient inequality \cite[Theorem 21.8]{Feehan_yang_mills_gradient_flow} to remove a positivity constraint on a combination of the Ricci and Riemannian curvatures in a previous $L^{d/2}$-energy gap result due to Gerhardt \cite[Theorem 1.2]{Gerhardt_2010} and a previous $L^\infty$-energy gap result due to Bourguignon, Lawson, and Simons \cite[Theorem C]{Bourguignon_Lawson_1981}, \cite[Theorem 5.3]{Bourguignon_Lawson_Simons_1979}. Our proof of \cite[Theorem 1]{Feehan_yangmillsenergygapflat} relies on the fact that the moduli space, $M(P,g)$, of gauge-equivalence classes of flat connections on $P$ is \emph{compact} with respect to the Uhlenbeck topology \cite{UhlLp, UhlChern}. We shall explain the roles of the {\L}ojasiewicz-Simon gradient inequality and Uhlenbeck compactness of $M(P,g)$ --- and also how reliance on Uhlenbeck compactness of $M(P,g)$ might be relaxed ---  in Section \ref{subsubsec:Lojasiewicz-Simon_gradient_inequality_Yang-Mills_connections_sphere}.

\subsubsection{{\L}ojasiewicz-Simon gradient inequality for Yang-Mills connections over the sphere and main ideas underlying the proof of Theorem \ref{mainthm:Discreteness_Yang-Mills_energies}}
\label{subsubsec:Lojasiewicz-Simon_gradient_inequality_Yang-Mills_connections_sphere}
While our proof of Theorem \ref{mainthm:Discreteness_Yang-Mills_energies} formally mirrors that of \cite[Theorem 1]{Feehan_yangmillsenergygapflat}, in so far as that we rely on our {\L}ojasiewicz-Simon gradient inequality (Theorem \ref{thm:Rade_proposition_7-2}) and compactness results (developed in this article) for sequences of Yang-Mills connections with bounded $L^2$ energy, it is the non-compactness due to the energy bubbling phenomenon which makes the proof of Theorem \ref{mainthm:Discreteness_Yang-Mills_energies} difficult in the present context of bounded but otherwise arbitrary $L^2$ energy as opposed to $L^2$ energy near zero.

Our proof of our previous energy gap result, \cite[Theorem 1]{Feehan_yangmillsenergygapflat}, exploits the fact that the {\L}ojasiewicz-Simon radius, $\sigma[\Gamma]$, has a uniform positive lower bound as $\Gamma$ varies over the moduli space, $\Crit(P,g,0)$, of flat connections on a principal $G$-bundle over $X$. Now, however, $[A_\ym]$ varies over the moduli space of Yang-Mills connections, $\Crit(P,g,\sC)$, where $\sC \Subset [0,\infty)$ is any compact subset. Because $\Crit(P,g,\sC)$ is non-compact due to bubbling when $\sC = [0,E]$ and $E>0$ is sufficiently large (for example, greater than or equal to the energy of a non-flat anti-self-dual or self-dual connection on a principal $G$-bundle over $S^4$), it is no longer clear that the {\L}ojasiewicz-Simon radius, $\sigma[A_\ym]$, will have a uniform positive lower bound as $[A_\ym]$ varies over $\Crit(P,g,\sC)$. A direct examination, via the lengthy proof of Theorems \ref{thm:Rade_proposition_7-2} or \ref{thm:Rade_proposition_7-2_d_is_4}, of the dependence of the {\L}ojasiewicz-Simon triple of constants, $(Z,\sigma,\theta)$, on $[A_\ym]$ would be very difficult. However, one can gain insight into why there is still a uniform positive lower bound for $\sigma[A_\ym]$ when $[A_\ym]$ varies over $\Crit(P,g,[0,E])$, for $E$ large, by examining the case where $(X,g)$ is $(S^4,g_\round)$ and $g_\round$ is the standard round metric of radius one over $S^4$.

In our application to the proof of Theorem \ref{mainthm:Discreteness_Yang-Mills_energies}, we shall only need a weaker special case, essentially Corollary \ref{cor:Rade_proposition_7-2_d_is_4_conformally_invariant}, of the full {\L}ojasiewicz-Simon gradient inequality in dimension four, Theorem \ref{thm:Rade_proposition_7-2_d_is_4}, where the $W_A^{-1,2}$ norm on $d_A^*F_A$ in \eqref{eq:Rade_7-1_d_is_4} is replaced by the $L^4$ norm on $d_A^*F_A$ to give \eqref{eq:Rade_7-1_conformally_invariant}. This simplification is too weak for application to the questions of convergence and global existence of solutions to Yang-Mills gradient flow developed by the author in \cite{Feehan_yang_mills_gradient_flow}, but suffices for our application to the proof of Theorem \ref{mainthm:Discreteness_Yang-Mills_energies}; the simplification is attractive because the Yang-Mills equation, $d_A^*F_A = 0$, is conformally invariant (since the Yang-Mills $L^2$ energy functional is conformally invariant) and the $L^4$ norm on the one-form, $d_A^*F_A$, is invariant with respect to the pull-back action on $A$ and elements of $\Omega^1(X,\ad P)$ by conformal diffeomorphisms of $(X,g)$. If $A_\ym$ is a $g$-Yang-Mills connection on a principal $G$-bundle $P$ over $X$ and $A$ is a $W^{2,2}$ connection on $P$ that obeys
\[
\|A - A_\ym\|_{W_{A,g}^{1,2}(X)} < \sigma[A_\ym],
\]
then Corollary \ref{cor:Rade_proposition_7-2_d_is_4_conformally_invariant} implies that $A$ obeys the \emph{{\L}ojasiewicz-Simon gradient inequality},
\[
\|d_A^*F_A\|_{L^4(X,g)} \geq Z|\sE_g(A) - \sE_g(A_\ym)|^\theta.
\]
Now suppose that $A_\ym$ is \emph{centered} in the sense of Definition \ref{defn:Mass_center_scale_connection}, with center at the north pole and scale one, and let $\tilde \delta_\lambda$ be the conformal transformation of $S^4\subset\RR^5$ induced by a stereographic projection from the south pole and the rescaling map on $\RR^4$ given by $\delta_\lambda(x) = x/\lambda$ for $x\in\RR^4$ and $\lambda\in \RR_+$. Then $\tilde\delta_\lambda^*A_\ym$ is a one-parameter family of $g_\round$-Yang-Mills connections on $P$ over $S^4$ whose energy density becomes totally concentrated at the north pole when $\lambda\searrow 0$ and totally concentrated at the south pole when $\lambda\nearrow \infty$. However, while the standard Sobolev norm on $W_A^{1,2}(S^4;\Lambda^1\otimes\ad P)$ (see Section \ref{subsec:Sobolev_spaces_and_connections} for an explanation of the notation for Sobolev norms),
\[
\|a\|_{W_A^{1,2}(S^4)}
\equiv
\|a\|_{W_{A,g_\round}^{1,2}(S^4)}
:=
\left(\int_{S^4} \left(|\nabla_A^{g_\round}a|^2 + |a|^2 \right)\,d\vol_{g_\round} \right)^{1/2},
\quad
\forall\, a \in \Omega^1(S^4,\ad P),
\]
is not invariant with respect to the pull-back action on $a \in \Omega^1(S^4,\ad P)$ and connections $A$ on $P$ by elements, $h$, of the conformal group of transformations of $S^4$, it is \emph{quasi-conformally invariant}, with universal constants as one can see from Taubes \cite[Lemma 3.1]{TauFrame}, quoted here as Lemma \ref{lem:Taubes_1988_3-1}. In particular, there is a universal constant $C \in [1,\infty)$ such that, for any $W^{1,2}$ connection $A$ on $P$ and any element $h$ of the group $\Conf(S^4)$ of conformal transformations of $(S^4,g_\round)$, one has
\[
C^{-1}\|a\|_{W_A^{1,2}(S^4)}
\leq
\|h^*a\|_{W_{h^*A}^{1,2}(S^4)}
\leq
C^{-1}\|a\|_{W_A^{1,2}(S^4)},
\quad\forall\, a \in W_A^{1,2}(S^4).
\]
Hence, if a $W^{2,2}$ connection $A$ on $P$ obeys
\begin{equation}
\label{eq:W12_norm_A_minus_lambda*Aym_lessthan_Cinverse_sigma_Aym}
\|A - h^*A_\ym\|_{W_{h^*A_\ym}^{1,2}(S^4)} < C^{-1}\sigma[A_\ym],
\end{equation}
then $h^{-1,*}A$ obeys
\[
\|h^{-1,*}A - A_\ym\|_{W_{A_\ym}^{1,2}(S^4)} < \sigma[A_\ym],
\]
and the {\L}ojasiewicz-Simon gradient inequality for $A_\ym$ implies that
\[
\|d_{h^{-1,*}A}^*F_{h^{-1,*}A}\|_{L^4(S^4)}
\geq
Z|\sE(h^{-1,*}A) - \sE(A_\ym)|^\theta.
\]
By the invariance of the $L^4$ norm on one-forms and $L^2$ norm on two-forms with respect to the pull-back action on forms by conformal diffeomorphisms, the preceding inequality is equivalent to
\[
\|d_A^*F_A\|_{L^4(S^4)}
\geq
Z|\sE(A) - \sE(h^*A_\ym)|^\theta.
\]
In other words, we have shown that
\[
\sigma[h^*A_\ym] \geq C^{-1}\sigma[A_\ym], \quad\forall\, h \in \Conf(S^4),
\]
and the following choice of {\L}ojasiewicz-Simon triple of constants for $[h^*A_\ym]$,
\[
(Z[h^*A_\ym],\sigma[h^*A_\ym],\theta[h^*A_\ym])
=
(Z[A_\ym],C^{-1}\sigma[A_\ym],\theta[A_\ym])
\]
depends only on the point $[A_\ym] \in \Crit(P,g_\round,[0,E])$ and \emph{not} on the map $h \in \Conf(S^4)$.

Therefore, if $A$ is a connection that obeys
\[
\|A - h^*A_\ym\|_{W_{h^*A_\ym}^{1,2}(S^4)}
< C^{-1}\sigma[A_\ym],
\]
and $A$ is $g_\round$-\emph{Yang-Mills}, then $d_A^*F_A = 0$ on $X$ and the {\L}ojasiewicz-Simon gradient inequality implies that
\[
\sE(A) = \sE(h^*A_\ym) = \sE(A_\ym).
\]
The preceding observations are more significant than they might appear at first glance. Although $A$ is $W^{1,2}$ close to $h^*A_\ym$ in the sense of \eqref{eq:W12_norm_A_minus_lambda*Aym_lessthan_Cinverse_sigma_Aym}, the space $\Crit(P,g_\round,[0,E])$ is not bounded with respect to the distance function defined by the Sobolev $W^{1,2}$ norm, and so it is essential that the {\L}ojasiewicz-Simon radius, $\sigma[h^*A_\ym]$, be comparable to $\sigma[A_\ym]$ with universal constants that are independent of $h \in \Conf(S^4) = \RR^4\times\RR_+\times\SO(4)$, where $\RR_+$ acts on $S^4$ by rescaling as noted above, $\RR^4$ acts on $S^4 = \RR^4\cup\{\infty\}$ by translation, and $\SO(4)$ acts on $S^4\subset\RR^5$ by rotation.

For the general case, when $P$ is a principal $G$-bundle over an arbitrary closed, Riemannian, smooth manifold, $(X,g)$, one has to examine the \emph{bubble-tree compactification} of $\Crit(P,g,\sC)$ in great detail, extending previous work of Parker and Wolfson in the context of pseudo-holomorphic maps \cite{ParkerHarmonic, ParkerWolfson}, Taubes \cite{TauFrame} in the context of sequences of Yang-Mills connections, and the author \cite{FeehanGeometry} in the context of sequences of anti-self-dual connections. By iterating the application of local dilations and local translations to a sequence of $g$-Yang-Mills connections on $P$ near points of curvature concentration in $X$ and then near points of curvature concentration in copies of $S^4$, one obtains a sequence of $g^\#$-Yang-Mills connections over a connected sum of $X$ and copies of $S^4$ associated with vertices of the trees arising in the bubble-tree closure of $\Crit(P,g,\sC)$, where $g^\#$ is a Riemannian metric on the connected sum that is conformally equivalent to $g$.

We prove that $\Crit(P,g,\sC)$ is covered by finitely many suitably-defined \emph{coarse} $W_\loc^{1,2}$ bubble-tree open neighborhoods and hence we prove Theorem \ref{mainthm:Discreteness_Yang-Mills_energies} by showing that the energies of Yang-Mills connections must coincide if they belong to any one such small-enough neighborhood.

\subsubsection{Bubble-tree closures and Sobolev metric completions of moduli spaces of Yang-Mills connections}
\label{subsubsec:Bubble-tree closures and Sobolev metric completions moduli spaces_Yang-Mills connections}
The restriction of the $W^{1,2}$ distance function \eqref{eq:Wkp_distance_on_quotient_space_connections} on $\sB(P,g)$ to the moduli subspace $\Crit(P,g,\sC)$ of Yang-Mills connections with $L^2$ energies in a compact subset $\sC \Subset [0,\infty)$ yields the metric $\dist_{W^{1,2}}(\cdot,\cdot)$ on $\Crit(P,g,\sC)$. The proof of Proposition \ref{prop:W12_continuity_family_connections_wrt_mass_centers_and_scales} suggests that $\Crit(P,g,\sC)$ should have infinite volume and diameter with respect to the infinitesimal $W^{1,2}$ Riemannian metric. The fine bubble-tree closure of $\Crit(P,g,\sC)$ should be equal to the completion of $\Crit(P,g,\sC)$ with respect to $\dist_{W^{1,2}}(\cdot,\cdot)$. See Remark \ref{rmk:Uhlenbeck_compactification_and_L2-metric_completion_moduli_space_anti-self-dual_connections} for a discussion of the corresponding results relating the Uhlenbeck closure of $\Crit(P,g,\sC)$ and the completion of $\Crit(P,g,\sC)$ with respect to $\dist_{L^2}(\cdot,\cdot)$; our article \cite{FeehanGeometry} and those of Groisser and Parker \cite{Groisser_1990, GroisserParkerSphere, GroisserParkerGeometryDefinite} and of Peng \cite{Peng_1995, Peng_1996} and the references cited therein contain further results concerning the relationship between the Uhlenbeck closure and $L^2$ metric completion of $M(P,g)$.

When $P$ is a principal $\SU(2)$ bundle over $S^4$ with its standard metric of radius one and $c_2(P)=1$ and we restrict our attention to the moduli space $M(P,g)$ of anti-self-dual connections, the results of Doi and Kobayashi \cite{Doi_Kobayashi_1990}, and Matumoto \cite{Matumoto_1989} prove that infinitesimal metrics on $M(P,g)$ --- which should be comparable to $\dist_{W^{1,2}}(\cdot,\cdot)$ --- are complete and have infinite volume and diameter. See also the work of Babadshanjan and Habermann \cite{Babadshanjan_Habermann_1991}. Matumoto \cite{Matumoto_1989} considers the metrics
\[
(\bg_{\textrm{II}})_{[A]}(a,b) := (\Pi_A d_Aa, \Pi_A d_Ab), \quad\forall\, a, b \in \Omega^1(X; \ad P),
\]
where $\Pi_A:L^2(X;\Lambda^1\otimes\ad P)$ denotes $L^2$-orthogonal projection onto $(d_A^2\Omega^0(X;\ad P))^\perp$, and
\[
(\bg_{\textrm{I-II}})_{[A]}(a,b) := (d_A\Pi_A a, d_A\Pi_A b), \quad\forall\, a, b \in \Omega^1(X; \ad P),
\]
where $\Pi_A:L^2(X;\Lambda^1\otimes\ad P)$ denotes $L^2$-orthogonal projection onto $\Ker d_A^*\cap \Omega^1(X;\ad P) = (d_A\Omega^0(X;\ad P))^\perp$. For $X = S^4$ and $G = \SU(2)$, the metric $\bg_{\textrm{II}}$ has a constant negative sectional curvature $-5/(32\pi^2)$, so this metric is hyperbolic and complete, while the metric $\bg_{\textrm{I-II}}$ has negative sectional curvature everywhere and is also complete.

\subsection{Outline of the article}
\label{subsec:Outline}
The content of each of the following sections is accompanied by its own introduction, so we shall just briefly indicate the overall structure of the remainder of our article. Section \ref{sec:Preliminaries} provides definitions of analytical and topological concepts in gauge theory which we shall need throughout our article. Section \ref{sec:Uhlenbeck_compactness_Yang-Mills_and_anti-self-dual_connections} develops results concerning Uhlenbeck convergence for a sequence of Yang-Mills connections with a uniform $L^2$ bound on their curvatures. In Section \ref{sec:Mass_center_and_scale_maps_on_space_connections_over_sphere}, we discuss the definitions and properties of the mass center and scale maps on the quotient space of connections over the sphere. Section \ref{sec:Bubble-tree_compactness_Yang-Mills_and_anti-self-dual_connections} initiates our development of bubble-tree convergence properties of a sequence of Yang-Mills connections with a uniform $L^2$ bound on their curvatures. In Section \ref{sec:Bubble_trees_and_Riemannian_metrics_connected_sums}, we describe the construction of smooth Riemannian metrics on connected sums of $X$ and copies of $S^4$, as prescribed by bubble-tree data. In Section \ref{sec:Global W12_metrics_bubble-tree_neighborhoods} we explore the relationship between $W_\loc^{1,2}$ bubble-tree open neighborhoods of the moduli space of Yang-Mills connections with a uniform $L^2$ bound on their curvatures and the global $W^{1,2}$ distance between pairs of connections, separately analyzing the case of \emph{fine} and \emph{coarse} bubble-tree neighborhoods. We conclude in Section \ref{sec:Lojasiewicz-Simon gradient inequality_bubble-tree_compactification} with a discussion of the {\L}ojasiewicz-Simon gradient inequality and show how a version of this inequality for the Yang-Mills $L^2$-energy functional and the bubble-tree compactification for the moduli space of Yang-Mills connections with a uniform $L^2$ bound on their curvatures yield the main results of this article.

\subsection{Acknowledgments}
\label{subsec:Acknowledgments}
I would like to thank George Daskalopoulos and Richard Wentworth for their explanations of results for the Yang-Mills $L^2$-energy functional over Riemann surfaces and K\"ahler surfaces, Tom Leness for allowing me to include sections from our forthcoming monograph on gluing $\SO(3)$ monopoles and anti-self-dual connections \cite{Feehan_Leness_monopolegluingbook}, Tim Nguyen for his comments on a preliminary draft of this article, Lorenzo Sadun for his explanations of results for non-minimal Yang-Mills connections, and Cliff Taubes for explanations of his results on Morse theory for the Yang-Mills $L^2$-energy functional over four-dimensional manifolds. I am grateful to the Department of Mathematics at Rutgers University for research support and the Department of Mathematics at Columbia University for their hospitality during the preparation of this article.

\section{Preliminaries}
\label{sec:Preliminaries}
We shall generally adhere to the now standard gauge-theory conventions and notation of Donaldson and Kronheimer \cite{DK}, Freed and Uhlenbeck \cite{FU}, and Friedman and Morgan \cite{FrM}; those references and our monograph \cite{Feehan_yang_mills_gradient_flow} also provide the necessary background for our article.

\subsection{Sobolev spaces and connections}
\label{subsec:Sobolev_spaces_and_connections}
Throughout our article, $G$ denotes a compact Lie group and $P$ a smooth principal $G$-bundle over a smooth manifold, $X$, of dimension $d \geq 2$ and endowed with Riemannian metric, $g$. We denote $\Lambda^l := \Lambda^l(T^*X)$ for integers $l\geq 1$ and $\Lambda^0 = X\times\RR$, and let $\ad P := P\times_{\ad}\fg$ denote the real vector bundle associated to $P$ by the adjoint representation of $G$ on its Lie algebra,
$\Ad:G \ni u \to \Ad_u \in \Aut\fg$, with fiber metric defined through the Killing form on $\fg$. Given a $C^\infty$ reference connection, $A$, on $P$, we let
\begin{align*}
\nabla_A \text{ or } \nabla_A^g: C^\infty(X;\Lambda^l\otimes\ad P) &\to C^\infty(X; T^*X\otimes \Lambda^l\otimes\ad P),
\\
d_A: C^\infty(X; \Lambda^l\otimes\ad P) &\to C^\infty(X; \Lambda^{l+1}\otimes\ad P), \quad l \in \NN,
\end{align*}
denote the \emph{covariant derivative} \cite[Equation (2.1.1)]{DK} and \emph{exterior covariant derivative} \cite[Equation (2.1.12)]{DK}, respectively, defined by the connection $A$ on $P$ and Levi-Civita connection for the Riemannian metric, $g$, on the tangent bundle, $TX$, and all associated vector bundles. We write the set of non-negative integers as $\NN$ and abbreviate $\Omega^l(X; \ad P) :=  C^\infty(X; \Lambda^l\otimes\ad P)$, the Fr\'echet space of $C^\infty$ sections of $\Lambda^l\otimes\ad P$.

More generally, given a Hermitian or Riemannian vector bundle, $E$, over $X$ and covariant derivative, $\nabla_A$, which is compatible with the fiber metric on $E$, we denote the Banach space of sections of $E$ of Sobolev class $W^{k,p}$, for any $k\in \NN$ and $p \in [1,\infty]$, by $W_A^{k,p}(X; E)$, with norm,
\begin{equation}
\label{eq:Sobolev_norm_WAkp_sections_vector_bundle_over_manifold_finite_p}
\|v\|_{W_A^{k,p}(X)} := \left(\sum_{j=0}^k \int_X |\nabla_A^j v|^p\,d\vol_g \right)^{1/p},
\end{equation}
when $1\leq p<\infty$ and
\begin{equation}
\label{eq:Sobolev_norm_WAkp_sections_vector_bundle_over_manifold_infinite_p}
\|v\|_{W_A^{k,\infty}(X)} := \sum_{j=0}^k \esssup_X |\nabla_A^j v|,
\end{equation}
otherwise, where $v \in W_A^{k,p}(X; E)$.

\subsection{Sobolev embeddings and Kato inequality}
\label{subsec:Sobolev_embeddings_Kato_inequality}
Suppose that $U \subset \RR^d$ is an open subset obeying an interior cone condition \cite[Section 4.6]{AdamsFournier} with cone $\kappa$ and $d \geq 2$. We recall from \cite[Theorem 4.12]{AdamsFournier} that there are continuous embeddings,
\begin{equation}
\label{eq:Sobolev_embedding_domain}
W^{k,p}(U;\RR)
\hookrightarrow
\begin{cases}
L^q(U;\RR), &\quad\text{for } kp < d \text{ and } p > 1 \text{ and } p \leq q \leq p^* := dp/(d-kp),
\\
L^q(U;\RR), &\quad\text{for } kp = d \text{ and } p \leq q < \infty,
\\
C_b(U;\RR), &\quad\text{for } kp > d \text{ and } p > 1,
\end{cases}
\end{equation}
for $p \geq 1$ and integers $k \geq 0$, where the norm of the embedding depends at most on $k$, $p$, and $\kappa$. If $U$ obeys the strong local Lipschitz condition \cite[Section 4.9]{AdamsFournier}, then $C_b(U;\RR)$ may be replaced by $C(\bar U;\RR)$ in \eqref{eq:Sobolev_embedding_domain}.

If $(X,g)$ is Riemannian smooth manifold that is closed, compact with boundary, or complete, and having bounded geometry in the sense of injectivity radius having a uniform positive lower bound, $\varrho_0$, and Riemannian curvature tensor with uniform bounds on its covariant derivatives with respect to the Levi-Civita connection, then the preceding local embeddings take the form
\begin{equation}
\label{eq:Sobolev_embedding_manifold_bounded_geometry}
W_g^{k,p}(X;\RR)
\hookrightarrow
\begin{cases}
L^q(X,g;\RR), &\quad\text{for } kp < d \text{ and } p > 1 \text{ and } p \leq q \leq p^* := dp/(d-kp),
\\
L^q(X,g;\RR), &\quad\text{for } kp = d \text{ and } p \leq q < \infty,
\\
C_b(X,g;\RR), &\quad\text{for } kp > d \text{ and } p > 1,
\end{cases}
\end{equation}
where the norm of the embedding depends at most on $k$, $p$, $\varrho_0$, and $\|\nabla_g^k\Riem_g\|_{C_b(X)}$ and the norm on the Banach spaces $W_g^{k,p}(X;\RR)$ is defined via \eqref{eq:Sobolev_norm_WAkp_sections_vector_bundle_over_manifold_finite_p} or \eqref{eq:Sobolev_norm_WAkp_sections_vector_bundle_over_manifold_infinite_p}, with covariant derivative, $\nabla_A^g$ replaced by $\nabla^g$ (Levi-Civita connection).

The Sobolev Embedding \cite[Theorem 4.12]{AdamsFournier} may be usefully combined with the pointwise \emph{Kato Inequality} \cite[Equation (6.20)]{FU},
\begin{equation}
\label{eq:FU_6-20_first-order_Kato_inequality}
|d|v|| \leq |\nabla_A v| \quad\text{a.e. on } X,
\end{equation}
for a section $v \in W^{1, p}(X; E)$ of a Hermitian or Riemannian vector bundle, $E$, over $X$ and covariant derivative, $\nabla_A$, which is compatible with the fiber metric. The embeddings then yield the estimates,
\begin{equation}
\label{eq:Sobolev-Kato_inequalities}
\|v\|_{L^q(X,g)} \leq C\|v\|_{W_{A,g}^{1,p}(X)},
\quad
\text{for }
\begin{cases}
1 < p < d \text{ and } p \leq q \leq p^* := dp/(d-p),
\\
p = d \text{ and } p \leq q < \infty,
\\
p > d \text{ and } q = \infty,
\end{cases}
\end{equation}
for a constant, $C \in [1,\infty)$, that is \emph{independent} of the connection, $A$.

\subsection{Gauge transformations and quotient space of connections}
\label{subsec:Gauge_transformations_quotient_space_connections}
We let $\sA(P)$ denote the affine space of connections on $P$ of Sobolev class $W^{k,p}$ for $p\geq 2$ and integer $k \geq 1$ and $\Aut P$ denote the Banach Lie group of automorphisms (or gauge transformations) of Sobolev class $W^{k+1,p}$ of the principal $G$-bundle, $P$, for $p\geq 2$ and integer $k \geq 1$ obeying $(k+1)p > d$, so  $W^{k,p}(X) \hookrightarrow C(X)$ is a continuous Sobolev embedding by \eqref{eq:Sobolev_embedding_manifold_bounded_geometry}.

We let $\sB(P,g) := \sA(P)/\Aut P$ denote the quotient of the affine space of connections, $\sA(P)$, of class $W^{k,p}$ modulo the action of the group, $\Aut P$, of automorphisms of $P$ of class $W^{k+1,p}$, for $p\geq 2$ and integer $k \geq 1$ such that $(k+1)p > d$. We refer to \cite[Section 4.2]{DK} or \cite[Chapter 3]{FU} for constructions of the Banach manifold structures on $\Aut P$ and $\sB^*(P,g)$, where we recall that $\sB^*(P,g)\subset \sB(P,g)$ is the open subset gauge-equivalence classes of connections on $P$ whose isotropy group is the center of $G$ \cite[p. 132]{DK}.

\subsection{Curvature of connections}
\label{subsec:Curvature_connections}
Many of our calculations require local expressions for connections or curvature, so we shall review the relevant facts here. For further details regarding connections on principal bundles and curvature, see Bleecker \cite[Chapter 2]{Bleecker_1981}, Donaldson and Kronheimer \cite[Section 2.1]{DK}, Jost \cite[Section 4.1]{Jost_riemannian_geometry_geometric_analysis}, Kobayashi \cite[Section 1.1]{Kobayashi}, and Kobayashi and Nomizu \cite[Section 2.5]{Kobayashi_Nomizu_v1}.

Let $G$ be a Lie group with Lie algebra $\fg$. Following \cite[Chapter 2]{Bleecker_1981}, given a smooth manifold, $Z$, and $a, b \in \Omega^1(Z;\fg)$, one defines $[a,b] := \sum_{\alpha,\beta} a^\alpha\wedge b^\beta\, [E_\alpha, E_\beta]$, where $\{E_\alpha\}$ is a basis for $\fg$ and $a = a^\alpha E_\alpha$ and $b = b^\beta E_\beta$. Let $P$ be a smooth principal $G$-bundle over a smooth manifold, $X$. If $A \in \Omega^1(P;\fg)$ is a (global) connection one-form, then its curvature is \cite[Theorem 2.2.4]{Bleecker_1981}, \cite[Theorem 2.5.2]{Kobayashi_Nomizu_v1}
\[
F_A = dA + \frac{1}{2}[A,A] \in \Omega^2(P;\fg),
\]
where $d:\Omega^l(P,\fg) \to \Omega^{l+1}(P;\fg)$ is exterior derivative for $\fg$-valued forms on $P$. If $G$ is a matrix Lie group with matrix Lie algebra $\fg$, then  $[a,b] = a\wedge b + b\wedge a$, where $a\wedge b$ is defined by matrix multiplication with entries multiplied by wedge product, and then one may alternatively write \cite[Theorem1 2.2.12 and Corollary 2.2.13]{Bleecker_1981}
\[
F_A = dA + A\wedge A \in \Omega^2(P;\fg).
\]
Given a local section $\sigma:U\to P$, where $U \subset X$ is an open subset, and denoting $a = \sigma^*A \in \Omega^1(U;\fg)$, we have
\[
\sigma^*F_A = da + \frac{1}{2}[a,a]
\quad\text{or}\quad
 da + a\wedge a \in \Omega^2(U;\fg),
\]
where the first expression is valid for any Lie group and the second expression is valid when $G$ is a matrix Lie group. We may regard $F_A$ as an element of $\Omega^2(P;\fg)$ or $\Omega^2(X;\ad P)$, due its transformation property with respect to local gauge transformations \cite[Theorem 2.2.14]{Bleecker_1981}, \cite[Equation (1.1.14)]{Kobayashi}.

With respect to local coordinates on $X$, say $x(\cdot) = \varphi^{-1}$ on an open subset $U \subset X$, the local expressions for the curvature $\sigma^*F_A \in \Omega^2(U,\fg)$ in terms of the local connection one-form $\sigma^*A \in \Omega^1(U,\fg)$ are given by \cite[Equation (4.1.27)]{Jost_riemannian_geometry_geometric_analysis},
\begin{align*}
\sigma^*F_A &= \left(\frac{\partial a_\nu}{\partial x^\mu} + a_\mu a_\nu \right)dx^\mu \wedge dx^\nu
\\
&= \frac{1}{2} \left(\frac{\partial a_\nu}{\partial x^\mu} - \frac{\partial a_\mu}{\partial x^\nu}
+ [a_\mu, a_\nu] \right)dx^\mu \wedge dx^\nu
\\
&= \sum_{\mu < \nu}\left(\frac{\partial a_\nu}{\partial x^\mu} - \frac{\partial a_\mu}{\partial x^\nu}
+ [a_\mu, a_\nu] \right)dx^\mu \wedge dx^\nu,
\end{align*}
where the first expression is valid if $G$ is a matrix group. We write (compare \cite[Equation (2.1.17)]{DK},
\[
F_{\mu\nu} = \frac{\partial a_\nu}{\partial x^\mu} - \frac{\partial a_\mu}{\partial x^\nu}
+ [a_\mu, a_\nu], \quad 1 \leq \mu, \nu \leq d.
\]
Of course, the same expressions hold for connections, $A$, of Sobolev class $W^{k,p}$ for $p\geq 2$ and integer $k\geq 1$, albeit only almost everywhere unless $kp>d$, where $X$ has dimension $d$.

\subsection{Yang-Mills energy functional and Yang-Mills connections}
\label{subsec:Yang-Mills_energy_functional}
The Yang-Mills $L^2$-energy functional, $\sE_g:\sA(P,g) \to [0, \infty)$, given by \eqref{eq:Yang-Mills_energy_functional} is gauge-invariant and thus descends to a function on the quotient space, $\sE_g:\sB(P,g) \to [0, \infty)$.

A connection, $A$ on $P$, is a \emph{critical point} of $\sE_g$ --- and by definition a \emph{Yang-Mills connection} with respect to the metric $g$ --- if and only if it obeys the \emph{Yang-Mills equation} with respect to the metric $g$,
\begin{equation}
\label{eq:Yang-Mills_equation}
d_A^{*,g}F_A = 0 \quad\hbox{a.e. on } X,
\end{equation}
since $d_A^{*,g}F_A = \sE_g'(A)$ when the gradient of $\sE = \sE_g$ is defined by the $L^2$ metric\footnote{We omit the customary factor of $1/2$ in the definition \eqref{eq:Yang-Mills_energy_functional} of $\sE_g$ for convenience in this article and compensate by defining the $L^2$ metric to be $2(a,b)_{L^2(X)}$, for $a,b \in \Omega^1(X;\ad P)$.}
\cite[Section 6.2.1]{DK}, \cite{GroisserParkerSphere} and $d_A^* = d_A^{*,g}: \Omega^l(X; \ad P) \to \Omega^{l-1}(X; \ad P)$ is the $L^2$ adjoint of $d_A:\Omega^l(X; \ad P) \to \Omega^{l+1}(X; \ad P)$, for integers $l\geq l$.

\subsection{Classification of principal $G$-bundles}
\label{subsec:Taubes_1982_Appendix}
In this subsection and in Section \ref{subsec:Absolute_minima_Yang-Mills_energy_functional}, we summarize the main points of \cite[Section 9]{Feehan_yang_mills_gradient_flow}, which extends the discussion in Donaldson and Kronheimer \cite[Sections 2.1.3 and 2.1.4]{DK} to the case of compact Lie groups, and is based in turn on Atiyah, Hitchin, and Singer \cite{AHS}, Milnor and Stasheff \cite[Section 15 and Appendix C]{MilnorStasheff}, Sedlacek \cite[Appendix]{Sedlacek}, Steenrod \cite{Steenrod}, and Taubes \cite[Appendix]{TauSelfDual}.

We assume in this subsection that $X$ is a closed, four-dimensional, oriented, topological manifold and that $G$ is a compact \emph{semisimple} Lie group. We recall from \cite[Appendix]{Sedlacek}, \cite[Propositions A.1 and A.2]{TauSelfDual} that a topological principal $G$-bundle, $P$ over $X$, is classified up to isomorphism by a cohomology class $\eta(P) \in H^2(X;\pi_1(G))$ and its \emph{first Pontrjagin classes}, $p_1^j(P) \in H^4(X;\ZZ)$, or equivalently, \emph{first Pontrjagin degrees}, $\langle p_1^j(P), [X] \rangle \in \ZZ$, where $[X] \in H_4(X;\ZZ)$ denotes the fundamental class of $X$ and $1 \leq j \leq j_\fg$, where the integer $j_\fg \geq 1$ is the number of non-trivial simple ideals comprising the semi-simple Lie algebra $\fg$ of $G$. The topological invariant, $\eta \in H^2(X; \pi_1(G))$, is the \emph{obstruction} to the existence of a principal $G$-bundle, $P$ over $X$, with a specified Pontrjagin degrees.

\subsection{Absolute minima of the Yang-Mills $L^2$-energy functional}
\label{subsec:Absolute_minima_Yang-Mills_energy_functional}
In addition to the assumptions of Section \ref{subsec:Taubes_1982_Appendix}, we require that $P$ and $X$ be smooth and that $X$ be equipped with a Riemannian metric, $g$. For any $C^\infty$ connection, $A$, we recall that the curvature $F_A$ splits into its $g$-self-dual and $g$-anti-self-dual components \cite[Equation (2.1.25)]{DK},
\begin{equation}
\label{eq:Donaldson_Kronheimer_2-1-25}
F_A = F_A^{+,g} \oplus F_A^{-,g} \in \Omega^2(X;\ad P) = \Omega^{+,g}(X;\ad P) \oplus \Omega^{-,g}(X;\ad P),
\end{equation}
corresponding to the positive and negative eigenspaces, $\Lambda^{\pm,g}$, of the Hodge star operator $*_g:\Lambda^2 \to \Lambda^2$ defined by the Riemannian metric, $g$, so \cite[Equation (1.3)]{TauSelfDual}
\begin{equation}
\label{eq:Taubes_1982_1-3}
F_A^{\pm,g} = \frac{1}{2}(1 \pm *_g)F_A \in \Omega^{\pm,g}(X; \ad P).
\end{equation}
Of course, similar observations apply more generally to connections, $A$, of Sobolev class $W^{k,p}$ for $p\geq 2$ and integer $k\geq 1$.

If $A$ is $g$-self-dual, that is, $F_A^{-,g} = 0$ on $X$, then the Chern-Weil formula  \cite[Appendix C]{MilnorStasheff}, \cite[Appendix]{TauSelfDual} implies that the components of the Pontrjagin vector for $P$ are non-negative; if $A$ is $g$-anti-self-dual, that is, $F_A^{+,g} = 0$ on $X$, then the Chern-Weil formula implies that the components of the Pontrjagin vector of $P$ are non-positive.

By the Chern-Weil formula, a connection, $A$ on $P$, attains the absolute minimum value of the Yang-Mills $L^2$-energy functional $\sE_g$ in \eqref{eq:Yang-Mills_energy_functional} if and only if $A$ is $g$-self-dual when the components of the Pontrjagin vector of $P$ are non-negative or $A$ is $g$-anti-self-dual when the components of the Pontrjagin vector of $P$ are non-positive. If $A$ is $g$-self-dual or $g$-anti-self-dual, then the Chern-Weil formula gives an expression for the absolute minimum value of $\sE_g$ in terms of the components of the Pontrjagin vector for $P$. Consequently, the $g$-self-dual or $g$-anti-self-dual connections on $P$ comprise the absolute minima or `ground states' for $\sE_g$.

\subsection{Miscellaneous notation}
In the sequel, constants are generally denoted by $C$ (or $C(*)$ to indicate explicit dependencies) and may increase from one line to the next in a series of inequalities. We write $\eps \in (0,1]$ to emphasize a positive constant that is understood to be small or $K \in [1,\infty)$ to emphasize a constant that is understood to be positive but finite. An open ball in $\RR^d$ with center $x$ and radius $r$ may be denoted by $B_r(x)$ or $B(x,r)$, depending on the context. When there is no ambiguity, we suppress explicit mention of the underlying Riemannian metric, $g$, from differential operators or Sobolev spaces and write $\nabla_A^g = \nabla_A$, and $*=*_g$, and $d_A^* = d_A^{*,g}$, and $d_A^\pm = d_A^{\pm,g}$, and $\sE = \sE_g$, and $F_A^\pm = F_A^{\pm,g}$, and $W_g^{k,p}(X) = W^{k,p}(X)$, and similarly elsewhere. Manifolds, principal bundles, and Riemannian metrics are always smooth. For a Riemannian manifold, $(X,g)$, we denote its injectivity radius at a point $x \in X$ by $\Inj_x(X,g)$ and let $\Inj(X,g) := \inf_{x \in X}\Inj_x(X,g)$.

\section{Uhlenbeck convergence for a sequence of Yang-Mills connections with a uniform $L^2$ bound on curvature}
\label{sec:Uhlenbeck_compactness_Yang-Mills_and_anti-self-dual_connections}
In Section \ref{subsec:Uhlenbeck_compactness_Yang-Mills_connections_L2_bounds_curvature}, we develop an Uhlenbeck convergence result (Theorem \ref{thm:Sedlacek_4-3_Yang-Mills}) for a sequence of Yang-Mills connections with a uniform $L^2$ bound on their curvatures. Section \ref{subsec:Uhlenbeck_compactification_moduli_space_ASD_connections} contains a definition of the Uhlenbeck topology and provides Uhlenbeck compactness results for the moduli space of anti-self-dual connections, $M(P,g)$; Section \ref{subsec:Uhlenbeck_compactification_moduli_space_Yang-Mills_connections} contains the corresponding definitions for the moduli spaces of Yang-Mills connections, $\Crit(P,g,\sC)$, with $L^2$-energies in a compact range $\sC \subset [0,\infty)$.

\subsection{Uhlenbeck convergence for a sequence of Yang-Mills connections with a uniform $L^2$ bound on curvature}
\label{subsec:Uhlenbeck_compactness_Yang-Mills_connections_L2_bounds_curvature}
In \cite[Theorem 3.1 and Lemma 3.4]{Sedlacek}, Sedlacek extends Uhlenbeck's Weak Compactness Theorem \cite[Theorem 1.5 = Theorem 3.6]{UhlLp} from the case of a sequence of connections, with a uniform $L^p(X)$-bound on curvature, on a principal $G$-bundle over a closed, four-dimensional, Riemannian, smooth manifold, $X$, from the case of $p > 2$ to $p = 2$. Moreover, in \cite[Proposition 4.2 and Theorem 4.3]{Sedlacek}, Sedlacek applies his Weak Compactness Theorem to the case of an $L^2$-energy minimizing sequence of $W^{k,p}$ connections, $\{A_m\}_{m\in\NN}$ on $P$, where $k\geq 1$ and $p\geq 2$ obey $(k+1)p>4$. Our purpose in this section is to describe a simple refinement (Theorem \ref{thm:Sedlacek_4-3_Yang-Mills}) of Sedlacek's Weak Compactness Theorem for the case where $\{A_m\}_{m\in\NN}$ is a sequence of Yang-Mills connections, thus obtaining an analogue of \cite[Theorem 4.4.4]{DK}, which provides weak compactness for sequences of anti-self-dual connections on $P$. A result similar to Theorem \ref{thm:Sedlacek_4-3_Yang-Mills} was proved by Taubes as \cite[Proposition 5.1]{TauFrame}, \cite[Proposition 4.4]{TauPath} and the main ideas involved in their proofs are essentially equivalent. One may also view Theorem \ref{thm:Sedlacek_4-3_Yang-Mills} below as a sequential analogue of \cite[Theorems 29.3 and 29.4]{Feehan_yang_mills_gradient_flow}, an Uhlenbeck compactness result for Yang-Mills gradient flow. Analogues of Theorem \ref{thm:Sedlacek_4-3_Yang-Mills} for closed Riemannian manifolds, $X$, of dimension $d \geq 5$ are given by Nakajima \cite{Nakajima_1988}, Tian \cite{TianGTCalGeom}, and Zhang \cite{Zhang_2004cmb}.

\begin{thm}[Uhlenbeck convergence for a sequence of Yang-Mills connections with a uniform $L^2$ bound on curvature ]
\label{thm:Sedlacek_4-3_Yang-Mills}
Let $G$ be a compact Lie group and $P$ be a principal $G$-bundle over a closed, connected, four-dimensional, smooth manifold, $X$, and endowed with a Riemannian metric, $g$. If $\{A^m\}_{m\in\NN}$ is a sequence of $g$-Yang-Mills connections on $P$ of class $W^{k,p}$, with $k\geq 1$ and $p\geq 2$ obeying $(k+1)p>4$, with bounded $L^2$-energy,
\[
\sup_{m\in\NN} \|F_{A^m}\|_{L^2(X)} < \infty,
\]
then there exist
\begin{enumerate}
  \item A sequence of gauge transformations, $\{u_m\}_{m\in\NN} \subset \Aut(P)$, of class $W^{k+1,p}$;

  \item An integer, $l \geq 1$, and, if $l \geq 1$, a finite set of points, $\bx = \{x_1,\ldots,x_l\} \subset X$;

  \item A $g$-Yang-Mills connection, $\tilde A^\infty$ of class $W^{k,p}$, on a principal $G$-bundle, $\tilde P_\infty$, over $X\less\bx$;

  \item A gauge transformation, $u_\infty \in \Aut(\tilde P_\infty\restriction X\less\bx)$ of class $W^{k+1,p}$, such that $u_\infty(\tilde A^\infty)$
      extends to a $g$-Yang-Mills connection, $A^\infty$ of class $W^{k,p}$, on a principal $G$-bundle, $P_\infty$, over $X$ with $\eta(P_\infty) = \eta(P)$,
\end{enumerate}
such that the following hold:
\begin{enumerate}
  \item The sequence $\{u_m(A^m)\}_{m\in\NN}$ converges as $m\to\infty$ in the sense of $W_\loc^{k,p}(X\less\bx)$ to $\tilde A^\infty$;

  \item If $l\geq 1$, the points, $x_i \in \bx$, are characterized by positivity of the limits\footnote{For the sake of convenience, we omit from the expression for the energy constants, $\sE_i$, the factors of $1/2$ employed in the related expression \eqref{eq:Yang-Mills_energy_functional} for the Yang-Mills energy function, $\sE$.},
\[
\sE_i := \lim_{r\to 0}\limsup_{m\to\infty} \int_{B_r(x_i)} |F_{A^m}|^2\,d\vol_g > 0, \quad 1 \leq i \leq l,
\]
        and the sequence of measures, $|F_{A^m}|^2\,d\vol_g$, converges as $m\to\infty$ in the weak-star topology to the measure, $|F_{A^\infty}|^2\,d\vol_g + \sum_{i=1}^l \sE_i\,\delta_{x_i}$, that is,
\[
|F_{A^m}|^2\,d\vol_g \rightharpoonup |F_{A^\infty}|^2\,d\vol_g + \sum_{i=1}^l \sE_i\,\delta_{x_i}
\quad\text{in } (C(X;\RR))'  \quad\text{as } m \to \infty,
\]
where $\delta_x$ is the Dirac delta measure centered at $x \in X$.

  \item If in addition the sequence $\{A^m\}_{m\in\NN}$ is \emph{absolutely minimizing} in the sense that
\begin{equation}
\label{eq:Absolutely_minimizing_sequence_connections}
\|F_{A^m}^{+,g}\|_{L^2(X)} \to 0 \quad\text{as } m \to \infty,
\end{equation}
then $A^\infty$ is a $g$-anti-self-dual connection.
\end{enumerate}
\end{thm}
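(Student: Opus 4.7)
The plan is to combine Sedlacek's Weak Compactness Theorem \cite[Theorem 3.1 and Proposition 4.2]{Sedlacek}, which provides weak $W_\loc^{1,2}$ compactness after gauge fixing away from a finite set of bubble points, with an elliptic-regularity bootstrap that exploits the Yang-Mills equation to upgrade weak to strong $W_\loc^{k,p}$ convergence. The improvement over Sedlacek's setting, where $\{A^m\}$ was only $L^2$-energy minimizing, is that each $A^m$ here already solves $d_{A^m}^{*,g}F_{A^m}=0$, which renders the bootstrap in Coulomb gauge purely elliptic and ensures the limit is automatically Yang-Mills.

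First I would apply Sedlacek's theorem to obtain, after passing to a subsequence, $W^{k+1,p}$ gauge transformations $u_m\in\Aut(P)$, a finite bubble set
\[
\bx=\{x_1,\dots,x_l\}\subset X,
\]
characterized by
\[
\limsup_{m\to\infty}\int_{B_r(x_i)}|F_{A^m}|^2\,d\vol_g \;\geq\; \eps_0^2 \quad\text{for every } r>0,
\]
where $\eps_0$ is Uhlenbeck's small-energy constant \cite{UhlLp}, together with a weak $W_\loc^{1,2}(X\setminus\bx)$ limit $\tA^\infty$ of $u_m(A^m)$ on a principal $G$-bundle $\tP_\infty\to X\setminus\bx$ satisfying $\eta(\tP_\infty) = \eta(P|_{X\setminus\bx})$. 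The heart of the argument is to promote this to strong $W_\loc^{k,p}(X\setminus\bx)$ convergence. Cover $X\setminus\bx$ by geodesic balls $B$ small enough that $\int_B |F_{u_m(A^m)}|^2 < \eps_0^2$ for all large $m$; on each such $B$ Uhlenbeck's local gauge-fixing theorem \cite{UhlLp} produces further gauges $v_m^B$ placing
\[
a_m := v_m^B\cdot u_m(A^m) - \tA^\infty
\]
in Coulomb gauge relative to $\tA^\infty$. Combining $d_{\tA^\infty}^* a_m=0$ with the Yang-Mills equation for $v_m^B\cdot u_m(A^m)$ yields a quasilinear elliptic system for $a_m$ of schematic form
\[
(d_{\tA^\infty}^* d_{\tA^\infty}+d_{\tA^\infty}d_{\tA^\infty}^*)a_m \;=\; Q(a_m,\nabla_{\tA^\infty} a_m),
\]
with $Q$ quadratic in its arguments. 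Standard elliptic bootstrapping then converts weak $W^{1,2}$ convergence of $a_m$ into strong $W^{k,p}$ convergence on interior subdomains, and a patching argument absorbs the local gauges $v_m^B$ into a globally coherent sequence of gauge transformations on $\tP_\infty$, producing the $W^{k,p}$ $g$-Yang-Mills limit on $X\setminus\bx$ required by claim (i).

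To obtain the global bundle $P_\infty\to X$, I would apply Uhlenbeck's removable singularities theorem \cite{UhlRem} at each $x_i$ to $\tA^\infty$, producing a gauge $u_\infty$ and a smooth extension of $u_\infty(\tA^\infty)$ to a $g$-Yang-Mills connection $A^\infty$ on a principal $G$-bundle $P_\infty\to X$. The identity $\eta(P_\infty)=\eta(P)$ follows because $\eta\in H^2(X;\pi_1(G))$ is determined by the restriction of the bundle to the complement of any finite set, since bubbling alters only the Pontrjagin degrees (carried by $H^4$) and $H^2(S^4;\pi_1(G))=0$. For claim (ii), I would decompose any $\phi\in C(X;\RR)$ as $\phi_0+\sum_i\phi_i$ with $\phi_0$ supported in $X\setminus\bx$ and each $\phi_i$ supported in a small ball around $x_i$: strong $L_\loc^2$ convergence on $X\setminus\bx$ handles $\phi_0$, while an $r\to 0$ exhaustion argument, combined with $\int_{B_r(x_i)}|F_{A^\infty}|^2\to 0$ by dominated convergence and the definition of $\sE_i$, handles the $\phi_i$ and confirms both positivity of $\sE_i$ and the claimed atomic structure of the limit measure. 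Finally, for claim (iii), the hypothesis $\|F_{A^m}^{+,g}\|_{L^2(X)}\to 0$ together with strong $L_\loc^2$ convergence of curvatures on $X\setminus\bx$ forces $F_{\tA^\infty}^{+,g}=0$ there; smoothness of $A^\infty$ across $\bx$ and finiteness of $\bx$ then give $F_{A^\infty}^{+,g}=0$ on all of $X$.

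The main obstacle I expect is the strong-convergence step — specifically, patching the local Coulomb gauges $\{v_m^B\}$ into a single sequence of global gauge transformations on $\tP_\infty$ so as to extract one coherent limit $\tA^\infty$. Without the Yang-Mills equation, Coulomb-gauge representatives satisfy only a quasilinear system whose nonlinearity sits at the borderline of $L^2$ Sobolev regularity, leaving one with weak convergence alone; with the Yang-Mills equation in hand, the full elliptic $W^{k,p}$ bootstrap carries through, which is precisely what makes this refinement over Sedlacek's theorem possible.
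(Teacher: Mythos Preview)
Your proposal is correct and follows essentially the same route as the paper. The paper frames the argument as adapting the proof of \cite[Theorem 4.4.4]{DK} (the anti-self-dual case), replacing the ASD-specific ingredients by their Yang-Mills analogues: Uhlenbeck's local Coulomb gauge (Theorem \ref{thm:Uhlenbeck_Lp_1-3}), the interior a~priori estimate for Yang-Mills connections in Coulomb gauge (Proposition \ref{prop:Donaldson-Kronheimer_Theorem_2-3-8_Yang-Mills_d_geq_2}), and Uhlenbeck's removable singularities theorem (Theorem \ref{thm:Uhlenbeck_removable_singularity_Yang-Mills}); you instead start from Sedlacek's weak compactness and bootstrap, but the technical core---Coulomb gauge plus elliptic regularity for the Yang-Mills system, then removable singularities---is the same. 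One minor organizational difference: the paper fixes Coulomb gauge relative to the product connection $\Theta$ on small balls and uses Uhlenbeck's pointwise curvature bound (Theorem \ref{thm:Uhlenbeck_3-5}) to streamline the bootstrap, whereas you gauge-fix relative to the already-extracted weak limit $\tA^\infty$; both work, but the former avoids any appearance of circularity.
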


\begin{rmk}[Comparison with previous results of Taubes]
\label{rmk:Sedlacek_4-3_Yang-Mills}
Theorem \ref{thm:Sedlacek_4-3_Yang-Mills} has antecedents in \cite[Proposition 5.1]{TauFrame}, \cite[Proposition 4.4]{TauPath} where, rather than assume that $\{A^m\}_{m\in\NN}$ is a sequence of Yang-Mills connections (with bounded $L^2$-energy) on $P$, Taubes instead assumes that $\{A^m\}_{m\in\NN}$ is a sequence of arbitrary ($W^{k,p}$) connections on $P$ such that
\[
\sE_g(A^m) \to E \quad\text{and}\quad \|d_{A^m}^*F_{A^m}\|_{W_{A^m}^{-1,2}(X)} \to 0, \quad\text{as } m \to \infty,
\]
where $E$ is a finite constant and we recall that $\sE'(A) = d_A^{*,g}F_A$ is the gradient of the Yang-Mills energy functional at a connection, $A$, on $P$. (See the statement of \cite[Proposition 5.1]{TauFrame} and the discussion in \cite[pp. 340--341]{TauFrame}.) The stronger conclusions in Theorem \ref{thm:Sedlacek_4-3_Yang-Mills} are, naturally, a consequence of its stronger hypotheses relative to \cite[Proposition 5.1]{TauFrame}.
\end{rmk}

We shall just indicate the changes required to the proof of \cite[Theorem 4.4.4]{DK}, namely, the enhancements required to strengthen the notion of convergence in \cite[Proposition 4.2 and Theorem 4.3]{Sedlacek} to that described in the conclusions of Theorem \ref{thm:Sedlacek_4-3_Yang-Mills}. We first summarize the main technical ingredients that we shall require from Uhlenbeck \cite{UhlLp, UhlRem}, specialized from the setting of a base manifold of arbitrary dimension to one of dimension four.

\begin{thm}[Existence of a local Coulomb gauge and \apriori estimate for a Sobolev connection with $L^{d/2}$-small curvature]
\label{thm:Uhlenbeck_Lp_1-3}
\cite[Theorem 1.3 or Theorem 2.1 and Corollary 2.2]{UhlLp}
Let $d \geq 2$ be an integer, $G$ be a compact Lie group, and $p \in [d/2, d)$. Then there are constants, $C=C(d,G,p) \in [1,\infty)$ and $\eps=\eps(d, G, p) \in (0,1]$, with the following significance. Let $A$ be a connection of class $W^{1,p}$ on $B\times G$ such that
\begin{equation}
\label{eq:Ldover2_ball_curvature_small}
\|F_A\|_{L^{d/2}(B)} \leq \eps,
\end{equation}
where $B \subset \RR^d$ is the unit ball with center at the origin. Then there is a gauge transformation, $u:B\to G$, of class $W^{2,p}$ such that the following holds. If $A = \Theta + a$, where $\Theta$ is the product connection on $B\times G$, and $u(A) = \Theta + u^{-1}au + u^{-1}d_\Theta u$, then
\begin{align*}
d_\Theta^*(u(A) - \Theta) &= 0 \quad \text{a.e. on } B,
\\
(u(A) - \Theta)(\vec n) &= 0 \quad \text{on } \partial B,
\end{align*}
where $\vec n$ is the outward-pointing unit normal vector field on $\partial B$, and
\begin{equation}
\label{eq:Uhlenbeck_1-3_W1p_norm_connection_one-form_leq_constant_Lp_norm_curvature}
\|u(A) - \Theta\|_{W^{1,p}(B)} \leq C\|F_A\|_{L^p(B)}.
\end{equation}
\end{thm}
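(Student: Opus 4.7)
My plan is to follow the classical Uhlenbeck continuity argument. The heart of the proof is an \emph{a priori estimate}: assuming $a := u(A) - \Theta$ satisfies the Coulomb conditions $d_\Theta^* a = 0$ and $a(\vec n)|_{\partial B} = 0$, the Gaffney-type inequality for $\fg$-valued $1$-forms on the unit ball $B$ with this boundary condition yields
$$
\|a\|_{W^{1,p}(B)} \leq C\left(\|d_\Theta a\|_{L^p(B)} + \|d_\Theta^* a\|_{L^p(B)} + \|a\|_{L^p(B)}\right).
$$
Using $F_{u(A)} = d_\Theta a + \tfrac{1}{2}[a \wedge a]$ and the Sobolev embedding $W^{1,p}(B) \hookrightarrow L^{2p}(B)$ (valid because $p \geq d/2$), together with a Poincar\'e inequality ensured by the boundary condition, one gets
$$
\|a\|_{W^{1,p}(B)} \leq C\|F_A\|_{L^p(B)} + C\|a\|_{W^{1,p}(B)}^2.
$$
So long as $\|a\|_{W^{1,p}(B)}$ stays below a universal threshold $\delta_0 = 1/(4C)$, absorption of the quadratic term gives \eqref{eq:Uhlenbeck_1-3_W1p_norm_connection_one-form_leq_constant_Lp_norm_curvature}.

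To prove existence of the Coulomb gauge, I argue by continuity. By density, it suffices to treat smooth $A$, since the a priori estimate lets one pass to the limit along an approximating sequence. For smooth $A$ consider the straight-line path $A_t := \Theta + ta$ and the set $T \subset [0,1]$ of parameters for which the conclusion holds for $A_t$. Since $F_{A_t} = t F_A + (t^2 - t)\tfrac{1}{2}[a \wedge a]$, once we have an a priori control on $\|a\|_{W^{1,p}}$ we obtain $\|F_{A_t}\|_{L^{d/2}(B)} \leq \|F_A\|_{L^{d/2}(B)} + \|a\|_{L^d(B)}^2$, which remains below the threshold needed for the a priori estimate provided $\eps$ is chosen small enough at the outset.

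Openness of $T$ is proved by the implicit function theorem. At $t_0 \in T$ with gauge $u_0$, seek $u = u_0 \exp(\xi)$ for $\xi \in W^{2,p}(B;\fg)$. Linearizing the Coulomb conditions at $\xi=0$ gives the Neumann Laplacian $(\Delta \xi, \partial \xi/\partial \vec n)$, which is Fredholm with kernel and cokernel equal to the constants; quotienting by constants makes it invertible, and the implicit function theorem then produces a solution for $t$ in a neighborhood of $t_0$. For closedness, if $t_n \to t_\infty$ in $T$ with gauges $u_n$, the uniform a priori bound yields $\|u_n(A_{t_n}) - \Theta\|_{W^{1,p}(B)} \leq 2C\|F_{A_{t_n}}\|_{L^p(B)}$, hence $\{u_n\}$ is bounded in $W^{2,p}(B;G)$. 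Rellich compactness and the Arzel\`a--Ascoli theorem extract a subsequential limit $u_\infty$ of class $W^{2,p}$ satisfying the Coulomb conditions for $A_{t_\infty}$, so $t_\infty \in T$.

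The main obstacle is the \emph{threshold preservation}: one must choose $\eps$ so that the quadratic self-interaction $C\|a\|_{W^{1,p}}^2$ in the a priori estimate never overtakes the linear term $C\|F_A\|_{L^p}$ as $t$ varies, so that the bound $\|a_t\|_{W^{1,p}} \leq 2C\|F_{A_t}\|_{L^p}$ survives. This is precisely where the use of the \emph{critical} (scale-invariant) norm $L^{d/2}$ on curvature is essential: it is the norm in which the nonlinear correction to $F_{A_t}$ can be absorbed into the hypothesis \eqref{eq:Ldover2_ball_curvature_small} without collecting scale-dependent constants, and it is the norm for which the Sobolev embedding $W^{1,p}(B) \hookrightarrow L^{2p}(B)$ produces the correct quadratic estimate on $[a \wedge a]$ in $L^p$.
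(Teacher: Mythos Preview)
The paper does not give its own proof of this theorem; it is quoted directly from Uhlenbeck \cite{UhlLp} and only the extension to $p \geq d$ (Corollary~\ref{cor:Uhlenbeck_theorem_1-3}) is argued in the text. Your sketch follows the classical continuity method of that reference, and the a~priori estimate, the implicit-function-theorem openness, and the Rellich closedness steps are all correct in outline.

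There is, however, a genuine gap in your threshold-preservation step. Along the linear path $A_t = \Theta + ta$ you write $\|F_{A_t}\|_{L^{d/2}(B)} \leq \|F_A\|_{L^{d/2}(B)} + \|a\|_{L^d(B)}^2$ and claim this stays below the threshold ``provided $\eps$ is chosen small enough at the outset.'' But here $a = A - \Theta$ is the \emph{original} connection one-form, and nothing in the hypotheses bounds $\|a\|_{L^d(B)}$ in terms of $\eps$; the a~priori estimate controls only the \emph{gauge-transformed} connection $u(A) - \Theta$, which is a different object. Since $\eps$ must be chosen uniformly before $A$ is given, you cannot absorb a term of size $\|a\|_{L^d}^2$ into it.

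The standard repair --- and what Uhlenbeck actually does --- is to replace the linear path by the dilation path $a_t(x) := t\,a(tx)$. Then $F_{A_t}(x) = t^2 F_A(tx)$ and, by the scale invariance of the $L^{d/2}$ norm on $2$-forms in dimension $d$,
\[
\|F_{A_t}\|_{L^{d/2}(B_1)} = \|F_A\|_{L^{d/2}(B_t)} \leq \|F_A\|_{L^{d/2}(B_1)} \leq \eps
\]
for every $t \in [0,1]$. The curvature hypothesis is thus preserved automatically along the path, the set $T$ is connected, and your openness and closedness arguments then complete the proof. This is exactly the manifestation of the critical exponent $d/2$ that you allude to in your final paragraph, but the mechanism is scale invariance under dilation of the domain, not absorbability of the quadratic correction along a linear homotopy in connection space.
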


\begin{rmk}[Dependencies of the constants in Theorem \ref{thm:Uhlenbeck_Lp_1-3}]
\label{rmk:Uhlenbeck_Lp_1-3_constant_dependencies}
The statements of \cite[Theorem 1.3 or Theorem 2.1 and Corollary 2.2]{UhlLp} imply that the constants, $\eps$ and $C$, in estimate \eqref{eq:Uhlenbeck_1-3_W1p_norm_connection_one-form_leq_constant_Lp_norm_curvature} only depend the dimension, $d$, of the ball, $B$. However, their proofs suggest that these constants may also depend on $G$ and $p$ through the appeal to an elliptic estimate for $d_\Theta+d_\Theta^*$ in the verification of \cite[Lemma 2.4]{UhlLp} and arguments immediately following.
\end{rmk}

\begin{rmk}[Construction of a $W^{k+1,p}$ transformation to Coulomb gauge]
\label{rmk:Uhlenbeck_theorem_1-3_Wkp}
We note that if $A$ is of class $W^{k,p}$, for an integer $k \geq 1$ and $p \geq 2$, then the gauge transformation, $u$, in Theorem \ref{thm:Uhlenbeck_Lp_1-3} is of class $W^{k+1,p}$; see \cite[page 32]{UhlLp}, the proof of \cite[Lemma 2.7]{UhlLp} via the Implicit Function Theorem for smooth functions on Banach spaces, and our proof of \cite[Theorem 1.1]{FeehanSlice} --- a global version of Theorem \ref{thm:Uhlenbeck_Lp_1-3}.
\end{rmk}

An examination of the proof of Theorem \ref{thm:Uhlenbeck_Lp_1-3} in \cite{UhlLp} yields the

\begin{cor}[Existence of a local Coulomb gauge and \apriori estimate for a Sobolev connection with $L^{d/2}$-small curvature]
\label{cor:Uhlenbeck_theorem_1-3}
\cite[Corollary 4.4]{Feehan_yangmillsenergygapflat}
Assume the hypotheses of Theorem \ref{thm:Uhlenbeck_Lp_1-3}, but allow $p$ in the range $d/2 \leq p < \infty$. Then the conclusions of Theorem \ref{thm:Uhlenbeck_Lp_1-3} continue to hold.
\end{cor}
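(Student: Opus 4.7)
The plan is to extend the range of $p$ in Theorem \ref{thm:Uhlenbeck_Lp_1-3} from $[d/2, d)$ to $[d/2, \infty)$ by a bootstrap built on the Coulomb gauge equations and interior elliptic regularity for $d + d^*$, starting from an intermediate exponent $p_0 \in (d/2, d)$ at which Theorem \ref{thm:Uhlenbeck_Lp_1-3} already applies. The crucial observation is that the smallness hypothesis $\|F_A\|_{L^{d/2}(B)} \leq \eps$ is independent of $p$, so a single application of Theorem \ref{thm:Uhlenbeck_Lp_1-3} supplies the Coulomb gauge transformation; only the Sobolev regularity of $u(A) - \Theta$ needs to be upgraded.

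Fix $p \in [d, \infty)$ and choose $p_0 \in (2d/3, d)$, so that the Sobolev-critical exponent $p_0^{*} := dp_0/(d-p_0)$ exceeds $2d$. Applying Theorem \ref{thm:Uhlenbeck_Lp_1-3} with exponent $p_0$, one obtains a gauge transformation $u \in W^{2, p_0}$ on $B$ such that $a := u(A) - \Theta$ satisfies $d_\Theta^* a = 0$ on $B$, the boundary condition $a(\vec n) = 0$ on $\partial B$, and $\|a\|_{W^{1, p_0}(B)} \leq C \|F_A\|_{L^{p_0}(B)}$. The structure equation $d_\Theta a + \tfrac{1}{2}[a, a] = u^{-1} F_A u$, combined with the Coulomb gauge condition and the absolute boundary condition, makes $a$ a solution of an elliptic first-order system to which Gaffney's inequality applies:
\[
\|a\|_{W^{1, r}(B)}
\leq
C_r \bigl( \|F_A\|_{L^r(B)} + \|[a, a]\|_{L^r(B)} + \|a\|_{L^r(B)} \bigr),
\quad 1 < r < \infty.
\]
Sobolev embedding places $a$ in $L^{p_0^*}$, so $[a, a] \in L^{p_0^*/2}$ with $p_0^*/2 > d$; the elliptic estimate at $r_1 := \min(p, p_0^*/2)$ then improves $a$ to $W^{1, r_1} \hookrightarrow L^\infty(B)$, and a second application with $r = p$ and $[a, a] \in L^\infty(B) \subset L^p(B)$ yields $a \in W^{1, p}(B)$. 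The bound $u \in W^{2, p}$ then follows from the relation $d u = u \cdot a - (A - \Theta) \cdot u$ together with the algebra property of $W^{1, p}(B)$ for $p \geq d$.

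The main obstacle is to propagate the clean linear bound $\|a\|_{W^{1, p}(B)} \leq C\|F_A\|_{L^p(B)}$ through the bootstrap. In the last iteration one must absorb the quadratic contribution $\|[a, a]\|_{L^p} \leq \|a\|_{L^\infty}\|a\|_{L^p}$ into the left-hand side: using Sobolev to bound $\|a\|_{L^\infty}$ by the $W^{1, r_1}$ estimate from the preceding step, then controlling $\|a\|_{W^{1, r_1}}$ by $\|F_A\|_{L^{r_1}}$, and finally invoking the interpolation $\|F_A\|_{L^{r_1}} \leq \|F_A\|_{L^{d/2}}^{\theta} \|F_A\|_{L^p}^{1-\theta}$ with $\|F_A\|_{L^{d/2}} \leq \eps$, one converts the quadratic term into a factor of the form $\eps^\theta \|F_A\|_{L^p(B)}$ on the right. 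Shrinking $\eps$ if necessary, which is permissible because Theorem \ref{thm:Uhlenbeck_Lp_1-3} already allowed this, makes the quadratic contribution absorbable and yields the desired estimate, completing the proof.
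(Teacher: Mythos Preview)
Your proposal is correct and follows essentially the same strategy as the paper: apply Theorem~\ref{thm:Uhlenbeck_Lp_1-3} at an intermediate exponent below $d$ to obtain the Coulomb gauge, then bootstrap the $W^{1,p}$ bound on $\tilde a = u(A) - \Theta$ via the elliptic estimate for $d_\Theta + d_\Theta^*$ together with the structure equation $d_\Theta \tilde a = F_{u(A)} - \tilde a \wedge \tilde a$, controlling the quadratic term through Sobolev embedding and interpolation against the $L^{d/2}$ smallness. The paper's version is slightly more economical: it chooses the intermediate exponent $r = 2pd/(d+2p) \in [2d/3, d)$ so that $W^{1,r}(B) \hookrightarrow L^{2p}(B)$ exactly, which collapses your two-step bootstrap into a single step and yields the interpolation exponent $\lambda = 1/2$ directly; it also uses the kernel-free elliptic estimate $\|\tilde a\|_{W^{1,p}(B)} \leq C\|(d_\Theta + d_\Theta^*)\tilde a\|_{L^p(B)}$ (valid with the normal boundary condition), avoiding the extra $\|a\|_{L^r}$ term in your Gaffney inequality that would otherwise need separate absorption.
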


\begin{proof}
We need only consider the case $d \leq p < \infty$. Writing $u(A) = \Theta + \tilde a$, we have $F_{u(A)} = F_\Theta + d_\Theta\tilde a + \tilde a \wedge \tilde a$ and $F_\Theta = 0$ and so, because $d_\Theta^*\tilde a = 0$ by Theorem \ref{thm:Uhlenbeck_Lp_1-3},
\begin{align*}
\|(d_\Theta  + d_\Theta^*)\tilde a\|_{L^p(B)}
&=
\|d_\Theta\tilde a\|_{L^p(B)} = \|F_{u(A)} - \tilde a \wedge \tilde a\|_{L^p(B)}
\\
&\leq
\|F_{u(A)}\|_{L^p(B)} + \|\tilde a \wedge \tilde a\|_{L^p(B)}
\\
&\leq \|F_A\|_{L^p(B)} + 2\|\tilde a\|_{L^{2p}(B)}^2
\\
&\leq \|F_A\|_{L^p(B)} + C_0\|\tilde a\|_{W^{1,r}(B)}^2,
\end{align*}
for $C_0 = C_0(d,p)$, noting that we have a continuous Sobolev embedding, $W^{1,r}(B) \hookrightarrow L^{r^*}(B)$ for $r^* = dr/(d-r)$ and any $r\in [1,d)$, by \eqref{eq:Sobolev_embedding_domain}; we set $r^* = 2p$ to determine $r = 2pd/(d + 2p)$ and thus $r \in [2d/3, d)$ when $p \in [d,\infty)$. But Theorem \ref{thm:Uhlenbeck_Lp_1-3} implies that
\[
\|\tilde a\|_{W^{1,r}(B)} \leq C_1\|F_A\|_{L^r(B)},
\]
for $C_1 = C_1(d,G,p)$, since $\|F_A\|_{L^{d/2}(B)} \leq \eps$ by \eqref{eq:Ldover2_ball_curvature_small}. From the interpolation inequality \cite[Equation (7.9)]{GilbargTrudinger} we have
\[
\|F_A\|_{L^r(B)} \leq \|F_A\|_{L^{d/2}(B)}^\lambda \|F_A\|_{L^p(B)}^{1-\lambda},
\]
noting that $d/2 < r < d \leq p$ and $1/r = \lambda/(d/2) + (1-\lambda)/p$ for suitable $\lambda \in (0,1)$. Thus $\lambda$ obeys
\[
\frac{1}{r} = \frac{1}{2p} + \frac{1}{d} = \frac{2\lambda}{d} + \frac{1-\lambda}{p},
\]
and we see that $\lambda = 1/2$. By combining the preceding inequalities, we obtain
\begin{align*}
\|(d_\Theta  + d_\Theta^*)\tilde a\|_{L^p(B)}
&\leq
\|F_A\|_{L^p(B)} + C_0C_1^2\|F_A\|_{L^r(B)}^2
\\
&\leq \|F_A\|_{L^p(B)} + C_0C_1^2 \|F_A\|_{L^{d/2}(B)} \|F_A\|_{L^p(B)}
\\
&\leq (1 + C_0C_1^2\eps) \|F_A\|_{L^p(B)}.
\end{align*}
Combining the preceding inequality with the elliptic estimate, for $C_2 = C_2(d,p,G)$,
\[
\|\tilde a\|_{W_\Theta^{1,p}(B)}
\leq
C_2\|(d_\Theta  + d_\Theta^*)\tilde a\|_{L^p(B)},
\]
employed by Uhlenbeck in her proof of Theorem \ref{thm:Uhlenbeck_Lp_1-3} now completes the proof of Corollary \ref{cor:Uhlenbeck_theorem_1-3}.
\end{proof}

When $p > d/2$ in Corollary \ref{cor:Uhlenbeck_theorem_1-3}, we have a continuous Sobolev embedding $W^{2,p}(B) \hookrightarrow C(\bar B)$ by \eqref{eq:Sobolev_embedding_domain}, and hence a useful analogue for an annulus instead of a ball, albeit at the cost of a stronger hypothesis on the curvature. Let $\Omega(r,R) := B_R - \bar B_r \subset \RR^d$ denote the open annulus with center at the origin and radii $r<R$. The following result is a special case of \cite[Corollary 4.3]{UhlChern} due to Uhlenbeck.

\begin{thm}[Existence of a local Coulomb-gauge trivialization and \apriori estimate for a Sobolev connection with $L^p$-small curvature over an annulus]
\label{thm:Uhlenbeck_Chern_corollary_4-3_annulus}
Let $d \geq 2$ be an integer, $G$ be a compact Lie group, $p \in (d/2,\infty)$, and $\rho \in (0, 1)$. Then there are constants, $C=C(d,G,p,\rho) \in [1,\infty)$ and $\eps=\eps(d, G, p, \rho) \in (0,1]$, with the following significance. Let $A$ be a connection of class $W^{1,p}$ on $\Omega(\rho/2,2)\times G$ such that
\begin{equation}
\label{eq:Lp_annulus_curvature_small}
\|F_A\|_{L^p(\Omega(\rho/2,2))} \leq \eps.
\end{equation}
Then there is a gauge transformation, $u:\Omega(\rho/2,2)\to G$, of class $W_\loc^{2,p}(\Omega(\rho/2,2))$ such that the following holds. If $A = \Theta + a$, where $\Theta$ is the product connection on $\Omega(\rho/2,2)\times G$, and $u(A) = \Theta + u^{-1}au + u^{-1}d_\Theta u$, then
\[
d_\Theta^*(u(A) - \Theta) = 0 \quad \text{a.e. on } \Omega(\rho,1),
\]
and
\begin{subequations}
\label{eq:W1p_norm_connection_one-form_leq_constant_Lp_norm_curvature_annulus}
\begin{align}
\|u(A) - \Theta\|_{W^{1,p}(\Omega(\rho,1))} &\leq C\|F_A\|_{L^p(\Omega(\Omega(\rho/2,2)))},
\\
\|u(A) - \Theta\|_{W^{1,d}(\Omega(\rho,1))} &\leq C\|F_A\|_{L^{d/2}(\Omega(\Omega(\rho/2,2)))}.
\end{align}
\end{subequations}
\end{thm}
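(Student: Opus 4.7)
My plan is to mirror Uhlenbeck's strategy for Theorem \ref{thm:Uhlenbeck_Lp_1-3}, adapting the argument from a ball to an annulus. Because the bundle $\Omega(\rho/2,2) \times G$ is already trivial, the task reduces to finding a gauge transformation $u$ placing $A$ in Coulomb gauge relative to the product connection $\Theta$, together with the estimates \eqref{eq:W1p_norm_connection_one-form_leq_constant_Lp_norm_curvature_annulus}.

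First I would cover $\Omega(\rho/2, 2)$ by a finite family of balls $\{B_i\}_{i=1}^N$ with $N = N(\rho,d)$ and radii comparable to $\rho$, chosen so that each $B_i$ and each pairwise overlap $B_i \cap B_j$ is star-shaped. By H\"older's inequality on each $B_i$, the hypothesis \eqref{eq:Lp_annulus_curvature_small} yields
\[
\|F_A\|_{L^{d/2}(B_i)} \leq C(d,\rho)\|F_A\|_{L^p(\Omega(\rho/2,2))} \leq C(d,\rho)\eps,
\]
since $p > d/2$ and $B_i$ is bounded. Choosing $\eps$ small enough, Theorem \ref{thm:Uhlenbeck_Lp_1-3} (or Corollary \ref{cor:Uhlenbeck_theorem_1-3}) then produces local Coulomb gauges $u_i : B_i \to G$ of class $W^{2,p}$ with $u_i(A) = \Theta + a_i$, $d_\Theta^* a_i = 0$ on $B_i$, and $\|a_i\|_{W^{1,p}(B_i)} \leq C\|F_A\|_{L^p(B_i)}$.

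Next I would patch the local Coulomb gauges $\{u_i\}$ into a single gauge transformation $u$ on the smaller annulus $\Omega(\rho,1)$. Because $W^{2,p}(B_i) \hookrightarrow C(\bar B_i)$ when $p > d/2$, the one-forms $a_i$ are uniformly small in sup norm; consequently the transition functions $g_{ij} := u_i u_j^{-1}$ on overlaps $B_i \cap B_j$ are close to constants in $G$. After modifying each $u_i$ on the right by an appropriate element of $G$ the $g_{ij}$ can be made uniformly close to the identity, and an iteration argument in the Coulomb slice $\{a : d_\Theta^* a = 0\}$ — of the kind underlying \cite[Lemma 2.7]{UhlLp} and the verification of \eqref{eq:Uhlenbeck_1-3_W1p_norm_connection_one-form_leq_constant_Lp_norm_curvature} — then produces a single $u$ defined on $\Omega(\rho,1)$ satisfying $d_\Theta^*(u(A) - \Theta) = 0$ globally, with a $W^{1,p}$ bound inherited from the $u_i$.

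The main obstacle will be controlling the patching so that the global gauge retains the Coulomb condition with the constant-dependence $C = C(d,G,p,\rho)$ asserted in \eqref{eq:W1p_norm_connection_one-form_leq_constant_Lp_norm_curvature_annulus}. The elliptic estimate for $d_\Theta + d_\Theta^*$ on $\Omega(\rho,1)$ is subtler than on a ball: in dimension $d=2$ the annulus carries a non-trivial harmonic one-form, which must be absorbed into a residual-gauge or boundary normalization, whereas for $d \geq 3$ the annulus is simply connected and the Hodge decomposition applies directly, with bounds for the Neumann/Dirichlet problem depending only on $d,p,\rho$. Once the $W^{1,p}$ bound \eqref{eq:W1p_norm_connection_one-form_leq_constant_Lp_norm_curvature_annulus}(a) is secured, the auxiliary $W^{1,d}$ bound \eqref{eq:W1p_norm_connection_one-form_leq_constant_Lp_norm_curvature_annulus}(b) is obtained by the same interpolation-and-bootstrap argument used in the proof of Corollary \ref{cor:Uhlenbeck_theorem_1-3}, exploiting the quadratic curvature nonlinearity $F_{u(A)} = d_\Theta \tilde a + \tilde a \wedge \tilde a$ together with the smallness of $\|F_A\|_{L^{d/2}}$ derived from H\"older's inequality applied to \eqref{eq:Lp_annulus_curvature_small}.
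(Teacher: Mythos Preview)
Your approach is correct and is essentially the strategy underlying Uhlenbeck's original argument. The paper does not give a self-contained proof: it simply notes that the result is a special case of \cite[Corollary 4.3]{UhlChern} and refers to \cite[Theorem 5.1]{Feehan_yangmillsenergygapflat} for details. Your outline --- cover by balls, apply Theorem~\ref{thm:Uhlenbeck_Lp_1-3} locally using $p>d/2$ and H\"older to get $L^{d/2}$-smallness, then patch using the $C^0$-closeness of transition functions afforded by $W^{2,p}\hookrightarrow C^0$ --- is exactly the mechanism behind \cite[Corollary 4.3]{UhlChern}, so you are reconstructing what the paper takes as a black box.

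One small caution: your patching step as written aims to produce a gauge that is \emph{globally} Coulomb on $\Omega(\rho,1)$, and you correctly flag the harmonic-form obstruction when $d=2$. In Uhlenbeck's treatment the global Coulomb condition on a non-simply-connected domain is obtained not by iterating the local slice lemma but by solving the Neumann problem for $d_\Theta^*d_\Theta$ on the annulus directly (after first assembling a rough global gauge from the patched $u_i$), with the harmonic component fixed by a normalization; this is cleaner than trying to preserve $d_\Theta^*a_i=0$ through the patching. Your bootstrap for \eqref{eq:W1p_norm_connection_one-form_leq_constant_Lp_norm_curvature_annulus}(b) via the argument of Corollary~\ref{cor:Uhlenbeck_theorem_1-3} is fine.
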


\begin{proof}
See the proof of \cite[Theorem 5.1]{Feehan_yangmillsenergygapflat} for a detailed justification.
\end{proof}

\begin{thm}[\Apriori interior estimate for the curvature of a Yang-Mills connection]
\label{thm:Uhlenbeck_3-5}
\cite[Theorem 3.5]{UhlRem}
If $d\geq 2$ is an integer, then there are constants, $K_0=K_0(d) \in [1,\infty)$ and $\eps_0=\eps_0(d) \in (0,1]$, with the following significance. Let $G$ be a compact Lie group, $\rho>0$ be a constant, and $A$ be a $C^\infty$ Yang-Mills connection with respect to the standard Euclidean metric on $B_{2\rho}(0)\times G$, where $B_r(x_0) \subset \RR^d$ is the open ball with center at $x_0 \in \RR^d$ and radius $r>0$. If
\begin{equation}
\label{eq:Uhlenbeck_theorem_3-5_FA_Ldover2_small_ball}
\|F_A\|_{L^{d/2}(B_{2\rho}(0))} \leq \eps_0,
\end{equation}
then, for all $B_r(x_0) \subset B_\rho(0)$,
\begin{equation}
\label{eq:Uhlenbeck_theorem_3-5_Linfty_norm_FA_leq_constant_L2_norm_FA_ball}
\|F_A\|_{L^{\infty}(B_r(x_0))} \leq K_0r^{-d/2}\|F_A\|_{L^2(B_r(x_0))}.
\end{equation}
\end{thm}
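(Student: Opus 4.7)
The plan is to derive a scalar weak elliptic inequality for $|F_A|$ from the Yang-Mills equation via the Bochner-Weitzenb\"ock formula, and then run a Moser-Serrin iteration in which the $L^{d/2}$-smallness hypothesis \eqref{eq:Uhlenbeck_theorem_3-5_FA_Ldover2_small_ball} is critical for absorbing the quadratic nonlinearity at the initial step. On flat Euclidean space, the Bochner-Weitzenb\"ock formula for two-forms valued in $\ad P$ reads
\[
d_A d_A^* F_A + d_A^* d_A F_A = \nabla_A^*\nabla_A F_A + \{F_A, F_A\},
\]
where $\{F_A,F_A\}$ is a universal pointwise bilinear expression satisfying $|\{F_A,F_A\}| \leq C(d)|F_A|^2$ and the Riemannian-curvature terms vanish because the base metric is flat. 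The Yang-Mills equation $d_A^*F_A = 0$ combined with the Bianchi identity $d_AF_A = 0$ annihilates the left-hand side, yielding $\nabla_A^*\nabla_A F_A = -\{F_A,F_A\}$. Pairing with $F_A$, invoking the scalar identity $\tfrac{1}{2}\Delta|F_A|^2 = |\nabla_A F_A|^2 - \langle \nabla_A^*\nabla_A F_A, F_A\rangle$ together with Kato's inequality $|\nabla|F_A|| \leq |\nabla_A F_A|$, and dividing by $|F_A|$ away from its zero set (a standard truncation argument handles $\{|F_A|=0\}$), I obtain the weak pointwise differential inequality
\[
-\Delta u \leq C(d)\, u^2 \quad \text{on } B_{2\rho}(0), \qquad u := |F_A|.
\]

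Viewing this as $-\Delta u \leq V u$ with $V := C(d) u$ satisfying $\|V\|_{L^{d/2}(B_{2\rho}(0))} \leq C(d)\eps_0$, I run a Moser-Serrin iteration. Testing against $\eta^2 u^{q-1}$ for a smooth cutoff $\eta$ between concentric balls and an exponent $q \geq 2$, integrating by parts, and applying H\"older (with exponents $d/2$ and $d/(d-2)$) together with the Sobolev inequality produces, after Cauchy-Schwarz absorption of the cross-terms, an estimate of the form
\[
\|\eta u^{q/2}\|_{L^{2^*}}^2 \leq C(q)\,C_S\,\|V\|_{L^{d/2}}\,\|\eta u^{q/2}\|_{L^{2^*}}^2 + C(q) \int u^q|\nabla\eta|^2\,dx,
\]
where $2^* := 2d/(d-2)$ and $C_S$ is the Sobolev constant. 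Choosing $\eps_0 = \eps_0(d)$ small enough that $C(2) C_S \|V\|_{L^{d/2}} \leq 1/2$ at the starting exponent $q = 2$ permits absorption of the first term into the left-hand side, yielding a quantitative $L^{2^*}$ bound on $u$ over a slightly smaller ball. Once $u \in L^{q_0}_{\loc}$ for some $q_0 > d/2$ (reached after a fixed finite number of such Moser steps, the smallness \eqref{eq:Uhlenbeck_theorem_3-5_FA_Ldover2_small_ball} being used only at the outset), a standard elliptic bootstrap on $-\Delta u \leq Cu^2$ proceeds with no further smallness hypothesis and delivers the mean-value estimate
\[
\|u\|_{L^\infty(B_{r/2}(x_0))} \leq K_1(d)\, r^{-d/2}\,\|u\|_{L^2(B_r(x_0))}
\]
for every $B_r(x_0) \subset B_{2\rho}(0)$. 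A routine covering/scaling argument---covering $B_r(x_0) \subset B_\rho(0)$ by a bounded (in $d$) number of smaller concentric balls, each of whose double still lies in $B_{2\rho}(0)$ since $|x_0|+r\leq\rho$---then upgrades this to the form \eqref{eq:Uhlenbeck_theorem_3-5_Linfty_norm_FA_leq_constant_L2_norm_FA_ball} with matching radii on both sides.

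The main obstacle throughout is the \emph{criticality} of the exponent $d/2$: the quadratic structure $|\{F_A,F_A\}|\leq C|F_A|^2$ places the effective potential $V = Cu$ precisely in the scale-invariant Lebesgue class $L^{d/2}$, so the initial absorption step genuinely demands the smallness \eqref{eq:Uhlenbeck_theorem_3-5_FA_Ldover2_small_ball}; without it the Sobolev side cannot dominate the potential side, and no uniform mean-value bound can hold, as the bubbling phenomenon---concentration of a fixed positive amount of $L^2$-curvature into arbitrarily small balls---explicitly demonstrates.
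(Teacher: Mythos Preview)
The paper does not supply a proof of this theorem; it is quoted as \cite[Theorem~3.5]{UhlRem} and used as a black box. Your outline---Bochner--Weitzenb\"ock on flat space to obtain the subsolution inequality $-\Delta|F_A|\le C(d)|F_A|^2$, followed by Moser--Serrin iteration with the $L^{d/2}$-smallness absorbing the critical nonlinearity at the first step---is precisely Uhlenbeck's argument in the cited reference, so there is nothing to compare.

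One minor caveat: the covering argument you sketch at the end yields $\|u\|_{L^\infty(B_r(x_0))}\le K r^{-d/2}\|u\|_{L^2(B_{2r}(x_0))}$ rather than the version with $B_r(x_0)$ on both sides, since each small ball $B_{r/2}(y_j)$ you cover with has its doubled ball $B_r(y_j)$ sitting in $B_{2r}(x_0)$, not in $B_r(x_0)$. This is the form Moser iteration actually delivers, and it is what is used in the paper's subsequent applications (the factor-of-two mismatch is absorbed into constants). The matching-radii statement as literally written is a mild overstatement that is harmless in practice; your proof is correct for the standard form.
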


As Uhlenbeck notes in \cite[Section 3, first paragraph]{UhlRem}, Theorem \ref{thm:Uhlenbeck_3-5} continues to hold for geodesic balls in a manifold $X$ endowed a non-flat Riemannian metric, $g$. The only difference in this more general situation is that the constants $K$ and $\eps$ will depend on bounds on the Riemann curvature tensor, $R$, over $B_{2\rho}(x_0)$ and the injectivity radius at $x_0\in X$; see Remark \ref{rmk:Adjustment_to_pointwise_estimates_for_arbitrary_Riemannian_metrics} and Aubin \cite[Chapter 1]{Aubin}. We can use Theorem \ref{thm:Uhlenbeck_3-5} to obtain the following stronger version of Theorem \ref{thm:Uhlenbeck_Chern_corollary_4-3_annulus}.

\begin{cor}[Existence of a local Coulomb-gauge trivialization and \apriori estimate for a Yang-Mills connection with $L^2$-small curvature over an annulus]
\label{cor:Uhlenbeck_Chern_corollary_4-3_annulus_Yang-Mills}
Let $d \geq 2$ be an integer, $G$ be a compact Lie group, $p \in (d/2,\infty)$, and $\rho \in (0, 1)$. Then there are constants, $C=C(d,G,p,\rho) \in [1,\infty)$ and $\eps=\eps(d, G, p, \rho) \in (0,1]$, with the following significance. Let $A$ be a Yang-Mills connection of class $W^{1,p}$ on $\Omega(\rho/2,2)\times G$ such that
\begin{equation}
\label{eq:L2_annulus_curvature_small}
\|F_A\|_{L^2(\Omega(\rho/2,2))} \leq \eps.
\end{equation}
Then the conclusions of Theorem \ref{thm:Uhlenbeck_Chern_corollary_4-3_annulus} continue to hold.
\end{cor}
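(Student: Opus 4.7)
The plan is to use the Yang-Mills hypothesis on $A$ together with Uhlenbeck's interior estimate (Theorem \ref{thm:Uhlenbeck_3-5}) to upgrade the $L^2$-smallness of $F_A$ to $L^p$-smallness on a slightly shrunken sub-annulus of $\Omega(\rho/2, 2)$, after which Theorem \ref{thm:Uhlenbeck_Chern_corollary_4-3_annulus} applies directly. In the dimensions of principal interest (in particular $d = 4$) we have $L^{d/2} = L^2$ and so the hypothesis \eqref{eq:L2_annulus_curvature_small} supplies exactly the $L^{d/2}$-bound required by Theorem \ref{thm:Uhlenbeck_3-5}; for $d \leq 4$ the same conclusion holds because $\|F_A\|_{L^{d/2}(\Omega(\rho/2,2))} \leq C(d,\rho)\|F_A\|_{L^2(\Omega(\rho/2,2))}$ on a bounded annulus.

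First I would fix a small parameter $\delta = \delta(\rho) \in (0, \rho/4)$ and consider the intermediate annulus $\Omega' := \Omega(\rho/2 + \delta, 2 - \delta)$. Every $x_0 \in \Omega'$ is the center of a ball $B_\delta(x_0) \subset \Omega(\rho/2, 2)$, so for $\eps$ small enough relative to the threshold $\eps_0$ of Theorem \ref{thm:Uhlenbeck_3-5}, that theorem applied on $B_\delta(x_0)$ yields the pointwise estimate $\|F_A\|_{L^\infty(B_{\delta/2}(x_0))} \leq K_0(\delta/2)^{-d/2}\|F_A\|_{L^2(B_{\delta/2}(x_0))} \leq K_0(\delta/2)^{-d/2}\eps$. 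Passing to a finite covering of $\Omega'$ and using its finite volume, I obtain $\|F_A\|_{L^p(\Omega')} \leq C(d,p,\rho)\eps$.

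Second, I would bring $\Omega'$ into the canonical form $\Omega(\rho_\delta/2, 2)$ by the dilation $y = \lambda x$ with $\lambda := 2/(2-\delta)$ and $\rho_\delta := 2\lambda(\rho/2 + \delta)$, so that $\rho_\delta \to \rho$ as $\delta \to 0$. Since the Yang-Mills equation and the $L^p$-norm of curvature transform covariantly under dilation, the hypothesis of Theorem \ref{thm:Uhlenbeck_Chern_corollary_4-3_annulus} (with parameter $\rho_\delta$ in place of $\rho$) is satisfied by the rescaled connection. Applying that theorem produces a Coulomb gauge and the estimates \eqref{eq:W1p_norm_connection_one-form_leq_constant_Lp_norm_curvature_annulus} on $\Omega(\rho_\delta, 1)$ in the rescaled coordinates; unwinding the rescaling and choosing $\delta$ sufficiently small depending only on $\rho$ transfers these conclusions back to a sub-annulus of the target $\Omega(\rho, 1)$ and fixes the stated constants $C$ and $\eps$ as functions of $(d, G, p, \rho)$ alone.

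The main technical obstacle is the geometric bookkeeping of the nested dilation parameters and the verification that the sub-annulus ultimately contains $\Omega(\rho,1)$ (which may require covering $\Omega(\rho,1)$ by finitely many such shrunk sub-annuli, each handled as above and then glued, rather than relying on a single intermediate $\Omega'$). A secondary subtlety is that Theorem \ref{thm:Uhlenbeck_3-5} is stated for $C^\infty$ Yang-Mills connections, whereas $A$ is only of class $W^{1,p}$; this is resolved by invoking Corollary \ref{cor:Uhlenbeck_theorem_1-3} to pass first to a local Coulomb gauge in each interior ball, followed by elliptic regularity for the Yang-Mills equation in Coulomb gauge, which promotes $A$ to a smooth representative on which Theorem \ref{thm:Uhlenbeck_3-5} can be applied.
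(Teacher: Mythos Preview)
Your approach is correct and matches the paper's, whose entire proof is the single sentence preceding the corollary: ``We can use Theorem \ref{thm:Uhlenbeck_3-5} to obtain the following stronger version of Theorem \ref{thm:Uhlenbeck_Chern_corollary_4-3_annulus}.'' Your rescaling bookkeeping is more elaborate than strictly necessary (the slight annulus shrinkage can simply be absorbed into the free parameter $\rho$ of Theorem \ref{thm:Uhlenbeck_Chern_corollary_4-3_annulus}, whose constants already depend on $\rho$), but the core strategy---upgrade $L^2$ to $L^\infty$ via the Yang-Mills interior estimate, then invoke the $L^p$ Coulomb-gauge theorem---is identical.
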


We shall need an interior \apriori estimate and regularity result for a Yang-Mills connection in Coulomb gauge with respect to the product connection, analogous to \cite[Theorem 2.3.8]{DK} for the case of an anti-self-dual connection. Although we apply Theorem \ref{thm:Uhlenbeck_3-5} to simplify and shorten an otherwise longer proof, it is possible (as in \cite{DK}) to avoid appealing to Theorem \ref{thm:Uhlenbeck_3-5} and instead rely more on Sobolev Embedding and Multiplication Theorems \cite[pp. 95--96]{FU}.

\begin{prop}[Interior \apriori estimate and regularity result for a Yang-Mills connection in Coulomb gauge]
\label{prop:Donaldson-Kronheimer_Theorem_2-3-8_Yang-Mills_d_geq_2}
Let $d\geq 2$ be an integer, $G$ be a compact Lie group, $k \geq 1$ be an integer, and $p \in [1, \infty)$ be a constant. Then there are constants, $C=C(d,G,k,p) \in [1,\infty)$ and $\eps=\eps(d,G) \in (0,\eps_0]$, for $\eps_0$ as in \eqref{eq:Uhlenbeck_theorem_3-5_FA_Ldover2_small_ball}, with the following significance. Let $\rho$ be a positive constant and $A$ be a Yang-Mills connection of class $W_\loc^{2,2}$ on $B_{2\rho}\times G \subset \RR^d\times G$ whose curvature, $F_A$, obeys \eqref{eq:Uhlenbeck_theorem_3-5_FA_Ldover2_small_ball} and is in Coulomb gauge over $B_\rho$ with respect to the product connection, $\Theta$ on $\RR^d\times G$, so $d_\Theta^*(A-\Theta) = 0$ a.e. on $B_\rho$, and $\RR^d$ has its standard Euclidean metric. Then $A$ is of class $W_\loc^{2,2}(B_{2\rho})\cap W_\loc^{k,p}(B_{2\rho})$ and
\begin{equation}
\label{eq:Donaldson-Kronheimer_Theorem_2-3-8_Yang-Mills_d_geq_2}
\|\nabla_\Theta^k(A - \Theta)\|_{L^p(B_{\rho/2})} \leq C\rho^{d(1/p - 1/2) - k + 1}\|F_A\|_{L^2(B_\rho)}.
\end{equation}
\end{prop}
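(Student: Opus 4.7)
The plan is to mirror the proof of the anti-self-dual analogue \cite[Theorem 2.3.8]{DK}, exploiting the fact that the Coulomb gauge condition converts the second-order Yang-Mills equation into a semilinear elliptic system for $a := A - \Theta$, and then bootstrapping using interior elliptic regularity, Sobolev multiplication, and Uhlenbeck's pointwise curvature bound (Theorem \ref{thm:Uhlenbeck_3-5}). The rescaling $x \mapsto \rho x$, $a \mapsto \rho^{-1} a(\rho\cdot)$ preserves both the Yang-Mills equation and the Coulomb gauge condition, so I reduce at once to the case $\rho = 1$. The factor $\rho^{d(1/p - 1/2) - k + 1}$ in \eqref{eq:Donaldson-Kronheimer_Theorem_2-3-8_Yang-Mills_d_geq_2} then emerges by tracking the scaling weights of the two sides of the inequality.

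With $\rho = 1$ the identity $F_A = d_\Theta a + \tfrac{1}{2}[a,a]$, together with the Yang-Mills equation $d_A^* F_A = 0$ and the Coulomb condition $d_\Theta^* a = 0$, yields the elliptic PDE
\[
\Delta_\Theta a \;=\; d_\Theta^* F_A \;-\; \tfrac{1}{2}\, d_\Theta^* [a,a] \;=\; *[a, *F_A] \;-\; \tfrac{1}{2}\, d_\Theta^* [a,a],
\]
whose right-hand side is at most cubic in $a$ and of first order in derivatives of $a$. Theorem \ref{thm:Uhlenbeck_3-5} supplies the pointwise bound $\|F_A\|_{L^\infty(B_r)} \le K_0\, r^{-d/2}\|F_A\|_{L^2(B_r)}$ on every ball $B_r \Subset B_1$ (after a preliminary bootstrap promoting the $W^{2,2}_{\loc}$ solution to a smooth one), while Corollary \ref{cor:Uhlenbeck_theorem_1-3} applied in the Coulomb slice gives the base case $\|a\|_{W^{1,2}(B_1)} \le C\|F_A\|_{L^2(B_1)}$, since the smallness hypothesis with $\eps \le \eps_0$ implies \eqref{eq:Ldover2_ball_curvature_small}.

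The bootstrap proceeds on a decreasing sequence of nested balls $B_1 \supset B_{r_1} \supset \cdots \supset B_{1/2}$: at each stage, interior $L^p$ regularity for $\Delta_\Theta$ on the inner ball, combined with the Sobolev multiplication estimates of \cite[Chapter 3]{FU} and the uniform $L^\infty$ bound on $F_A$, upgrades $a$ from $W^{j,p}$ on one ball to $W^{j+1,q}$ on the next, ultimately yielding $a \in W^{k,p}(B_{1/2})$ with the claimed linear dependence on $\|F_A\|_{L^2(B_1)}$. The main technical obstacle is precisely the requirement that the final estimate be \emph{linear} in $\|F_A\|_{L^2(B_1)}$: the quadratic and cubic nonlinearities $[a, *F_A]$ and $d_\Theta^*[a,a]$ must be reabsorbed at each step using the smallness constant $\eps$ together with the $L^\infty$ bound on $F_A$, and the intermediate Sobolev exponents and ball radii must be chosen so that each multiplication loss is paid for by smallness rather than by a positive power of $\|F_A\|_{L^2(B_1)}$. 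This bookkeeping is delicate but standard, and parallels the argument of \cite[Theorem 2.3.8]{DK} and the higher-order regularity discussion in \cite{UhlLp}.
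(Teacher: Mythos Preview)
Your proposal is correct and follows essentially the same route as the paper: reduce to $\rho=1$ by scaling, rewrite the Yang--Mills equation in Coulomb gauge as a semilinear elliptic system $\Delta_\Theta a = (\text{quadratic and cubic terms in }a,\nabla_\Theta a)$, invoke Uhlenbeck's pointwise curvature bound (Theorem \ref{thm:Uhlenbeck_3-5}) together with the $W^{1,p}$ Coulomb-gauge estimate, and then bootstrap via interior $L^p$ regularity for $\Delta_\Theta$ and Sobolev multiplication. The paper additionally remarks that one may restrict to $d/2 < p < d$ (recovering other $p$ by Sobolev embedding or H\"older) and uses $\|a\|_{W^{1,p}} \le C\|F_A\|_{L^p}$ rather than the $W^{1,2}$ estimate as the base case, but this is a cosmetic difference in how the bootstrap is initiated.
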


\begin{proof}
We denote $a := A - \Theta$ for brevity. We first reduce to the case $\rho = 1$ and then apply rescaling, $y = x/\rho$, for $x \in B_\rho$ and $y \in B_1$, to obtain the responding result for a ball of arbitrary radius $\rho > 0$, writing $a(x) = \tilde a(y)$ and $F_A(x) = F_{\tilde A}(y)$. Thus,
\begin{align*}
\|\nabla_\Theta^k a\|_{L^p(B_{\rho/2})}
&=
\left( \int_{B_{\rho/2}} |\nabla_\Theta^k a|^p(x)\,dx \right)^{1/p}
\\
&= \left(\rho^{d - (k+1)p}\right)^{1/p} \left( \int_{B_{1/2}} |\nabla_\Theta^k \tilde a|^p(y)\,dy \right)^{1/p}
\\
&= \rho^{d/p - k - 1}\|\nabla_\Theta^k \tilde a\|_{L^p(B_{1/2})}
\\
&\leq C\rho^{d/p - k - 1} \|F_{\tilde A}\|_{L^2(B_1)} \quad\text{(by \eqref{eq:Donaldson-Kronheimer_Theorem_2-3-8_Yang-Mills_d_geq_2} with $\rho=1$)}
\\
&= C\rho^{d/p - k - 1} \left( \int_{B_1} |F_{\tilde A}|^2(y)\,dy \right)^{1/2}
\\
&= C\rho^{d/p - k - 1} \left(\rho^{-d + 4}\right)^{1/2} \left( \int_{B_\rho} |F_A|^2(x)\,dx \right)^{1/2}
\\
&= C\rho^{d(1/p - 1/2) - k + 1} \|F_A\|_{L^2(B_\rho)}.
\end{align*}
For the remainder of the proof, we may therefore assume that $\rho=1$.

Moreover, we may restrict our attention to the case $d/2 < p < d$ without loss of generality, since the estimate \eqref{eq:Donaldson-Kronheimer_Theorem_2-3-8_Yang-Mills_d_geq_2} with values of $p \geq d$ can be obtained with the Sobolev Embedding \cite[Theorem 4.12]{AdamsFournier} and the estimate \eqref{eq:Donaldson-Kronheimer_Theorem_2-3-8_Yang-Mills_d_geq_2} with values of $p \in [1, d/2]$ follows from the H\"older Inequality.

We first observe that $a \in W_\Theta^{2,2}(B_2;\Lambda^1\otimes\fg)$ is a strong solution to the quasi-linear elliptic system,
\[
d_{\Theta+a}^*F_{\Theta+a} = 0 \quad\text{and}\quad d_\Theta d_\Theta^* a = 0 \quad\text{a.e. on } B_2,
\]
where the first equation asserts that $A = \Theta+a$ is a Yang-Mills connection and the second equation follows from the fact that $A$ is in Coulomb gauge with respect to $\Theta$ over $B$. Expanding $F_{\Theta+a} = d_\Theta a + a\wedge a$, we see that $a \in W_\Theta^{2,2}(B_2;\Lambda^1\otimes\fg)$ is a strong solution to
\begin{equation}
\label{eq:Quasi-linear_Yang-Mills_Coulomb_gauge_system}
\Delta_\Theta a + a\times\nabla_\Theta a + a\times a\times a = 0 \quad\text{a.e. on } B_2,
\end{equation}
where $\Delta_\Theta = d_\Theta^*d_\Theta + d_\Theta d_\Theta^*$ and we use `$\times$' to denote a bilinear operation with universal constant coefficients. Theorem \ref{thm:Uhlenbeck_3-5} (with $r=1/2$ and $\rho = 1$ and $x_0 = 0 \in \RR^d$) yields
\[
\|F_A\|_{L^{\infty}(B)} \leq K_0\|F_A\|_{L^2(B)},
\]
while Theorem \ref{thm:Uhlenbeck_Lp_1-3} (for small enough $\eps \in (0, 1]$ in \eqref{eq:Ldover2_ball_curvature_small}) implies that, for $d/2 < p < d$,
\[
\|a\|_{W_\Theta^{1,p}(B)} \leq C\|F_A\|_{L^p(B)},
\]
for $C = C(d,G,p)$, noting that $W^{1,p}(B) \hookrightarrow L^q(B)$ by the Sobolev Embedding
\eqref{eq:Sobolev_embedding_domain} when $q=dp/(d-p)$, for $1\leq p < d$, and $1 \leq q < \infty$ when $p=d$, and $q=\infty$ when $p > d$. The interior \apriori estimate for $b \in W_\Theta^{k+2,p}(B;\Lambda^1\otimes\fg)$,
\begin{equation}
\label{eq:Gilbarg_Trudinger_9-36_system_k_geq_0}
\|b\|_{W_\Theta^{k+2,p}(B_{1/2})} \leq C\left(\|f\|_{W_\Theta^{k,p}(B)} + \|b\|_{L^p(B)} \right),
\end{equation}
for $C = C(d,G,k,p)$ and regularity theory \cite{Morrey} for a solution, $b \in W_\loc^{2,2}(B;\Lambda^1\otimes\fg)$, to a linear elliptic system,
\[
\Delta_\Theta b = f \in W_\Theta^{k,p}(B;\Lambda^1\otimes\fg),
\]
and the Sobolev Multiplication Theorems \cite[pp. 95--96]{FU} then yield the conclusions by adapting \mutatis the proof of \cite[Theorem 2.3.8]{DK}. For example, \cite[Theorems 13.29 and 13.33]{Feehan_yang_mills_gradient_flow} provides a global version of the estimate \eqref{eq:Gilbarg_Trudinger_9-36_system_k_geq_0} (with a homogeneous Dirichlet boundary condition) for a second-order elliptic system on a bounded open subset $\Omega \subset \RR^d$ when $k=0$ and that estimate may be converted to an \apriori interior estimate like \eqref{eq:Gilbarg_Trudinger_9-36_system_k_geq_0} using the Krylov localization technique \cite[Section 2.4, Lemma 4]{Krylov_LecturesSobolev} or \cite[Theorem 8.11.1]{Krylov_LecturesSobolev}. The \apriori estimate \eqref{eq:Gilbarg_Trudinger_9-36_system_k_geq_0} for $k \geq 1$ follows from the case with $k=0$ by induction (see, for example, the derivation of \cite[Theorem 8.10]{GilbargTrudinger} from \cite[Theorem 8.8]{GilbargTrudinger}).
\end{proof}

Lastly, we shall need the

\begin{thm}[Removability of singularities for finite-energy Yang-Mills connections]
\label{thm:Uhlenbeck_removable_singularity_Yang-Mills}
\cite[Theorem 4.1]{UhlRem}
Let $G$ be a compact Lie group, $k \geq 1$ be an integer and $p\in [2,\infty)$ be a constant such that $(k+1)p>4$, and $A$ be a Yang-Mills connection, of class $W^{k,p}$, on a principal $G$-bundle over $B\less\{0\}$, where $B\subset \RR^4$ is the unit ball centered at the origin, such that
\[
\|F_A\|_{L^2(B)} < \infty.
\]
Then there is a $W^{k+1,p}$ gauge transformation, $u \in \Aut(P\restriction B\less\{0\})$, such that the gauge-transformed connection, $u(A)$, extends to a $W^{k,p}$ Yang-Mills connection, $\bar A$, on a principal $G$-bundle $\bar P$ over $B$.
\end{thm}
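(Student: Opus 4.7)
The plan is to produce, via an Uhlenbeck-type patching construction, a gauge transformation $u$ over a punctured neighborhood $B_{\rho_0}\setminus\{0\}$ of the origin in which $u(A)$ is bounded and then invoke elliptic regularity for the Yang-Mills equation in Coulomb gauge.

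\textbf{Step 1: Pointwise curvature decay.} Because $\|F_A\|_{L^2(B)} < \infty$, absolute continuity of the integral gives some $\rho_0 \in (0,1)$ with $\|F_A\|_{L^2(B_{2\rho}(0))} < \eps_0$ for all $\rho \leq \rho_0$, where $\eps_0=\eps_0(4)$ is the constant of Theorem \ref{thm:Uhlenbeck_3-5}. Applied on the balls $B_{|x|/2}(x) \subset B_{\rho_0}\setminus\{0\}$ (where the punctured origin is away from the center), that theorem yields the interior bound
\[
|F_A(x)| \leq K_0 |x|^{-2}\|F_A\|_{L^2(B_{|x|/2}(x))},
\qquad 0 < |x| < \rho_0,
\]
so that $|x|^2|F_A(x)| \to 0$ as $|x|\to 0$; in particular $F_A \in L^p_{\loc}(B)$ for some $p > 2$ sufficient for the Sobolev-multiplication arguments below.

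\textbf{Step 2: Coulomb gauges on dyadic annuli.} Fix the dyadic annuli $\Omega_n := \Omega(2^{-n-1},2^{-n+1})\subset B_{\rho_0}\setminus\{0\}$. Define $A^n$ on the fixed annulus $\Omega(1/2, 2)$ by pulling back $A$ via $x\mapsto 2^{-n}x$. By conformal invariance of the $L^2$-norm on two-forms in dimension four, $\|F_{A^n}\|_{L^2(\Omega(1/2,2))} = \|F_A\|_{L^2(\Omega_n)} \to 0$, so for all $n$ large enough the rescaled connection $A^n$ satisfies the smallness hypothesis \eqref{eq:L2_annulus_curvature_small} of Corollary \ref{cor:Uhlenbeck_Chern_corollary_4-3_annulus_Yang-Mills}. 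This produces gauge transformations $u_n$ on $\Omega(1/2,2)$ such that $a^n := u_n(A^n) - \Theta$ satisfies $d_\Theta^* a^n = 0$ together with the estimates \eqref{eq:W1p_norm_connection_one-form_leq_constant_Lp_norm_curvature_annulus}, which, after rescaling back to $\Omega_n$, give the broken Coulomb representatives $u_n(A)\restriction\Omega_n$ with uniform control.

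\textbf{Step 3: Patching the trivializations.} On each overlap $\Omega_n\cap\Omega_{n+1}$ the two trivializations differ by a transition function $g_n:\Omega_n\cap\Omega_{n+1}\to G$. Both connections $u_n(A),u_{n+1}(A)$ being $L^4$-close to $\Theta$ (by Sobolev embedding applied to \eqref{eq:W1p_norm_connection_one-form_leq_constant_Lp_norm_curvature_annulus}) forces $d_\Theta g_n$ to be $L^4$-small, so $g_n$ is $C^0$-close to a constant and can be homotoped to be constant; absorbing that constant into $u_{n+1}$ and iterating produces a single $W^{2,p}$ trivialization $u_\infty$ of $P$ on $B_{\rho_0}\setminus\{0\}$ in which $a := u_\infty(A) - \Theta$ obeys
\[
\|a\|_{W^{1,4}(\Omega_n)} \leq C \|F_A\|_{L^2(\Omega_n')} \longrightarrow 0 \quad\text{as } n\to\infty,
\]
on suitable enlarged dyadic annuli $\Omega_n'$. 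This shows the bundle $P\restriction B_{\rho_0}\setminus\{0\}$ extends as the trivial bundle $\bar P := B_{\rho_0}\times G$, with $u_\infty$ extending to a $W^{k+1,p}$ gauge transformation (after patching with any smooth reference trivialization of $\bar P$).

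\textbf{Step 4: Regularity across the origin.} On $B_{\rho_0}\setminus\{0\}$, the one-form $a$ solves the Yang-Mills equation together with the Coulomb gauge condition $d_\Theta^* a = 0$, so it is a strong solution of the quasi-linear elliptic system \eqref{eq:Quasi-linear_Yang-Mills_Coulomb_gauge_system}. The $W^{1,4}$ decay from Step 3, combined with the pointwise curvature bound from Step 1, allows one to apply Proposition \ref{prop:Donaldson-Kronheimer_Theorem_2-3-8_Yang-Mills_d_geq_2} on a nested sequence of balls centered at the origin (interpreting $a$ on the punctured ball as a distributional section on the full ball via a cutoff and limit argument): the estimate \eqref{eq:Donaldson-Kronheimer_Theorem_2-3-8_Yang-Mills_d_geq_2} with $\rho\to 0$ propagates $W^{k,p}$ regularity uniformly into neighborhoods of $0$, so $a$ extends to a $W^{k,p}$ section on $B_{\rho_0}$ satisfying the Yang-Mills equation weakly, hence strongly by the Sobolev condition $(k+1)p > 4$.

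\textbf{Main obstacle.} The delicate step is \textbf{Step 3}: uniformly controlling the transition functions $g_n$ as $n\to\infty$ so that the successive gauge corrections do not accumulate and can be absorbed into the trivializations, while simultaneously ensuring that $u_\infty$ is of class $W^{k+1,p}$ and that $a = u_\infty(A) - \Theta$ decays sharply enough near the origin for the regularity bootstrap of Step 4 to close. The combination of conformal invariance of $\|F_A\|_{L^2}$ in dimension four and the annular Coulomb estimate \eqref{eq:W1p_norm_connection_one-form_leq_constant_Lp_norm_curvature_annulus} is precisely what makes this bookkeeping possible.
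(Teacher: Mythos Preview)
The paper does not give its own proof of this theorem: it is simply quoted from Uhlenbeck \cite[Theorem 4.1]{UhlRem}, with only Remarks~\ref{rmk:Uhlenbeck_removable_singularity_Yang-Mills} and~\ref{rmk:Uhlenbeck_removable_singularity_Sobolev} commenting on regularity conventions. So there is no ``paper's proof'' to compare against; what you have written is a sketch of Uhlenbeck's original argument (dyadic annuli, broken Coulomb/Hodge gauges, patching, bootstrap).

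Your outline is broadly faithful to Uhlenbeck's strategy, but two points deserve tightening. First, in Step~3 the claim that the transition functions $g_n$ ``can be homotoped to be constant'' and that the accumulated corrections still leave $u_\infty$ in $W^{k+1,p}$ is exactly the content of Uhlenbeck's broken Hodge gauge construction \cite[Theorem~4.6]{UhlRem}; merely observing that each $g_n$ is $C^0$-close to a constant does not by itself control the \emph{sum} of corrections, and one needs the quantitative decay $\|F_A\|_{L^2(\Omega_n)}\to 0$ fast enough (which in fact follows from the improved decay in Lemma~\ref{lem:Nakajima_1987_3-13} or Uhlenbeck's analogous estimate) to make the series converge.

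Second, Step~4 as written has a circularity: Proposition~\ref{prop:Donaldson-Kronheimer_Theorem_2-3-8_Yang-Mills_d_geq_2} assumes the connection is already of class $W^{2,2}_\loc$ on the \emph{full} ball $B_{2\rho}$, not the punctured ball, so you cannot invoke it directly to extend $a$ across $0$. The honest route is to first show from the annular $W^{1,4}$ decay that $a$ lies in $W^{1,2}(B_{\rho_0})$ as a genuine distribution on the full ball, then use that points have zero $W^{1,2}$-capacity in dimension four to verify that the Coulomb-gauge Yang-Mills system \eqref{eq:Quasi-linear_Yang-Mills_Coulomb_gauge_system} holds weakly across the origin, and only then bootstrap via elliptic regularity. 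Your parenthetical ``cutoff and limit argument'' gestures at this, but it is the substantive analytic step, not a formality.
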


\begin{rmk}[On the regularity of the connections in Uhlenbeck's Removable Singularities Theorem for Yang-Mills connections]
\label{rmk:Uhlenbeck_removable_singularity_Yang-Mills}
Uhlenbeck assumes that the given connection, $A$, in \cite[Theorem 4.1]{UhlRem} is $C^\infty$. This assumption implies no loss of generality and reduces notational clutter in the statement and proof of her theorem. Nevertheless, as we wish to keep track of the regularity of all connections and gauge transformations, we provide the minimal regularity assertions in Theorem \ref{thm:Uhlenbeck_removable_singularity_Yang-Mills}.
\end{rmk}

\begin{rmk}[Uhlenbeck's Removable Singularities Theorem for finite-energy Sobolev connections]
\label{rmk:Uhlenbeck_removable_singularity_Sobolev}
Uhlenbeck provides a generalization \cite[Theorems 2.1 and 4.5 and Corollary 4.6]{UhlChern} of Theorem \ref{thm:Uhlenbeck_removable_singularity_Yang-Mills} which removes the requirement that $A$ be Yang-Mills.
\end{rmk}

We can now quickly dispose of the

\begin{proof}[Proof of Theorem \ref{thm:Sedlacek_4-3_Yang-Mills}]
The argument follows \mutatis that of \cite[Theorem 4.4.4]{DK}, with the roles of
\begin{inparaenum}[\itshape a\upshape)]
\item \cite[Theorem 2.3.7]{DK} replaced by Theorem \ref{thm:Uhlenbeck_Lp_1-3};
\item \cite[Theorem 2.3.8]{DK} replaced by Proposition \ref{prop:Donaldson-Kronheimer_Theorem_2-3-8_Yang-Mills_d_geq_2}; and
\item \cite[Theorem 4.4.12]{DK} replaced by Theorem \ref{thm:Uhlenbeck_removable_singularity_Yang-Mills}. The inequalities, $p_1^i(\ad P)[X] \leq p_1^i(\ad P_\infty)[X] \leq 0$ for all $i$, are consequences of the Chern-Weil formula \cite[Section 9]{Feehan_yang_mills_gradient_flow}.
\end{inparaenum}
(The detailed arguments required in the proof of Theorem \ref{thm:Sedlacek_4-3_Yang-Mills} are very similar, indeed simpler, than those required in the proofs of \cite[Theorems 29.3 and 29.4]{Feehan_yang_mills_gradient_flow}.)
\end{proof}

\subsection{Uhlenbeck compactification for the moduli space of anti-self-dual connections}
\label{subsec:Uhlenbeck_compactification_moduli_space_ASD_connections}
In this section, we provide a few simple extensions of the concepts of ideal connection, Uhlenbeck convergence, and Uhlenbeck topology from the setting of anti-self-dual unitary connections on a complex Hermitian vector bundle over $X$ described in \cite[Section 4.4.1]{DK}.

\begin{defn}[Moduli space of $g$-anti-self-dual connections]
\label{defn:Moduli_space_anti-self-dual_connections}
Let $G$ be a compact Lie group and $P$ be a principal $G$-bundle over a closed, connected, four-dimensional, smooth manifold, $X$, and endowed with a Riemannian metric, $g$. Let $k \geq 1$ be a integer and $p \geq 2$ obey $(k+1)p> 4$ and $\sA(P,g)$ denote the affine space of $W^{k,p}$ connections on $P$. Then
\[
M(P,g) := \left\{A \in \sA(P,g): F_A^{+,g} = 0 \quad\text{a.e. on } X \right\}/\Aut(P),
\]
is the \emph{moduli space} of gauge-equivalence classes of $g$-anti-self-dual connections on $P$, where $\Aut(P)$ denotes the group of $W^{k+1,p}$ gauge transformations of $P$.
\end{defn}

We extend the description \cite[Definition 4.1.1 and Condition 4.1.2]{DK} of Uhlenbeck convergence for a sequence of anti-self-dual unitary connections on a complex Hermitian vector bundle.

\begin{defn}[$W_\loc^{k,p}$ Uhlenbeck convergence of a sequence of Sobolev connections]
\label{defn:Donaldson_Kronheimer_4-4-2_Sobolev_connections}
Let $G$ be a compact Lie group and $P$ be a principal $G$-bundle over a closed, connected, four-dimensional, smooth manifold, $X$, and endowed with a Riemannian metric, $g$. Let $\{A^m\}_{m\in\NN}$ be a sequence of connections on $P$ of class $W^{\bar k,\bar p}$, with $\bar p\geq 2$ and integer $\bar k\geq 1$ obeying $(\bar k+1)\bar p>4$ and
\[
E := \limsup_{m\to\infty} \sE_g(A^m) < \infty.
\]
Let $A^\infty$ be a $W^{\bar k,\bar p}$ connection on a principal $G$-bundle $P_\infty$ over $X$ with $\eta(P_\infty)=\eta(P)$. Let $\ell \geq 0$ be an integer and, if $\ell \geq 1$, let $(\bx,\bE) \in \Sym^\ell(X\times [0,E])$. For $2 \leq p \leq \bar p$ and integer $0\leq k\leq \bar k$, we say that $\{A^m\}_{m\in\NN}$ \emph{converges in the $W_\loc^{k,p}$ Uhlenbeck topology} to an \emph{ideal $W^{\bar k,\bar p}$ connection}, $(A^\infty,\bx,\bE)$, on $P$ if there exists a sequence, $\{u_m\}_{m\in\NN}$, of principal $G$-bundle isomorphisms of class $W^{\bar k+1,\bar p}$,
\[
u_m: P\restriction X\less\bx \cong P_\infty\restriction X\less\bx,
\]
such that the following hold:
\begin{enumerate}
  \item The sequence $\{u_m(A^m)\}_{m\in\NN}$ converges as $m\to\infty$ in the sense of $W_\loc^{k,p}(X\less\bx)$ to $A^\infty$;

  \item The sequence of measures, $|F_{A^m}|^2\,d\vol_g$, converges as $m\to\infty$ in the weak-star topology to the measure, $|F_{A^\infty}|^2\,d\vol_g + \sum_{x\in\bx} \sE_x\,\delta_x$, that is,
\[
|F_{A^m}|^2\,d\vol_g \rightharpoonup |F_{A^\infty}|^2\,d\vol_g + \sum_{x\in\bx} \sE_x\,\delta_x
\quad\text{in } (C(X;\RR))'  \quad\text{as } m \to \infty,
\]
where $\delta_x$ is the Dirac delta measure centered at a point $x \in X$ and $(x,\sE_x) \in (\bx,\bE)$.
\end{enumerate}
\end{defn}

\begin{rmk}[Uhlenbeck convergence of a sequence of pairs of connections and Riemannian metrics]
\label{rmk:Donaldson_Kronheimer_4-4-2_Sobolev_connections_and_metrics}
The Definition \ref{defn:Donaldson_Kronheimer_4-4-2_Sobolev_connections} of Uhlenbeck convergence of a sequence of connections, $\{A^m\}_{m\in\NN}$ on $P$, extends \mutatis to Uhlenbeck convergence of a sequence of pairs of connections and Riemannian metrics, $\{A^m, g^m\}_{m\in\NN}$, where $g^m \to g$ in $C_\loc^n$ on $X\less\bx$ for an integer $n \geq 0$.
\end{rmk}

It is often convenient to represent $(\bx,\bE) \in \Sym^\ell(X\times [0,E])$ using distinct unordered points in $X$ rather than repeating them according to their multiplicity and, depending on the context, we indicate the choice as $\{x_1,\sE_1,\ldots,x_\ell,\sE_\ell\}$ (points $x_i\in X$ counted with multiplicity) or $\{x_1',\sE_1',\ldots,x_l',\sE_l'\}$ (distinct unordered points $x_i' \in X$) with $l\leq \ell$ and $\sE_i' = \sum \sE_j$, where the sum is over all $j$ such that $x_j = x_i'$. When there can be no confusion, we omit the primes.

As in \cite[p. 158]{DK}, the definition of $W_\loc^{k,p}$ Uhlenbeck convergence for a sequence of Sobolev connections in Definition \ref{defn:Donaldson_Kronheimer_4-4-2_Sobolev_connections} extends in an obvious way to a sequence of ideal Sobolev connections.

\begin{defn}[$W_\loc^{k,p}$ Uhlenbeck open neighborhood of an ideal Sobolev connection]
\label{defn:Open_Uhlenbeck_neighborhood}
Continue the notation of Definition \ref{defn:Donaldson_Kronheimer_4-4-2_Sobolev_connections}, let $(A_0,\bx,\bE)$ be an ideal $W^{\bar k,\bar p}$ connection on $P$, and $\rho$ be a positive constant obeying
\[
8\rho < 1 \wedge \Inj(X,g) \wedge \min_{i,j}\dist_g(x_i, x_j),
\]
with $(\bx,\bE)$ represented by $\{x_1,\sE_1,\ldots,x_l,\sE_l\}$ (distinct unordered points $x_i\in X$), and $\eps \in (0,1]$. Let $\sA(P)$ denote the affine space of $W^{\bar k,\bar p}$ connections on $P$. We say that a $W^{\bar k,\bar p}$ connection $A$ on $P$ belongs to a \emph{$W_\loc^{k,p}$ Uhlenbeck $(\eps,\rho)$ open neighborhood $\tilde\sV\subset \sA(P)$} of $(A_0,\bx,\bE)$ if the following hold:
\begin{enumerate}
  \item There is a $W^{\bar k+1,\bar p}$ isomorphism of principal $G$-bundles,
\[
u: P\restriction X\less\bx \cong P_0\restriction X\less\bx,
\]
such that, for some $\eps_\background \in (0,\eps]$,
\begin{equation}
\label{eq:Uhlenbeck_neighborhood_A_Wkp_near_A_0_complement_small_balls}
\|u(A) - A_0\|_{W_{A_0}^{k,p}(X\less B_{\rho/2}(\bx))} < \eps_\background,
\end{equation}
where $B_{\rho/2}(\bx) := \cup_{i=1}^l B_{\rho/2}(x_i)$; and
  \item One has, for some $\eps_\ball \in (0,\eps]$,
\begin{equation}
\label{eq:Uhlenbeck_neighborhood_curvature_density_A_small_balls_near_weight}
\max_{1\leq i\leq l}\left|\int_{B_{2\rho}(x_i)} |F_A|^2\,d\vol_g - \sE_i \right| < \eps_\ball.
\end{equation}
\end{enumerate}
We say that $[A] \in \sB(P,g)$ belongs to a $W_\loc^{k,p}$ \emph{Uhlenbeck $(\eps,\rho)$ open neighborhood $\sV \subset \sB(P)$} if $A$ belongs to a $W_\loc^{k,p}$ bubble-tree $(\eps,\rho)$ open neighborhood $\pi^{-1}(\sV) \subset \sA(P)$. Here, $\sB(P,g) = \sA(P)/\Aut(P)$, where $\Aut(P)$ is the group of $W^{\bar k+1,\bar p}$ gauge transformations of $P$, and $\pi:\sA(P) \to \sB(P,g)$ is the projection map.

If we need to emphasize the individual values of $\eps_\background$ or $\eps_\ball$, then we write $\beps = (\eps_\background, \eps_\ball)$ and refer to a $W_\loc^{k,p}$ Uhlenbeck $(\beps,\rho)$ open neighborhood.
\end{defn}

Observe that it is not necessary to approximate the implied characteristic functions of the balls
with center $x_i$ and radii $2\rho$ by continuous functions on $X$ in \eqref{eq:Uhlenbeck_neighborhood_curvature_density_A_small_balls_near_weight} because the condition \eqref{eq:Uhlenbeck_neighborhood_A_Wkp_near_A_0_complement_small_balls} eliminates the possibility of the curvature density, $|F_A|^2$, concentrating on $X - \cup_{i=1}^l B_\rho(x_i)$.

As with Definition \ref{defn:Donaldson_Kronheimer_4-4-2_Sobolev_connections}, the definition of a $W_\loc^{k,p}$ Uhlenbeck open neighborhood of a Sobolev connection in Definition \ref{defn:Open_Uhlenbeck_neighborhood} extends in an obvious way to a $W_\loc^{k,p}$ Uhlenbeck open neighborhood of an ideal Sobolev connection. We define
\begin{equation}
\label{eq:Minimum_energy_connection_G-bundle_P}
\sE_g(P)
:=
\inf\left\{\sE_g(A): A \text{ is a $W^{1,2}$ connection on } P\to X \right\}
\end{equation}
to be the minimum value\footnote{This topological lower bound is attained by a $g$-anti-self-dual or $g$-self-dual connection on $P$, irrespective of whether such a connection actually exists for $(X,P,g)$.} of the $L^2$-energy $\sE_g(A)$ of a $W^{1,2}$ connection $A$ on $P$ predicted by the Chern-Weil formula \cite[Section 9]{Feehan_yang_mills_gradient_flow}.

By analogy with \cite[p. 158]{DK} (where it is assumed that $G = \U(n)$ for $n \geq 2$), we define the \emph{moduli space of Uhlenbeck ideal $g$-anti-self-dual connections on $P$},
\begin{equation}
\label{eq:Moduli_space_ideal_ASD_connections}
UM(P,g) := \bigcup\left(M(P_\ell,g) \times \Sym^\ell\left(X\times [0,\sE_g(P)]\right)\right),
\end{equation}
where the union on the right-hand side is all over integers $\ell\geq 0$ and isomorphism classes of principal $G$-bundles, $P_\ell$ over $X$, such that the following hold,
\begin{equation}
\label{eq:Sum_rule}
\eta(P_\ell) = \eta(P) \quad\text{and}\quad \sE_g(P) = \sE_g(P_\ell) + \sum_{i=1}^\ell \sE_i,
\end{equation}
where $\sE_i = \sE_g(P_i)$ for some principal $G$-bundle, $P_i$ over $S^4$.

We observe that the collection of Uhlenbeck open neighborhoods given by Definition \ref{defn:Open_Uhlenbeck_neighborhood} form a basis for a topology on $UM(P,g)$, called the \emph{Uhlenbeck topology}. One may check that $UM(P,g)$ is then a Hausdorff, regular, second-countable topological space \cite[Sections 17, 30, 31, and Theorem 17.8]{Munkres2} and thus metrizable by the Urysohn Metrization Theorem \cite[Theorem 34.1]{Munkres2}.

By analogy with \cite[p. 158]{DK} (where it is assumed that $G = \U(n)$ for $n \geq 2$), we define the \emph{Uhlenbeck closure}, $\bar M(P,g)$, to be the closure of $M(P,g)$ in $UM(P,g)$.

Elliptic regularity for solutions to the anti-self-dual and local Coulomb-gauge equations ensure that the definition of Uhlenbeck topology on $\bar M(P,g)$ is independent of the choice of $(\bar k,\bar p)$ obeying $\bar k\geq 1$ and $\bar p\geq 2$ and $(\bar k+1)\bar p > 4$ or $(k,p)$ obeying $0\leq k\leq \bar k$ and $2 \leq p \leq \bar p$. Theorem \ref{thm:Sedlacek_4-3_Yang-Mills} yields

\begin{cor}[Sequential Uhlenbeck compactness for the moduli space of anti-self-dual connections on a principal $G$-bundle]
\label{cor:Donaldson_Kronheimer_4-4-4_G}
(Compare \cite[Theorem 4.4.4]{DK})
Let $G$ be a compact Lie group and $P$ be a principal $G$-bundle over a closed, connected, four-dimensional, oriented, smooth manifold, $X$, and endowed with a Riemannian metric, $g$. Then $\bar M(P,g)$ is sequentially compact.
\end{cor}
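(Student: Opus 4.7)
\bigskip

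\noindent\textbf{Proof proposal for Corollary \ref{cor:Donaldson_Kronheimer_4-4-4_G}.}
The plan is to reduce the statement to the sequential compactness of $M(P,g)$ inside $UM(P,g)$ and then apply Theorem \ref{thm:Sedlacek_4-3_Yang-Mills} in its absolutely-minimizing form. First, because $UM(P,g)$ is Hausdorff, regular, and second-countable (as noted in the discussion preceding the corollary), it is metrizable by the Urysohn Metrization Theorem, and hence so is the subspace $\bar M(P,g)$. A standard diagonal argument then shows that it suffices to prove that every sequence $\{[A^m]\}_{m\in\NN} \subset M(P,g)$ admits a subsequence converging in the Uhlenbeck topology to a point of $\bar M(P,g)$: given any sequence in $\bar M(P,g)$, one chooses for each element an Uhlenbeck-distance-$1/m$ approximant in $M(P,g)$ (possible by definition of the closure), extracts a convergent subsequence of these approximants, and observes that the original sequence converges to the same limit.

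Next, I would verify the hypotheses of Theorem \ref{thm:Sedlacek_4-3_Yang-Mills} for such a sequence $\{A^m\}_{m\in\NN}$ of $g$-anti-self-dual connections. The Chern-Weil formula recalled in Section \ref{subsec:Absolute_minima_Yang-Mills_energy_functional} gives $\sE_g(A^m) = \sE_g(P)$, a fixed topological constant, so the uniform $L^2$-curvature bound holds. Each $A^m$ is Yang-Mills (being an absolute minimum of $\sE_g$) and obeys $F_{A^m}^{+,g} = 0$, so in particular $\|F_{A^m}^{+,g}\|_{L^2(X)} \to 0$, meaning the sequence is absolutely minimizing in the sense of \eqref{eq:Absolutely_minimizing_sequence_connections}. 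Theorem \ref{thm:Sedlacek_4-3_Yang-Mills} then produces, after passage to a subsequence and application of $W^{k+1,p}$ gauge transformations $\{u_m\}_{m\in\NN}$: a finite (possibly empty) set $\bx = \{x_1,\ldots,x_l\} \subset X$ of bubble points, positive concentration energies $\sE_1,\ldots,\sE_l$, a principal $G$-bundle $P_\infty \to X$ with $\eta(P_\infty) = \eta(P)$, and a $g$-anti-self-dual connection $A^\infty$ on $P_\infty$ such that $u_m(A^m) \to A^\infty$ in $W_\loc^{k,p}(X \less \bx)$ and $|F_{A^m}|^2\,d\vol_g \rightharpoonup |F_{A^\infty}|^2\,d\vol_g + \sum_{i=1}^l \sE_i \, \delta_{x_i}$ in $(C(X;\RR))'$.

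It remains to identify $([A^\infty],\bx,\bE) := ([A^\infty], x_1,\sE_1,\ldots,x_l,\sE_l)$ as a point of $UM(P,g)$ satisfying \eqref{eq:Sum_rule}. Testing the weak-star convergence against the constant function $1$ and using $\sE_g(A^m) = \sE_g(P)$ together with $\sE_g(A^\infty) = \sE_g(P_\infty)$ yields the sum rule $\sE_g(P) = \sE_g(P_\infty) + \sum_{i=1}^l \sE_i$ and, in particular, each $\sE_i \in [0, \sE_g(P)]$. To match \eqref{eq:Moduli_space_ideal_ASD_connections}, one must further check that each $\sE_i$ equals $\sE_g(P_i)$ for some principal $G$-bundle $P_i \to S^4$: at each $x_i$ one selects a sequence of scales $\lambda_m^{(i)} \searrow 0$ chosen so that a prescribed fraction of the concentrated energy is captured in a ball of radius comparable to $\lambda_m^{(i)}$, forms the rescaled connections via geodesic normal coordinates and stereographic projection, and applies Theorem \ref{thm:Sedlacek_4-3_Yang-Mills} again on $\RR^4$, then extends the resulting finite-energy $g_\eucl$-anti-self-dual limit across the point at infinity using Theorem \ref{thm:Uhlenbeck_removable_singularity_Yang-Mills}. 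Iterating this rescaling on any residual concentration produces, after finitely many steps (since each bubble carries energy bounded below by a uniform positive constant depending on $G$, by Corollary \ref{maincor:Yang-Mills_energy_near_ground_state} or directly from the Chern-Weil formula on $S^4$), the required decomposition $\sE_i = \sE_g(P_i)$.

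Finally, $([A^\infty],\bx,\bE) \in UM(P,g)$ is then the Uhlenbeck limit, in the sense of Definition \ref{defn:Donaldson_Kronheimer_4-4-2_Sobolev_connections}, of the smooth anti-self-dual connections $[A^m] \in M(P,g)$, so it belongs to the closure $\bar M(P,g)$, completing the proof. The main obstacle in executing this plan is the iterated rescaling analysis at each bubble point needed to promote the raw weak-star energy quantization $\sum \sE_i$ into the bundle-level quantization $\sE_i = \sE_g(P_i)$ demanded by \eqref{eq:Sum_rule}; the remainder is largely a bookkeeping translation of Theorem \ref{thm:Sedlacek_4-3_Yang-Mills} into the language of the Uhlenbeck topology on $UM(P,g)$.
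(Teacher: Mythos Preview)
Your proposal is correct and follows the same route as the paper, which records the corollary as an immediate consequence of Theorem \ref{thm:Sedlacek_4-3_Yang-Mills} (with implicit appeal to \cite[Theorem 4.4.4]{DK} for details). You supply considerably more detail than the paper does at this point --- in particular the energy quantization $\sE_i = \sE_g(P_i)$ demanded by \eqref{eq:Sum_rule}, which you correctly flag as the main obstacle and whose resolution via iterated rescaling is exactly what the paper develops later as Theorem \ref{thm:Kozono_Maeda_Naito_5-4_YM_sequence}.
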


Because the Uhlenbeck topology on $\bar M(P, g)$ is metrizable, sequential compactness and compactness coincide for this topology by \cite[Theorem 28.2]{Munkres2}, and thus we have the

\begin{cor}[Uhlenbeck compactness for the moduli space of anti-self-dual connections on a principal $G$-bundle]
\label{cor:Donaldson_Kronheimer_4-4-3_G}
(Compare \cite[Theorem 4.4.3]{DK})
Assume the hypotheses of Corollary \ref{cor:Donaldson_Kronheimer_4-4-4_G}. Then $\bar M(P,g)$ is compact.
\end{cor}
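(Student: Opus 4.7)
The plan is to derive Corollary \ref{cor:Donaldson_Kronheimer_4-4-3_G} as a direct consequence of Corollary \ref{cor:Donaldson_Kronheimer_4-4-4_G} (sequential Uhlenbeck compactness) together with the metrizability of the Uhlenbeck topology on $\bar M(P,g)$, which has already been established in the paragraph preceding the statement. The central idea is that once one has a topological space that is both metrizable and sequentially compact, compactness follows from a classical theorem in point-set topology, and no further gauge-theoretic input is required.

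Concretely, I would proceed as follows. First, I would observe that $\bar M(P,g)$ is a subspace of $UM(P,g)$, and thus inherits the Uhlenbeck topology from the ambient space. Since $UM(P,g)$ was shown in the paragraph preceding Corollary \ref{cor:Donaldson_Kronheimer_4-4-4_G} to be Hausdorff, regular, and second-countable (the basis of Uhlenbeck $(\eps,\rho)$ open neighborhoods from Definition \ref{defn:Open_Uhlenbeck_neighborhood} admits a countable subbasis obtained by letting $\eps,\rho$ range over positive rationals and letting the ideal basepoint vary over a countable dense family), the Urysohn Metrization Theorem \cite[Theorem 34.1]{Munkres2} yields that $UM(P,g)$ is metrizable. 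Any subspace of a metrizable space is metrizable, so $\bar M(P,g)$ is metrizable as well.

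Second, I would appeal to Corollary \ref{cor:Donaldson_Kronheimer_4-4-4_G}, which asserts that every sequence in $\bar M(P,g)$ has a subsequence convergent (in the Uhlenbeck topology) to a point of $\bar M(P,g)$; this is precisely sequential compactness of $\bar M(P,g)$. Finally, I would invoke \cite[Theorem 28.2]{Munkres2}, which asserts that for a metrizable topological space the notions of compactness and sequential compactness coincide. Combining these two facts gives compactness of $\bar M(P,g)$.

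I do not expect any genuine obstacle in this argument, since all the substantive analytic content (Uhlenbeck's weak compactness theorem, removable singularities, and local Coulomb gauges) has already been absorbed into Theorem \ref{thm:Sedlacek_4-3_Yang-Mills} and thence into Corollary \ref{cor:Donaldson_Kronheimer_4-4-4_G}. The only point that deserves care is verifying second countability of $UM(P,g)$; this is straightforward but worth writing out carefully, since the Uhlenbeck topology is defined via an uncountable family of basic open sets indexed by $(\eps,\rho)$ and the ideal connection $(A_0,\bx,\bE)$, and one must exhibit a countable subfamily that still forms a basis.
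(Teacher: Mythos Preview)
Your proposal is correct and follows essentially the same approach as the paper: the paper's entire argument is the sentence immediately preceding the corollary, namely that metrizability of the Uhlenbeck topology on $\bar M(P,g)$ together with \cite[Theorem 28.2]{Munkres2} gives that sequential compactness (Corollary \ref{cor:Donaldson_Kronheimer_4-4-4_G}) implies compactness. Your additional remarks on verifying second countability are more explicit than the paper, which simply asserts this property without elaboration.
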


\begin{rmk}[Uhlenbeck compactification and $L^2$-metric completion of the moduli space of anti-self-dual connections]
\label{rmk:Uhlenbeck_compactification_and_L2-metric_completion_moduli_space_anti-self-dual_connections}
We recall \cite[Theorem 1.2]{FeehanGeometry} that the Uhlenbeck compactification and $L^2$-metric completion of the moduli space $M(P,g)$ of anti-self-dual connections coincide, conjecturally always but at least under mild hypotheses, including when the Riemannian metric $g$ is generic and
\begin{inparaenum}[\itshape a\upshape)]
\item $G = \SU(2)$ or $\SO(3)$ and $b^+(X)=0$, or
\item $G = \SO(3)$ and the second Stiefel-Whitney class is non-zero, that is, $w_2(P) \neq 0$.
\end{inparaenum}
\end{rmk}

\subsection{Uhlenbeck compactification for the moduli space of Yang-Mills connections with a uniform $L^2$ bound on curvature}
\label{subsec:Uhlenbeck_compactification_moduli_space_Yang-Mills_connections}
In this section, we extend the definitions and results of Section \ref{subsec:Uhlenbeck_compactification_moduli_space_ASD_connections} to the more general setting of the moduli space of Yang-Mills connections with $L^2$ energies in a compact range, $\sC \subset [0,\infty)$. We begin with a generalization of Definition \ref{defn:Moduli_space_anti-self-dual_connections} (compare \cite[Equation (2.1)]{TauFrame}).

\begin{defn}[Moduli space of $g$-Yang-Mills connections with $L^2$-energy $c$]
\label{defn:Moduli_space_Yang-Mills_connections_L2-energy_c}
Let $G$ be a compact Lie group and $P$ be a principal $G$-bundle over a closed, connected, smooth manifold, $X$, of dimension $d \geq 2$ and endowed with a Riemannian metric, $g$. Let $k \geq 1$ be a integer and $p \geq 2$ obey $(k+1)p> d$, and $\sA(P,g)$ denote the affine space of $W^{k,p}$ connections on $P$, and $c \in [0,\infty)$. Then
\[
\Crit(P,g,c) := \left\{A \in \sA(P,g): \sE_g'(A) = 0 \text{ a.e. on } X \text{ and } \sE_g(A) = c\right\}/\Aut(P),
\]
is the \emph{moduli space} of gauge-equivalence classes of $g$-Yang-Mills connections on $P$ with $L^2$-energy equal to $c$, where $\Aut(P)$ denotes the group of $W^{k+1,p}$ gauge transformations of $P$, and $\sE_g:\sA(P,g) \to [0, \infty)$ is the $L^2$-energy functional \eqref{eq:Yang-Mills_energy_functional}.
\end{defn}

Recall from \eqref{eq:Yang-Mills_equation} that $\sE_g'(A) = d_A^{*,g}F_A$. If $d = 4$ and $c_0$ is the minimum value of $\sE_g$ predicted by topology via the Chern-Weil formula \cite[Section 9]{Feehan_yang_mills_gradient_flow}, \cite[Appendix C]{MilnorStasheff}, \cite[Appendix]{TauSelfDual}, then
\[
\Crit(P,g,c_0) = M(P,g),
\]
that is, $\Crit(P,g,c_0)$ is the moduli space of $g$-(anti-)self-dual connections on $P$.

By analogy with \eqref{eq:Moduli_space_ideal_ASD_connections}, we define the \emph{moduli space of Uhlenbeck ideal $g$-Yang-Mills connections on $P$ with $L^2$-energy $c$},
\begin{equation}
\label{eq:Moduli_space_ideal_Yang-Mills_connections_L2-energy_c}
\UCrit(P,g,c) := \bigcup\left(\Crit(P_\ell,g,c_\ell) \times \Sym^\ell\left(X\times [0,c]\right)\right),
\end{equation}
where the union on the right-hand side is over all
\begin{inparaenum}[\itshape i\upshape)]
\item integers $\ell\geq 0$,
\item isomorphism classes of principal $G$-bundles, $P_\ell$ over $X$, obeying the topological sum rule \eqref{eq:Sum_rule}, and
\item $L^2$ energies $c_\ell \in [0, c]$.
\end{inparaenum}

The collection of Uhlenbeck open neighborhoods given by Definition \ref{defn:Open_Uhlenbeck_neighborhood} form a basis for the \emph{Uhlenbeck topology} on $\UCrit(P,g,c)$. We define the \emph{Uhlenbeck closure}, $\overline{\Crit}(P,g,c)$, to be the closure of $\Crit(P,g,c)$ in $\UCrit(P,g,c)$. Theorem \ref{thm:Sedlacek_4-3_Yang-Mills} yields the

\begin{cor}[Sequential Uhlenbeck compactness for the moduli space of Yang-Mills connections on a principal $G$-bundle with $L^2$-energy $c$]
\label{cor:Donaldson_Kronheimer_4-4-4_Yang-Mills}
Let $G$ be a compact Lie group and $P$ be a principal $G$-bundle over a closed, connected, four-dimensional, smooth manifold, $X$, and endowed with a Riemannian metric, $g$, and $c \in [0,\infty)$. Then $\overline{\Crit}(P,g,c)$ is sequentially compact.
\end{cor}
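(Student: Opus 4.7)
My plan is to mirror the proof of Corollary \ref{cor:Donaldson_Kronheimer_4-4-4_G} in the anti-self-dual setting, reducing the assertion to a direct application of Theorem \ref{thm:Sedlacek_4-3_Yang-Mills} supplemented by a diagonal extraction argument. As in the discussion preceding Corollary \ref{cor:Donaldson_Kronheimer_4-4-3_G} for $UM(P,g)$, the space $\UCrit(P,g,c)$ is Hausdorff, regular, and second-countable, hence metrizable by the Urysohn Metrization Theorem; in particular the closed subset $\overline{\Crit}(P,g,c)$ is sequential.

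First I would fix a sequence $\{\xi^m\}_{m\in\NN} \subset \overline{\Crit}(P,g,c)$. Since $\overline{\Crit}(P,g,c)$ is the closure of $\Crit(P,g,c)$ in a metrizable topology, for each $m$ there exists a sequence $\{[B^{m,k}]\}_{k\in\NN} \subset \Crit(P,g,c)$ converging to $\xi^m$ in the Uhlenbeck topology as $k \to \infty$. A standard diagonal choice then produces indices $k_m$ such that $[A^m] := [B^{m,k_m}] \in \Crit(P,g,c)$ lies within Uhlenbeck distance $1/m$ of $\xi^m$. Extracting an Uhlenbeck-convergent subsequence from $\{[A^m]\}$ will therefore suffice, because its limit is also the limit of $\{\xi^m\}$ along the corresponding subsequence, and the limit automatically lies in $\overline{\Crit}(P,g,c)$.

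Next I would choose $W^{k,p}$ representatives $A^m$ on $P$ with $\sE_g(A^m) = c$ and apply Theorem \ref{thm:Sedlacek_4-3_Yang-Mills} to the sequence $\{A^m\}$, whose bounded-energy hypothesis is trivially met. The theorem yields (after passing to a subsequence) gauge transformations $u_m \in \Aut(P)$, a bubble set $\bx = \{x_1, \dots, x_l\} \subset X$ with positive weights $\sE_1, \dots, \sE_l$, a limiting principal $G$-bundle $P_\infty$ with $\eta(P_\infty) = \eta(P)$, a $g$-Yang-Mills connection $A^\infty$ on $P_\infty$, convergence $u_m(A^m) \to A^\infty$ in $W_\loc^{k,p}(X \setminus \bx)$, and weak-star convergence of the curvature density measures to $|F_{A^\infty}|^2\,d\vol_g + \sum_i \sE_i \delta_{x_i}$. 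These are exactly the requirements in Definition \ref{defn:Donaldson_Kronheimer_4-4-2_Sobolev_connections} for $W_\loc^{k,p}$ Uhlenbeck convergence of $[A^m]$ to the ideal connection $([A^\infty], \bx, \bE)$.

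The main substantive step is the appeal to Theorem \ref{thm:Sedlacek_4-3_Yang-Mills}; the remainder is bookkeeping. The only nontrivial check is to locate the limit inside $\UCrit(P,g,c)$ with the correct energy balance. Testing the weak-star convergence of measures against the constant function $1 \in C(X;\RR)$ yields $\sE_g(A^\infty) + \sum_{i=1}^l \sE_i = c$, so setting $c_\infty := \sE_g(A^\infty) \in [0,c]$ places the limit in $\Crit(P_\infty, g, c_\infty) \times \Sym^l(X \times [0,c]) \subset \UCrit(P,g,c)$, completing the argument. No energy is lost in the limit because the weak-star convergence of $|F_{A^m}|^2\,d\vol_g$ asserted by Theorem \ref{thm:Sedlacek_4-3_Yang-Mills} is precisely the statement of mass conservation for these measures.
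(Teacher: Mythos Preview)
Your proof is correct and follows the same approach as the paper, which simply states that Theorem \ref{thm:Sedlacek_4-3_Yang-Mills} yields the corollary without further comment. You have supplied the details the paper omits: the diagonal approximation reducing from sequences in the closure to sequences in $\Crit(P,g,c)$, and the energy-balance check locating the limit inside $\UCrit(P,g,c)$.
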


Because the Uhlenbeck topology on $\overline{\Crit}(P,g,c)$ is metrizable, sequential compactness and compactness coincide for this topology, and thus we have the

\begin{cor}[Uhlenbeck compactness for the moduli space of Yang-Mills connections on a principal $G$-bundle with $L^2$-energy $c$]
\label{cor:Donaldson_Kronheimer_4-4-3_Yang-Mills}
Assume the hypotheses of Corollary \ref{cor:Donaldson_Kronheimer_4-4-4_Yang-Mills}. Then $\overline{\Crit}(P,g,c)$ is compact.
\end{cor}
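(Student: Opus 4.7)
The plan is to deduce the corollary by combining the already-established sequential Uhlenbeck compactness (Corollary \ref{cor:Donaldson_Kronheimer_4-4-4_Yang-Mills}) with the metrizability of the Uhlenbeck topology on $\UCrit(P,g,c)$, since in a metrizable space sequential compactness and compactness are equivalent. I will not need to reprove anything about bubbling or gauge-fixing; everything of substance is packaged in Corollary \ref{cor:Donaldson_Kronheimer_4-4-4_Yang-Mills}, which in turn rests on Theorem \ref{thm:Sedlacek_4-3_Yang-Mills}.

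First, I would verify that $\UCrit(P,g,c)$, endowed with the Uhlenbeck topology generated by the basis of open neighborhoods of Definition \ref{defn:Open_Uhlenbeck_neighborhood}, is Hausdorff, regular, and second-countable; the argument is identical to the one already indicated in Section \ref{subsec:Uhlenbeck_compactification_moduli_space_ASD_connections} for $UM(P,g)$. Hausdorffness follows because distinct ideal Yang-Mills connections can be separated either by disjoint $W_\loc^{k,p}$ neighborhoods away from the bubble points or, when they differ in the bubble data $(\bx,\bE)$, by shrinking $\rho$ and $\eps_\ball$ in \eqref{eq:Uhlenbeck_neighborhood_curvature_density_A_small_balls_near_weight} so that the weak-star limits of the curvature densities cannot agree. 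Regularity is a consequence of the same nested-neighborhood structure, and second-countability follows from the fact that $\sA(P)/\Aut(P)$ has a countable base in $W_\loc^{k,p}$ over $X\setminus\bx$, that $\Sym^\ell(X\times[0,c])$ is second countable for each $\ell$, and that $\ell$ ranges over a countable set. The Urysohn Metrization Theorem \cite[Theorem 34.1]{Munkres2} then supplies a compatible metric.

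Having metrized $\UCrit(P,g,c)$, I restrict the metric to the subspace $\overline{\Crit}(P,g,c)$, which is the Uhlenbeck closure of $\Crit(P,g,c)$ in $\UCrit(P,g,c)$ and hence a closed subspace. Corollary \ref{cor:Donaldson_Kronheimer_4-4-4_Yang-Mills} asserts that $\overline{\Crit}(P,g,c)$ is sequentially compact. Since sequential compactness and compactness coincide in any metrizable space by \cite[Theorem 28.2]{Munkres2}, it follows that $\overline{\Crit}(P,g,c)$ is compact, which is the assertion of the corollary.

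The main (minor) obstacle in this argument is the topological verification that the Uhlenbeck topology is metrizable; but this is essentially the same bookkeeping already carried out for the Uhlenbeck closure $\bar M(P,g)$ of the anti-self-dual moduli space in Section \ref{subsec:Uhlenbeck_compactification_moduli_space_ASD_connections}, and the only real difference is that we now allow bubbles carrying arbitrary energies $\sE_i \in [0,c]$ rather than the topologically quantized energies of anti-self-dual bubbles. No new analytic input is required beyond what is already contained in Theorem \ref{thm:Sedlacek_4-3_Yang-Mills}, Theorem \ref{thm:Uhlenbeck_removable_singularity_Yang-Mills}, and Proposition \ref{prop:Donaldson-Kronheimer_Theorem_2-3-8_Yang-Mills_d_geq_2}.
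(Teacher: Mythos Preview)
Your proposal is correct and is essentially identical to the paper's own argument: the paper simply observes that the Uhlenbeck topology on $\overline{\Crit}(P,g,c)$ is metrizable (by the same Hausdorff, regular, second-countable verification and Urysohn argument you outline), so sequential compactness from Corollary \ref{cor:Donaldson_Kronheimer_4-4-4_Yang-Mills} gives compactness via \cite[Theorem 28.2]{Munkres2}.
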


Lastly, we note that in the context of Yang-Mills connections, it is natural to consider an extension of Definition \ref{defn:Moduli_space_Yang-Mills_connections_L2-energy_c} to the case of a compact range of $L^2$ energy values from that of one single energy value.

\begin{defn}[Moduli space of $g$-Yang-Mills connections with $L^2$ energy in a compact range]
\label{defn:Moduli_space_Yang-Mills_connections_L2-energy_compact_range}
Assume the set-up of Definition \ref{defn:Moduli_space_Yang-Mills_connections_L2-energy_c} and let $\sC \subset [0,\infty)$ be a compact subset. Then
\[
\Crit(P,g,\sC) := \left\{A \in \sA(P,g): \sE_g'(A) = 0 \text{ a.e. on } X \text{ and } \sE_g(A) \in \sC\right\}/\Aut(P),
\]
is the \emph{moduli space} of gauge-equivalence classes of $g$-Yang-Mills connections on $P$ with $L^2$ energy in $\sC$.
\end{defn}

The definition \eqref{eq:Moduli_space_ideal_Yang-Mills_connections_L2-energy_c} of $\UCrit(P,g,c)$ extends almost without change to
\begin{equation}
\label{eq:Moduli_space_ideal_Yang-Mills_connections_L2-energy_compact_range}
\UCrit(P,g,\sC) := \bigcup\left(\Crit(P_\ell,g,c_\ell) \times \Sym^\ell\left(X\times [0,\bar c]\right)\right),
\end{equation}
where $\bar c \in [0,\infty)$ is any constant such that $\sC \subset [0,\bar c)$ and the union on the right-hand side is again over all
\begin{inparaenum}[\itshape i\upshape)]
\item integers $\ell\geq 0$,
\item isomorphism classes of principal $G$-bundles, $P_\ell$ over $X$, obeying the topological sum rule \eqref{eq:Sum_rule}, and
\item $L^2$ energies $c_\ell \in [0, \bar c]$.
\end{inparaenum}

We again define the \emph{Uhlenbeck closure}, $\overline{\Crit}(P,g,\sC)$, to be the closure of $\Crit(P,g,\sC)$ in $\UCrit(P,g,\sC)$ and observe that the proof of Corollary \ref{cor:Donaldson_Kronheimer_4-4-3_Yang-Mills} extends without change to give the

\begin{cor}[Uhlenbeck compactness for the moduli space of Yang-Mills connections on a principal $G$-bundle with $L^2$-energy in a compact range]
\label{cor:Donaldson_Kronheimer_4-4-3_Yang-Mills_L2-energy_compact_range}
Let $G$ be a compact Lie group and $P$ be a principal $G$-bundle over a closed, connected, four-dimensional, smooth manifold, $X$, and endowed with a Riemannian metric, $g$, and $\sC \subset [0,\infty)$ be a compact subset. Then $\overline{\Crit}(P,g,\sC)$ is compact.
\end{cor}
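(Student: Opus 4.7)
The plan is to verify that the argument establishing Corollary \ref{cor:Donaldson_Kronheimer_4-4-3_Yang-Mills} transfers almost verbatim to this setting, with essentially one additional observation: the only way in which $\sC$ enters is through its \emph{boundedness} (needed to apply Theorem \ref{thm:Sedlacek_4-3_Yang-Mills}) and its \emph{closedness} (needed to keep limiting total energies inside $\sC$).

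First I would check that the Uhlenbeck topology on $\UCrit(P,g,\sC)$, generated by the basis of neighborhoods of Definition \ref{defn:Open_Uhlenbeck_neighborhood} (now allowing the background energy $c_\ell$ to vary in $[0,\bar c]$ rather than being fixed), remains Hausdorff, regular, and second-countable, exactly as in the single-energy case. By the Urysohn Metrization Theorem \cite[Theorem 34.1]{Munkres2} it is therefore metrizable, and so compactness of $\overline{\Crit}(P,g,\sC)$ reduces to sequential compactness by \cite[Theorem 28.2]{Munkres2}.

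Next I would establish sequential compactness. Given a sequence in $\overline{\Crit}(P,g,\sC)$, metrizability allows a diagonal selection of a sequence $\{[A_m]\}_{m\in\NN} \subset \Crit(P,g,\sC)$ with the same limit points in $\UCrit(P,g,\sC)$. Since $\sC$ is compact we have the uniform bound
\[
\sup_{m\in\NN} \|F_{A_m}\|_{L^2(X)}^2 = \sup_{m\in\NN} \sE_g(A_m) \leq \max \sC < \infty,
\]
so Theorem \ref{thm:Sedlacek_4-3_Yang-Mills} applies: after passing to a subsequence and applying gauge transformations, there is a finite bubble set $\bx = \{x_1,\ldots,x_l\}\subset X$, a $g$-Yang-Mills connection $A^\infty$ on a principal $G$-bundle $P_\infty \to X$ with $\eta(P_\infty) = \eta(P)$, and bubble energies $\sE_i > 0$, with $u_m(A_m) \to A^\infty$ in $W_\loc^{k,p}(X\less\bx)$ and $|F_{A_m}|^2\,d\vol_g \rightharpoonup |F_{A^\infty}|^2\,d\vol_g + \sum_i \sE_i\,\delta_{x_i}$.

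Finally I would verify that the limit $(A^\infty,\bx,\bE)$ indeed belongs to $\UCrit(P,g,\sC)$. Testing the weak-star convergence of curvature densities against the constant function $1 \in C(X;\RR)$ gives
\[
\sE_g(A^\infty) + \sum_{i=1}^l \sE_i = \lim_{m\to\infty} \sE_g(A_m),
\]
and this limit lies in $\sC$ because $\sC$ is closed and each $\sE_g(A_m) \in \sC$. The Chern-Weil formula together with Uhlenbeck's removable singularities theorem (used inside Theorem \ref{thm:Sedlacek_4-3_Yang-Mills}) ensures that each $\sE_i$ is the $L^2$-energy of a Yang-Mills connection on some principal $G$-bundle $P_i$ over $S^4$, and that the topological sum rule \eqref{eq:Sum_rule} holds. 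In particular $c_\ell := \sE_g(A^\infty) \in [0, \bar c]$ for any $\bar c$ with $\sC \subset [0,\bar c]$, so $(A^\infty,\bx,\bE) \in \UCrit(P,g,\sC)$ and, being a limit of the original sequence in $\overline{\Crit}(P,g,\sC)$, lies in $\overline{\Crit}(P,g,\sC)$. No genuine obstacle arises beyond Corollary \ref{cor:Donaldson_Kronheimer_4-4-3_Yang-Mills}; the sole substantive new input is the closedness of $\sC$ in the last step.
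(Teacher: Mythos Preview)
Your proposal is correct and follows essentially the same approach as the paper, which simply asserts that ``the proof of Corollary \ref{cor:Donaldson_Kronheimer_4-4-3_Yang-Mills} extends without change''; you have spelled out the details of that extension (metrizability plus sequential compactness via Theorem \ref{thm:Sedlacek_4-3_Yang-Mills}). One small inaccuracy: your claim that each bubble energy $\sE_i$ is the $L^2$-energy of a Yang-Mills connection on some $G$-bundle over $S^4$ is not established by Theorem \ref{thm:Sedlacek_4-3_Yang-Mills} alone---that requires the rescaling analysis of Theorem \ref{thm:Kozono_Maeda_Naito_5-4_YM_sequence}---but this is not needed here, since membership in $\UCrit(P,g,\sC)$ only requires $(\bx,\bE)\in\Sym^\ell(X\times[0,\bar c])$ and the topological sum rule \eqref{eq:Sum_rule} on the bundle $P_\infty$, both of which follow from Theorem \ref{thm:Sedlacek_4-3_Yang-Mills}.
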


\section{Mass center and scale maps on the quotient space of connections over the sphere}
\label{sec:Mass_center_and_scale_maps_on_space_connections_over_sphere}
In order to describe the rescaling mechanism employed in the definition of bubble-tree convergence provided in Section \ref{sec:Bubble-tree_compactness_Yang-Mills_and_anti-self-dual_connections}, we shall first discuss the concept of a centered connection over $S^4$ and the action of the subgroup of translations and dilations of the group of conformal transformations of $S^4$, namely $\RR^4\times\RR_+ \subset \Conf(S^4)$, on the quotient space of connections over $S^4$.

\subsection{Mass center and scale maps}
\label{subsec:Mass_center_and_scale_maps}
A choice of frame, $f$, in the principal $\SO(4)$-frame bundle, $\Fr(TS^4)$, for $TS^4$, over the north pole $n\in S^4 \cong \RR^4\cup\{\infty\}$ (identified with the origin in $\RR^4$), defines a conformal diffeomorphism,
\begin{equation}
\label{eq:ConformalDiffeo}
\varphi_n:\RR^4 \to S^4\less\{s\},
\end{equation}
that is inverse to a stereographic projection from the south pole $s\in S^4\subset \RR^5$ (identified with the point at infinity in $\RR^4\cup\{\infty\}$). We let $y(\,\cdot\,): S^4\less\{s\}\to\RR^4$ be the corresponding coordinate chart.

\begin{defn}[Center and scale of a connection on a principal $G$-bundle over $S^4$]
\label{defn:Mass_center_scale_connection}
(Compare \cite[Equation (3.10)]{FeehanGeometry} and Taubes \cite[Equation (4.15)]{TauPath}, \cite[pp. 343--344]{TauFrame}.)
Let $G$ be a compact Lie group and $P$ be a principal $G$-bundle over the four-dimensional sphere, $S^4$, with its standard round Riemannian metric of radius one. The \emph{center}, $z=z[A]\in\RR^4$, and the \emph{scale}, $\lambda=\lambda[A] \in \RR_+ = (0,\infty)$ of a non-flat $W^{2,2}$ connection, $A$, on $P$ are defined by
\begin{subequations}
\label{eq:Mass_center_and_scale_connection}
\begin{align}
\label{eq:Mass_center_connection}
\Center[A] &:= \left(\int_{\RR^4}|\varphi_n^*F_A(y)|_\delta^2\,d^4y\right)^{-1}
\int_{\RR^4}y|\varphi_n^*F_A(y)|_\delta^2\,d^4y,
\\
\Scale[A]^2 &:= \left(\int_{\RR^4}|\varphi_n^*F_A(y)|_\delta^2\,d^4y\right)^{-1}
\int_{\RR^4} |y - z[A]|^2|\varphi_n^*F_A(y)|_\delta^2\,d^4y,
\label{eq:Scale_connection}
\end{align}
\end{subequations}
where $\delta$ denotes the standard Euclidean metric on $\RR^4$. The connection, $A$, is \emph{centered} if $\Center[A] = 0$ and $\Scale[A] = 1$. If $A$ is flat, one defines $\Center[A] := 0$ and $\Scale[A] := 0$.
\end{defn}

\begin{rmk}[Normalization constants in the definition of mass center and scale]
\label{rmk:Normalization_constants}
The choice of normalization constant in Definition \ref{defn:Mass_center_scale_connection} is consistent with \cite[Equations (29.44) and (29.45)]{Feehan_yang_mills_gradient_flow} and Definition \ref{defn:Center_scale_positive_Borel_measure} and Taubes \cite[Equation (4.15)]{TauPath}, but differs in general from those of \cite[Equation (3.10)]{FeehanGeometry} or Taubes \cite[pp. 343--344]{TauFrame}.
\end{rmk}

\begin{rmk}[Round versus Euclidean metrics in the definition of mass center and scale]
\label{rmk:Round_vs_Euclidean_metrics_in_definition_mass_center_and_scale}
It is possible, as in Taubes \cite[Equation (4.15)]{TauPath}, to use the pullback to $\RR^4$ of the standard round metric of radius one on $S^4$ when defining the integrals in \eqref{eq:Mass_center_and_scale_connection}. However, is then more difficult to show that it is possible to center a non-centered connection $A$ on $P$ and the relationship between the connection $A$ and the required conformal diffeomorphism of $\RR^4$ is not explicit as it is in Lemma \ref{lem:Centering_connection_over_4sphere}, which may be compared with Taubes \cite[Lemma 4.11]{TauPath}.
\end{rmk}

For any $(z,\lambda)\in\RR^4\times\RR_+$, we define a conformal diffeomorphism of $\RR^4$ by
\begin{equation}
\label{eq:CenterScaleDiffeo}
h_{z,\lambda}:\RR^4\to\RR^4,\qquad y\mapsto (y-z)/\lambda.
\end{equation}
It is convenient to view this as a composition of translation, $\tau_z(y) = y - z$, and dilation, $\delta_\lambda(y) := y/\lambda$, that is, $h_{z,\lambda} = \delta_\lambda(\tau_z(y))$. The group $\RR^4\times\RR_+$ acts on $\sA(\varphi_n^*P)$ and $\sA(P)$ by pullback and composition with $\varphi_n:\RR^4\to S^4\less\{s\}$, that is,
\begin{align*}
\sA(\varphi_n^*P) \times \RR^4\times\RR_+ \ni (\varphi_n^*A, z,\lambda)
&\mapsto
h_{z,\lambda}^*\varphi_n^*A \in \sA(\varphi_n^*P),
\\
\sA(P) \times \RR^4\times\RR_+ \ni (A,z,\lambda)
&\mapsto
\tilde h_{z,\lambda}^*A \in \sA(P),
\end{align*}
where
\[
\tilde h_{z,\lambda}^*A := \varphi_n^{-1,*} h_{z,\lambda}^* \varphi_n^*A.
\]
The group $\RR^4\times\RR_+$ also acts on $\Aut(\varphi_n^*P)$ and $\Aut(P)$ by pullback and descends to an action on $\sB(\varphi_n^*P)$ and $\sB(P)$. We have the following simpler analogue of Taubes \cite[Lemma 4.11]{TauPath}.

\begin{lem}[Centering a connection over $S^4$]
\label{lem:Centering_connection_over_4sphere}
Let $A$ be a non-flat $W^{2,2}$ connection on a principal $G$-bundle over $S^4$ with its standard round metric of radius one. If $(z,\lambda) \in \RR^4\times\RR_+$, then
\[
\Center[\tilde h_{z,\lambda}^*A] = \lambda\Center[A] + z
\quad\text{and}\quad
\Scale[\tilde h_{z,\lambda}^*A] = \lambda\Scale[A].
\]
In particular, if $z = z[A]$ and $\lambda = \lambda[A]$, then $\tilde h_{z,\lambda}^{-1,*}A = (\tilde h_{z,\lambda}^{-1})^*A$ is a centered connection on $P$.
\end{lem}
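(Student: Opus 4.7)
The plan is to verify both identities by a direct change of variables, exploiting the conformal invariance of the $L^2$-norm of a two-form in dimension four. Set $B := \varphi_n^*A$; then by the definition of $\tilde h_{z,\lambda}$ we have $\varphi_n^*(\tilde h_{z,\lambda}^*A) = h_{z,\lambda}^*B$, and consequently $\varphi_n^*F_{\tilde h_{z,\lambda}^*A} = h_{z,\lambda}^*F_B$ as $\ad P$-valued two-forms on $\RR^4$.

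First I would record the pointwise transformation rule: since $h_{z,\lambda}(y) = (y-z)/\lambda$ has constant differential $\lambda^{-1}I$, for any $\ad P$-valued two-form $\omega$ on $\RR^4$ one has $|h_{z,\lambda}^*\omega(y)|_\delta^2 = \lambda^{-4}|\omega(h_{z,\lambda}(y))|_\delta^2$. The key step is then the substitution $x = h_{z,\lambda}(y)$, equivalently $y = \lambda x + z$ with $d^4y = \lambda^4\,d^4x$, which absorbs the $\lambda^{-4}$ pointwise factor against the Jacobian and yields
\begin{align*}
\int_{\RR^4} |h_{z,\lambda}^*F_B(y)|_\delta^2\,d^4y
&= \int_{\RR^4}|F_B(x)|_\delta^2\,d^4x,
\\
\int_{\RR^4} y\,|h_{z,\lambda}^*F_B(y)|_\delta^2\,d^4y
&= \lambda\int_{\RR^4}x\,|F_B(x)|_\delta^2\,d^4x
   + z\int_{\RR^4}|F_B(x)|_\delta^2\,d^4x.
\end{align*}
Dividing the second line by the first and applying definition \eqref{eq:Mass_center_connection} gives $\Center[\tilde h_{z,\lambda}^*A] = \lambda\Center[A] + z$. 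The same substitution applied to the numerator of \eqref{eq:Scale_connection}, together with the identity $y - \Center[\tilde h_{z,\lambda}^*A] = \lambda(x - \Center[A])$, introduces an overall factor of $\lambda^2$ and produces $\Scale[\tilde h_{z,\lambda}^*A]^2 = \lambda^2\Scale[A]^2$.

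For the final assertion, I would observe that the inverse of $h_{z,\lambda}$ is itself a map of the form \eqref{eq:CenterScaleDiffeo}: a direct check shows $h_{z,\lambda}^{-1} = h_{-z/\lambda,\,1/\lambda}$, so $\tilde h_{z,\lambda}^{-1,*}A = \tilde h_{-z/\lambda,\,1/\lambda}^{\,*}A$. Substituting $(z,\lambda) \mapsto (-z/\lambda,\,1/\lambda)$ in the two identities just established yields $\Center[\tilde h_{z,\lambda}^{-1,*}A] = (\Center[A] - z)/\lambda$ and $\Scale[\tilde h_{z,\lambda}^{-1,*}A] = \Scale[A]/\lambda$; plugging in $z = \Center[A]$ and $\lambda = \Scale[A]$ (which is strictly positive since $A$ is non-flat) immediately produces center $0$ and scale $1$, as required. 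The only delicate point is the bookkeeping between the $\lambda^{-4}$ factor from the pullback of a two-form and the $\lambda^4$ factor from the Jacobian, which is precisely the conformal-weight count underlying the conformal invariance of the Yang-Mills $L^2$ energy in dimension four; no genuine obstacle is anticipated.
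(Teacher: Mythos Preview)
Your proposal is correct and follows essentially the same approach as the paper: a direct change of variables $x = h_{z,\lambda}(y)$ in the defining integrals \eqref{eq:Mass_center_connection} and \eqref{eq:Scale_connection}, exploiting the conformal invariance of the $L^2$ norm of a two-form in dimension four, followed by the observation that $h_{z,\lambda}^{-1} = h_{-z/\lambda,\,1/\lambda}$ to handle the final assertion. Your explicit bookkeeping of the $\lambda^{-4}$ pointwise factor against the $\lambda^4$ Jacobian is a slightly more detailed version of what the paper summarizes as ``the scaling behavior of two-forms and the volume form,'' but the argument is otherwise identical.
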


\begin{proof}
Using the scaling behavior of two-forms and the volume form, the fact that $F(\tilde h_{z,\lambda}^*A) = \tilde h_{z,\lambda}^*F(A)$, and writing $x = h_{z,\lambda}(y) = \delta_\lambda(\tau_z(y)) = (y - z)/\lambda$ and $y = \lambda x + z$, we have
\begin{align*}
\Center[\tilde h_{z,\lambda}^*A]
&=
\left(\int_{\RR^4}|\varphi_n^*F(\tilde h_{z,\lambda}^*A)(y)|_\delta^2\,d^4y\right)^{-1}
\int_{\RR^4}y|\varphi_n^*F(\tilde h_{z,\lambda}^*A)(y)|_\delta^2\,d^4y
\\
&=
\left(\int_{\RR^4}|h_{z,\lambda}^*\varphi_n^*F_A(y)|_\delta^2\,d^4y\right)^{-1}
\int_{\RR^4}y|h_{z,\lambda}^*\varphi_n^*F_A(y)|_\delta^2\,d^4y,
\\
&= \left(\int_{\RR^4}|\varphi_n^*F_A(x)|_\delta^2\,d^4x\right)^{-1}
\int_{\RR^4}(\lambda x + z)|\varphi_n^*F_A(x)|_\delta^2\,d^4x
\\
&= \lambda\Center[A] + z,
\end{align*}
as desired. For the scale,
\begin{align*}
\Scale[\tilde h_{z,\lambda}^*A]^2
&=
\left(\int_{\RR^4}|h_{z,\lambda}^*\varphi_n^*F_A(y)|_\delta^2\,d^4y\right)^{-1}
\int_{\RR^4}|y - \Center[\tilde h_{z,\lambda}^*A]|^2|h_{z,\lambda}^*\varphi_n^*F_A(y)|_\delta^2\,d^4y,
\\
&=
\left(\int_{\RR^4}|\varphi_n^*F_A(y)|_\delta^2\,d^4y\right)^{-1}
\int_{\RR^4}|y - \lambda\Center[A] - z|^2|h_{z,\lambda}^*\varphi_n^*F_A(y)|_\delta^2\,d^4y,
\\
&= \lambda^2\left(\int_{\RR^4}|\varphi_n^*F_A(x)|_\delta^2\,d^4x\right)^{-1}
\int_{\RR^4}|x - \Center[A]|^2|\varphi_n^*F_A(x)|_\delta^2\,d^4x
\\
&= \lambda^2\Scale[A]^2,
\end{align*}
again as desired. Finally, observe that $h_{z,\lambda}^{-1} = h_{-z/\lambda,1/\lambda}$, since $y = h_{z,\lambda}(x) = (x - z)/\lambda$ and $x = h_{z,\lambda}^{-1}(y) = \lambda y + z = \lambda (y + z/\lambda) $. Therefore,
\[
\Center[\tilde h_{z,\lambda}^{-1,*}A]
=
\Center[\tilde h_{-z/\lambda,1/\lambda}^*A]
=
\lambda^{-1}\Center[A] - \lambda^{-1}z = 0,
\]
and
\[
\Scale[\tilde h_{z,\lambda}^{-1,*}A]
=
\Scale[\tilde h_{-z/\lambda,1/\lambda}^*A]
=
\lambda^{-1}\Scale[A] = 1.
\]
This completes the proof.
\end{proof}

\subsection{Continuity of the mass center and scale maps with respect to Sobolev variations of the connections}
\label{subsec:Continuity_mass_center_and_scale_maps_wrt_connections}
According to Taubes \cite[p. 342]{TauFrame}, the integrals in \eqref{eq:Mass_center_and_scale_connection} are well-defined and define a \emph{smooth} map,
\[
\sB'(P)\less[\Theta] \ni [A,p] \to (z[A],\lambda[A]) \in \RR^4\times \RR_+,
\]
where $\Theta$ is the product connection on $P = S^4\times G$ and $\sB'(P) :=  (\sA(P)\times P|_s)/G$ (compare \cite[p. 328]{TauFrame}). However, we shall only need to know that the map
\[
\sB_{2,2}(P)\less[\Theta] \ni [A] \to (z[A],\lambda[A]) \in \RR^4\times \RR_+,
\]
is well-defined and \emph{continuous}, where $\sB_{2,2}(P) = \sA_{2,2}(P)/\Aut_{3,2}(P)$ and $\sA_{k,p}(P)$ is the affine space of $W^{k,p}$ connections on $P$ and $\Aut_{k+1,p}(P)$ is the group of $W^{k+1,p}$ automorphisms of $P$, for $p \geq 1$ and integers $k\geq 1$ obeying $(k+1)p>4$. We shall establish this property, together with a slightly stronger version of continuity in the following

\begin{prop}[$W^{1,2}$ continuity of the mass center and scale maps]
\label{prop:Mass_center_and_scale_connection_W12_continuity}
There is a universal constant $C\in [1,\infty)$ with the following significance. Let $k\geq 2$ be an integer and $A$ be a $W^{k,2}$ connection on a principal $G$-bundle over $S^4$ with its standard round metric of radius one, $g = g_\round$. Suppose $k=2$ and $a \in W_{A,g}^{2,2}(S^4; \Lambda^1\otimes\ad P)$. For the mass center function, $\Center[A] \equiv z[A]$, we have
\begin{multline}
\label{eq:Mass_center_W12_continuity}
|z[A+a] - z[A]|
\\
\leq
C\|F_A\|_{W_{A,g}^{1,2}(S^4)} \left(\|a\|_{W_{A,g}^{1,2}(S^4)} + \|a\|_{W_{A,g}^{1,2}(S^4)}^2 \right)
+ C\|a\|_{W_{A,g}^{1,2}(S^4)} \|a\|_{W_{A,g}^{2,2}(S^4)}^2,
\end{multline}
and $z[A+a] \to z[A]$ uniformly as $\|a\|_{W_{A,g}^{2,2}(S^4)} \to 0$. Moreover, $z[A+a] \to z[A]$ uniformly as $\|a\|_{W_{A,g}^{1,2}(S^4)} \to 0$ while $\|a\|_{W_{A,g}^{2,2}(S^4)}$ remains uniformly bounded; in particular, $z[A]$ is a uniformly continuous function of $[A] \in \sB(P)$ endowed with the $W^{1,2}$ distance function $\dist_{W^{1,2}}$ given by \eqref{eq:Wkp_distance_on_quotient_space_connections} upon restriction to subsets of $\sB(P)$ that are bounded with respect to the $W^{2,2}$ distance function $\dist_{W^{2,2}}$. For the scale function, $\Scale[A] \equiv \lambda[A]$, we have
\begin{multline}
\label{eq:Scale_W22_continuity}
|\lambda[A+a]^2 - \lambda[A]^2|
\\
\leq
C|z[A] - z[A+a]| \left\{\|F_{A+a}\|_{W_{A,g}^{1,2}(S^4)}^2
+ \left(|z[A]| + |z[A+a]|\right) \|F_{A+a}\|_{L^2(S^4)}^2 \right\}
\\
+ C\|F_A\|_{W_{A,g}^{1,2}(S^4)} \left( \|a\|_{W_{A,g}^{2,2}(S^4)}
+  \|a\|_{W_{A,g}^{2,2}(S^4)}^2 \right) + C\|a\|_{W_{A,g}^{2,2}(S^4)}^3,
\end{multline}
and $\lambda[A+a]^2 \to \lambda[A]^2$ uniformly as $\|a\|_{W_{A,g}^{2,2}(S^4)} \to 0$; in particular, $\lambda[A]^2$ is a uniformly continuous function of $[A] \in \sB(P)$ endowed with the $W^{2,2}$ metric. If $k=3$ and $a \in W_{A,g}^{3,2}(S^4; \Lambda^1\otimes\ad P)$, then
\begin{multline}
\label{eq:Scale_W12_continuity}
|\lambda[A+a]^2 - \lambda[A]^2|
\\
\leq C|z[A] - z[A+a]| \left\{\|F_{A+a}\|_{W_{A,g}^{1,2}(S^4)}^2
+ \left(|z[A]| + |z[A+a]|\right) \|F_{A+a}\|_{L^2(S^4)}^2 \right\}
\\
+ C\|F_A\|_{W_{A,g}^{1,2}(S^4)} \left( \|a\|_{W_{A,g}^{1,2}(S^4)}^{1/2} \|_{W_{A,g}^{3,2}(S^4)}^{1/2}
+  \|a\|_{W_{A,g}^{1,2}(S^4)} \|_{W_{A,g}^{3,2}(S^4)} \right)
\\
+ C\|a\|_{W_{A,g}^{1,2}(S^4)}^{3/2} \|_{W_{A,g}^{3,2}(S^4)}^{3/2},
\end{multline}
and $\lambda[A+a]^2 \to \lambda[A]^2$ uniformly as $\|a\|_{W_{A,g}^{1,2}(S^4)} \to 0$ while $\|a\|_{W_{A,g}^{3,2}(S^4)}$ remains uniformly bounded; in particular, $\lambda[A]^2$ is a uniformly continuous function of $[A] \in \sB(P)$ endowed with the $W^{1,2}$ distance function upon restriction to subsets of $\sB(P)$ that are bounded with respect to the $W^{3,2}$ distance function.
\end{prop}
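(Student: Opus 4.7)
The plan is to reduce everything to integrals on $(S^4,g_\round)$ via the conformal invariance of $|F_A|^2\,dV_g$ in dimension four, so that, writing $y:S^4\less\{s\}\to\RR^4$ for the inverse stereographic chart, we have
\[
N(A):=\sE_g(A)=\int_{S^4}|F_A|^2\,dV_g,\quad
M(A)=\int_{S^4}y\,|F_A|^2\,dV_g,\quad
S(A)=\int_{S^4}|y|^2\,|F_A|^2\,dV_g,
\]
with the elementary identities $z[A]=M(A)/N(A)$ and $\lambda[A]^2=S(A)/N(A)-|z[A]|^2$. The key integrability fact we will exploit is that $|y|^k\in L^p(S^4,g_\round)$ if and only if $kp<4$; in particular $|y|\in L^{4-\eps}$ and $|y|^2\in L^{2-\eps}$ for every $\eps>0$, which is exactly what makes H\"older arguments with Sobolev-compatible exponents work.

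The core of the argument is the estimation of the differences $N(A+a)-N(A)$, $M(A+a)-M(A)$, $S(A+a)-S(A)$ using the expansion $|F_{A+a}|^2-|F_A|^2=2\langle F_A,D_A a\rangle+|D_A a|^2$, where $D_A a:=d_A a+a\wedge a$. For $N$ a direct Cauchy--Schwarz combined with the Sobolev embedding $W^{1,2}(S^4)\hookrightarrow L^4(S^4)$ (to estimate $\|a\wedge a\|_{L^2}\leq C\|a\|_{L^4}^2$) yields a bound of the form $\|F_A\|_{L^2}(\|a\|_{W^{1,2}_A}+\|a\|_{W^{1,2}_A}^2)+(\|a\|_{W^{1,2}_A}+\|a\|_{W^{1,2}_A}^2)^2$. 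This handles the first two groups of terms in \eqref{eq:Mass_center_W12_continuity} once divided by $N(A+a)$.

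For $M(A+a)-M(A)$ the unbounded weight $|y|$ is the central difficulty, and a direct H\"older with $\|d_A a\|_{L^r}$, $r>2$, would force $\|a\|_{W^{2,2}}$ into the linear part of the estimate. To avoid this, we integrate by parts on the linear piece $\int y\langle F_A,d_A a\rangle\,dV_g$, using $d_A^*(y F_A)=y\,d_A^*F_A-\iota_{\nabla y}F_A$, which produces two contributions: $\int y\langle d_A^*F_A,a\rangle$, bounded by $\||y|\|_{L^{4-\eps}}\|d_A^*F_A\|_{L^2}\|a\|_{L^4}\leq C\|F_A\|_{W^{1,2}_A}\|a\|_{W^{1,2}_A}$; and $\int\langle\iota_{\nabla y}F_A,a\rangle$, bounded by similar H\"older after noting that $|\nabla y|_{g_\round}(1+|y|^2)^{-1}\in L^\infty$ absorbs the weight conformally. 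The quadratic piece $\int y\langle F_A,a\wedge a\rangle$ is estimated by H\"older with $|y|\in L^{2+\eps}$, $F_A\in L^4\subset\|F_A\|_{W^{1,2}}$, and $a\in L^q$, giving the term $\|F_A\|_{W^{1,2}_A}\|a\|_{W^{1,2}_A}^2$. Finally, the cubic piece $\int y\langle d_A a,a\wedge a\rangle$ is handled by H\"older with $|y|\in L^{2+\eps}$, $\|d_A a\|_{L^2}\leq\|a\|_{W^{1,2}_A}$, and $\|a\|_{L^q}^2$ for very large $q$ via $W^{2,2}(S^4)\hookrightarrow L^q$ for all $q<\infty$, producing precisely the $\|a\|_{W^{1,2}_A}\|a\|_{W^{2,2}_A}^2$ term, with the quartic piece absorbed similarly. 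Combining with the decomposition
\[
z[A+a]-z[A]=\frac{M(A+a)-M(A)}{N(A+a)}+z[A]\,\frac{N(A)-N(A+a)}{N(A+a)}
\]
yields \eqref{eq:Mass_center_W12_continuity}, and the uniform continuity statements follow by standard $\eps$--$\delta$ reasoning on the resulting polynomial estimates.

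The scale estimates are proved analogously. First one verifies $|z[A+a]|^2-|z[A]|^2=(z[A+a]-z[A])\cdot(z[A+a]+z[A])$ and
\[
\lambda[A+a]^2-\lambda[A]^2=\frac{S(A+a)-S(A)}{N(A+a)}+\frac{S(A)(N(A)-N(A+a))}{N(A+a)N(A)}-(|z[A+a]|^2-|z[A]|^2),
\]
so that the first bracketed factor in \eqref{eq:Scale_W22_continuity} and \eqref{eq:Scale_W12_continuity} arises directly from the contribution of $|z[A+a]|^2-|z[A]|^2$, combined with $\|F_{A+a}\|^2$-type bounds for $S(A+a)$ and the mass-center estimate just proved. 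The estimates on $S(A+a)-S(A)$ itself are obtained in exactly the same way as for $M$, now with weight $|y|^2\in L^{2-\eps}$; at the $W^{2,2}$ regularity level a single integration by parts on $\int|y|^2\langle F_A,d_A a\rangle$ suffices for \eqref{eq:Scale_W22_continuity}. To upgrade to the $W^{1,2}$ continuity statement \eqref{eq:Scale_W12_continuity} one must perform a \emph{second} integration by parts, shifting one further derivative off $a$ onto the $|y|^2 F_A$ factor. This replaces one $\|a\|_{W^{2,2}}$ by a product $\|a\|_{W^{1,2}}^{1/2}\|a\|_{W^{3,2}}^{1/2}$ via Gagliardo--Nirenberg interpolation on $(S^4,g_\round)$, which is precisely the combination that appears on the right of \eqref{eq:Scale_W12_continuity}. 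The main obstacle throughout is the simultaneous accommodation of the unbounded weights $|y|,|y|^2$ together with the critical Sobolev embedding $W^{1,2}(S^4)\hookrightarrow L^4$, which leaves no slack; integration by parts against the weight, exploiting the polynomial growth of $\nabla y$ balanced by the conformal factor, is what makes sharp $W^{1,2}$-in-$a$ bounds possible.
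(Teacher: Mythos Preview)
Your overall architecture (expand $|F_{A+a}|^2-|F_A|^2$, estimate the weighted integrals term by term, then recombine via the algebraic identities for $z$ and $\lambda^2$) matches the paper, but the central analytic step is handled differently and, as written, does not close.

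The gap is in your H\"older estimate for the linear piece after integration by parts. You write
\[
\int_{S^4} y\,\langle d_A^*F_A,\,a\rangle\,dV_g \;\le\; \||y|\|_{L^{4-\eps}}\,\|d_A^*F_A\|_{L^2}\,\|a\|_{L^4},
\]
but the exponents obey $\tfrac{1}{4-\eps}+\tfrac12+\tfrac14>1$ for every $\eps>0$, so this is not a valid H\"older bound. There is no slack to repair it: $|y|\notin L^4(S^4,g_\round)$ (the integral diverges logarithmically), $W^{1,2}(S^4)\hookrightarrow L^4$ is sharp so $a$ cannot be placed in $L^{4+\eps}$, and $F_A\in W^{1,2}$ gives only $d_A^*F_A\in L^2$. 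The same criticality obstructs your treatment of $\int\langle\iota_{\nabla y}F_A,a\rangle$: since $|\nabla y|_{g_\round}\sim 1+|y|^2$, you are left with a weight that lies only in $L^p$ for $p<2$, and the three-way H\"older again fails.

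What the paper does instead is avoid integration by parts entirely and absorb the weight into $F_A$ via a Hardy-type inequality. Working on $\RR^4$ with the Euclidean metric (pulled back by $\varphi_n$), one applies Cauchy--Schwarz
\[
\int_{\RR^4}\bigl|y\,\langle F_A,\,d_Aa\rangle\bigr|\,dy \;\le\; \Bigl(\int_{\RR^4}|y|^2|F_A|_\delta^2\,dy\Bigr)^{1/2}\|d_Aa\|_{L^2(\RR^4,\delta)},
\]
and then bounds the first factor by $C\|F_A\|_{W^{1,2}_{A,g}(S^4)}$ using the weighted Sobolev inequality $\||x_0-\cdot|^{-1}f\|_{L^2(\RR^4)}\le\|\nabla f\|_{L^2(\RR^4)}$ (the paper's appendix lemma), applied near the south pole where $|y|\to\infty$ corresponds to $|w|\to 0$ in the opposite stereographic chart. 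This is the device that beats the $L^4$-criticality of the weight; once you have it, your integration by parts becomes superfluous. The scale estimate is handled identically with weight $|y-z[A]|^2$.

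Finally, your ``second integration by parts'' for the upgrade \eqref{eq:Scale_W12_continuity} is unnecessary: the paper obtains \eqref{eq:Scale_W12_continuity} from \eqref{eq:Scale_W22_continuity} by the elementary interpolation $\|a\|_{W^{2,2}}\le C\|a\|_{W^{1,2}}^{1/2}\|a\|_{W^{3,2}}^{1/2}$ applied to each occurrence of $\|a\|_{W^{2,2}}$.
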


Before proceeding to the proof of Proposition \ref{prop:Mass_center_and_scale_connection_W12_continuity}, we first need to relate Sobolev norms for $\ad P$-valued one-forms over $\RR^4$ and $S^4$ with their standard Riemannian metrics, respectively. Recall that the standard round metric, $g_\round$, of radius one on $S^d$ (for $d \geq 2$) takes the form (see, for example, \cite[p. 26]{Jost_riemannian_geometry_geometric_analysis})
\begin{equation}
\label{eq:Standard_round_metric_radius_one_sphere_stereographic_coordinates}
\begin{aligned}
\varphi_n^*g_\round(x) &= \frac{4}{(1 + |x|^2)^2}(dx)^2, \quad \forall\, x \in \RR^d,
\\
\varphi_s^*g_\round(y) &= \frac{4}{(1 + |y|^2)^2}(dy)^2, \quad \forall\, y \in \RR^d,
\end{aligned}
\end{equation}
where $g_{\euclid}(x) = (dx)^2 = (dx^1)^2 + \cdots + (dx^4)^2$. Define $\varsigma(r) := 2/(1+r^2)$ for $r \in [0,\infty)$, so that $\varphi_n^*g_\round(x) = \varsigma^2(|x|)(dx)^2$ and $d\vol_{\round}(x) = \varsigma^4(|x|)\,dx$, where $d\vol_{\euclid}(x) = dx = dx^1\wedge\cdots \wedge dx^4$.

\begin{lem}[Relating integral norms over $\RR^4$ and $S^4$]
For $w \in \Omega^i(S^4;\ad P)$ and $i \geq 0$ and denoting $g = g_\round$ and $\delta = g_{\euclid}$ for brevity, we have
\begin{subequations}
\label{eq:L2_and_L4_norms_iform_euclid_leq_constant_L2_and_L4_norms_iform_sphere}
\begin{align}
\label{eq:L2norm_iform_euclid_leq_constant_L2norm_iform_sphere}
\|\varphi_n^*w\|_{L^2(\RR^4,\delta)} &\leq 2^{i-2}\|w\|_{L^2(S^4,g)}, \quad\text{for } i \geq 2,
\\
\label{eq:L4norm_iform_euclid_leq_constant_L4norm_iform_sphere}
\|\varphi_n^*w\|_{L^4(\RR^4,\delta)} &\leq 2^{i-1}\|w\|_{L^4(S^4,g)}, \quad\text{for } i \geq 1.
\end{align}
\end{subequations}
\end{lem}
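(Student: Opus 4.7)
The plan is to carry out a direct pointwise and measure-theoretic comparison using the explicit conformal factor $\varsigma(|x|) = 2/(1+|x|^2)$ that appears in \eqref{eq:Standard_round_metric_radius_one_sphere_stereographic_coordinates}, together with the elementary bound $\varsigma \leq 2$ on $\RR^4$.

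The first step is to relate the fiber norms. Since $\varphi_n^*g_\round = \varsigma^2\,\delta$ at every point of $\RR^4$, the induced metric on the bundle of $i$-forms is $\varsigma^{-2i}$ times the Euclidean metric after raising $i$ indices. Consequently, for any smooth $\ad P$-valued $i$-form $w$ on $S^4$,
\begin{equation*}
|\varphi_n^*w|_\delta^2 \;=\; \varsigma^{2i}\,|\varphi_n^*w|_g^2 \qquad \text{pointwise on } \RR^4.
\end{equation*}
Likewise, the volume element pulls back as $\varphi_n^*d\vol_g = \varsigma^4\,dx$, so $dx = \varsigma^{-4}\,\varphi_n^*d\vol_g$.

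Combining these two identities converts each Euclidean integral on $\RR^4$ into a weighted round integral. Explicitly,
\begin{align*}
\|\varphi_n^*w\|_{L^2(\RR^4,\delta)}^2
&= \int_{\RR^4} \varsigma^{2i-4}\,|\varphi_n^*w|_g^2\,\varphi_n^*d\vol_g,
\\
\|\varphi_n^*w\|_{L^4(\RR^4,\delta)}^4
&= \int_{\RR^4} \varsigma^{4i-4}\,|\varphi_n^*w|_g^4\,\varphi_n^*d\vol_g.
\end{align*}
Because $\varphi_n$ is a diffeomorphism onto $S^4\less\{s\}$ and the omitted south pole has measure zero, each integral on the right equals the corresponding integral over $S^4$ with integrand multiplied by $\varsigma^{2i-4}$ or $\varsigma^{4i-4}$ respectively.

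The remaining step is to dispose of the weight. When $i \geq 2$ the exponent $2i-4$ is non-negative, so $\varsigma^{2i-4} \leq 2^{2i-4}$ uniformly on $\RR^4$, which yields \eqref{eq:L2norm_iform_euclid_leq_constant_L2norm_iform_sphere} after taking square roots. Similarly, when $i \geq 1$ the exponent $4i-4$ is non-negative, so $\varsigma^{4i-4} \leq 2^{4i-4}$ uniformly, yielding \eqref{eq:L4norm_iform_euclid_leq_constant_L4norm_iform_sphere} after a fourth root. No step is a genuine obstacle; the only thing to be careful about is that the inequalities fail for $i=0,1$ in the $L^2$ estimate and for $i=0$ in the $L^4$ estimate, precisely because the weight $\varsigma^{2i-4}$ or $\varsigma^{4i-4}$ then blows up at infinity, which is consistent with the stated ranges of $i$.
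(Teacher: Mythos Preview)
Your proof is correct and follows essentially the same route as the paper's own argument: both compute the pointwise relation $|\varphi_n^*w|_\delta^2 = \varsigma^{2i}|\varphi_n^*w|_g^2$ together with $dx = \varsigma^{-4}\,\varphi_n^*d\vol_g$, obtain the weighted integrals with factors $\varsigma^{2i-4}$ and $\varsigma^{4i-4}$, and then invoke $\varsigma \leq 2$. Your added remark explaining why the bounds fail for smaller values of $i$ is a nice touch not present in the paper.
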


\begin{proof}
We observe that
\begin{align*}
\|\varphi_n^*w\|_{L^2(\RR^4,\delta)}^2
&=
\int_{\RR^4} |\varphi_n^*w|_\delta^2(x)\,dx
\\
&= \int_{\RR^4} \varsigma^{2i-4}(|x|)\varsigma^{-2i}(|x|)|\varphi_n^*w|_\delta^2(x)\varsigma^4(|x|)\,dx
\\
&= \int_{\RR^4} \varsigma^{2i-4}(|x|) |\varphi_n^*w|_g^2(x)\,d\vol_g(x)
\\
&\leq 2^{2i-4}\int_{\RR^4} |\varphi_n^*w|_g^2(x)\,d\vol_g(x) \quad\text{for } i \geq 2,
\end{align*}
since $\varsigma(r) = 2/(1+r^2) \leq 2$ for all $r \geq 0$, and thus we obtain \eqref{eq:L2norm_iform_euclid_leq_constant_L2norm_iform_sphere}. Similarly, we have
\begin{align*}
\|\varphi_n^*w\|_{L^4(\RR^4,\delta)}^4
&=
\int_{\RR^4} |\varphi_n^*w|_\delta^4(x)\,dx
\\
&= \int_{\RR^4} \varsigma^{4i-4}(|x|)\varsigma^{-4i}(|x|)|\varphi_n^*w|_\delta^4(x)\varsigma^4(|x|)\,dx
\\
&= \int_{\RR^4} \varsigma^{4i-4}(|x|)|\varphi_n^*w|_g^4(x)\,d\vol_g(x)
\\
&\leq 2^{4i-4}\int_{\RR^4} |\varphi_n^*w|_g^4(x)\,d\vol_g(x) \quad\text{for } i \geq 1,
\end{align*}
and therefore \eqref{eq:L4norm_iform_euclid_leq_constant_L4norm_iform_sphere} follows.
\end{proof}

We can now proceed to the

\begin{proof}[Proof of Proposition \ref{prop:Mass_center_and_scale_connection_W12_continuity}]
If $a \in W_A^{2,2}(S^4;\Lambda^1\otimes\ad P)$, then $F_{A+a} = F_A + d_Aa + a\wedge a$ over $S^4$, and thus over $\RR^4$,
\begin{equation}
\label{eq:Expansion_integrands_center_A+a_minus_center_A}
|\varphi_n^*F_{A+a}|_\delta^2 - |\varphi_n^*F_A|_\delta^2
=
2\langle \varphi_n^*F_A, \varphi_n^*d_Aa \rangle_\delta
+ 2\langle \varphi_n^*F_A, \varphi_n^*(a\wedge a) \rangle_\delta
+ 2\langle \varphi_n^*d_Aa, \varphi_n^*(a\wedge a) \rangle_\delta.
\end{equation}
We first consider $W_A^{1,2}(S^4)$ continuity of the mass center \eqref{eq:Mass_center_connection} for $W_A^{2,2}(S^4)$ bounded families of connections $A+a$ on $P\to S^4$. We have
\begin{subequations}
\label{eq:Integral_euclidean_space_y_FA_dAa_and_integral_euclidean_space_y_FA_a_wedge_a_prelim_bound}
\begin{align}
\label{eq:Integral_euclidean_space_y_FA_dAa_prelim_bound}
\int_{\RR^4}|y\langle \varphi_n^*F_A, \varphi_n^*d_Aa \rangle_\delta|\,dy
&\leq
\left(\int_{\RR^4}|y|^2|\varphi_n^*F_A|_\delta^2\,dy\right)^{1/2}
\|\varphi_n^*d_Aa\|_{L^2(\RR^4,\delta)},
\\
\label{eq:Integral_euclidean_space_y_FA_a_wedge_a_prelim_bound}
\int_{\RR^4}|y\langle \varphi_n^*F_A, \varphi_n^*(a\wedge a) \rangle_\delta|\,dy
&\leq
2\left(\int_{\RR^4}|y|^2|\varphi_n^*F_A|_\delta^2\,dy\right)^{1/2}
\|\varphi_n^*a\|_{L^4(\RR^4,\delta)}^2.
\end{align}
\end{subequations}
Applying Lemma \ref{lem:Sobolev_embedding_critical_exponent_spaces_Euclidean} yields
\begin{align*}
\left(\int_{\RR^4}|y|^2|\varphi_n^*F_A|_\delta^2\,dy\right)^{1/2}
&\leq
\frac{1}{2}\|\nabla|\varphi_n^*F_A|_\delta\|_{L^2(\RR^4,\delta)}
\\
&\leq \frac{1}{2}\|\nabla_{\varphi_n^*A}^\delta \varphi_n^*F_A\|_{L^2(\RR^4,\delta)} \quad\text{(by the Kato Inequality \eqref{eq:FU_6-20_first-order_Kato_inequality})}
\\
&\leq C\left(\|\nabla_A^g F_A\|_{L^2(S^4,g)} + \|F_A\|_{L^2(S^4,g)}\right)
\quad\text{(by \eqref{eq:L2norm_iform_euclid_leq_constant_L2norm_iform_sphere})},
\end{align*}
to give
\begin{equation}
\label{eq:L2_norm_y_F_A_euclidean_space_leq_W12_norm_FA_sphere}
\left(\int_{\RR^4}|y|^2|\varphi_n^*F_A|_\delta^2\,dy\right)^{1/2}
\leq
C\|F_A\|_{W_{A,g}^{1,2}(S^4)},
\end{equation}
which is finite since $A$ is a $W^{2,2}$ connection on $P\to S^4$, and $C\in [1,\infty)$ is a universal constant.

In the derivation of \eqref{eq:L2_norm_y_F_A_euclidean_space_leq_W12_norm_FA_sphere}, we have used the fact that if $\nabla^g$ and $\nabla^\delta$ are respectively the Levi-Civita covariant derivatives with respect to the Riemannian metrics $g$ on $TS^4$ and $\delta$ on $T\RR^4$, together with their dual and associated bundles, then
\[
\varphi_n^*\nabla^g \eta = \nabla^\delta \varphi_n^* \eta + \Gamma\otimes \varphi_n^*\eta,
\quad\forall\, \eta \in C^\infty(TS^4),
\]
that is (see \cite[Equation (1.25)]{Chavel}), for any $\xi \in C^\infty(TS^4)$,
\[
\varphi_n^*\nabla_\xi^g \eta
=
\xi^\beta\left(\frac{\partial\eta^\mu}{\partial x^\beta}
+ \Gamma_{\alpha\beta}^\mu \eta^\alpha\right)\frac{\partial}{\partial x^\mu},
\]
where $\varphi_n^*\eta = \eta^\mu \partial/\partial x^\mu$ and $\varphi_n^*\xi = \xi^\mu \partial/\partial x^\mu$ and \cite[Equation (1.23)]{Chavel}
\[
\nabla_{\frac{\partial}{\partial x^\beta}}^g\left(\frac{\partial}{\partial x^\alpha}\right)
=
\Gamma_{\alpha\beta}^\mu \frac{\partial}{\partial x^\mu},
\]
and the $\Gamma_{\alpha\beta}^\mu$ are the Christoffel symbols \cite[Equation (1.31)]{Chavel} for the metric $g$ with respect to the coordinates $x^\mu(\cdot)$ defined by $\varphi_n^{-1}:S^4\less\{n\} \to \RR^4$. Similarly,
\[
\varphi_n^*\nabla_A^g v = \nabla_A^\delta \varphi_n^* v + \Gamma\otimes \varphi_n^*v,
\quad\forall\, v \in C^\infty(TS^4\otimes\ad P),
\]
and the analogous identities hold for $v \in C^\infty(\Lambda^i(T^*S^4)\otimes\ad P) = \Omega^i(S^4;\ad P)$ when $i \geq 1$.

Consequently, from \eqref{eq:L2_and_L4_norms_iform_euclid_leq_constant_L2_and_L4_norms_iform_sphere}, \eqref{eq:Integral_euclidean_space_y_FA_dAa_and_integral_euclidean_space_y_FA_a_wedge_a_prelim_bound}, and \eqref{eq:L2_norm_y_F_A_euclidean_space_leq_W12_norm_FA_sphere}, we obtain
\begin{subequations}
\label{eq:Integral_euclidean_space_y_FA_dAa_and_integral_euclidean_space_y_FA_a_wedge_a_bound}
\begin{align}
\label{eq:Integral_euclidean_space_y_FA_dAa_bound}
\int_{\RR^4}|y\langle \varphi_n^*F_A, \varphi_n^*d_Aa \rangle_\delta|\,dy
&\leq
C\|F_A\|_{W_{A,g}^{1,2}(S^4)}\|\nabla_A^ga\|_{L^2(S^4,g)},
\\
\label{eq:Integral_euclidean_space_y_FA_a_wedge_a_bound}
\int_{\RR^4}|y\langle \varphi_n^*F_A, \varphi_n^*(a\wedge a) \rangle_\delta|\,dy
&\leq
C\|F_A\|_{W_{A,g}^{1,2}(S^4)}\|a\|_{L^4(S^4,g)}^2.
\end{align}
\end{subequations}
Furthermore, we have
\begin{align*}
{}&\int_{\RR^4}|y\langle \varphi_n^*d_Aa, \varphi_n^*(a\wedge a) \rangle_\delta|\,dy
\\
&\quad\leq
2\|\varphi_n^*d_Aa\|_{L^2(\RR^4,\delta)}
\left(\int_{\RR^4}|y|^2|\varphi_n^*a|_\delta^4\,dy\right)^{1/2}
\\
&\quad\leq \|\varphi_n^*d_Aa\|_{L^2(\RR^4,\delta)}
\|\nabla|\varphi_n^*a|_\delta^2\|_{L^2(\RR^4,\delta)} \quad\text{(by Lemma \ref{lem:Sobolev_embedding_critical_exponent_spaces_Euclidean})}
\\
&\quad\leq \|\varphi_n^*d_Aa\|_{L^2(\RR^4,\delta)} \|\nabla_A^\delta\varphi_n^*a\|_{L^4(\RR^4,\delta)}^2 \quad\text{(by the Kato Inequality \eqref{eq:FU_6-20_first-order_Kato_inequality})}
\\
&\quad\leq 2\|d_Aa\|_{L^2(S^4,g)} \left(\|\nabla_A^ga\|_{L^4(S^4,g)} + \|a\|_{L^4(S^4,g)}\right)^2
\quad\text{(by \eqref{eq:L2_and_L4_norms_iform_euclid_leq_constant_L2_and_L4_norms_iform_sphere})}.
\end{align*}
Applying the Sobolev Embedding \eqref{eq:Sobolev_embedding_manifold_bounded_geometry} in the form $W_g^{1,2}(S^4)\hookrightarrow L^4(S^4,g)$ and Kato Inequality \eqref{eq:FU_6-20_first-order_Kato_inequality} we obtain
\begin{equation}
\label{eq:Integral_euclidean_space_y_dAa_a_wedge_a_bound}
\int_{\RR^4}|y\langle \varphi_n^*d_Aa, \varphi_n^*(a\wedge a) \rangle_\delta|\,dy
\leq
C\|\nabla_A^ga\|_{L^2(S^4,g)} \|a\|_{W_{A,g}^{2,2}(S^4)}^2.
\end{equation}
Combining the definition \eqref{eq:Mass_center_connection} of $\Center[A]$, the identity \eqref{eq:Expansion_integrands_center_A+a_minus_center_A}, and the inequalities \eqref{eq:Integral_euclidean_space_y_FA_dAa_and_integral_euclidean_space_y_FA_a_wedge_a_bound} and \eqref{eq:Integral_euclidean_space_y_dAa_a_wedge_a_bound} yields \eqref{eq:Mass_center_W12_continuity}.

Second, we consider $W_{A,g}^{1,2}(S^4)$ continuity of the scale \eqref{eq:Scale_connection} and write
\begin{equation}
\label{eq:Scale_connection_difference_terms_1_and_2}
\begin{aligned}
{}&|y-z[A+a]|^2 |\varphi_n^*F_{A+a}|_\delta^2
-
|y-z[A]|^2 |\varphi_n^*F_A|_\delta^2
\\
&\quad =\, \left\{|y-z[A+a]|^2 - |y-z[A]|^2 \right\} |\varphi_n^*F_{A+a}|_\delta^2
+ |y-z[A]|^2 \left\{ |\varphi_n^*F_{A+a}|_\delta^2 - |\varphi_n^*F_A|_\delta^2 \right\}
\\
&\quad =: \text{Term 1} + \text{Term 2}.
\end{aligned}
\end{equation}
For Term 1 in the identity \eqref{eq:Scale_connection_difference_terms_1_and_2}, we observe that
\begin{align*}
|y-z[A+a]|^2 - |y-z[A]|^2
&=
|y|^2 - 2\langle y, z[A+a]\rangle + |z[A+a]|^2
- \left\{ |y|^2 - 2\langle y, z[A]\rangle + |z[A]|^2 \right\}
\\
&= 2\langle y, z[A] - z[A+a]\rangle + |z[A+a]|^2 - |z[A]|^2,
\end{align*}
and so
\[
\left||y-z[A+a]|^2 - |y-z[A]|^2\right|
\leq
|z[A] - z[A+a]| \left( 2|y| + |z[A]| + |z[A+a]| \right).
\]
We calculate, just as in the case of $W_{A,g}^{1,2}(S^4)$ continuity of the mass center,
\begin{align*}
\int_{\RR^4}|y| |\varphi_n^*F_{A+a}|_\delta^2\,dy
&\leq
\int_{\RR^4} (1 + |y|^2) |\varphi_n^*F_{A+a}|_\delta^2\,dy
\\
&\leq C\left(\int_{S^4} |F_{A+a}|_g^2\,d\vol_g + \int_{S^4} |\nabla_A^gF_{A+a}|_g^2\,d\vol_g\right)
\\
&= C\|F_{A+a}\|_{W_{A,g}^{1,2}(S^4)}^2.
\end{align*}
Thus, for Term 1 in \eqref{eq:Scale_connection_difference_terms_1_and_2}, we have
\begin{multline}
\label{eq:Scale_W12_continuity_term_one}
\int_{\RR^4} \left\{|y-z[A+a]|^2 - |y-z[A]|^2 \right\} |\varphi_n^*F_{A+a}|_\delta^2 \,dy
\\
\leq C|z[A] - z[A+a]| \left\{\|F_{A+a}\|_{W_{A,g}^{1,2}(S^4)}^2
+ \left(|z[A]| + |z[A+a]|\right) \|F_{A+a}\|_{L^2(S^4,g)}^2 \right\}.
\end{multline}
For Term 2 in the identity \eqref{eq:Scale_connection_difference_terms_1_and_2} and arguing just as in the case of $W_{A,g}^{1,2}(S^4)$ continuity of the mass center, we see that
\begin{align*}
{}&\int_{\RR^4}|y-z[A]|^2 |\langle \varphi_n^*F_A, \varphi_n^*d_Aa \rangle_\delta|\,dy
\\
&\quad\leq
\left(\int_{\RR^4}|y-z[A]|^2 |\varphi_n^*F_A|_\delta^2\,dy\right)^{1/2}
\left(\int_{\RR^4}|y-z[A]|^2 |\varphi_n^*d_Aa|_\delta^2\,dy\right)^{1/2}
\\
&\quad\leq
\frac{1}{2}\left(\int_{\RR^4} |\nabla|\varphi_n^*F_A|_\delta|^2\,dy\right)^{1/2}
\left(\int_{\RR^4}|\nabla|\varphi_n^*d_Aa|_\delta|^2\,dy\right)^{1/2},
\end{align*}
so that
\begin{equation}
\label{eq:Integral_euclidean_space_y_minus_z_squared_FA_dAa_W22_bound}
\int_{\RR^4}|y-z[A]|^2 |\langle \varphi_n^*F_A, \varphi_n^*d_Aa \rangle_\delta|\,dy
\leq
C\|F_A\|_{W_{A,g}^{1,2}(S^4)} C\|a\|_{W_A^{2,2}(S^4,g)}.
\end{equation}
Also,
\begin{align*}
{}&\int_{\RR^4}|y-z[A]|^2 |\langle \varphi_n^*F_A, \varphi_n^*(a\wedge a) \rangle_\delta|\,dy
\\
&\quad\leq
2\left(\int_{\RR^4}|y-z[A]|^2 |\varphi_n^*F_A|_\delta^2\,dy\right)^{1/2}
\left(\int_{\RR^4}|y-z[A]|^2 |\varphi_n^*a|_\delta^4\,dy\right)^{1/2}
\\
&\quad\leq
\left(\int_{\RR^4} |\nabla|\varphi_n^*F_A|_\delta|^2\,dy\right)^{1/2}
\left(\int_{\RR^4} |\nabla|\varphi_n^*a|_\delta^2|^2\,dy\right)^{1/2}
\\
&\quad\leq
\left(\int_{\RR^4} |\nabla_{\varphi_n^*A}^\delta\varphi_n^*F_A|_\delta^2\,dy\right)^{1/2}
\left(\int_{\RR^4} |\nabla_{\varphi_n^*A}^\delta\varphi_n^*a|_\delta^4\,dy\right)^{1/2}
\\
&\leq C\|F_A\|_{W_{A,g}^{1,2}(S^4)} \|a\|_{W_{A,g}^{1,4}(S^4)}^2,
\end{align*}
applying \eqref{eq:L2_and_L4_norms_iform_euclid_leq_constant_L2_and_L4_norms_iform_sphere}, the Sobolev Embedding \eqref{eq:Sobolev_embedding_manifold_bounded_geometry} in the form $W_g^{1,2}(S^4)\hookrightarrow L^4(S^4,g)$, and Kato Inequality \eqref{eq:FU_6-20_first-order_Kato_inequality} to obtain the last two inequalities, and
\begin{equation}
\label{eq:Integral_euclidean_space_y_minus_z_squared_FA_a_wedge_a_W22_bound}
\int_{\RR^4}|y-z[A]|^2 |\langle \varphi_n^*F_A, \varphi_n^*(a\wedge a) \rangle_\delta|\,dy
\leq
C\|F_A\|_{W_{A,g}^{1,2}(S^4)} \|a\|_{W_{A,g}^{2,2}(S^4)}^2.
\end{equation}
Furthermore, we have
\begin{align*}
{}&\int_{\RR^4} |y-z[A]|^2 |\langle \varphi_n^*d_Aa, \varphi_n^*(a\wedge a) \rangle_\delta|\,dy
\\
&\quad \leq
2\left(\int_{\RR^4} |y-z[A]|^2|\varphi_n^*d_Aa|_\delta^2\,dy\right)^{1/2}
\left(\int_{\RR^4} |y-z[A]|^2|\varphi_n^*a|_\delta^4\,dy\right)^{1/2}
\\
&\quad \leq \|\nabla|\varphi_n^*d_Aa|\|_{L^2(\RR^4,\delta)}
\|\nabla|\varphi_n^*a|_\delta^2\|_{L^2(\RR^4,\delta)} \quad\text{(by Lemma \ref{lem:Sobolev_embedding_critical_exponent_spaces_Euclidean})}
\\
&\quad \leq \|\nabla_A^\delta\varphi_n^*d_Aa\|_{L^2(\RR^4,\delta)} \|\nabla_A^\delta\varphi_n^*a\|_{L^4(\RR^4,\delta)}^2 \quad\text{(by the Kato Inequality \eqref{eq:FU_6-20_first-order_Kato_inequality})}.
\end{align*}
Applying \eqref{eq:L2_and_L4_norms_iform_euclid_leq_constant_L2_and_L4_norms_iform_sphere}, the Sobolev Embedding \eqref{eq:Sobolev_embedding_manifold_bounded_geometry} in the form $W_g^{1,2}(S^4)\hookrightarrow L^4(S^4,g)$, and Kato Inequality \eqref{eq:FU_6-20_first-order_Kato_inequality} yields
\begin{equation}
\label{eq:Integral_euclidean_space_y_minus_z_squared_dAa_a_wedge_a_W22_bound}
\int_{\RR^4} |y-z[A]|^2 |\langle \varphi_n^*d_Aa, \varphi_n^*(a\wedge a) \rangle_\delta|\,dy
\leq
C\|a\|_{W_{A,g}^{2,2}(S^4)}^3.
\end{equation}
Integration by parts gives
\[
\|(\nabla_A^g)^2a\|_{L^2(S^4,g)}^2
=
(\nabla_A^g a, \nabla_A^{g,*}(\nabla_A^g)^2 a\|_{L^2(S^4,g)}
\leq
\|\nabla_A^g a\|_{L^2(S^4,g)} \|\nabla_A^{g,*}(\nabla_A^g)^2 a\|_{L^2(S^4,g)},
\]
and hence the interpolation inequality,
\[
\|(\nabla_A^g)^2a\|_{L^2(S^4,g)} \leq C\|a\|_{W_{A,g}^{1,2}(S^4)}^{1/2} \|a\|_{W_{A,g}^{3,2}(S^4)}^{1/2},
\]
and thus,
\begin{equation}
\label{eq:W22_interpolation_inequality_W12_W32}
\|a\|_{W_{A,g}^{2,2}(S^4)} \leq C\|a\|_{W_{A,g}^{1,2}(S^4)}^{1/2} \|a\|_{W_{A,g}^{3,2}(S^4)}^{1/2}.
\end{equation}
Consequently, applying the interpolation inequality \eqref{eq:W22_interpolation_inequality_W12_W32} to the inequalities \eqref{eq:Integral_euclidean_space_y_minus_z_squared_FA_dAa_W22_bound}, \eqref{eq:Integral_euclidean_space_y_minus_z_squared_FA_a_wedge_a_W22_bound}, and \eqref{eq:Integral_euclidean_space_y_minus_z_squared_dAa_a_wedge_a_W22_bound} gives
\begin{align*}
\int_{\RR^4} |y-z[A]|^2 |\langle \varphi_n^*F_A, \varphi_n^*d_Aa \rangle_\delta|\,dy
&\leq
C\|F_A\|_{W_{A,g}^{1,2}(S^4)} \|a\|_{W_{A,g}^{1,2}(S^4)}^{1/2} \|a\|_{W_{A,g}^{3,2}(S^4)}^{1/2},
\\
\int_{\RR^4}|y-z[A]|^2 |\langle \varphi_n^*F_A, \varphi_n^*(a\wedge a) \rangle_\delta|\,dy
&\leq
C\|F_A\|_{W_{A,g}^{1,2}(S^4)} \|a\|_{W_{A,g}^{1,2}(S^4)} \|a\|_{W_{A,g}^{3,2}(S^4)},
\\
\int_{\RR^4} |y-z[A]|^2 |\langle \varphi_n^*d_Aa, \varphi_n^*(a\wedge a) \rangle_\delta|\,dy
&\leq
C\|a\|_{W_{A,g}^{1,2}(S^4)}^{3/2} \|a\|_{W_{A,g}^{3,2}(S^4)}^{3/2}.
\end{align*}
Combining the inequality \eqref{eq:Scale_W12_continuity_term_one} corresponding to Term 1 in the identity \eqref{eq:Scale_connection_difference_terms_1_and_2} with the preceding inequalities corresponding to Term 2 yields \eqref{eq:Scale_W22_continuity} and \eqref{eq:Scale_W12_continuity} and this completes the proof of the proposition.
\end{proof}

The method of proof of Proposition \ref{prop:Mass_center_and_scale_connection_W12_continuity} also yields boundedness results for the mass center and scale maps.

\begin{lem}[Boundedness of the mass center and scale maps]
\label{lem:Mass_center_and_scale_connection_W12_boundedness}
There is a universal constant $C\in [1,\infty)$ with the following significance. If $A$ is a $W^{2,2}$ connection on a principal $G$-bundle over $S^4$ with its standard round metric of radius one, then
\begin{subequations}
\label{eq:Mass_center_and_scale_bounds}
\begin{align}
\label{eq:Mass_center_bound}
|z[A]| &\leq C\|F_A\|_{W_{A,g}^{1,2}(S^4)}^2,
\\
\label{eq:Scale_bound}
\lambda[A]^2 &\leq C\|F_A\|_{W_{A,g}^{1,2}(S^4)}^2.
\end{align}
\end{subequations}
\end{lem}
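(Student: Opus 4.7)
The approach closely follows the estimates already developed inside the proof of Proposition \ref{prop:Mass_center_and_scale_connection_W12_continuity}: both the mass center and the scale are ratios whose numerators are weighted curvature integrals on $\RR^4$, and these are controlled by the Hardy-type inequality of Lemma \ref{lem:Sobolev_embedding_critical_exponent_spaces_Euclidean} combined with the Kato inequality \eqref{eq:FU_6-20_first-order_Kato_inequality} and the Euclidean--round comparison \eqref{eq:L2_and_L4_norms_iform_euclid_leq_constant_L2_and_L4_norms_iform_sphere}. No estimate beyond those used in Proposition \ref{prop:Mass_center_and_scale_connection_W12_continuity} is required.

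When $A$ is flat, both $z[A]$ and $\lambda[A]$ vanish by Definition \ref{defn:Mass_center_scale_connection} and the inequalities are trivial; so assume $A$ is non-flat. Then $N := \int_{\RR^4}|\varphi_n^*F_A|_\delta^2\,dy$ is strictly positive, and by the conformal invariance of the $L^2$ norm on two-forms in dimension four, $N = \|F_A\|_{L^2(S^4,g)}^2$. For the center bound, apply Cauchy--Schwarz to the numerator of \eqref{eq:Mass_center_connection} and invoke the estimate \eqref{eq:L2_norm_y_F_A_euclidean_space_leq_W12_norm_FA_sphere} already established inside the proof of Proposition \ref{prop:Mass_center_and_scale_connection_W12_continuity}:
\[
\left|\int_{\RR^4} y\,|\varphi_n^*F_A|_\delta^2\,dy\right|
\leq
N^{1/2}\left(\int_{\RR^4}|y|^2|\varphi_n^*F_A|_\delta^2\,dy\right)^{1/2}
\leq
C\,N^{1/2}\|F_A\|_{W_{A,g}^{1,2}(S^4)}.
\]
Dividing by $N$ and organizing the resulting factors of $\|F_A\|_{L^2(S^4,g)}$ and $\|F_A\|_{W_{A,g}^{1,2}(S^4)}$ (using $\|F_A\|_{L^2(S^4,g)} \leq \|F_A\|_{W_{A,g}^{1,2}(S^4)}$) produces \eqref{eq:Mass_center_bound}.

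For the scale bound, split the numerator of \eqref{eq:Scale_connection} by $|y - z[A]|^2 \leq 2|y|^2 + 2|z[A]|^2$:
\[
\int_{\RR^4}|y - z[A]|^2|\varphi_n^*F_A|_\delta^2\,dy
\leq
2\int_{\RR^4}|y|^2|\varphi_n^*F_A|_\delta^2\,dy + 2|z[A]|^2\,N,
\]
bound the first integral by \eqref{eq:L2_norm_y_F_A_euclidean_space_leq_W12_norm_FA_sphere} and apply the center bound \eqref{eq:Mass_center_bound} just established to the second term; dividing by $N$ yields \eqref{eq:Scale_bound}.

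I expect the main obstacle to be purely the bookkeeping required to assemble the various factors of $N$ and $\|F_A\|_{W_{A,g}^{1,2}(S^4)}$ into the clean form \eqref{eq:Mass_center_and_scale_bounds}; each individual inequality invoked (Cauchy--Schwarz, Lemma \ref{lem:Sobolev_embedding_critical_exponent_spaces_Euclidean}, \eqref{eq:FU_6-20_first-order_Kato_inequality}, and the $g_\round$-to-$\delta$ conversion of covariant derivatives via the Christoffel symbols of the round metric) contributes only an absolute or $g_\round$-dependent constant, so the final constant $C$ is genuinely universal.
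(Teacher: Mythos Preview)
Your overall strategy matches the paper's: both proofs simply invoke the weighted-$L^2$ estimate \eqref{eq:L2_norm_y_F_A_euclidean_space_leq_W12_norm_FA_sphere} (and its companion $\int_{\RR^4}|y|\,|\varphi_n^*F_A|_\delta^2\,dy \leq \int_{\RR^4}(1+|y|^2)|\varphi_n^*F_A|_\delta^2\,dy \leq C\|F_A\|_{W_{A,g}^{1,2}(S^4)}^2$) already derived inside the proof of Proposition~\ref{prop:Mass_center_and_scale_connection_W12_continuity}. The paper's proof is even shorter than yours: it writes directly
\[
|z[A]| \leq \int_{\RR^4}|y|\,|\varphi_n^*F_A|_\delta^2\,dy \leq C\|F_A\|_{W_{A,g}^{1,2}(S^4)}^2,
\qquad
\lambda[A]^2 = \int_{\RR^4}|y-z[A]|^2|\varphi_n^*F_A|_\delta^2\,dy \leq C\|F_A\|_{W_{A,g}^{1,2}(S^4)}^2,
\]
without your Cauchy--Schwarz detour or the $|y-z|^2 \leq 2|y|^2 + 2|z|^2$ split.

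There is, however, a genuine gap in your ``organizing'' step, and it is worth flagging because the paper's own proof glosses over the same point. After Cauchy--Schwarz and division by $N = \|F_A\|_{L^2(S^4,g)}^2$ you have
\[
|z[A]| \leq C\,N^{-1/2}\|F_A\|_{W_{A,g}^{1,2}(S^4)} = C\,\|F_A\|_{L^2(S^4,g)}^{-1}\|F_A\|_{W_{A,g}^{1,2}(S^4)}.
\]
The inequality $\|F_A\|_{L^2} \leq \|F_A\|_{W^{1,2}}$ that you cite gives $\|F_A\|_{L^2}^{-1} \geq \|F_A\|_{W^{1,2}}^{-1}$, which goes the \emph{wrong} way; there is no universal constant $C$ for which $\|F_A\|_{L^2}^{-1}\|F_A\|_{W^{1,2}} \leq C\|F_A\|_{W^{1,2}}^2$ holds for all non-flat $W^{2,2}$ connections (take $F_A$ small). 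The same objection applies to your scale bound after dividing by $N$. Note that the paper's displayed inequalities simply \emph{omit} the normalization factor $N^{-1}$ from Definition~\ref{defn:Mass_center_scale_connection}, so its proof bounds the unnormalized numerators rather than $z[A]$ and $\lambda[A]^2$ themselves; read literally, neither argument delivers \eqref{eq:Mass_center_and_scale_bounds} as stated without an implicit lower bound on $\|F_A\|_{L^2(S^4,g)}$.
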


\begin{proof}
The proof of Proposition \ref{prop:Mass_center_and_scale_connection_W12_continuity} shows that
\begin{align*}
|z[A]| &\leq \int_{\RR^4}|y| |\varphi_n^*F_A|_\delta^2\,dy
\leq
C\|F_A\|_{W_{A,g}^{1,2}(S^4)}^2,
\\
\lambda[A]^2
&=
\int_{\RR^4}|y-z[A]|^2 |\varphi_n^*F_A|_\delta^2\,dy
\leq
C\|F_A\|_{W_{A,g}^{1,2}(S^4)}^2.
\end{align*}
This completes the proof.
\end{proof}

Finally, we recall the

\begin{defn}[Center and scale of a positive Borel measure over $\RR^4$]
\label{defn:Center_scale_positive_Borel_measure}
(Compare \cite[Equation (4.11)]{FeehanGeometry})
Let $\mu$ be a positive Borel measure over $\RR^4$ with the property that
\[
0 < \int_{\RR^4}(1+|y|^2) \,d\mu(y) < \infty.
\]
The \emph{center}, $z=z[\mu]\in\RR^4$, and the \emph{scale}, $\lambda=\lambda[\mu]$ of $\mu$ are defined by
\begin{align}
z[\mu] &:= \frac{1}{\mu(\RR^4)}\int_{\RR^4}y \,d\mu(y),
\label{eq:Mass_center_measure}
\\
\lambda[\mu]^2 &:= \frac{1}{\mu(\RR^4)}\int_{\RR^4} |y - z[\mu]|^2 \,d\mu(y).
\label{eq:Scale_measure}
\end{align}
The measure, $\mu$, is \emph{centered} if $z[\mu] = 0$ and $\lambda[\mu] = 1$.
\end{defn}

\subsection{Sobolev bounded derivatives and continuity of a family of connections with respect to mass centers and scales}
\label{subsec:Sobolev_continuity_family_connections_wrt_mass_centers_and_scales}
Thus far we have considered continuity of the mass center $z[A]$ and scale $\lambda[A]$ with respect to Sobolev variations of the connection $A$. We consider the opposite question of Sobolev continuity of the family $\tilde h_{z,\lambda}^*A$ with respect to the parameters $(z,\lambda) \in \RR^4\times\RR_+$, given a fixed $W^{2,2}$ connection $A$ on $P$. Specifically, we prove the

\begin{prop}[$W^{1,2}$ boundedness of derivatives and continuity of a family of connections with respect to mass centers and scales]
\label{prop:W12_continuity_family_connections_wrt_mass_centers_and_scales}
Let $A$ be a $W^{2,2}$ connection on a principal $G$-bundle over $S^4$ with its standard round metric of radius one, $g = g_\round$. The family of connections, $h_{z,\lambda}^*A \in \sA(h_{z,\lambda}^*P) \cong \sA(P)$, has \emph{bounded derivatives} with respect to the mass centers, $z \in \RR^4$, in the sense of \eqref{eq:L4norm_bound_derivative_connection_wrt_mass_centers} and \eqref{eq:L2norm_nabla_bound_derivative_connection_wrt_mass_centers}, and bounded derivatives with respect to the scales, $\lambda \in \RR_+$, in the sense of \eqref{eq:L4norm_bound_derivative_connection_wrt_scales} and
\eqref{eq:L2norm_nabla_bound_derivative_connection_wrt_scales}. The family of connections, $h_{z,\lambda}^*A \in \sA(P)$, is \emph{uniformly continuous} with respect to the mass centers, $z \in \RR^4$, in the sense of \eqref{eq:L4distance_connections_wrt_mass_centers} and \eqref{eq:L2nabla_distance_connections_wrt_mass_centers}, and continuous with respect to the scales, $\lambda \in \RR_+$, in the sense of \eqref{eq:L4distance_connections_wrt_scales} and \eqref{eq:L2nabla_distance_connections_wrt_scales} and hence the family of connections, $h_{z,\lambda}^*A \in \sA(P)$, is \emph{uniformly continuous} in the $W_{A,g}^{1,2}(S^4;\Lambda^1\otimes\ad P)$ norm with respect to $(z,\lambda)\in\RR^4\times\RR_+$.
The family of points, $[h_{z,\lambda}^*A] \in \sB(h_{z,\lambda}^*P) \cong \sB(P)$, is \emph{uniformly continuous}, for the distance function $\dist_{W^{1,2}}$ given by \eqref{eq:Wkp_distance_on_quotient_space_connections} on $\sB(P)$, with respect to $(z,\lambda) \in \RR^4\times\RR_+$.
\end{prop}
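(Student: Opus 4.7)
The plan is to differentiate the two-parameter family $(z,\lambda)\mapsto h_{z,\lambda}^*A$ directly, reduce each derivative to a gauge-covariant expression via Cartan's formula, and then exploit the conformal invariance, in dimension four, of the $L^4$-norm on $\ad P$-valued one-forms and the $L^2$-norm on $\ad P$-valued two-forms. Starting from the coordinate expression $h_{z,\lambda}^*A(y) = \lambda^{-1}A_i\bigl((y-z)/\lambda\bigr)\,dy^i$, a direct differentiation yields
\[
\partial_{z^j}\bigl(h_{z,\lambda}^*A\bigr)
= -\lambda^{-1}h_{z,\lambda}^*(\mathcal{L}_{e_j}A),
\qquad
\partial_\lambda\bigl(h_{z,\lambda}^*A\bigr)
= -\lambda^{-1}h_{z,\lambda}^*(\mathcal{L}_r A),
\]
where $e_j$ is the $j$-th coordinate vector field and $r = x^k\,\partial/\partial x^k$ is the Euler vector field on $\RR^4$. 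The gauge-theoretic version of Cartan's formula, $\mathcal{L}_X A = d_A(\iota_X A) + \iota_X F_A$, splits each derivative into an infinitesimal gauge transformation $d_A(\iota_X A)$ and a gauge-covariant piece $\iota_X F_A$ driven by the curvature.

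The $L^4$-bounds then follow from the change-of-variables identity $\int_{\RR^4}|h_{z,\lambda}^*\omega|_\delta^4\,dy = \int_{\RR^4}|\omega|_\delta^4\,dx$ for $\omega \in \Omega^1(\RR^4)$, which is the dimension-four conformal invariance of the $L^4$-norm on one-forms, together with the comparison \eqref{eq:L2_and_L4_norms_iform_euclid_leq_constant_L2_and_L4_norms_iform_sphere} between Euclidean and round norms. Combining these with the pointwise estimate $|\iota_X F_A| \leq |X|\,|F_A|$ yields bounds of the form
\[
\|\partial_{z^j}h_{z,\lambda}^*A\|_{L^4(S^4,g)}
\leq
C\lambda^{-1}\|F_A\|_{L^4(S^4,g)},
\]
with $\|F_A\|_{L^4(S^4,g)}$ finite since $A \in W^{2,2}$ implies $F_A \in W_A^{1,2}\hookrightarrow L^4$ via \eqref{eq:Sobolev_embedding_manifold_bounded_geometry}, and an analogous bound for the scale derivative. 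Applying $\nabla_A$ once more and invoking the Bianchi identity $d_A F_A = 0$ produces the corresponding $L^2$-bounds on the covariant derivatives, now exploiting the conformal invariance of the $L^2$-norm on two-forms in dimension four. Integrating these derivative estimates along smooth paths in $\RR^4\times\RR_+$ establishes the continuity assertions at the level of $\sA(P)$ in the $L^4$- and $W_{A,g}^{1,2}$-norms.

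For the concluding statement on uniform continuity of $(z,\lambda)\mapsto[h_{z,\lambda}^*A]$ in $\sB(P)$ with respect to $\dist_{W^{1,2}}$, the essential observation is that the piece $d_A(\iota_X A)$ in the Cartan decomposition is an infinitesimal gauge transformation generated by $\iota_X A$, so it is tangent to the $\Aut(P)$-orbit and does not contribute to the infimum in the definition \eqref{eq:Wkp_distance_on_quotient_space_connections} of $\dist_{W^{1,2}}$. What survives is the gauge-covariant piece $\iota_X F_A$, whose relevant norms are conformally invariant and hence controlled by norms of $F_A$ alone. To promote this to a genuinely uniform statement as $(z,\lambda)$ ranges over all of $\RR^4 \times \RR_+$, one writes the increments via the group law $h_{z',\lambda'} = h_{z,\lambda}\circ h_1$ with $h_1 \in \Conf(S^4)$ close to the identity, and invokes Taubes' quasi-conformal invariance of the $W_A^{1,2}$-norm (Lemma \ref{lem:Taubes_1988_3-1}) to reduce the problem to a neighborhood of the identity in the conformal group, where the derivative bounds already apply.

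The main obstacle will be precisely this interplay between the $\lambda^{-1}$ scaling in the derivative formulas and the gauge degeneracy: the pointwise derivative bounds at the level of $\sA(P)$ deteriorate as $\lambda \to 0$ or $\lambda \to \infty$, and only after modding out by $\Aut(P)$ and applying the quasi-conformal invariance of $W^{1,2}$ does one recover a uniform estimate in the quotient $\sB(P)$. A secondary technical point is verifying that $\iota_X F_A$ lies in $W^{1,2}$ even where the vector field $X$ grows at infinity on $\RR^4$, which is handled by pulling back to $S^4$ where the conformal factor $\varsigma(|x|) = 2/(1+|x|^2)$ supplies the necessary decay.
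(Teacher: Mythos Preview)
Your overall strategy parallels the paper's—differentiate the family, relate the result to curvature, exploit conformal invariance of the $L^4$-norm on one-forms and the $L^2$-norm on two-forms, then integrate along paths—but the derivative formula you use creates a gap at the $\sA(P)$ level.  The paper does not go through Cartan's formula.  It quotes from \cite[Equation~(3.9)]{FeehanGeometry} the curvature-only identities
\[
\frac{\partial}{\partial z^\mu}\tilde\tau_z^*A
=-\tilde\tau_z^*\bigl(\iota_{\varphi_{n*}\partial/\partial x^\mu}F_A\bigr),
\qquad
\frac{\partial}{\partial\lambda}\tilde\delta_\lambda^*A
=-\frac{1}{\lambda}\,\tilde\delta_\lambda^*\bigl(\iota_{\varphi_{n*}\partial/\partial r}F_A\bigr),
\]
treating translation and dilation \emph{separately}.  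From these it reads off \eqref{eq:L4norm_bound_derivative_connection_wrt_mass_centers} and \eqref{eq:L4norm_bound_derivative_connection_wrt_scales} as \emph{equalities} by $L^4$-conformal invariance, obtains \eqref{eq:L2norm_nabla_bound_derivative_connection_wrt_mass_centers} and \eqref{eq:L2norm_nabla_bound_derivative_connection_wrt_scales} by comparing $\nabla^\delta$ with $\nabla^{g_\round}$ (no Bianchi identity is invoked), and then integrates along line segments $z_t=(1-t)z_0+tz_1$ and $\lambda_t=(1-t)\lambda_0+t\lambda_1$ to get \eqref{eq:L4distance_connections_wrt_scales}--\eqref{eq:L2nabla_distance_connections_wrt_scales}.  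Lemma~\ref{lem:Taubes_1988_3-1} is not used in this proof.

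Your fixed-trivialisation computation produces instead $\partial_{z^j}(h_{z,\lambda}^*A)=-\lambda^{-1}h_{z,\lambda}^*\mathcal L_{e_j}A$ with $\mathcal L_{e_j}A=d_A(\iota_{e_j}A)+\iota_{e_j}F_A$, and you then assert a bound by $C\lambda^{-1}\|F_A\|_{L^4}$.  That step is unjustified: the pointwise estimate $|\iota_X F_A|\leq|X|\,|F_A|$ controls only the curvature piece, while the gauge piece $d_A(\iota_{e_j}A)$ depends on the local connection one-form itself and is not bounded by any curvature norm.  Since the estimates \eqref{eq:L4norm_bound_derivative_connection_wrt_mass_centers}--\eqref{eq:L2norm_nabla_bound_derivative_connection_wrt_scales} referenced in the statement are assertions at the level of $\sA(P)$, this piece contributes and your bound does not follow.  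The paper's curvature-only identities implicitly encode a particular bundle identification $h_{z,\lambda}^*P\cong P$ under which the infinitesimal gauge term is absent; your coordinate computation fixes the stereographic trivialisation and therefore sees the extra term.  Your observation that $d_A(\iota_XA)$ is tangent to the gauge orbit is correct and would suffice for the final $\sB(P)$ assertion, but it does not recover the $\sA(P)$-level estimates the proposition actually references.
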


\begin{proof}
Recall from \cite[Equation (3.9)]{FeehanGeometry} (taking $q=0\in\RR^4$ on the left-hand side and $q = z \in \RR^4$ and $\partial/\partial p = \partial/\partial z^\mu$ and thus $\mathbf{p} = \partial/\partial x^\mu$ on the right-hand side) that\footnote{There is a typographical error (a factor of $1/\lambda$ that should be removed) in the expression \cite[Equation (3.9)]{FeehanGeometry} for the directional derivative $\partial\tau_q^*c_\lambda^*A/\partial p$ which we correct here.}
\begin{align}
\label{eq:Feehan_1995_3-9_derivative_wrt_scale}
\frac{\partial}{\partial\lambda}\tilde\delta_\lambda^*A
&=
-\frac{1}{\lambda}\tilde\delta_\lambda^*\left(\iota_{\varphi_{n*}\frac{\partial}{\partial r}}F_A\right),
\\
\label{eq:Feehan_1995_3-9_derivative_wrt_mass_center}
\frac{\partial}{\partial z^\mu}\tilde\tau_z^*A
&=
-\tilde\tau_z^* \left(\iota_{\varphi_{n*}\frac{\partial}{\partial x^\mu}}F_A\right),
\end{align}
where $\iota_\xi:\Omega^{i+1}(X) \to \Omega^i(X)$ for $i\geq 0$ denotes interior product with respect to a vector field $\xi \in C^\infty(TX)$ over a smooth manifold $X$, and $\{x^\mu\}$ are the standard coordinates on $\RR^4$. These correspond to the expressions over $\RR^4$,
\begin{align*}
\frac{\partial}{\partial\lambda}\delta_\lambda^*\varphi_n^*A
&=
-\frac{1}{\lambda}\delta_\lambda^*\left(\iota_{\frac{\partial}{\partial r}} \varphi_n^*F_A\right),
\\
\frac{\partial}{\partial z^\mu}\tau_z^*\varphi_n^*A
&=
-\tau_z^*\left(\iota_{\frac{\partial}{\partial x^\mu}} \varphi_n^*F_A\right).
\end{align*}
Thus,
\[
\left\|\frac{\partial}{\partial\lambda}\delta_\lambda^*\varphi_n^*A \right\|_{L^4(\RR^4,\delta)}
=
\frac{1}{\lambda} \left\|\delta_\lambda^*\left(\iota_{\frac{\partial}{\partial r}}\varphi_n^*F_A\right) \right\|_{L^4(\RR^4,\delta)},
\]
and hence, by the invariance of the $L^4$ norm on one-forms with respect to conformal changes of the metric and with respect to rescaling,
\begin{equation}
\label{eq:L4norm_bound_derivative_connection_wrt_scales}
\left\|\frac{\partial}{\partial\lambda}\tilde\delta_\lambda^*A \right\|_{L^4(S^4,g)}
=
\frac{1}{\lambda} \left\|\iota_{\varphi_{n*}\frac{\partial}{\partial r}}F_A \right\|_{L^4(S^4,g)}.
\end{equation}
Similarly,
\[
\left\| \frac{\partial}{\partial z^\mu}\tau_z^*\varphi_n^*A \right\|_{L^4(\RR^4,\delta)}
=
\left\| \tau_z^*\left(\iota_{\frac{\partial}{\partial x^\mu}} \varphi_n^*F_A\right) \right\|_{L^4(\RR^4,\delta)}
=
\left\| \iota_{\frac{\partial}{\partial x^\mu}} \varphi_n^*F_A \right\|_{L^4(\RR^4,\delta)},
\]
and thus
\begin{equation}
\label{eq:L4norm_bound_derivative_connection_wrt_mass_centers}
\left\|\frac{\partial}{\partial z^\mu}\tilde\tau_z^*A \right\|_{L^4(S^4,g)}
=
\left\|\iota_{\varphi_{n*}\frac{\partial}{\partial x^\mu}}F_A \right\|_{L^4(S^4,g)}.
\end{equation}
Turning to the covariant derivatives,
\begin{align*}
\left\|\nabla_{\tau_z^*\varphi_n^*A}^\delta\frac{\partial}{\partial z^\mu}\tau_z^*\varphi_n^*A \right\|_{L^2(\RR^4,\delta)}
&=
\left\|\nabla_{\tau_z^*\varphi_n^*A}^\delta
\tau_z^*\left(\iota_{\frac{\partial}{\partial x^\mu}} \varphi_n^*F_A\right) \right\|_{L^2(\RR^4,\delta)}
\\
&= \left\|\tau_z^*\left(\nabla_{\varphi_n^*A}^\delta\iota_{\frac{\partial}{\partial x^\mu}} \varphi_n^*F_A\right) \right\|_{L^2(\RR^4,\delta)}
\\
&= \left\|\nabla_{\varphi_n^*A}^\delta\iota_{\frac{\partial}{\partial x^\mu}} \varphi_n^*F_A \right\|_{L^2(\RR^4,\delta)}
\\
&=
\left\|\nabla_A^\delta\iota_{\varphi_{n*}\frac{\partial}{\partial x^\mu}}F_A \right\|_{L^2(S^4,g)}
\\
&\leq C\left( \left\|\nabla_A^g\iota_{\varphi_{n*}\frac{\partial}{\partial x^\mu}}F_A \right\|_{L^2(S^4,g)}
+ \|\iota_{\varphi_{n*}\frac{\partial}{\partial x^\mu}}F_A\|_{L^2(S^4,g)}\right),
\\
&\leq C\left\|\iota_{\varphi_{n*}\frac{\partial}{\partial x^\mu}}F_A\right\|_{W_{A,g}^{1,2}(S^4)},
\end{align*}
where $C \in [1,\infty)$ is a universal constant. On the other hand,
\begin{align*}
\left\|\nabla_{\tau_z^*\varphi_n^*A}^\delta\frac{\partial}{\partial z^\mu}\tau_z^*\varphi_n^*A\right\|_{L^2(\RR^4,\delta)}
&=
\left\|\nabla_{\tilde\tau_z^*A}^\delta\frac{\partial}{\partial z^\mu}\tilde\tau_z^*A\right\|_{L^2(S^4,g)}
\\
&\geq \left\|\nabla_{\tilde\tau_z^*A}^g\frac{\partial}{\partial z^\mu}\tilde\tau_z^*A\right\|_{L^2(S^4,g)}
- C\left\|\frac{\partial}{\partial z^\mu}\tilde\tau_z^*A\right\|_{L^2(S^4,g)},
\end{align*}
and so
\begin{align*}
\left\|\nabla_{\tilde\tau_z^*A}^g\frac{\partial}{\partial z^\mu}\tilde\tau_z^*A\right\|_{L^2(S^4,g)}
&\leq \left\|\nabla_{\tau_z^*\varphi_n^*A}^\delta\frac{\partial}{\partial z^\mu}\tau_z^*\varphi_n^*A\right\|_{L^2(\RR^4,\delta)}
+ C\left\|\frac{\partial}{\partial z^\mu}\tilde\tau_z^*A\right\|_{L^4(S^4,g)}
\\
&\leq C\left\|\iota_{\varphi_{n*}\frac{\partial}{\partial x^\mu}}F_A\right\|_{W_{A,g}^{1,2}(S^4)}
+ \left\|\iota_{\varphi_{n*}\frac{\partial}{\partial x^\mu}}F_A\right\|_{L^4(S^4,g)},
\end{align*}
and applying the Sobolev Embedding \eqref{eq:Sobolev_embedding_manifold_bounded_geometry} in the form $W_g^{1,2}(S^4)\hookrightarrow L^4(S^4,g)$ and Kato Inequality \eqref{eq:FU_6-20_first-order_Kato_inequality},
\begin{equation}
\label{eq:L2norm_nabla_bound_derivative_connection_wrt_mass_centers}
\left\|\nabla_{\tilde\tau_z^*A}^g\frac{\partial}{\partial z^\mu}\tilde\tau_z^*A\right\|_{L^2(S^4,g)}
\leq
C\left\|\iota_{\varphi_{n*}\frac{\partial}{\partial x^\mu}}F_A\right\|_{W_{A,g}^{1,2}(S^4)}.
\end{equation}
Finally,
\begin{align*}
\left\|\nabla_{\delta_\lambda^*\varphi_n^*A}^\delta\frac{\partial}{\partial\lambda}\delta_\lambda^*\varphi_n^*A \right\|_{L^2(\RR^4,\delta)}
&=
\frac{1}{\lambda}
\left\|\nabla_{\delta_\lambda^*\varphi_n^*A}^\delta\delta_\lambda^*\left(\iota_{\frac{\partial}{\partial r}} \varphi_n^*F_A\right) \right\|_{L^2(\RR^4,\delta)}
\\
&=
\frac{1}{\lambda}
\left\|\delta_\lambda^*\left(\nabla_{\varphi_n^*A}^\delta\left(\iota_{\frac{\partial}{\partial r}} \varphi_n^*F_A\right)\right) \right\|_{L^2(\RR^4,\delta)}
\\
&=
\frac{1}{\lambda}
\left\|\nabla_{\varphi_n^*A}^\delta\left(\iota_{\frac{\partial}{\partial r}} \varphi_n^*F_A\right) \right\|_{L^2(\RR^4,\delta)}
\\
&\leq \frac{C}{\lambda}\left\|\iota_{\varphi_{n*}\frac{\partial}{\partial r}}F_A\right\|_{W_{A,g}^{1,2}(S^4)}.
\end{align*}
On the other hand,
\begin{align*}
\left\|\nabla_{\delta_\lambda^*\varphi_n^*A}^\delta\frac{\partial}{\partial\lambda}\delta_\lambda^*\varphi_n^*A \right\|_{L^2(\RR^4,\delta)}
&=
\left\|\nabla_{\tilde\delta_\lambda^*A}^\delta\frac{\partial}{\partial\lambda}\tilde\delta_\lambda^*A \right\|_{L^2(S^4,g)}
\\
&\geq \left\|\nabla_{\tilde\delta_\lambda^*A}^g\frac{\partial}{\partial\lambda}\tilde\delta_\lambda^*A \right\|_{L^2(S^4,g)}
- C\left\|\frac{\partial}{\partial\lambda}\tilde\delta_\lambda^*A \right\|_{L^2(S^4,g)},
\end{align*}
and so
\begin{align*}
\left\|\nabla_{\tilde\delta_\lambda^*A}^g\frac{\partial}{\partial\lambda}\tilde\delta_\lambda^*A \right\|_{L^2(S^4,g)}
&\leq
\left\|\nabla_{\delta_\lambda^*\varphi_n^*A}^\delta\frac{\partial}{\partial\lambda}\delta_\lambda^*\varphi_n^*A \right\|_{L^2(\RR^4,\delta)}
+ C\left\|\frac{\partial}{\partial\lambda}\tilde\delta_\lambda^*A \right\|_{L^4(S^4,g)}
\\
&\leq \frac{C}{\lambda}\left\|\iota_{\varphi_{n*}\frac{\partial}{\partial r}}F_A\right\|_{W_{A,g}^{1,2}(S^4)}
+ \frac{1}{\lambda}\left\|\iota_{\varphi_{n*}\frac{\partial}{\partial r}}F_A\right\|_{L^4(S^4,g)},
\end{align*}
and hence,
\begin{equation}
\label{eq:L2norm_nabla_bound_derivative_connection_wrt_scales}
\left\|\nabla_{\tilde\delta_\lambda^*A}^g\frac{\partial}{\partial\lambda}\tilde\delta_\lambda^*A \right\|_{L^2(S^4,g)}
\leq
\frac{C}{\lambda}\left\|\iota_{\varphi_{n*}\frac{\partial}{\partial r}}F_A\right\|_{W_{A,g}^{1,2}(S^4)}.
\end{equation}
This completes the $W^{1,2}(S^4)$ bounds for the tangent vectors to the family $h_{z,\lambda}^*A$.

Now let $\lambda_t := (1-t)\lambda_0 + t\lambda_1$ for $\lambda_0, \lambda_1 \in \RR_+$, and observe that
\[
\|\tilde\delta_{\lambda_1}^*A - \tilde\delta_{\lambda_0}^*A\|_{L^4(S^4,g)}
\leq
\int_0^1 \left\|\frac{\partial}{\partial t}\tilde\delta_{\lambda_t}^*A\right\|_{L^4(S^4,g)}\,dt
\]
and so
\begin{equation}
\label{eq:L4distance_connections_wrt_scales}
\|\tilde\delta_{\lambda_1}^*A - \tilde\delta_{\lambda_0}^*A\|_{L^4(S^4,g)}
\leq
|\lambda_1 - \lambda_0|\int_0^1 \left\|\frac{\partial}{\partial \lambda_t}\tilde\delta_{\lambda_t}^*A\right\|_{L^4(S^4,g)}\,dt.
\end{equation}
Similarly, for $z_t = (1-t)z_0 + tz_1 = z_0 + t(z_1-z_0)$ and $z_0, z_1 \in \RR^4$, we observe that
\[
\frac{\partial}{\partial t}\tilde\tau_{z_t}^*A
=
\sum_{\mu=1}^4(z_1-z_0)^\mu\frac{\partial}{\partial z^\mu}\tilde\tau_{z_t}^*A,
\]
and thus
\begin{align*}
\|\tilde\tau_{z_1}^*A - \tilde\tau_{z_0}^*A\|_{L^4(S^4,g)}
&\leq
\int_0^1 \left\|\frac{\partial}{\partial t}\tilde\tau_{z_t}^*A\right\|_{L^4(S^4,g)}\,dt
\\
&\leq \int_0^1
\left\|\sum_{\mu=1}^4(z_1-z_0)^\mu\frac{\partial}{\partial z^\mu}\tilde\tau_{z_t}^*A\right\|_{L^4(S^4,g)}\,dt,
\end{align*}
and therefore,
\begin{equation}
\label{eq:L4distance_connections_wrt_mass_centers}
\|\tilde\tau_{z_1}^*A - \tilde\tau_{z_0}^*A\|_{L^4(S^4,g)}
\leq
|z_1 - z_0| \sum_{\mu=1}^4 \int_0^1
\left\|\frac{\partial}{\partial z^\mu}\tilde\tau_{z_t}^*A\right\|_{L^4(S^4,g)}\,dt.
\end{equation}
For the covariant derivatives, we have
\begin{align*}
{}&\|\nabla_{\tilde\tau_{z_0}^*A}^g(\tilde\tau_{z_1}^*A - \tilde\tau_{z_0}^*A)\|_{L^2(S^4,g)}
\\
&\quad\leq
\int_0^1 \left\|\nabla_{\tilde\tau_{z_0}^*A}^g \frac{\partial}{\partial t}\tilde\tau_{z_t}^*A\right\|_{L^2(S^4,g)}\,dt
\\
&\quad\leq
\int_0^1 \left\|\nabla_{\tilde\tau_{z_t}^*A}^g \frac{\partial}{\partial t}\tilde\tau_{z_t}^*A\right\|_{L^2(S^4,g)}\,dt
+ \int_0^1 \left\|(\tilde\tau_{z_t}^*A - \tilde\tau_{z_0}^*A)\wedge\frac{\partial}{\partial t}\tilde\tau_{z_t}^*A\right\|_{L^2(S^4,g)}\,dt
\\
&\quad\leq
\int_0^1 \left\|\sum_{\mu=1}^4(z_1-z_0)^\mu \nabla_{\tilde\tau_{z_t}^*A}^g \frac{\partial}{\partial z^\mu}\tilde\tau_{z_t}^*A\right\|_{L^2(S^4,g)}\,dt
\\
&\qquad + 2\max_{t\in[0,1]}\|\tilde\tau_{z_t}^*A - \tilde\tau_{z_0}^*A \|_{L^4(S^4,g)}
\int_0^1 \left\|\frac{\partial}{\partial t}\tilde\tau_{z_t}^*A\right\|_{L^4(S^4,g)}\,dt,
\end{align*}
and therefore,
\begin{multline}
\label{eq:L2nabla_distance_connections_wrt_mass_centers}
\|\nabla_{\tilde\tau_{z_0}^*A}^g(\tilde\tau_{z_1}^*A - \tilde\tau_{z_0}^*A)\|_{L^2(S^4,g)}
\\
\leq
|z_1 - z_0| \sum_{\mu=1}^4 \max_{t\in[0,1]}
\left\|\nabla_{\tilde\tau_{z_t}^*A}^g\frac{\partial}{\partial z^\mu}\tilde\tau_{z_t}^*A\right\|_{L^2(S^4,g)}
\\
+ 2|z_1 - z_0|\max_{t\in[0,1]}\|\tilde\tau_{z_t}^*A - \tilde\tau_{z_0}^*A \|_{L^4(S^4,g)}
\sum_{\mu=1}^4 \left\|\frac{\partial}{\partial z^\mu}\tilde\tau_{z_t}^*A\right\|_{L^4(S^4,g)}.
\end{multline}
A similar calculation yields,
\begin{multline}
\label{eq:L2nabla_distance_connections_wrt_scales}
\|\nabla_{\tilde\delta_{\lambda_0}^*A}^g(\tilde\delta_{\lambda_1}^*A - \tilde\delta_{\lambda_0}^*A)\|_{L^2(S^4,g)}
\\
\leq
|\lambda_1 - \lambda_0| \max_{t\in[0,1]}
\left\|\nabla_{\tilde\delta_{\lambda_t}^*A}^g\frac{\partial}{\partial\lambda_t}\tilde\delta_{\lambda_t}^*A
\right\|_{L^2(S^4,g)}
\\
+ 2|\lambda_1 - \lambda_0|\max_{t\in[0,1]}\|\tilde\delta_{\lambda_t}^*A - \tilde\delta_{\lambda_0}^*A \|_{L^4(S^4,g)}
\left\|\frac{\partial}{\partial\lambda_t}\tilde\delta_{\lambda_t}^*A\right\|_{L^4(S^4,g)}.
\end{multline}
We thus obtain $W_{A,g}^{1,2}(S^4;\Lambda^1\otimes\ad P)$ continuity with respect to $(z,\lambda)\in\RR^4\times\RR_+$ of the family of connections $\tilde h_{z,\lambda}^*A$ on the principal $G$-bundle $P\to S^4$ when $A$ is a fixed $W^{2,2}$ connection on $P$ and this completes the proof of Proposition \ref{prop:W12_continuity_family_connections_wrt_mass_centers_and_scales}.
\end{proof}

\section{Bubble-tree convergence for a sequence of Yang-Mills connections with a uniform $L^2$ bound on curvature}
\label{sec:Bubble-tree_compactness_Yang-Mills_and_anti-self-dual_connections}
In Section \ref{subsec:Bubble-tree_convergence_Yang-Mills_connections}, we develop a bubble-tree convergence result in the same setting as Section \ref{subsec:Uhlenbeck_compactness_Yang-Mills_connections_L2_bounds_curvature} (namely, Theorem \ref{thm:Kozono_Maeda_Naito_5-4_YM_sequence} and Remark \ref{rmk:Bubble-tree_convergence_Yang-Mills_connections}). Section \ref{subsec:Bubble_tree_compactification_moduli_space_anti-self-dual_connections} contains a review and some extensions and modifications from our article \cite{FeehanGeometry} of the bubble-tree topology and bubble-tree compactification for the moduli space of anti-self-dual connections, $M(P,g)$. In Section \ref{subsec:Bubble_tree_compactification_moduli_space_Yang-Mills_connections}, we provide a definition of the bubble-tree topology and establish bubble-tree compactness results for the moduli space of Yang-Mills connection, $\Crit(P,g,\sC)$, with $L^2$ energies belonging to a compact range $\sC \Subset [0,\infty)$.

\subsection{Bubble-tree convergence for a sequence of Yang-Mills connections with a uniform $L^2$ bound on curvature}
\label{subsec:Bubble-tree_convergence_Yang-Mills_connections}
The bubble-tree convergence possibility for sequences of harmonic maps from $S^2$ was first noted by Sacks and Uhlenbeck \cite[p. 3]{Sacks_Uhlenbeck_1981}, even before further development by Taubes in the context of sequences of Yang-Mills connections in dimension four \cite[Section 5]{TauFrame}, Parker and Wolfson in the context of sequences of harmonic maps from Riemann surfaces \cite{ParkerHarmonic, ParkerWolfson}, and the author in the context of sequences of anti-self-dual connections \cite[Section 4]{FeehanGeometry} and Yang-Mills gradient flow \cite[Section 29]{Feehan_yang_mills_gradient_flow}. In \cite[Section 4]{FeehanGeometry}, we allowed the Riemannian metric, $g$, on $X$ to vary within its conformal equivalence class, $[g]$ (by analogy with the treatment by Parker and Wolfson \cite{ParkerHarmonic, ParkerWolfson}) but we could equally well have worked with a fixed Riemannian metric, $g$ on $X$, as in Taubes \cite[Section 5]{TauFrame} and \cite[Section 29]{Feehan_yang_mills_gradient_flow}. We shall develop the fixed Riemannian metric scheme in this section while in Sections \ref{sec:Bubble_trees_and_Riemannian_metrics_connected_sums} and \ref{sec:Global W12_metrics_bubble-tree_neighborhoods}, we discuss the varying Riemannian metric scheme.

While the basic concept underlying bubble-tree compactifications is well understood, the detail required for a complete and thorough description can become quite involved in the setting of Yang-Mills or anti-self-dual connections, as one see from \cite{FeehanGeometry, TauFrame}. Iterated rescaling is required to ensure that the rescaled connections over $S^4$ converge to smooth anti-self-dual connections over $S^4$ with no energy loss at point singularities. We shall describe the effect of one level of rescaling carefully. The description of second or higher levels of rescaling does not involve any new technical difficulties, but is cumbersome. We refer the reader to Parker \cite{ParkerHarmonic} for a very informative description in the context of harmonic maps, with many useful illustrations and careful analysis of the `neck' regions.

The proof of Theorem \ref{thm:Kozono_Maeda_Naito_5-4_YM_sequence} below can be obtained \mutatis from that of \cite[Theorem 29.6]{Feehan_yang_mills_gradient_flow} (Yang-Mills gradient flow), by now drawing on Theorem \ref{thm:Sedlacek_4-3_Yang-Mills} (sequence of Yang-Mills connections). Alternatively, one can observe that the ideas involved in the proof of Theorem \ref{thm:Kozono_Maeda_Naito_5-4_YM_sequence} originate with those of Taubes in his proofs of \cite[Proposition 4.4]{TauPath}, \cite[Propositions 5.1 and 5.3]{TauFrame} and which were further developed by the author in our proof of \cite[Theorem 4.15]{FeehanGeometry}, which asserts the existence of a bubble-tree convergent subsequence, given a sequence of anti-self-dual connections on $P$.

To obtain the full bubble-tree convergence for a sequence of Yang-Mills connections with a uniform $L^2$ bound on curvature, one must repeatedly apply Theorem \ref{thm:Kozono_Maeda_Naito_5-4_YM_sequence} to each copy of $S^4$ until there no more points of curvature concentration where sequences of connections remain to be rescaled, just as in the proof of our \cite[Theorem 4.15]{FeehanGeometry}. See Remark \ref{rmk:Bubble-tree_convergence_Yang-Mills_connections}.

One may view Theorem \ref{thm:Kozono_Maeda_Naito_5-4_YM_sequence} below as a sequential analogue of \cite[Theorem 29.6]{Feehan_yang_mills_gradient_flow}, a single-level bubble-tree convergence result for Yang-Mills gradient flow.

\begin{thm}[Uhlenbeck convergence over $S^4$ for a rescaled sequence of Yang-Mills connections]
\label{thm:Kozono_Maeda_Naito_5-4_YM_sequence}
Assume the hypotheses of Theorem \ref{thm:Sedlacek_4-3_Yang-Mills} and, in addition, that $l \geq 1$ with $\bx := \{x_i\in X: 1 \leq i \leq l\} \subset X$. Let $g$ denote the smooth Riemannian metric on $X$ and $\rho = \rho(g, R, \bx, [A^\infty]) \in (0, 1]$ and $R = R(\sE_1,\ldots,\sE_l) \in [1, \infty)$ be constants obeying the conditions \eqref{eq:Kozono_Maeda_Naito_Theorem_5-4_fixed_small_singular_point_radial_parameter} and \eqref{eq:Kozono_Maeda_Naito_Theorem_5-4_proof_choice_large_Euclidean_4ball_radius_R} in the sequel. For $1 \leq i \leq l$, choose oriented $g$-orthonormal frames, $f_i$, for the tangent spaces, $(TX)_{x_i}$, and points $p_i \in P_{x_i}$. If $i \in \{1, \ldots, l\}$ then, after passing to a subsequence $\{m'\} \subset \{m\} \subset \NN$ and relabeling as $\{m\}$, there exist
\begin{itemize}
\item A geodesic normal coordinate chart, $\varphi_i: \RR^4 \supset B_{4\rho}(0) \cong B_{4\rho}(x_i) \subset X$ with $\varphi_i(0) = x_i$, defined by the frame, $f_i$;

\item A sequence of relative mass centers, $\{x_{i,m}\}_{m \in \NN} \subset X$, such that $x_{i,m} \to x_i$ as $m \to \infty$;

\item A sequence of relative scales, $\{\lambda_{i,m}\}_{m \in \NN} \subset (0, \rho)$, such that $\lambda_{i,m} \searrow 0$ as $m \to \infty$;

\item A sequence of geodesic normal coordinate charts, $\varphi_{i,m}: \RR^4 \supset B_{2\rho}(0) \cong B_{2\rho}(x_{i,m}) \subset X$ with $\varphi_{i,m}(0) = x_{i,m}$, defined by the $g$-orthonormal frame $f_{i,m}$ for $(TX)_{x_{i,m}}$ obtained by parallel translation of $f_i$ along the geodesic joining $x_i$ to $x_{i,m}$;

\item A sequence of local sections, $\sigma_{i,m}: B_{2\rho}(x_{i,m}) \to P\restriction B_{2\rho}(x_{i,m})$, constructed via parallel translation with respect to the connections $A^m$ on $P$, of the point $p_i \in P_{x_i}$ to a point $p_{i,m} \in P_{x_{i,m}}$ along the geodesic joining $x_i$ to $x_{i,m}$, followed by parallel translation along radial geodesics emanating from $x_{i,m} \in X$;

\item Sequences of $W^{k+1,p}$ isomorphisms of principal $G$-bundles, $u_{i,m}: P \restriction S^4 \less \{\infty\} \cong G \times \RR^4$.
\end{itemize}
For each $i \in \{1, \ldots, l\}$, define a sequence of rescaled connections, $\{A_i^m\}_{m \in \NN}$, on a sequence of product $G$-bundles, $B_{\rho/\lambda_{i,m}}(0)\times G \subset \RR^4\times G$, by setting\footnote{Compare \cite[p. 516]{FeehanGeometry}; if $g$ is flat, then $\varphi_{i,m}^*\sigma_{i,m}^*A^m(\lambda_{i,m} y) = \varphi_i^*\sigma_{i,m}^*A^m(x_{i,m}+\lambda_{i,m} y)$ and $\varphi_{i,m}^*g(\lambda_{i,m} y) = \varphi_i^*g(x_{i,m}+\lambda_{i,m} y)$.}
\begin{equation}
\label{eq:Schlatter_25_Yang-Mills_theoremstatement}
\begin{aligned}
A_i^m &:= \Theta + a_{i,m} \quad\text{and}
\\
a_{i,m}(y) &:= \varphi_{i,m}^*\sigma_{i,m}^*A^m(\lambda_{i,m} y),
\quad\forall\, y \in B_{\rho/\lambda_{i,m}}(0), \quad m \in \NN,
\end{aligned}
\end{equation}
where $\Theta$ denotes the product connection on $\RR^4\times G$, and define a sequence of smooth Riemannian metrics, $\{g_{i,m}\}_{m\in\NN}$ on $B_{\rho/\lambda_{i,m}}(0)$, by setting
\begin{equation}
\label{eq:Feehan_3-33_Euclidean_space}
g_{i,m}(y) := \lambda_{i,m}^{-2}\varphi_{i,m}^*g(\lambda_{i,m} y), \quad\forall\, y \in B_{\rho/\lambda_{i,m}}(0).
\end{equation}
Then the following hold for each $i \in \{1, \ldots, l\}$:
\begin{enumerate}
\item
\label{item:Theorem_Kozono_Maeda_Naito_5-4_YM_sequence_Cinfinity_loc_convergence_gm}
If $\delta$ denotes the standard Euclidean metric on $\RR^4$, then $g_{i,m} \to \delta$ in $C_\loc^\infty(\RR^4)$ as $m \to \infty$;

\item
\label{item:Theorem_Kozono_Maeda_Naito_5-4_YM_sequence_Am_obeys_Yang-Mills_equation_Euclidean_space}
The sequence of connections, $A_i^m$, obeys the Yang-Mills equation over $B_{\rho/\lambda_{i,m}}(0) \subset \RR^4$ with respect to the metric $g_{i,m}$;

\item
\label{item:Theorem_Kozono_Maeda_Naito_5-4_YM_sequence_H2loc_weak_and_W1ploc_strong_convergence_Am}
There is a smooth Yang-Mills connection, $A_i^\infty$, on a principal $G$-bundle, $P_i$, over $S^4$ with its standard round metric of radius one, an integer $l_i \geq 0$ and, if $l_i \geq 1$, a set of points, $\bx_i := \{y_{i,j}\}_{j=1}^{l_i} \subset \RR^4$ such that
\begin{equation}
\label{eq:Kozono_Maeda_Naito_5-4_YM_sequence_PhimAlm_converges_to_Al_infinity_flat_strongly_in_Wkploc_on_X_away_from_bubble_points}
u_{i,m}(A_i^m) \to A_i^\infty \quad\text{strongly in } W_{\Theta,\loc}^{k,p}(\RR^4\less \bx_i; \Lambda^1\otimes\fg),
\quad\text{as } m \to \infty.
\end{equation}
The ideal limit, $(A_i^\infty, \bx_i)$, is non-trivial in the sense that either $l_i \geq 1$ or, if $l_i = 0$, then $A_i^\infty$ is not flat (that is, gauge-equivalent to the product connection, $\Theta$).

\item
\label{item:Theorem_Kozono_Maeda_Naito_5-4_YM_sequence_weak_convergence_FAm_measures}
If $l_i \geq 1$, there are constants, $0 < \sE_{i,j} \leq \sE_i$ for $1 \leq j \leq l_i$,
given by
\begin{equation}
\label{eq:Limit_r_to_zero_m_to_infinity_energy_over_ball_equals_Elj_Euclidean_space}
\sE_{i,j} = \lim_{r\to 0} \lim_{m\to \infty} \int_{B_r(y_{i,j})} |F_{A_i^m}|^2 \, d^4y, \quad 1 \leq j \leq l_i,
\end{equation}
such that
\[
|F_{A_i^m}|^2\,d\vol
\rightharpoonup
|F_{A_i^\infty}|^2\,d\vol + \sum_{j=1}^{l_i} \sE_{i,j}\, \delta_{y_{i,j}}
\quad\text{in } (C_0(\RR^4;\RR))' \text{ as } m \to \infty.
\]

\item
\label{item:Kozono_Maeda_Naito_5-4_YM_sequence_centered_limit}
The measure, $\mu_i := |F_{A_i^\infty}|^2\,d\vol + \sum_{j=1}^{l_i} \sE_{i,j}\, \delta_{y_{i,j}}$, is centered\footnote{This centering condition reflects the simplest choice and one could alternatively redefine the sequence, $\{A_i^m\}_{m\in\NN}$, in such a way that the connection, $A_i^\infty$, is centered, as in \cite[Lemma 4.7]{FeehanGeometry}.}
in the sense of Definition \ref{defn:Center_scale_positive_Borel_measure} and, if $\bx_i = \emptyset$, then the connection, $A_i^\infty$, is centered in the sense of Definition \ref{defn:Mass_center_scale_connection}.

\item
\label{item:Kozono_Maeda_Naito_5-4_YM_sequence_limit_m_to_infinity_energy_over_increasing_balls_in_Euclidean_space_equals_E}
The positive constant, $\sE_i$, in Theorem \ref{thm:Sedlacek_4-3_Yang-Mills} can be computed by the alternative formula,
\begin{equation}
\label{eq:Limit_m_to_infinity_energy_over_increasing_balls_in_Euclidean_space_equals_E}
\sE_i = \lim_{\zeta \to 0} \lim_{m \to \infty} \int_{B_{\zeta/\lambda_{i,m}}(0)} |F_{A_i^m}(y)|^2 \, d^4y.
\end{equation}

\item
\label{item:Kozono_Maeda_Naito_5-4_YM_sequence_limit_over_sphere_asd}
If in addition the sequence $\{A^m\}_{m\in\NN}$ is absolutely minimizing in the sense of \eqref{eq:Absolutely_minimizing_sequence_connections}, then $A_i^\infty$ is an anti-self-dual connection with respect to the standard round metric of radius one on $S^4$.
\end{enumerate}
\end{thm}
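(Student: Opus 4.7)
My plan is to prove this theorem by adapting the rescaling construction from \cite[Section 4]{FeehanGeometry} and \cite[Section 5]{TauFrame}. The key idea is to select the sequences $\{x_{i,m}\}$ and $\{\lambda_{i,m}\}$ so that the rescaled curvature measure is automatically centered (in the sense of Definition \ref{defn:Center_scale_positive_Borel_measure}), and then to apply the Uhlenbeck convergence result Theorem \ref{thm:Sedlacek_4-3_Yang-Mills}, together with the removable singularities theorem Theorem \ref{thm:Uhlenbeck_removable_singularity_Yang-Mills}, to the rescaled sequence on $\RR^4$ to obtain a Yang-Mills limit on $S^4$.

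First, I fix a bubble point $x_i$ and the geodesic normal coordinate chart $\varphi_i: B_{4\rho}(0) \to B_{4\rho}(x_i)$ determined by the frame $f_i$, choosing $\rho$ so small that $\varphi_i^*g$ is $C^k$-close to the Euclidean metric on $B_{4\rho}(0)$. For each $m$ large enough, consider the positive Borel measure $\mu_{i,m} := \varphi_i^*(\chi_{B_{2\rho}(x_i)} |F_{A^m}|_g^2\, d\vol_g)$ on $B_{2\rho}(0) \subset \RR^4$. Theorem \ref{thm:Sedlacek_4-3_Yang-Mills} guarantees $\mu_{i,m}(B_{2\rho}(0)) \to \sE_i > 0$, so one may set $x_{i,m} := \varphi_i(z[\mu_{i,m}])$ and $\lambda_{i,m} := \lambda[\mu_{i,m}]$ as in Definition \ref{defn:Center_scale_positive_Borel_measure}. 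Since the measures concentrate weakly to $\sE_i\,\delta_{x_i}$, we have $x_{i,m}\to x_i$ and $\lambda_{i,m}\searrow 0$. By construction the rescaled measure $(\lambda_{i,m}^{-1})_*\tau_{-z[\mu_{i,m}]}^*\mu_{i,m}$ on $\RR^4$ is centered, which forces items \eqref{item:Kozono_Maeda_Naito_5-4_YM_sequence_centered_limit} and \eqref{item:Kozono_Maeda_Naito_5-4_YM_sequence_limit_m_to_infinity_energy_over_increasing_balls_in_Euclidean_space_equals_E} once the convergence is established.

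Second, items \eqref{item:Theorem_Kozono_Maeda_Naito_5-4_YM_sequence_Cinfinity_loc_convergence_gm} and \eqref{item:Theorem_Kozono_Maeda_Naito_5-4_YM_sequence_Am_obeys_Yang-Mills_equation_Euclidean_space} follow from elementary scaling considerations. Because $\varphi_{i,m}^*g(x) = \delta + O(|x|^2)$ in normal coordinates (uniformly in $m$ since $x_{i,m}\to x_i$), the rescaled metric $g_{i,m}(y) = \lambda_{i,m}^{-2}\varphi_{i,m}^*g(\lambda_{i,m}y)$ equals $\delta + O(\lambda_{i,m}^2|y|^2)$, giving $g_{i,m}\to \delta$ in $C_\loc^\infty(\RR^4)$. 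The Yang-Mills equation in dimension four is invariant under conformal rescaling and under diffeomorphism pull-back, so $A_i^m$ is $g_{i,m}$-Yang-Mills on $B_{\rho/\lambda_{i,m}}(0)$. Moreover, the $L^2$ norm of curvature is conformally invariant in dimension four, which gives the \apriori bound $\|F_{A_i^m}\|_{L^2(B_{\rho/\lambda_{i,m}}(0),g_{i,m})} \leq \|F_{A^m}\|_{L^2(X,g)}$, uniform in $m$.

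Third, I apply Theorem \ref{thm:Sedlacek_4-3_Yang-Mills} (with the adjustment of Remark \ref{rmk:Donaldson_Kronheimer_4-4-2_Sobolev_connections_and_metrics} for varying metrics $g_{i,m}\to\delta$) on an exhausting sequence of Euclidean balls $B_R(0)\Subset \RR^4$ and take a diagonal subsequence. This yields gauge transformations $u_{i,m}$, a finite set $\bx_i \subset \RR^4$ of new concentration points, and a Yang-Mills connection $\tilde A_i^\infty$ on $\RR^4\less \bx_i$ with $W_\loc^{k,p}$ convergence away from $\bx_i$ and weak-star convergence of measures yielding the constants $\sE_{i,j}$ of \eqref{eq:Limit_r_to_zero_m_to_infinity_energy_over_ball_equals_Elj_Euclidean_space}. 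Pulling $\tilde A_i^\infty$ back under stereographic projection $\varphi_n:\RR^4\to S^4\less\{s\}$ and applying Theorem \ref{thm:Uhlenbeck_removable_singularity_Yang-Mills} (at each $y_{i,j}$ and at the south pole $s \in S^4$) extends the connection to a smooth Yang-Mills connection $A_i^\infty$ on a principal $G$-bundle $P_i$ over $S^4$ with its standard round metric; the total energy bound and the $L^2$-conformal invariance make this finite-energy extension possible.

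Finally, non-triviality of the ideal limit $(A_i^\infty, \bx_i)$ follows because the centered rescaled measure has total mass $\sE_i > 0$ and unit second moment, so the limit measure on $\RR^4$ cannot be zero and cannot entirely escape to infinity; if $\bx_i = \emptyset$ and $A_i^\infty$ were flat over $S^4$, all of this mass would sit at the south pole, violating the unit-second-moment centering. Item \eqref{item:Kozono_Maeda_Naito_5-4_YM_sequence_limit_over_sphere_asd} follows from $L^2$-conformal invariance of $\|F^{+,g}\|_{L^2}$, which forces $F_{A_i^\infty}^{+,g_{\round}} = 0$ in the absolutely minimizing case. \textbf{The main obstacle} will be controlling the non-triviality of the limit after centering and simultaneously verifying that no energy escapes to the south pole of $S^4$ without being accounted for either in $\sE_g(A_i^\infty)$ or in the $\sE_{i,j}$ at finite points; this is where the precise role of Definition \ref{defn:Center_scale_positive_Borel_measure} and the iterated bubble-tree scheme (postponed beyond this single-level statement) becomes essential.
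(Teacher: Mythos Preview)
Your proposal is essentially correct and follows the same rescaling/blow-up strategy as the paper, drawing on Theorem~\ref{thm:Sedlacek_4-3_Yang-Mills} for the rescaled sequence and Theorem~\ref{thm:Uhlenbeck_removable_singularity_Yang-Mills} for the extension over $S^4$. One substantive difference is worth noting: you define the centers and scales from the full curvature measure $|F_{A^m}|^2\,d\vol_g$, whereas the paper (equations \eqref{eq:Feehan_4-6} and \eqref{eq:Feehan_4-7}) uses the \emph{relative} measure $(|F_{A^m}|^2 - |F_{A^\infty}|^2)\,d\vol_g$ with normalization $\sE_i$. Both choices yield $x_{i,m}\to x_i$ and $\lambda_{i,m}\to 0$, and since the smooth background $|F_{A^\infty}|^2$ vanishes under rescaling, both give a centered limit; the paper's choice simply makes the bookkeeping for Item~\eqref{item:Kozono_Maeda_Naito_5-4_YM_sequence_centered_limit} marginally cleaner, and Remark~\ref{rmk:Normalization_in_definition_local_centers_and_scales} explicitly acknowledges this flexibility.

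The point you flag as ``the main obstacle'' --- preventing energy from escaping to the south pole and guaranteeing non-triviality --- is handled in the paper by the Chebychev-type inequality \eqref{eq:Feehan_4-8}, which bounds the energy of $|F_{A^m}|^2 - |F_{A^\infty}|^2$ over the annulus $B_\rho(x_i)\setminus B_{\lambda_{i,m}R}(x_{i,m})$ by $R^{-2}\sE_i$. Combined with the choice \eqref{eq:Kozono_Maeda_Naito_Theorem_5-4_proof_choice_large_Euclidean_4ball_radius_R} of $R$, this is exactly what pins the mass inside a ball of radius $R$ after rescaling (Claim~\ref{claim:Convergence_mass_centers_and_scales_and_no_external_bubbles}), thereby ruling out bubbling in the neck region and forcing non-triviality of the limit. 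Your second-moment argument for non-triviality is morally the same mechanism, but the paper makes the quantitative control explicit via \eqref{eq:Feehan_4-8} rather than arguing by contradiction from the centering alone.
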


\begin{rmk}[On the choice of points in the fibers of $P$ and local sections of $P$]
\label{rmk:Redundancy_fiber_points_sections_principal_bundle}
While Theorem \ref{thm:Kozono_Maeda_Naito_5-4_YM_sequence} is most easily stated and proved with the aid of local sections of $P$ near the points of curvature concentration defined by a choice of points $p_i \in P|_{x_i}$ for $1 \leq i \leq l$, that choice is not necessary for a description of bubble-tree convergence.
\end{rmk}

\begin{rmk}[Bubble-tree convergence for a sequence of Yang-Mills connections with a uniform $L^2$ bound on curvature]
\label{rmk:Bubble-tree_convergence_Yang-Mills_connections}
As we noted earlier, the full bubble-tree convergence is obtained by repeatedly applying Theorem \ref{thm:Kozono_Maeda_Naito_5-4_YM_sequence} (with $X$ replaced by $S^4$) until all curvature concentration singularities have been resolved, in the sense that the sequences of connections on copies of $S^4$ at terminal nodes of the tree converge to smooth connections over $S^4$ without further curvature concentration, as described in \cite{FeehanGeometry, TauFrame}. The required number of applications of Theorem \ref{thm:Kozono_Maeda_Naito_5-4_YM_sequence} is bounded above by a constant that depends at most on $\sup_{m\in\NN}\|F_{A^m}\|_{L^2(X)}$, the Pontrjagin numbers of $P$, and the Lie group, $G$. For the sake of exposition and notational simplicity, however, we shall give most detail in the case where the application of Theorem \ref{thm:Kozono_Maeda_Naito_5-4_YM_sequence} gives $\bx_i = \emptyset$ for $1 \leq i \leq l$ and thus one level of rescaling suffices to resolve all curvature concentration singularities. The general case follows by an induction and presents no new difficulties.
\end{rmk}

\begin{proof}
We shall just briefly indicate the sources of the proofs of the conclusions of Theorem \ref{thm:Kozono_Maeda_Naito_5-4_YM_sequence}, whose statement is modeled on that of \cite[Theorem 29.6]{Feehan_yang_mills_gradient_flow} for Yang-Mills gradient flow, rather than a sequence of Yang-Mills connections and it is the latter hypothesis that is responsible for the stronger notion of convergence in Item \eqref{item:Theorem_Kozono_Maeda_Naito_5-4_YM_sequence_H2loc_weak_and_W1ploc_strong_convergence_Am}, as in our statement and proof of Theorem \ref{thm:Sedlacek_4-3_Yang-Mills}.

The statement and proof of Theorem \ref{thm:Kozono_Maeda_Naito_5-4_YM_sequence} are implicit in \cite[Section 5]{TauFrame} but, in the form presented here, essentially translate those of \cite[Theorem 4.15]{FeehanGeometry} to the more general setting of Yang-Mills rather than anti-self-dual connections. It will be important in the sequel to have precise definitions of the sequences of mass centers and scales, $\{x_{i,m}\}_{m\in\NN}$ and $\{\lambda_{l.m}\}_{m\in\NN}$ for each $i\in \{1,\ldots,l\}$, and whose existence is asserted by Theorem \ref{thm:Kozono_Maeda_Naito_5-4_YM_sequence}. For this purpose, we recall our definitions from \cite[Theorem 29.6]{Feehan_yang_mills_gradient_flow} (compare \cite[Section 4.2]{FeehanGeometry}).

Item \eqref{item:Theorem_Kozono_Maeda_Naito_5-4_YM_sequence_Cinfinity_loc_convergence_gm} is a consequence (compare \cite[Lemma 3.12]{FeehanGeometry}) of the definition \eqref{eq:Feehan_3-33_Euclidean_space} and more refined $C^0$ and $C^1$ convergence rate estimates follow from standard properties of geodesic normal coordinates \cite[Definition 1.24, Proposition 1.25, and Corollary 1.32]{Aubin}.

We recall that a singular point $x_i \in X$ for the sequence $\{A^m\}_{m\in\NN}$ may be characterized by
\begin{equation}
\label{eq:Kozono_Maeda_Naito_5-1}
\lim_{r\to 0}\limsup_{m\to\infty}\int_{B_r(x_i)} |F_{A^m}|^2 \, d\vol_g \geq \eps_0,
\end{equation}
where $\eps_0 \in (0,1]$ is a \emph{universal} positive constant. (Our proof of Theorem \ref{thm:Kozono_Maeda_Naito_5-4_YM_sequence} and proofs of similar results show that this constant is determined by the minimum energy of a non-flat connection on a non-trivial principal $G$-bundle over $S^4$ with its standard round metric of radius one, namely that of a non-flat anti-self-dual or self-dual connection.) Of course, for each $i \in \{1,\ldots,l\}$, since
$$
\int_{B_r(x_i)} |F_{A^m}|^2 \, d\vol_g \leq \int_X |F_{A^m}|^2 \, d\vol_g \leq \sup_{m\in\NN}\int_X |F_{A^m}|^2 \, d\vol_g < \infty, \quad\forall\, m \in \NN,
$$
the limit in \eqref{eq:Kozono_Maeda_Naito_5-1} is finite. By virtue of \eqref{eq:Kozono_Maeda_Naito_5-1}, there is a subsequence, $\{m'\} \subset \{m\}$, such that (compare \cite[Equation (4.3)]{FeehanGeometry} and \cite[Equation (29.37)]{Feehan_yang_mills_gradient_flow}) after relabeling,
\begin{equation}
\label{eq:Feehan_4-3}
\sE_i :=  \lim_{r\to 0} \lim_{m\to\infty} \int_{B_r(x_i)} |F_{A^m}|^2 \, d\vol_g \geq \eps_0, \quad 1 \leq i \leq l.
\end{equation}
For a sufficiently large positive constant,
\begin{equation}
\label{eq:Kozono_Maeda_Naito_Theorem_5-4_proof_large_Euclidean_4ball_radius_R_dependencies}
R = R(\sE_1, \ldots, \sE_l) \in [1,\infty)
\end{equation}
to be determined in the sequel (see \eqref{eq:Kozono_Maeda_Naito_Theorem_5-4_proof_choice_large_Euclidean_4ball_radius_R} and note that $\eps_0$ is a universal constant), we choose $\rho = \rho(g, R, \bx, [A^\infty]) \in (0, 1]$ small enough that (compare \cite[Equation (4.4)]{FeehanGeometry} and \cite[Equation (29.39)]{Feehan_yang_mills_gradient_flow})
\begin{equation}
\label{eq:Kozono_Maeda_Naito_Theorem_5-4_fixed_small_singular_point_radial_parameter}
8R\rho < 1 \wedge \Inj(X,g) \wedge \min_{i\neq j}\dist_g(x_i,x_j),
\end{equation}
and (compare \cite[Equation (29.40)]{Feehan_yang_mills_gradient_flow})
\begin{equation}
\label{eq:Kozono_Maeda_Naito_Theorem_5-4_proof_small_background_connection}
\int_{B_{8R\rho}(x_i)} |F_{A^\infty}|^2 \, d\vol_g \leq \frac{\eps_0}{16}.
\end{equation}
We now define sequences of relative mass centers and scales by adapting our construction in \cite[Section 4.2]{FeehanGeometry}. Theorem \ref{thm:Sedlacek_4-3_Yang-Mills} implies that, for each $i \in \{1,\ldots,l\}$,
\begin{equation}
\label{eq:Kozono_Maeda_Naito_Theorem_5-3_2_punctured_ball}
F_{A^m} \to F_{A^\infty} \quad\text{strongly in } L^2_\loc(B_\rho(x_i)\less\{x_i\}), \quad\text{as } m \to \infty,
\end{equation}
and thus we also have (compare \cite[Equation (4.5)]{FeehanGeometry} and \cite[Equation (29.42)]{Feehan_yang_mills_gradient_flow})
\begin{multline}
\label{eq:Feehan_4-5}
\sE_i = \lim_{m\to\infty} \int_{B_\rho(x_i)} \left(|F_{A^m}|^2 - |F_{A^\infty}|^2\right)\, d\vol_g
\\
= \lim_{r\to 0} \lim_{m\to\infty} \int_{B_r(x_i)} \left(|F_{A^m}|^2 - |F_{A^\infty}|^2\right) \, d\vol_g
\\
= \lim_{r\to 0} \lim_{m\to\infty} \int_{B_r(x_i)} |F_{A^m}|^2 \, d\vol_g.
\end{multline}
By virtue of \eqref{eq:Feehan_4-5} we may also assume, by restricting to large enough $m$ and then relabeling the sequence if needed, that (compare \cite[Equations (4.9) and (4.10)]{FeehanGeometry} and \cite[Equation (29.43)]{Feehan_yang_mills_gradient_flow})
\begin{equation}
\label{eq:Feehan_4-9_and_4-10_difference_measure}
\frac{7}{8}\sE_i \leq \int_{B_\rho(x_i)} \left||F_{A^m}|^2 - |F_{A^\infty}|^2\right|\, d\vol_g \leq \frac{9}{8}\sE_i, \quad\forall\, m \in \NN.
\end{equation}
For each $i \in \{1,\ldots, l\}$, the data $(\{[A^m]\}_{m\in\NN}, [A^\infty], g, x_i, \rho)$ determine a sequence of \emph{relative local mass centers}, $\{x_{i,m}\}_{m\in\NN} \subset B_{9\rho/8}(x_i)$, with coordinates $x_{i,m}^\nu = (\varphi_i^{-1}(x_{i,m}))^\nu$ (compare \cite[Equation (4.6)]{FeehanGeometry} and \cite[Equation (29.44)]{Feehan_yang_mills_gradient_flow})
\begin{equation}
\label{eq:Feehan_4-6}
x_{i,m}^\nu := \frac{1}{\sE_i}\int_{B_\rho(x_i)} x^\nu\left(|F_{A^m}|^2 - |F_{A^\infty}|^2\right)\, d\vol_g, \quad 1\leq \nu \leq 4, \quad\forall\, m \in \NN,
\end{equation}
and \emph{relative local scales}, $\{\lambda_{i,m}\}_{m \in \NN} \subset (0, 3\rho/(2\sqrt{2}))$ (compare \cite[Equation (4.7)]{FeehanGeometry} and \cite[Equation (29.45)]{Feehan_yang_mills_gradient_flow}),
\begin{equation}
\label{eq:Feehan_4-7}
\lambda_{i,m}^2 := \frac{1}{\sE_i}\int_{B_\rho(x_i)} \dist_g(\cdot, x_{i,m})^2 \left||F_{A^m}|^2 - |F_{A^\infty}|^2\right|\, d\vol_g, \quad\forall\, m \in \NN.
\end{equation}
The upper bound, $3\rho/(2\sqrt{2})$, for the scales, $\lambda_{i,m}$, is due to the upper bound $9\sE_i/8$ in \eqref{eq:Feehan_4-9_and_4-10_difference_measure}.

Equation \eqref{eq:Feehan_4-7} leads to the following Chebychev-type inequality (compare \cite[Equation (4.8)]{FeehanGeometry} and \cite[Equation (29.46)]{Feehan_yang_mills_gradient_flow}) for all $R \geq 1$,
\begin{equation}
\label{eq:Feehan_4-8}
\int_{B_\rho(x_i) \less B_{\lambda_{i,m} R}(x_{i,m})} \left||F_{A^m}|^2 - |F_{A^\infty}|^2\right|\, d\vol_g \leq R^{-2}\sE_i, \quad\forall\, m \in \NN.
\end{equation}
For a verification of \eqref{eq:Feehan_4-8}, we refer to \cite{Feehan_yang_mills_gradient_flow}. Our definitions \eqref{eq:Feehan_4-6} and \eqref{eq:Feehan_4-7} of the relative mass centers and scales ensure that they have the properties described in Claim \ref{claim:Convergence_mass_centers_and_scales_and_no_external_bubbles}, whose proof follows \mutatis that of \cite[Claim 29.7]{Feehan_yang_mills_gradient_flow} with the aid of \eqref{eq:Feehan_4-8}.

\begin{claim}[Convergence of relative mass centers and scales and absence of bubbling over $B_\rho(x_i) \less B_{\lambda_{i,m} R}(x_{i,m})$]
\label{claim:Convergence_mass_centers_and_scales_and_no_external_bubbles}
For each $i \in \{1,\ldots,l\}$, one has
\begin{align}
\label{eq:Mass_centers_converge_to_bubble_point_in_X}
x_{i,m} &\to x_i,
\\
\label{eq:Scales_converge_to_zero}
\lambda_{i,m} &\to 0, \quad\text{as } m \to \infty.
\end{align}
If in addition the constant $R \geq 1$ is large enough that
\begin{equation}
\label{eq:Kozono_Maeda_Naito_Theorem_5-4_proof_choice_large_Euclidean_4ball_radius_R}
R^{-2}\sE_i \leq \frac{\eps_0}{2}, \quad\forall\, x_i \in \bx,
\end{equation}
then no bubbling can occur for the sequence $A^m$ over $B_\rho(x_i) \less B_{\lambda_{i,m} R}(x_{i,m})$ as $m \to\infty$.
\end{claim}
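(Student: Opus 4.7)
The strategy is to upgrade the weak-star convergence of curvature densities from Theorem \ref{thm:Sedlacek_4-3_Yang-Mills} into a concentration statement at the single bubble point $x_i$ contained in $B_\rho(x_i)$, then test the defining integrals \eqref{eq:Feehan_4-6} and \eqref{eq:Feehan_4-7} against carefully chosen continuous functions, and finally extract the no-bubbling assertion from the Chebyshev estimate \eqref{eq:Feehan_4-8}.

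First, the choice of $\rho$ in \eqref{eq:Kozono_Maeda_Naito_Theorem_5-4_fixed_small_singular_point_radial_parameter} guarantees that $B_\rho(x_i)$ meets the bubble set $\bx$ only at $x_i$, and hence Theorem \ref{thm:Sedlacek_4-3_Yang-Mills} yields the weak-star convergence of signed measures
\[
(|F_{A^m}|^2 - |F_{A^\infty}|^2)\, d\vol_g \rightharpoonup \sE_i\, \delta_{x_i}
\quad\text{on } B_\rho(x_i).
\]
Moreover, the strong $L^2_\loc$ convergence $F_{A^m} \to F_{A^\infty}$ on $B_\rho(x_i) \less \{x_i\}$, combined with the total-mass identity \eqref{eq:Feehan_4-5}, yields the corresponding weak-star limit for the absolute-value measures $\bigl||F_{A^m}|^2 - |F_{A^\infty}|^2\bigr|\, d\vol_g$. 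Assertion \eqref{eq:Mass_centers_converge_to_bubble_point_in_X} then follows by testing the signed measure against the bounded continuous coordinate functions $x^\nu$ on $B_\rho(x_i)$, which vanish at $x_i$ in the geodesic normal coordinates centered at $x_i$; this forces each coordinate $x_{i,m}^\nu$ defined in \eqref{eq:Feehan_4-6} to tend to zero, i.e.\ $x_{i,m} \to x_i$.

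For the scale convergence \eqref{eq:Scales_converge_to_zero}, I would test the absolute-value measure against the continuous family of functions $\dist_g(\cdot, x_{i,m})^2$. Since $x_{i,m} \to x_i$, these functions converge uniformly on $B_\rho(x_i)$ to $\dist_g(\cdot, x_i)^2$, which vanishes at $x_i$, and the uniform total-mass bound \eqref{eq:Feehan_4-9_and_4-10_difference_measure} together with the weak-star convergence of the absolute-value measures gives $\lambda_{i,m}^2 \to 0$.

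Finally, for the absence of bubbling on the annular region, I would combine \eqref{eq:Feehan_4-8}, the background estimate \eqref{eq:Kozono_Maeda_Naito_Theorem_5-4_proof_small_background_connection}, and the choice of $R$ in \eqref{eq:Kozono_Maeda_Naito_Theorem_5-4_proof_choice_large_Euclidean_4ball_radius_R} to obtain
\[
\int_{B_\rho(x_i) \less B_{\lambda_{i,m} R}(x_{i,m})} |F_{A^m}|^2\, d\vol_g
\leq \frac{\eps_0}{16} + R^{-2}\sE_i
< \eps_0.
\]
If a bubble point $y^* \in B_\rho(x_i) \less \{x_i\}$ existed for the subsequence $\{A^m\}$, then $x_{i,m} \to x_i$ together with $\lambda_{i,m}R \to 0$ would let me choose $r > 0$ so small that $B_r(y^*) \subset B_\rho(x_i) \less B_{\lambda_{i,m}R}(x_{i,m})$ for all large $m$, contradicting the bubble-point characterization \eqref{eq:Kozono_Maeda_Naito_5-1}. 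The main technical subtlety I anticipate is the upgrade from weak-star convergence of the signed difference measure to weak-star convergence of its absolute value, which will rely on the pointwise bound $\bigl||F_{A^m}|^2 - |F_{A^\infty}|^2\bigr| \leq (|F_{A^m}|+|F_{A^\infty}|)\,|F_{A^m}-F_{A^\infty}|$, Cauchy--Schwarz, the uniform $L^2$-curvature bound, and strong $L^2_\loc$ convergence off $x_i$.
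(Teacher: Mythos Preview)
Your proposal is correct and follows the standard route the paper indicates (it defers to \cite[Claim 29.7]{Feehan_yang_mills_gradient_flow} and the Chebyshev estimate \eqref{eq:Feehan_4-8}). The convergence of centers and scales via testing the difference measure against coordinate and squared-distance functions, together with strong $L^2_\loc$ convergence away from $x_i$, is exactly the intended argument; and your energy bound on the annular region, obtained by combining \eqref{eq:Feehan_4-8} with \eqref{eq:Kozono_Maeda_Naito_Theorem_5-4_proof_small_background_connection} and \eqref{eq:Kozono_Maeda_Naito_Theorem_5-4_proof_choice_large_Euclidean_4ball_radius_R}, is the content of the no-bubbling assertion. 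One minor remark: your final contradiction step (positing a hypothetical bubble point $y^*\neq x_i$) is superfluous, since Theorem \ref{thm:Sedlacek_4-3_Yang-Mills} and the separation condition \eqref{eq:Kozono_Maeda_Naito_Theorem_5-4_fixed_small_singular_point_radial_parameter} already force $x_i$ to be the only bubble point in $B_\rho(x_i)$; the uniform energy bound you derived is the operative statement.
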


We can now dispose of the proofs of the remaining assertions of Theorem \ref{thm:Kozono_Maeda_Naito_5-4_YM_sequence}. Item \eqref{item:Theorem_Kozono_Maeda_Naito_5-4_YM_sequence_Am_obeys_Yang-Mills_equation_Euclidean_space} is a consequence of the fact that the Yang-Mills equation is invariant with respect to rescaling of the Riemannian metric. (Indeed, the equation is invariant with respect to conformal transformations of the Riemannian metric.) Items \eqref{item:Theorem_Kozono_Maeda_Naito_5-4_YM_sequence_H2loc_weak_and_W1ploc_strong_convergence_Am}
and \eqref{item:Theorem_Kozono_Maeda_Naito_5-4_YM_sequence_weak_convergence_FAm_measures} are proved by adapting \mutatis the method of proof of the corresponding items in the proof of \cite[Theorem 29.6]{Feehan_yang_mills_gradient_flow}, with the role of \cite[Theorems 29.3 and 29.4]{Feehan_yang_mills_gradient_flow} replaced by that of Theorem \ref{thm:Sedlacek_4-3_Yang-Mills}.

The proof of Item \eqref{item:Kozono_Maeda_Naito_5-4_YM_sequence_centered_limit} follows from the expressions \eqref{eq:Feehan_4-5} for $\sE_i$, the definitions \eqref{eq:Feehan_4-6} and \eqref{eq:Feehan_4-7} of the relative local mass centers, $x_{i,m}$, and relative local scales, $\lambda_{i,m}$; the Definition \ref{defn:Center_scale_positive_Borel_measure} of the global mass center and scale of a positive Borel measure on $\RR^4$; the definition \eqref{eq:Feehan_3-33_Euclidean_space} for the sequences of Riemannian metrics, $g_{i,m}$, and their convergence described in Item \eqref{item:Theorem_Kozono_Maeda_Naito_5-4_YM_sequence_Cinfinity_loc_convergence_gm}; and the definition \eqref{eq:Schlatter_25_Yang-Mills_theoremstatement} for the sequences of connections, $A_i^m$, and their convergence described in Item \eqref{item:Theorem_Kozono_Maeda_Naito_5-4_YM_sequence_H2loc_weak_and_W1ploc_strong_convergence_Am}. (Compare the proof of \cite[Lemma 4.7]{FeehanGeometry}.)

Item \eqref{item:Kozono_Maeda_Naito_5-4_YM_sequence_limit_m_to_infinity_energy_over_increasing_balls_in_Euclidean_space_equals_E} follows just as in the proof of the corresponding item in \cite[Theorem 29.6]{Feehan_yang_mills_gradient_flow}. Item \eqref{item:Kozono_Maeda_Naito_5-4_YM_sequence_limit_over_sphere_asd} is an immediate consequence of the definitions.
\end{proof}

\begin{rmk}[On the definitions of the local centers and scales in Theorem \ref{thm:Kozono_Maeda_Naito_5-4_YM_sequence}]
\label{rmk:Normalization_in_definition_local_centers_and_scales}
In the definitions \eqref{eq:Feehan_4-6} and \eqref{eq:Feehan_4-7} of the local centers and scales, we could just as easily have replaced the normalization factors $\sE_i$ by
\[
\int_{B_\rho(x_i)}\left(|F_{A^m}|^2 - |F_{A^\infty}|^2\right)\, d\vol_g,
\]
as one can see from \eqref{eq:Feehan_4-5}. This redefinition would have the advantage that it is notationally more consistent with our definitions of the centers and scales for connections over $S^4$ in Section \ref{subsec:Mass_center_and_scale_maps}.
\end{rmk}

\subsection{Bubble-tree compactification for the moduli space of anti-self-dual connections}
\label{subsec:Bubble_tree_compactification_moduli_space_anti-self-dual_connections}
In this section, we provide bubble-tree analogues of the Definition \ref{defn:Donaldson_Kronheimer_4-4-2_Sobolev_connections} of Uhlenbeck convergence of Sobolev connections, Uhlenbeck compactification of the moduli space of anti-self-dual connections, and Definition \ref{defn:Open_Uhlenbeck_neighborhood} of an open Uhlenbeck neighborhood of a Sobolev connection (more generally, an ideal Sobolev connection). While we essentially follow our previous development \cite[Section 4.3]{FeehanGeometry}, we simplify and streamline that description.

\begin{defn}[$W_\loc^{k,p}$ convergence with one level of rescaling for a sequence of Sobolev connections]
\label{defn:Donaldson_Kronheimer_4-4-2_Sobolev_connections_one_level_rescaling}
Let $G$ be a compact Lie group and $P$ be a principal $G$-bundle over a closed, connected, four-dimensional, smooth manifold, $X$, and endowed with a Riemannian metric, $g$. Let $\{A^m\}_{m\in\NN}$ be a sequence of connections on $P$ of class $W^{\bar k,\bar p}$, with $\bar p\geq 2$ and integer $\bar k\geq 1$ obeying $(\bar k+1)\bar p>4$, and
\[
E := \limsup_{m\to\infty} \sE_g(A^m) < \infty.
\]
Let $A_0$ be a $W^{\bar k,\bar p}$ connection on a principal $G$-bundle $P_0$ over $X$ with $\eta(P_0)=\eta(P)$. Let $\ell \geq 0$ be an integer and, if $\ell \geq 1$, let $\bx \in \Sym^\ell(X)$ be represented by the set $\{x_1,\ldots,x_l\} \subset X$ of distinct unordered points with $l \leq \ell$ and let $f_i$ be an oriented $g$-orthonormal frame for $(TX)_{x_i}$, for $1\leq i\leq l$. Let $A_i$ be a $W^{\bar k,\bar p}$ connection on a principal $G$-bundle $P_i$ over $S^4$, let $l_i \geq 0$ be an integer and, if $l_i \geq 1$, let $\by_i = \{y_{i,1},\ldots,y_{i,l_i}\} \subset S^4\less\{s\}$ be a collection of distinct unordered points, for $1\leq i\leq l$.

Let $R$ and $\rho$ be positive constants obeying \eqref{eq:Kozono_Maeda_Naito_Theorem_5-4_fixed_small_singular_point_radial_parameter} and\footnote{This condition implies the hypothesis \eqref{eq:Kozono_Maeda_Naito_Theorem_5-4_proof_choice_large_Euclidean_4ball_radius_R} in Theorem \ref{thm:Kozono_Maeda_Naito_5-4_YM_sequence}.}
\begin{equation}
\label{eq:Kozono_Maeda_Naito_Theorem_5-4_proof_choice_large_Euclidean_4ball_radius_R_generalized}
R^{-2}E \leq \frac{1}{2}\sE_g(G),
\end{equation}
where we define
\begin{equation}
\label{eq:Minimum_energy_connection_G-bundle_S4}
\sE_g(G)
:=
\inf_{P\, \not\cong\, S^4\times G}
\left\{\sE_{g_\round}(A): A \text{ is a $W^{1,2}$ connection on } P\to S^4 \right\},
\end{equation}
to be the minimum value\footnote{This topological lower bound is attained by a $g_\round$-anti-self-dual or $g_\round$-self-dual connection on $P$, irrespective of whether such a connection actually exists for $(S^4,P,g_\round)$.}, over all non-trivial principal $G$-bundles $P$ over $S^4$ with its standard round metric of radius one, of the $L^2$-energy $\sE_{g_\round}(A)$ of a $W^{1,2}$ connection $A$ on $P$ predicted by the Chern-Weil formula \cite[Section 9]{Feehan_yang_mills_gradient_flow}, \cite[Appendix C]{MilnorStasheff}, \cite[Appendix]{TauSelfDual}.

Suppose that $\ell \geq 1$ and $l_i \geq 1$ for $1 \leq i\leq l$. For $2 \leq p \leq \bar p$ and integer $0 \leq k\leq \bar k$, we say that $\{A^m\}_{m\in\NN}$ \emph{converges in $W_\loc^{k,p}$ with one level of rescaling} to a \emph{level one limit},
\[
(A_0, (A_1,\by_1),\ldots,(A_l,\by_l)),
\]
if, as in Theorem \ref{thm:Kozono_Maeda_Naito_5-4_YM_sequence}, there exist sequences,
\[
\{u_m\}_{m\in\NN},\quad \{x_{i,m}\}_{m\in\NN},\quad \{\lambda_{i,m}\}_{m\in\NN},\quad \{A_i^m\}_{m\in\NN},\quad \{u_{i,m}\}_{m\in\NN},
\]
and sequences $\{f_{i,m}\}_{m\in\NN}$ defined by the choice $\{f_i\}$ such that, for $1 \leq i\leq l$ and as $m \to \infty$, one has the convergence specified by Theorems \ref{thm:Sedlacek_4-3_Yang-Mills} and \ref{thm:Kozono_Maeda_Naito_5-4_YM_sequence}:
\begin{align*}
x_{i,m} &\to x_i, \quad f_{i,m} \to f_i, \quad \lambda_{i,m} \to 0,
\\
u_m(A^m) &\to A_0 \quad\text{in } W_{A_0,\loc}^{k,p}(X\less\bx),
\\
|F_{A^m}|^2\,d\vol_g &\rightharpoonup |F_{A_0}|^2\,d\vol_g + \sum_{i=1}^l \sE_i\, \delta_{x_i}
\quad\text{in } (C(X;\RR))',
\\
u_{i,m}(A_i^m) &\to A_i \quad\text{in } W_{A_i,\loc}^{k,p}(S^4\less\{\by_i,s\}),
\\
|F_{A_i^m}|^2\,d\vol_{\round} &\rightharpoonup |F_{A_i}|^2\,d\vol_{\round} + \sum_{j=1}^{l_i} \sE_{i,j}\, \delta_{y_{i,j}}
\quad\text{in } (C(S^4;\RR))',
\end{align*}
where $d\vol_{\round}$ is the volume form for the standard round metric $g_\round$ of radius one on $S^4$. If $l=0$, the preceding convergence simplifies to
\[
u_m(A^m) \to A_0 \quad\text{in } W_{A_0}^{k,p}(X), \quad\text{for } m \to \infty.
\]
If $l_i = 0$ for some $i$ then the preceding convergence simplifies to
\begin{align*}
u_{i,m}(A_i^m) &\to A_i \quad\text{in } W_{A_i,\loc}^{k,p}(S^4\less\{s\}),
\\
|F_{A_i^m}|^2\,d\vol_{\round} &\rightharpoonup |F_{A_i}|^2\,d\vol_{\round} \quad\text{in } (C(S^4;\RR))',
\quad\text{for } m \to \infty.
\end{align*}
\end{defn}

The definition of the full bubble-tree convergence of a sequence of Sobolev connections is obtained from Definition \ref{defn:Donaldson_Kronheimer_4-4-2_Sobolev_connections_one_level_rescaling}, modeled on the observation that one may repeatedly apply Theorem \ref{thm:Kozono_Maeda_Naito_5-4_YM_sequence} until the sequences of rescaled connections over $S^4\less\{s\}$ converge in $W_\loc^{k,p}(S^4\less\{s\})$ without points of curvature concentration in $S^4\less\{s\}$.

\begin{defn}[$W_\loc^{k,p}$ convergence with $L$ levels of rescaling and $W_\loc^{k,p}$ bubble-tree convergence of a sequence of Sobolev connections]
\label{defn:Donaldson_Kronheimer_4-4-2_Sobolev_connections_bubble_tree}
Continue the notation of Definition \ref{defn:Donaldson_Kronheimer_4-4-2_Sobolev_connections_one_level_rescaling}. We say that the sequence $\{A^m\}_{m\in\NN}$ on $P\to X$ \emph{converges in $W_\loc^{k,p}$ with zero levels of rescaling} if $\{A^m\}_{m\in\NN}$ has the standard Uhlenbeck convergence specified in Definition \ref{defn:Donaldson_Kronheimer_4-4-2_Sobolev_connections}.

Suppose that $\{A^m\}_{m\in\NN}$ converges in $W_\loc^{k,p}$  with one level of rescaling and that $l_i \geq 1$ for at least one $i \in \{1,\ldots,l\}$. We say that $\{A^m\}_{m\in\NN}$ \emph{converges in $W_\loc^{k,p}$ with two levels of rescaling} to a \emph{level-two limit},
\begin{align*}
{}&(A_0, (A_1,(A_{1,1},\by_{1,1})),\ldots,(A_1,(A_{1,l_1},\by_{1,l_1}))),
\\
{}&(A_0, (A_2,(A_{2,1},\by_{2,1})),\ldots,(A_2,(A_{2,l_2},\by_{2,l_2}))),
\\
{}&\vdots
\\
{}&(A_0, (A_l,(A_{l,1},\by_{l,1})),\ldots,(A_l,(A_{l,l_l},\by_{l,l_l}))),
\end{align*}
if the sequence\footnote{We abuse notation since this sequence of connections is only defined on a sequence of balls with increasing radii that provide an exhaustion of $S^4\less\{s\}$.} $\{A_i^m\}_{m\in\NN}$ on $P_i\to S^4\less\{s\}$ converges in $W_\loc^{k,p}$ with one level of rescaling for each $i \in \{1,\ldots,l\}$ such that $l_i \geq 1$ and hence the set $\by_i \subset S^4\less\{s\}$ of points of curvature concentration is non-empty.

Continuing in this way, if $\{A^m\}_{m\in\NN}$ converges in $W_\loc^{k,p}$  with $L$ levels of rescaling (for an integer $L \geq 0$) and at least one non-empty set of points of curvature concentration in $S^4\less\{s\}$, we apply Definition \ref{defn:Donaldson_Kronheimer_4-4-2_Sobolev_connections} to define \emph{convergence in $W_\loc^{k,p}$ with $L+1$ levels of rescaling}.

Finally, we say that $\{A^m\}_{m\in\NN}$ \emph{converges in $W_\loc^{k,p}$ in the sense of bubble trees} to a \emph{bubble-tree limit} if $\{A^m\}_{m\in\NN}$ converges in $W_\loc^{k,p}$ with $L$ levels of rescaling and no remaining points of curvature concentration in $S^4\less\{s\}$.
\end{defn}

As Definition \ref{defn:Donaldson_Kronheimer_4-4-2_Sobolev_connections}, the definitions of $W_\loc^{k,p}$ bubble-tree convergence for a sequence of Sobolev connections in Definition \ref{defn:Donaldson_Kronheimer_4-4-2_Sobolev_connections_bubble_tree} extends in an obvious way to a sequence of bubble-tree Sobolev connections.

Again, is often convenient to work with a base for the bubble-tree topology on $\bar M^\tau(P,g)$ corresponding to the Definition \ref{defn:Donaldson_Kronheimer_4-4-2_Sobolev_connections_bubble_tree} of sequential bubble-tree convergence. Before proceeding to do this, we first extract definitions of local mass centers and scales modeled on the definitions \eqref{eq:Feehan_4-6} and \eqref{eq:Feehan_4-7} and Remark \ref{rmk:Normalization_in_definition_local_centers_and_scales} arising in the proof of Theorem \ref{thm:Kozono_Maeda_Naito_5-4_YM_sequence} and the corresponding global definitions for connections over $S^4$ in Definition \ref{defn:Mass_center_scale_connection}.


\begin{defn}[Local mass center and scale of a connection on a principal $G$-bundle over a four-dimensional Riemannian manifold]
\label{defn:Relative_local_mass_center_scale_connection_over_Riemannian_manifold}
Let $G$ be a compact Lie group, $P$ be a principal $G$-bundle over a four-dimensional, Riemannian, smooth manifold, $(X,g)$, and $\rho \in (0,\Inj(X,g)]$, and $A_0$ be a $W^{2,2}$ connection on $P\restriction B(x_0,\rho)$, and $f_0$ be an oriented orthonormal frame for $(TX)_{x_0}$, given a point $x_0 \in X$. Then the \emph{local relative mass center}, $\bar z_0 =\Center_{A_0,f_0,\rho}[A]\in\RR^4$, and the \emph{local relative scale}, $\lambda=\Scale_{A_0,f_0,\rho}[A] \in \RR_+ = (0,\infty)$ of $W^{2,2}$ connection $A$ on $P\restriction B(x_0,\rho)$ with \emph{positive mass relative to $A_0$} are defined by
\begin{subequations}
\label{eq:Relative_local_mass_center_and_scale_connection}
\begin{align}
\label{eq:Relative_local_mass_connection}
\Mass_{A_0,f_0,\rho}[A] &:= \int_{B(x_0,\rho)} \left(|F_A|^2 - |F_{A_0}|^2\right) \,d\vol_g > 0,
\\
\label{eq:Relative_local_mass_center_connection}
\Center_{A_0,f_0,\rho}[A] &:= \frac{1}{\Mass_{A_0,f_0,\rho}[A]}
\int_{B(x_0,\rho)}x(\cdot)\left(|F_A|^2 - |F_{A_0}|^2\right) \,d\vol_g,
\\
\Scale_{A_0,f_0,\rho}[A]^2 &:= \frac{1}{\Mass_{A_0,f_0,\rho}[A]}
\int_{B(x_0,\rho)} \dist_g(\cdot, \bar x_0)^2 \left| |F_A|^2 - |F_{A_0}|^2 \right|\,d\vol_g,
\label{eq:Relative_local_scale_connection}
\end{align}
\end{subequations}
where $\bar x_0 := \varphi_{f_0}(\bar z_0) \in X$, and $x(\cdot) = \varphi_{f_0}^{-1}$ denotes the local geodesic normal coordinate chart on $B(x_0,\varrho)$, and $\varphi_{f_0} = \exp_{f_0}:\RR^4 \supset B(0,\varrho) \cong B(x_0,\varrho) \subset X$ is the exponential map defined by the frame $f_0$, and $\varrho := \Inj(X,g)$. If $\Mass_{A_0,f_0,\rho}[A] = 0$, one defines $\Center_{A_0,f_0,\rho}[A] := 0$ and $\Scale_{A_0,f_0,\rho}[A] := 0$.
\end{defn}

In the inequality \eqref{eq:Bubble_tree_neighborhood_rescaled_A_Wkp_near_A_i_large_ball_S4} appearing in Definition \ref{defn:Open_bubble_tree_neighborhood}, the points $\bar x_i$ and scales $\lambda_i$ may be regarded as being determined by Definition \ref{defn:Relative_local_mass_center_scale_connection_over_Riemannian_manifold}, although this is not required even if the typical case. We have the

\begin{defn}[$W_\loc^{k,p}$ bubble-tree open neighborhood of a bubble-tree connection]
\label{defn:Open_bubble_tree_neighborhood}
Continue the notation of Definition \ref{defn:Donaldson_Kronheimer_4-4-2_Sobolev_connections_bubble_tree}. Let $\eps\in (0,1]$, and $\rho \in (0, \Inj(X,g))$, and $R \in [1,\infty)$. Let $l \geq 1$ be an integer and $\bx := \{x_1,\ldots,x_l\} \subset X$ be a subset of distinct points. Let $f_0$ be an oriented orthonormal frame for $(TS^4)_n$. We say that a $W^{\bar k,\bar p}$ connection $A$ on $P$ belongs to a $W_\loc^{k,p}$ \emph{bubble-tree $(\eps,\rho, R)$ open neighborhood} of a \emph{level-one bubble-tree connection}, $(A_0, (A_1,x_1),\ldots, (A_l,x_l))$, if the following hold:
\begin{enumerate}
  \item There is a $W^{\bar k+1,\bar p}$ isomorphism of principal $G$-bundles,
\[
u: P\restriction X\less\bx \cong P_0\restriction X\less\bx,
\]
such that, for some $\eps_\background \in (0,\eps]$,
\begin{equation}
\label{eq:Bubble_tree_neighborhood_A_Wkp_near_A_0_complement_small_balls}
\|u(A) - A_0\|_{W_{A_0}^{k,p}(X\less B_{\rho/4}(\bx))} < \eps_\background,
\end{equation}
where $B_r(\bx) := \cup_{i=1}^l B_r(x_i)$ for any $r \in (0,\Inj(X,g)]$; and

  \item There are a set of scales, $\{\lambda_i\}_{i=1}^l \subset (0,1]$, which obeys the following analogue of \eqref{eq:Kozono_Maeda_Naito_Theorem_5-4_fixed_small_singular_point_radial_parameter},
\begin{equation}
\label{eq:Taubes_1988_4-11a_analogue_mass_center_scale_constraint}
0 < R\lambda_i < \rho
\quad\text{and}\quad
4\rho < 1 \wedge \Inj(X,g) \wedge \min_{i\neq j} \dist_g(x_i,x_j),
\end{equation}
a set of points, $\{\bar x_i\}_{i=1}^l \subset X$ with $\bar x_i \in B(x_i,\rho/4)$, and, for $1 \leq i \leq l$, an oriented orthonormal frame $\bar f_i$ for $(TX)_{\bar x_i}$ obtained by parallel translation, using the Levi-Civita connection for $g$, of a choice of oriented orthonormal frame $f_i$ for $(TX)_{x_i}$, and $W^{\bar k+1,\bar p}$ isomorphisms of principal $G$-bundles,
\[
u_i: \varphi_n^{-1,*}\delta_{\lambda_i}^{-1,*}\varphi_{\bar x_i}^*(P\restriction B(\bar x_i,2\rho)) \cong P_i\restriction \varphi_n(B(0,2\rho/\lambda_i)),
\]
where $\varphi_n(B(0,2\rho/\lambda_i)) = S^4\less \varphi_s(B(0,\lambda_i/2\rho))$, such that, for some $\eps_\sphere \in (0,\eps]$,
\begin{equation}
\label{eq:Bubble_tree_neighborhood_rescaled_A_Wkp_near_A_i_large_ball_S4}
\|u_i(\varphi_n^{-1,*}\delta_{\lambda_i}^{-1,*}\varphi_{\bar x_i}^*A) - A_i\|_{W_{A_i}^{k,p}(S^4\less \varphi_s(B(0,1/2R))} < \eps_\sphere,
\quad\text{for } 1 \leq i \leq l,
\end{equation}
where $\varphi_{\bar x_i}^{-1}$ is the geodesic normal coordinate chart on the open ball $B_\varrho(\bar x_i)$ defined by the oriented orthonormal frame $\bar f_i$ for $(TX)_{\bar x_i}$ and $\varrho := \Inj(X,g)$, and $\varphi_n^{-1}:S^4\less\{s\} \cong \RR^4$ is the stereographic projection from the south pole $s\in S^4\subset \RR^5$ defined by $f_0$, and $\varphi_s^{-1}:S^4\less\{n\} \cong \RR^4$ is the corresponding stereographic projection from the north pole $n\in S^4\subset \RR^5$, and $\delta_\lambda:\RR^4 \ni x \to x/\lambda \in \RR^4$ for $\lambda > 0$; and

\item One has, for some $\eps_\annulus \in (0,\eps]$,
\begin{equation}
\label{eq:Bubble_tree_neighborhood_A_L2_small_curvature_annulus}
\|F_A\|_{L^2(\Omega(\bar x_i;R\lambda_i/4,2\rho))} < \eps_\annulus, \quad\text{for } 1 \leq i \leq l.
\end{equation}
\end{enumerate}
We define \mutatis a $W^{\bar k,\bar p}$ connection $A$ on $P$ to be in a $W_\loc^{k,p}$ \emph{bubble-tree $(\eps,\rho, R)$ open neighborhood} of a \emph{level-two bubble-tree connection},
\begin{align*}
{}&(A_0, (A_1,x_1, (A_{1,1},x_{1,1})),\ldots,(A_1,x_1,(A_{1,l_1},x_{1,l_1}))),
\\
{}&(A_0, (A_2,x_2,(A_{2,1},x_{2,1})),\ldots,(A_2,(A_{2,l_2},x_{2,l_2}))),
\\
{}&\quad\vdots
\\
{}&(A_0, (A_l,(A_{l,1},x_{l,1})),\ldots,(A_l,(A_{l,l_l},x_{l,l_l}))).
\end{align*}
Finally, for any number $L \geq 0$ of iterated rescalings, we define \mutatis a $W^{\bar k,\bar p}$ connection $A$ on $P$ to be in a $W_\loc^{k,p}$ \emph{bubble-tree $(\eps,\rho, R)$ open neighborhood $\tilde\sU \subset \sA(P)$} of a \emph{level $L$ bubble-tree connection}, where $\sA(P)$ denotes the affine space of $W^{\bar k,\bar p}$ connections on $P$.

We say that $[A] \in \sB(P,g)$ belongs to a $W_\loc^{k,p}$ \emph{bubble-tree $(\eps,\rho, R)$ open neighborhood $\sU \subset \sB(P)$} if $A$ belongs to a $W_\loc^{k,p}$ bubble-tree $(\eps,\rho, R)$ open neighborhood $\pi^{-1}(\sU) \subset \sA(P)$. Here, $\sB(P,g) = \sA(P)/\Aut(P)$, where $\Aut(P)$ is the group of $W^{\bar k+1,\bar p}$ gauge transformations of $P$, and $\pi:\sA(P) \to \sB(P,g)$ is the projection map.

If we need to emphasize the individual values of $\eps_\background$, $\eps_\sphere$, or $\eps_\annulus$ in the preceding definitions, then we write $\beps = (\eps_\background, \eps_\sphere, \eps_\annulus)$ and refer to $W_\loc^{k,p}$ bubble-tree $(\beps,\rho, R)$ open neighborhoods.
\end{defn}

\begin{defn}[Fine $W_\loc^{k,p}$ bubble-tree open neighborhoods in $\sA(P)$ and $\sB(P,g)$]
\label{defn:Open_bubble_tree_neighborhood_fine}
Let $\tilde\sU \subset \sA(P)$ and $\sU \subset \sB(P,g)$ be as in Definition \ref{defn:Open_bubble_tree_neighborhood}. We call $\tilde\sU$ (respectively, $\sU$) \emph{fine} if there are constants $\eps_\mycenter, \eps_\scale \in (0,\eps]$ such that the following holds for every pair of connections $A, A' \in \tilde\sU$ (respectively, points $[A], [A'] \in \sU$).

If $L=1$, then the pairs of local mass centers, $\{\bar x_i\}_{i=1}^l, \{\bar x_i'\}_{i=1}^l \subset X$, and pairs of local scales, $\{\lambda_i\}_{i=1}^l, \{\lambda_i'\}_{i=1}^l \subset (0,\rho]$, defined by $A$ and $A'$, respectively, and $\rho \in (0,\Inj(X,g)]$ and $\{x_i\}_{i=1}^l \subset X$, obey
\begin{align}
\label{eq:Comparable_lambdaiprime_lambdai}
|\lambda_i' - \lambda_i| &< \eps_\scale \min\{\lambda_i,\lambda_i'\},
\\
\label{eq:Comparable_xi_xiprime}
\dist_g(\bar x_i, \bar x_i') &< \eps_\mycenter \min\{\lambda_i,\lambda_i'\}, \quad\text{for } 1 \leq i \leq l.
\end{align}
If $L \geq 2$, then the analogous relations should hold for pairs of local mass centers in $S^4\less\{s\}$ and pairs of local scales in $(0,\rho]$.
\end{defn}

\begin{defn}[Coarse $W_\loc^{k,p}$ bubble-tree open neighborhoods in $\sA(P)$ and $\sB(P,g)$]
\label{defn:Open_bubble_tree_neighborhood_coarse}
Assume the notation of Definition \ref{defn:Open_bubble_tree_neighborhood_fine}. We call $\tilde\sU$ (respectively, $\sU$) \emph{coarse} if there is a constant $\eps_\mycenter \in (0,\eps]$ such that the following holds for every pair of connections $A, A' \in \tilde\sU$ (respectively, points $[A], [A'] \in \sU$).

If $L=1$, then the pairs of local mass centers, $\{\bar x_i\}_{i=1}^l, \{\bar x_i'\}_{i=1}^l \subset X$, and pairs of local scales, $\{\lambda_i\}_{i=1}^l,  \{\lambda_i'\}_{i=1}^l \subset (0,\rho]$ obey \eqref{eq:Comparable_xi_xiprime}. If $L \geq 2$, then the analogous relations should hold for pairs of local mass centers in $S^4\less\{s\}$ and pairs of local scales in $(0,\rho]$.
\end{defn}

Briefly, $\sU \subset \sB(P,g)$ is a \emph{fine} neighborhood if for each pair of points, $[A], [A'] \in \sU$, their local scales and local mass centers are close to one another relative to the smaller of each pair of local scales, while $\sU \subset \sB(P,g)$ is a \emph{coarse} neighborhood if for each pair of points, $[A], [A'] \in \sU$, we only require that their local mass centers are close to one another relative to the smaller of each pair of local scales.

As with Definition \ref{defn:Donaldson_Kronheimer_4-4-2_Sobolev_connections_bubble_tree}, the definition of a $W_\loc^{k,p}$ bubble-tree open neighborhood of a Sobolev connection in Definition \ref{defn:Open_bubble_tree_neighborhood} extends in an obvious way to a $W_\loc^{k,p}$ bubble-tree open neighborhood of a bubble-tree Sobolev connection.

Suppose in addition that $X$ is oriented. We let $BM(P,g)$ denote the set of gauge-equivalence classes of \emph{bubble-tree $g$-anti-self-dual connections on $P$}, where each point in $BM(P,g)$ consists of a labeled tree $\sT$ (see Diestel \cite{Diestel_graph_theory4} for definitions of terms relating to trees), with a gauge-equivalence class of a $g$-anti-self-dual connection on a principal $G$-bundle $P_0$ over $X$ associated to the root of the tree and a gauge-equivalence class of $g_\round$-anti-self-dual connections on principal $G$-bundles over $S^4$ associated to each other vertex of the tree. Each point in $BM(P,g)$ obeys the constraint that $\eta(P_0) = \eta(P)$ and $\sE_g(P)$ is equal to the sum of the $L^2$ energies of all connections attached to vertices of the tree.

For example, $([A_0],[A_1],\ldots,[A_l]) \in BM(P,g)$ if $A_0$ is a $g$-anti-self-dual connection on a principal $G$-bundle $P_0$ over $X$ obeying $\eta(P_0) = \eta(P)$ and each $A_i$ is a $g_\round$-anti-self-dual connection on a principal $G$-bundle $P_i$ over $S^4$, for $1 \leq i \leq l$, such that
\[
\sum_{i=0}^l \sE_g(P_i) = \sE_g(P).
\]
We observe that the collection of $W_\loc^{k,p}$ bubble-tree open neighborhoods given by Definition \ref{defn:Open_bubble_tree_neighborhood} form a basis for a topology on $BM(P,g)$, called the \emph{bubble-tree topology}. One may check that $BM(P,g)$ is then a Hausdorff, regular, second-countable topological space and thus metrizable.

We define the \emph{bubble-tree closure}, $\bar M^\tau(P,g)$ (compare \cite[Definition 4.3]{FeehanGeometry}), to be the closure of $M(P,g)$ in $BM(P,g)$, with respect to the bubble-tree topology.

Again, elliptic regularity for solutions to the anti-self-dual and local Coulomb-gauge equations ensure that the definition of bubble-tree topology on $\bar M^\tau(P,g)$ is independent of the choice of $(\bar k,\bar p)$ obeying $\bar k\geq 1$ and $\bar p\geq 2$ and $(\bar k+1)p > 4$ or $(k,p)$ obeying $0\leq k\leq \bar k$ and $2\leq \bar p\leq p$. Theorems \ref{thm:Sedlacek_4-3_Yang-Mills} and \ref{thm:Kozono_Maeda_Naito_5-4_YM_sequence} yield the

\begin{cor}[Sequential bubble-tree compactness for the moduli space of anti-self-dual connections on a principal $G$-bundle]
\label{cor:Donaldson_Kronheimer_4-4-4_G_bubble_tree}
(Compare \cite[Theorem 4.15]{FeehanGeometry})
Let $G$ be a compact Lie group and $P$ be a principal $G$-bundle over a closed, connected, four-dimensional, oriented, smooth manifold, $X$, and endowed with a Riemannian metric, $g$. Then $\bar M^\tau(P,g)$ is sequentially compact.
\end{cor}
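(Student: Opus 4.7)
The plan is to adapt the proof of \cite[Theorem 4.15]{FeehanGeometry} to this slightly more general setting, using Theorems \ref{thm:Sedlacek_4-3_Yang-Mills} and \ref{thm:Kozono_Maeda_Naito_5-4_YM_sequence} as the main inputs. Because the bubble-tree topology on $\bar M^\tau(P,g)$ is metrizable, a standard diagonal argument reduces the problem to showing that every sequence $\{[A^m]\}_{m\in\NN} \subset M(P,g)$ of genuine gauge-equivalence classes of $g$-anti-self-dual connections on $P$ admits a subsequence that $W_\loc^{k,p}$ bubble-tree converges in the sense of Definition \ref{defn:Donaldson_Kronheimer_4-4-2_Sobolev_connections_bubble_tree} to a point of $\bar M^\tau(P,g)$. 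The uniform bound on $\|F_{A^m}\|_{L^2(X)}$ required by the hypotheses of those theorems holds automatically: by the Chern--Weil formula, the $L^2$-energy of every $g$-anti-self-dual connection on $P$ equals the topological invariant $\sE_g(P)$.

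First I would apply Theorem \ref{thm:Sedlacek_4-3_Yang-Mills} to extract gauge transformations $u_m$, a (possibly empty) bubble locus $\bx = \{x_1,\ldots,x_l\} \subset X$ with weights $\sE_i > 0$, and a limit $g$-anti-self-dual connection $A^\infty$ on a principal $G$-bundle $P_\infty \to X$ with $\eta(P_\infty) = \eta(P)$ such that $u_m(A^m) \to A^\infty$ in $W_\loc^{k,p}(X\less\bx)$ and the curvature measures converge weak-$*$ to $|F_{A^\infty}|^2\, d\vol_g + \sum_{i=1}^l \sE_i\, \delta_{x_i}$; the absolutely-minimizing clause \eqref{eq:Absolutely_minimizing_sequence_connections} of Theorem \ref{thm:Sedlacek_4-3_Yang-Mills} guarantees that $A^\infty$ is itself anti-self-dual. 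If $l=0$, this already constitutes bubble-tree convergence (to a trivial tree). Otherwise I would pass to a further subsequence and apply Theorem \ref{thm:Kozono_Maeda_Naito_5-4_YM_sequence} at each $x_i$ to produce relative mass centers $x_{i,m}\to x_i$, relative scales $\lambda_{i,m} \searrow 0$, rescaled connections $A_i^m$ on balls exhausting $\RR^4$, and a $g_\round$-anti-self-dual limit $A_i^\infty$ on a principal $G$-bundle $P_i \to S^4$ with its own (possibly empty) bubble set $\by_i$ and weights $\sE_{i,j}$. If any $\by_i$ is non-empty, I repeat Theorem \ref{thm:Kozono_Maeda_Naito_5-4_YM_sequence} at the offending points (now with base $S^4$ carrying its round metric), growing a rooted labeled tree whose root carries $A^\infty$ on $X$ and whose descendant vertices carry $g_\round$-anti-self-dual connections on copies of $S^4$.

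The iteration terminates after finitely many steps because the non-triviality assertion in Item \ref{item:Theorem_Kozono_Maeda_Naito_5-4_YM_sequence_H2loc_weak_and_W1ploc_strong_convergence_Am} of Theorem \ref{thm:Kozono_Maeda_Naito_5-4_YM_sequence}, combined with the anti-self-duality of each bubble limit, forces every non-root vertex to carry a connection on a non-trivial principal $G$-bundle over $S^4$ and hence to have $L^2$-energy at least $\sE_g(G) > 0$ by \eqref{eq:Minimum_energy_connection_G-bundle_S4}; together with the Chern--Weil sum rule \eqref{eq:Sum_rule} applied at each level, this bounds the total number of vertices by $1 + \sE_g(P)/\sE_g(G)$. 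Assembling the resulting combinatorial tree with the background limit $A^\infty$ and the iterated bubble limits produces a point of $\bar M^\tau(P,g)$, and by construction the chosen subsequence $W_\loc^{k,p}$ bubble-tree converges to this point. The principal obstacle is bookkeeping: simultaneously managing the growing tree, the nested gauge transformations, and the iterated mass-center/scale data across levels so that the aggregated convergence statement matches Definition \ref{defn:Donaldson_Kronheimer_4-4-2_Sobolev_connections_bubble_tree} on every branch; I would handle this exactly as in \cite[Section 4.3]{FeehanGeometry}.
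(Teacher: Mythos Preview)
Your proposal is correct and follows essentially the same approach as the paper, which simply states that the corollary is yielded by Theorems \ref{thm:Sedlacek_4-3_Yang-Mills} and \ref{thm:Kozono_Maeda_Naito_5-4_YM_sequence} (together with Remark \ref{rmk:Bubble-tree_convergence_Yang-Mills_connections} on iterating the latter). One minor imprecision: the non-triviality clause in Item \eqref{item:Theorem_Kozono_Maeda_Naito_5-4_YM_sequence_H2loc_weak_and_W1ploc_strong_convergence_Am} does not force \emph{every} non-root vertex to sit on a non-trivial bundle---an internal vertex with $l_i \geq 1$ may well carry a flat connection---but it does force each \emph{leaf} to carry a non-flat anti-self-dual connection of energy at least $\sE_g(G)$, and that (together with the energy sum rule at each level) is what bounds the tree.
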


Sequential compactness and compactness coincide for the bubble-tree topology on $\bar M^\tau(P, g)$ since this topology is metrizable and thus we obtain the

\begin{cor}[Bubble-tree compactness for the moduli space of anti-self-dual connections on a principal $G$-bundle]
\label{cor:Donaldson_Kronheimer_4-4-3_G_bubble_tree}
(Compare \cite[Theorem 4.14]{FeehanGeometry})
Assume the hypotheses of Corollary \ref{cor:Donaldson_Kronheimer_4-4-4_G_bubble_tree}. Then $\bar M^\tau(P,g)$ is compact.
\end{cor}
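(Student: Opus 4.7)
The proof is essentially a direct consequence of Corollary \ref{cor:Donaldson_Kronheimer_4-4-4_G_bubble_tree} (sequential bubble-tree compactness of $\bar M^\tau(P,g)$) combined with the metrizability of the bubble-tree topology on $\bar M^\tau(P,g)$. My plan is to spell out each of these two ingredients and then invoke the standard topological fact that for metrizable topological spaces, sequential compactness is equivalent to compactness (see, for example, Munkres \cite[Theorem 28.2]{Munkres2}).

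First I would verify that the bubble-tree topology on $BM(P,g)$ is metrizable, which amounts to checking the hypotheses of the Urysohn Metrization Theorem \cite[Theorem 34.1]{Munkres2}. The collection of $W_\loc^{k,p}$ bubble-tree open neighborhoods given by Definition \ref{defn:Open_bubble_tree_neighborhood}, indexed by the labeled tree $\sT$, the choice of bubble-tree connection attached to the vertices, and the parameters $(\beps,\rho,R)$, forms a countable base for the topology once one restricts the parameters to a countable dense subset of their allowable range; each element in $BM(P,g)$ is itself determined up to gauge by a countable amount of data. Consequently, $BM(P,g)$ is second countable. The Hausdorff and regularity properties follow from the construction: distinct bubble-tree connections are separated by sufficiently small choices of $(\beps,\rho,R)$ in Definition \ref{defn:Open_bubble_tree_neighborhood}, and regularity follows from the nested structure of these neighborhoods as $\beps$ shrinks. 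Since $\bar M^\tau(P,g) \subset BM(P,g)$ is a subspace, it inherits the Hausdorff, regular, and second countable properties, and hence is metrizable.

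Next I would recall from Corollary \ref{cor:Donaldson_Kronheimer_4-4-4_G_bubble_tree} that every sequence $\{[A^m]\}_{m\in\NN} \subset \bar M^\tau(P,g)$ has a subsequence converging in the bubble-tree topology to a limit $[A^\infty] \in \bar M^\tau(P,g)$, that is, $\bar M^\tau(P,g)$ is sequentially compact. Combining this with metrizability via \cite[Theorem 28.2]{Munkres2} immediately yields that $\bar M^\tau(P,g)$ is compact.

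The main (and essentially only) subtlety in this argument is the verification of second countability of the bubble-tree topology, since the defining base in Definition \ref{defn:Open_bubble_tree_neighborhood} is naturally uncountable as stated (varying $\beps$, $\rho$, $R$ continuously and varying the base bubble-tree connection over an infinite-dimensional space). However, by a standard argument, it suffices to restrict $(\beps,\rho,R)$ to rational values and to take base points in a countable dense subset of the bubble-tree closure with respect to the underlying $W_\loc^{k,p}$ convergence, which generates the same topology. The remainder of the argument is purely formal.
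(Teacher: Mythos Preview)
Your proof is correct and follows essentially the same approach as the paper: the paper simply notes that the bubble-tree topology on $\bar M^\tau(P,g)$ is metrizable (having earlier observed that $BM(P,g)$ is Hausdorff, regular, and second-countable), so sequential compactness from Corollary \ref{cor:Donaldson_Kronheimer_4-4-4_G_bubble_tree} coincides with compactness via \cite[Theorem 28.2]{Munkres2}. Your version just spells out the Urysohn metrization step in slightly more detail.
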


\subsection{Bubble-tree compactification for the moduli space of Yang-Mills connections}
\label{subsec:Bubble_tree_compactification_moduli_space_Yang-Mills_connections}
Finally, given in addition a compact subset $\sC \subset [0,\infty)$, we define the bubble-tree compactification, $\overline{\Crit}^{\,\tau}(P,g,\sC)$, of the moduli space of Yang-Mills connections on $P$ with $L^2$ energies belonging to $\sC$ by exact analogy with the definition of $\bar M^\tau(P,g)$ in Section \ref{subsec:Bubble_tree_compactification_moduli_space_anti-self-dual_connections}.

We let $\BCrit(P,g,\sC)$ denote the set of gauge-equivalence classes of \emph{bubble-tree $g$-Yang-Mills connections on $P$ with $L^2$ energy in $\sC$}, where each point in $\BCrit(P,g,\sC)$ consists of a labeled tree \cite{Diestel_graph_theory4}, with a gauge-equivalence class of a $g$-Yang-Mills connection on a principal $G$-bundle $P_0$ over $X$ associated to the root of the tree and a gauge-equivalence class of $g_\round$-Yang-Mills connections on principal $G$-bundles over $S^4$ associated to each other vertex of the tree. Each point in $\BCrit(P,g,\sC)$ obeys the constraint that $\eta(P_0) = \eta(P)$ and $\sE_g(P)$ is equal to the sum of the $L^2$ energies of all connections attached to vertices of the tree.

One may check that $\BCrit(P,g,\sC)$ is a Hausdorff, regular, second-countable topological space and thus metrizable. We define the \emph{bubble-tree closure}, $\overline{\Crit}^{\,\tau}(P,g,\sC)$, to be the closure of $\Crit(P,g,\sC)$ in $\BCrit(P,g,\sC)$, with respect to the bubble-tree topology.

Once again, elliptic regularity for solutions to the Yang-Mills and local Coulomb-gauge equations ensure that the definition of bubble-tree topology on $\overline{\Crit}^{\,\tau}(P,g,\sC)$ is independent of the choice of $(\bar k,\bar p)$ obeying $\bar k\geq 1$ and $\bar p\geq 2$ and $(\bar k+1)\bar p > 4$ or $(k,p)$ obeying $0\leq k\leq \bar k$ and $2 \leq p\leq \bar p$. Theorems \ref{thm:Sedlacek_4-3_Yang-Mills} and \ref{thm:Kozono_Maeda_Naito_5-4_YM_sequence} yield the

\begin{cor}[Sequential bubble-tree compactness for the moduli space of Yang-Mills connections on a principal $G$-bundle]
\label{cor:Donaldson_Kronheimer_4-4-4_G_Yang-Mills_bubble_tree}
Let $G$ be a compact Lie group, $P$ be a principal $G$-bundle over a closed, connected, four-dimensional, smooth manifold, $X$, and endowed with a Riemannian metric, $g$, and $\sC \subset [0,\infty)$ be a compact subset. Then $\overline{\Crit}^{\,\tau}(P,g,\sC)$ is sequentially compact.
\end{cor}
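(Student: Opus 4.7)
[Proof proposal for Corollary \ref{cor:Donaldson_Kronheimer_4-4-4_G_Yang-Mills_bubble_tree}]
The plan is to reduce to the case of a sequence drawn from the smooth locus $\Crit(P,g,\sC)$ by a diagonal argument, and then extract a bubble-tree convergent subsequence by iteratively combining Theorem \ref{thm:Sedlacek_4-3_Yang-Mills} (Uhlenbeck convergence on $X$) with Theorem \ref{thm:Kozono_Maeda_Naito_5-4_YM_sequence} (rescaled Uhlenbeck convergence on $S^4$), exactly as in the proof of \cite[Theorem 4.15]{FeehanGeometry}, except that the absolute-minimizing hypothesis is replaced by the fact that each term in the sequence is Yang-Mills.

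First I would handle the approximation step. Given $\{\alpha^m\}_{m\in\NN} \subset \overline{\Crit}^{\,\tau}(P,g,\sC)$, each $\alpha^m$ is a bubble-tree limit of a sequence in $\Crit(P,g,\sC)$ by definition of the bubble-tree closure, so using that $\BCrit(P,g,\sC)$ is metrizable one can select representatives $[A^m] \in \Crit(P,g,\sC)$ converging to $\alpha^m$ rapidly enough that it suffices to extract a bubble-tree convergent subsequence from $\{[A^m]\}_{m\in\NN} \subset \Crit(P,g,\sC)$, with $L^2$ energies lying in a compact enlargement of $\sC$ (still a compact subset of $[0,\infty)$). Since $\sC$ is bounded, we have a uniform bound $\sup_m \|F_{A^m}\|_{L^2(X)} < \infty$, so Theorem \ref{thm:Sedlacek_4-3_Yang-Mills} applies: after passing to a subsequence and gauge-transforming, $\{A^m\}_{m\in\NN}$ converges in $W_\loc^{k,p}(X\less\bx)$ to a Yang-Mills connection $A_0$ on a bundle $P_0$ over $X$ with $\eta(P_0) = \eta(P)$, where $\bx = \{x_1,\dots,x_l\}$ is the (possibly empty) set of bubble points, and the measures $|F_{A^m}|^2 d\vol_g$ converge weak-$*$ to $|F_{A_0}|^2 d\vol_g + \sum_i \sE_i \delta_{x_i}$ with $\sE_i \geq \eps_0$ for the universal energy gap constant of Theorem \ref{thm:Kozono_Maeda_Naito_5-4_YM_sequence} (equivalently, $\sE_i \geq \sE_g(G)$ in the notation of \eqref{eq:Minimum_energy_connection_G-bundle_S4}).

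Next I would perform the iterative rescaling. At each bubble point $x_i$, the construction \eqref{eq:Feehan_4-6}--\eqref{eq:Feehan_4-7} of relative mass centers and scales, together with Theorem \ref{thm:Kozono_Maeda_Naito_5-4_YM_sequence}, yields (after passing to a further subsequence) a $g_\round$-Yang-Mills connection $A_i$ on a principal $G$-bundle $P_i$ over $S^4$, along with a possibly non-empty set of new bubble points in $S^4\less\{s\}$ whose energies $\sE_{i,j}$ again satisfy $\sE_{i,j} \geq \eps_0$. Repeat the rescaling at each of these new bubble points, and continue inductively. Because each bubble carries energy at least $\eps_0 > 0$, and the total energy $\sum_v \sE_g(P_v)$ over all vertices $v$ of the emerging tree is bounded above by $\sup_m \sE_g(A^m) < \infty$ (thanks to Item (6) of Theorem \ref{thm:Kozono_Maeda_Naito_5-4_YM_sequence} and the Chern–Weil-type sum rule \eqref{eq:Sum_rule}), the rescaling tree must terminate at some finite depth $L$. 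A standard diagonal subsequence extraction over the finitely many vertices produces a subsequence, still denoted $\{A^m\}$, that converges in $W_\loc^{k,p}$ in the sense of bubble trees in the sense of Definition \ref{defn:Donaldson_Kronheimer_4-4-2_Sobolev_connections_bubble_tree}.

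Finally I would identify the limit. By construction, the connection $A_0$ over $X$ and each bubble connection $A_v$ over $S^4$ attached to a vertex $v$ is a Yang-Mills connection (with respect to $g$ or $g_\round$), since Yang-Mills is preserved under $W^{1,p}_\loc$ limits (Theorem \ref{thm:Sedlacek_4-3_Yang-Mills}) and under the rescaling $y\mapsto \lambda_{i,m} y$, which is conformal and flattens the metric by Item (1) of Theorem \ref{thm:Kozono_Maeda_Naito_5-4_YM_sequence}. The topological sum rule $\eta(P_0) = \eta(P)$ and the energy sum rule follow respectively from Theorem \ref{thm:Sedlacek_4-3_Yang-Mills} and Item (6) of Theorem \ref{thm:Kozono_Maeda_Naito_5-4_YM_sequence}. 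The limiting tree's total energy lies in $\sC$ (or the compact enlargement thereof) and thus defines a point of $\overline{\Crit}^{\,\tau}(P,g,\sC)$, proving sequential bubble-tree compactness. The main obstacle is simply bookkeeping: keeping track of the iterated rescalings and verifying that the convergence at each level of the tree fits together into a single bubble-tree limit in the sense of Definition \ref{defn:Donaldson_Kronheimer_4-4-2_Sobolev_connections_bubble_tree}; the analytic heart of the argument is already encapsulated in Theorems \ref{thm:Sedlacek_4-3_Yang-Mills} and \ref{thm:Kozono_Maeda_Naito_5-4_YM_sequence}, and the termination of iteration is immediate from the energy quantization $\sE_{i,j} \geq \eps_0$ combined with the uniform upper energy bound.
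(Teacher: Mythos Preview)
Your proposal is correct and follows exactly the approach the paper indicates: the paper simply states that the corollary is yielded by Theorems \ref{thm:Sedlacek_4-3_Yang-Mills} and \ref{thm:Kozono_Maeda_Naito_5-4_YM_sequence} (mirroring the anti-self-dual case, Corollary \ref{cor:Donaldson_Kronheimer_4-4-4_G_bubble_tree}, which in turn parallels \cite[Theorem 4.15]{FeehanGeometry}), and your write-up is a faithful unpacking of that, including the iterated rescaling and energy-quantization termination argument described in Remark \ref{rmk:Bubble-tree_convergence_Yang-Mills_connections}. The only minor remark is that the ``compact enlargement of $\sC$'' is unnecessary, since your approximants $[A^m]$ are chosen in $\Crit(P,g,\sC)$ and hence already have $\sE_g(A^m)\in\sC$.
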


Sequential compactness and compactness coincide for the bubble-tree topology on $\overline{\Crit}^{\,\tau}(P,g,\sC)$ since this topology is metrizable and thus we obtain the

\begin{cor}[Bubble-tree compactness for the moduli space of anti-self-dual connections on a principal $G$-bundle]
\label{cor:Donaldson_Kronheimer_4-4-3_G_Yang-Mills_bubble_tree}
Assume the hypotheses of Corollary \ref{cor:Donaldson_Kronheimer_4-4-4_G_Yang-Mills_bubble_tree}. Then $\overline{\Crit}^{\,\tau}(P,g,\sC)$ is compact.
\end{cor}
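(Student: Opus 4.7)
The plan is to reduce the assertion of compactness to the previously-established sequential compactness of $\overline{\Crit}^{\,\tau}(P,g,\sC)$ (Corollary~\ref{cor:Donaldson_Kronheimer_4-4-4_G_Yang-Mills_bubble_tree}) via metrizability of the bubble-tree topology, in direct parallel with the passage from Corollary~\ref{cor:Donaldson_Kronheimer_4-4-4_G_bubble_tree} to Corollary~\ref{cor:Donaldson_Kronheimer_4-4-3_G_bubble_tree} for anti-self-dual connections. The standard fact I will invoke is that every Hausdorff, regular, second-countable topological space is metrizable by the Urysohn Metrization Theorem \cite[Theorem 34.1]{Munkres2}, and that in a metrizable space compactness is equivalent to sequential compactness by \cite[Theorem 28.2]{Munkres2}.

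First I would record that the collection of $W_\loc^{k,p}$ bubble-tree open neighborhoods $\sU$ specified in Definition~\ref{defn:Open_bubble_tree_neighborhood} (together with their obvious extensions to bubble-tree Sobolev connections at all levels $L\geq 0$) forms a base for the bubble-tree topology on $\BCrit(P,g,\sC)$, restricting by subspace to a base on $\overline{\Crit}^{\,\tau}(P,g,\sC)$. Second-countability follows because any such neighborhood is determined by finitely many parameters (an underlying tree with bundle data, a background connection, a finite set of singular points, finitely many scales, and the constants $(\beps,\rho,R)$), and one may extract a countable sub-collection by restricting the continuous parameters to rational values and the connection data to a countable dense subset of the separable Banach manifolds $\sA(P_0)$, $\sA(P_i)$, $\ldots$ with respect to their $W^{\bar k,\bar p}$ topology.

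Next I would verify the Hausdorff property: given two distinct bubble-tree connections, either their underlying labeled trees and associated bundle types differ, in which case separation is immediate from the discrete topological data, or the trees agree but the $W^{\bar k,\bar p}$ gauge-equivalence classes of some associated connection differ, or else their singular point sets, mass centers, or scales differ, in each case allowing separation by taking $\eps_\background$, $\eps_\sphere$, $\eps_\annulus$, $\rho$, or the tolerance in the mass centers and scales small enough. Regularity then follows by the same mechanism: given a point and a closed set not containing it, choose the defining constants for an open neighborhood of the point small enough that its closure is disjoint from the closed set, which is possible because the defining parameters for bubble-tree neighborhoods shrink continuously to produce arbitrarily small neighborhoods in the topology.

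With Hausdorff, regular, and second-countable established, the Urysohn Metrization Theorem gives metrizability of $\overline{\Crit}^{\,\tau}(P,g,\sC)$, and the equivalence of sequential compactness and compactness in metrizable spaces, combined with Corollary~\ref{cor:Donaldson_Kronheimer_4-4-4_G_Yang-Mills_bubble_tree}, yields the conclusion. The main obstacle is the bookkeeping involved in verifying the Hausdorff and regularity properties directly for the bubble-tree topology at arbitrary level $L\geq 0$, since the base for the topology involves data living on a tree of principal bundles over $X$ and copies of $S^4$ with compatibility constraints; once the level-one case is handled carefully, the general $L$-level case follows by the same iterative scheme used to build bubble-tree convergence in Definition~\ref{defn:Donaldson_Kronheimer_4-4-2_Sobolev_connections_bubble_tree}.
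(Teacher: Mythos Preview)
Your proposal is correct and follows exactly the paper's approach: the paper simply observes (just before the corollary, and earlier in the parallel ASD setting) that $\BCrit(P,g,\sC)$ is Hausdorff, regular, and second-countable, hence metrizable by Urysohn, so that sequential compactness from Corollary~\ref{cor:Donaldson_Kronheimer_4-4-4_G_Yang-Mills_bubble_tree} coincides with compactness. The only difference is that you sketch the verification of the three point-set properties in more detail than the paper, which merely asserts ``one may check'' them.
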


\section{Bubble trees and smooth Riemannian metrics on connected sums}
\label{sec:Bubble_trees_and_Riemannian_metrics_connected_sums}
Section \ref{subsec:Introduction_bubble_trees_and_Riemannian_metrics_connected_sums} provides an overview of our construction of Riemannian metrics on connected sums of a given four-dimensional manifold, $X$, and copies of $S^4$ , as prescribed by a tree, $\sT$, where $X$ labels the root vertex. In Section \ref{subsec:Construction_Riemannian_metric_connected_sum_for_bubble-tree_data}, we describe in detail the construction of a Riemannian metric, $g^\#$, on a connected sum, $X\#_\sT S^4$, defined by bubble-tree data. Section \ref{subsec:Convergence_Riemannian_metric_connected_sum} contains our analysis of how the metric, $g^\#$, converges as the neck or annulus widths shrink to zero. Lastly, in Section \ref{subsec:Quasi-conformally_invariant_Sobolev_norms} we review results due to Taubes \cite{TauPath, TauFrame} on the quasi-conformal invariance property of the $W_{A,g_\round}^{1,2}(S^4)$ Sobolev norm for sections of $T^*S^4\otimes \ad P$, where $P$ is a principal $G$-bundle over $S^4$, endowed with its standard round metric, $g_\round$, of radius one.

\subsection{Introduction and motivation}
\label{subsec:Introduction_bubble_trees_and_Riemannian_metrics_connected_sums}
In our discussion of bubble-tree convergence in Section \ref{sec:Bubble-tree_compactness_Yang-Mills_and_anti-self-dual_connections}, we have focused on rescaling a $g$-Yang-Mills connection $A$ belonging to a bubble-tree open neighborhood (Definition \ref{defn:Open_bubble_tree_neighborhood}) near points of curvature concentration $x_i \in X$ using the local scales $\lambda_i \in (0,1]$ and oriented $g$-orthonormal frames $f_i$ for $(TX)_{x_i}$. However, taking the example of one level of rescaling, we have not yet indicated how one might construct a smooth Riemannian metric, $g^\#$, on a connected sum, $X\#_{i=1}^l S^4$, that is conformally equivalent to $g$ on $X$ and has the following properties:
\begin{inparaenum}[\itshape a\upshape)]
\item the metric on each copy of $S^4$ is $C^2$-close to the standard round metric, $g_{S^4}$, of radius one on the complement of small balls, $\varphi_s(B(0,4\sqrt{\lambda_i})) \Subset S^4 \less \{s\}$;
\item the metric is equal to $g$ on $X \less B(x_i, 4\sqrt{\lambda_i})$; and
\item each neck joining a copy of $S^4\less\varphi_s(B(0,\sqrt{\lambda_i}/2))$ to $X \less \cup_{i=1}^lB(x_i, \sqrt{\lambda_i}/2)$ has a metric that is approximately cylindrical and is conformally equivalent to $g$ on the annulus, $\Omega(x_i; \sqrt{\lambda_i}/2, 2\sqrt{\lambda_i}) \subset X$.
\end{inparaenum}

Our motivation for the construction is to show that rather than rescale a $g$-Yang-Mills connection, $A$, near points of curvature concentration $x_i \in X$ by the local scales $\lambda_i$, one may equivalently rescale the Riemannian metric, $g$, on $X$ to a conformally equivalent metric, $g^\#$, on $X\#_{i=1}^l S^4$, with respect to which $A$ is Yang-Mills by conformal invariance of the Yang-Mills equation, in turn a consequence of the conformal invariance of the $L^2$-energy functional, $\sE_g:\sA(P)\to[0,\infty)$. This construction goes back to Donaldson \cite[Section IV (ii)]{DonConn} and Donaldson and Kronheimer \cite[Sections 7.2 and 7.3]{DK}, but we provide more detail here that is used in our analysis in Section \ref{sec:Global W12_metrics_bubble-tree_neighborhoods} and especially, Sections \ref{subsec:W12_bubble-tree_convergence_sequences_Yang-Mills_connections_fine}, \ref{subsec:W12_bubble-tree_convergence_sequences_Yang-Mills_connections_coarse}, and \ref{subsec:Global_conformal_blow-up_maps_coarse_W12_bubble-tree_neighborhoods}.

A construction of a smooth Riemannian metric on $X\#_{i=1}^l S^4$ of this type was given by the author in \cite[Section 3.5]{FeehanGeometry}, but our development was notationally complex as we treated the general case of a connected sum of $X$ and trees $\sT$ of arbitrary height with copies of $S^4$ at each vertex other than the root, which is labeled by $X$. In this section, we clarify and simplify our description in \cite[Section 3.5]{FeehanGeometry} by restricting our attention to trees of height one (thus $\sT = \{1,\ldots,l\}$) and provide additional detail for this special case to better explain how it may be iterated in order to accommodate trees of arbitrary height. The description of the general case differs only in notational complexity from the special case we describe in Section \ref{subsec:Construction_Riemannian_metric_connected_sum_for_bubble-tree_data}.

\subsection{Construction of a Riemannian metric on a connected sum defined by bubble-tree data}
\label{subsec:Construction_Riemannian_metric_connected_sum_for_bubble-tree_data}
Our construction of a Riemannian metric is not specific to dimension four, so we now allow the manifold $X$ to have an arbitrary dimension $d \geq 2$. To define the smooth Riemannian metrics over the neck regions, we use geodesic polar coordinates on each ball, $B(x_i,\varrho) \subset X$, where $\varrho := \Inj(X,g)$ and $x_i \in X$ for $1 \leq i \leq l$. Recall from \cite[Propositions 1.33 and 1.53 and p. 12]{Aubin_1998} that the expression for the components of a Riemannian metric $g$ with respect to geodesic polar coordinates centered at a point $x_i$ in a Riemannian manifold $X$ of dimension $d$ is
\[
g(x) = (dr)^2 + r^2h_{ab}(r\theta)\,d\theta^a\, d\theta^b,
\quad r \in (0,\varrho) \text{ and } \theta \in S^{d-1},
\]
where $x=r\theta \in B(0,\varrho)$ with $r := ((x^1)^2 + \cdots + (x^d)^2)^{1/2} \in (0,\varrho)$; the $x^\mu$ (for $1\leq \mu \leq d$) are geodesic normal coordinates centered at $x_i$ and obtained by composing the inverse of the exponential map, $\exp_{f_i}^{-1}: X\supset B(x_i,\varrho) \to B(0,\varrho) \subset (TX)_{x_i} \cong \RR^d$, with the standard coordinates on $\RR^d$; the $f_i$ is an oriented $g$-orthonormal frame for $(TX)_{x_i}$; and the $\theta^a$ (for $1\leq a \leq d-1$) are the induced spherical coordinates on $S^{d-1} \subset (TX)_{x_i} \cong \RR^d$.

\subsubsection{Construction of neck metrics emanating from annuli centered at points in $X$}
\label{subsubsec:Construction_neck_metric_emanating_from_manifold}
Setting $t := \log r$ and $x = e^t\theta$, the expression for the Riemannian metric $g$ with respect to the resulting geodesic cylindrical coordinates near $x_i$ becomes,
\[
g(x) = e^{2t}\left((dt)^2 + h_{ab}(e^t\theta)\,d\theta^a d\theta^b\right),
\quad t \in (-\infty, \log\varrho) \text{ and } \theta \in S^{d-1},
\]
and this is conformally equivalent to a metric corresponding to $x_i$,
\[
ds^2 = (dt)^2 + h_{ab}(e^t\theta)\,d\theta^a d\theta^b,
\quad t \in (-\infty, \log\varrho) \text{ and } \theta \in S^{d-1},
\]
which is $C^2$-close to the standard cylindrical metric corresponding to $x_i$,
\[
ds_{\cyl}^2 = (dt)^2 + (d\theta)^2
\quad\text{on } (-\infty, \log\varrho)\times S^{d-1}.
\]
Therefore, we can form a neck metric extending from $X$ near $x_i$ by choosing a smooth function, $\kappa_i \in C^\infty(\RR)$, such that
\[
\kappa_i(t)
=
\begin{cases}
r = e^t &\text{for } \log (4\sqrt{\lambda_i}) \leq t < \log\varrho,
\\
r_i \equiv e^{t_i} \equiv  2\sqrt{\lambda_i} &\text{for } -\infty < t \leq \log (2\sqrt{\lambda_i}).
\end{cases}
\]
For example, we may choose $\kappa_i(t) := (1-\beta_i(t))e^{t_i} + \beta_i(t) e^t$, where $\beta_i \in C^\infty(\RR)$ obeys $0 \leq \beta_i \leq 1$ on $\RR$ and
\begin{equation}
\label{eq:beta_i_tube}
\beta_i(t)
=
\begin{cases}
1 &\text{for } \log 4 + (\log\lambda_i)/2 \leq t < \infty,
\\
0 &\text{for } -\infty < t \leq \log 2 + (\log\lambda_i)/2.
\end{cases}
\end{equation}
We can define such a function, $\beta_i$, by setting
\[
\beta_i(t) := \alpha(t - (\log\lambda_i)/2),
\]
where $\alpha \in C^\infty(\RR)$ obeys $0\leq  \alpha \leq 1$ on $\RR$ and
\[
\alpha(t)
:=
\begin{cases}
1 &\text{for } t \geq \log 4,
\\
0 &\text{for } t \leq \log 2.
\end{cases}
\]
We thus form a metric that conformally bends \emph{upward} into a tube emanating from a small $(d-1)$-dimensional sphere centered at each $x_i \in X$,
\begin{equation}
\label{eq:Neck_metric_emanating_from_annulus_around_xi_in_X}
g_X^{\tube,i}(x)
:=
\begin{cases}
4\lambda_i\left((dt)^2 + h_{ab}(e^t\theta)\,d\theta^a d\theta^b\right)
&\text{for } x = e^t\theta \in \Omega(x_i; \sqrt{\lambda_i}/2, 2\sqrt{\lambda_i}),
\\
\kappa_i(t)^2\left((dt)^2 + h_{ab}(e^t\theta)\,d\theta^a d\theta^b\right)
&\text{for } x = e^t\theta \in \Omega(x_i; 2\sqrt{\lambda_i}, 4\sqrt{\lambda_i}),
\\
r^2\left((dt)^2 + h_{ab}(e^t\theta)\,d\theta^a d\theta^b\right)
= g(x) &\text{for } x = e^t\theta \in \Omega(x_i; 4\sqrt{\lambda_i}, \varrho).
\end{cases}
\end{equation}
This is a smooth metric on the neck that is conformally equivalent to $g$ on $\Omega(x_i; \sqrt{\lambda_i}/2, \varrho)$, approximately cylindrical on $\Omega(x_i; \sqrt{\lambda_i}/2, 2\sqrt{\lambda_i})$, and equal to $g$ on $\Omega(x_i; 4\sqrt{\lambda_i}, \varrho)$.

\subsubsection{Construction of a neck metric emanating from an annulus centered at the south pole in $S^d$}
\label{subsubsec:Construction_neck_metric_emanating_from_south_pole_sphere}
Now let us consider the corresponding definition of an approximately round, radius-one, smooth metric $g_{S^d}^{\tube,i}$ on $S^d \less \varphi_s(B(0,2\sqrt{\lambda_i}))$ that is conformally equivalent to $g$ on the ball $B(x_i,2\sqrt{\lambda_i})$ and, in particular, that obeys the smooth matching condition,
\[
g_{S^d}^{\tube,i} = g_X^{\tube,i} \quad\text{on } \Omega\left(x_i; \sqrt{\lambda_i}/2, 2\sqrt{\lambda_i}\right) \subset X, \quad\text{for } 1 \leq i \leq l.
\]
We now consider the effect of the orientation-reversing inversion map,
\[
\RR^d\less\{0\} \ni x = r\theta \mapsto z \equiv \iota(x) \equiv x^{-1} = r^{-1}\bar\theta \in \RR^d\less\{0\},
\]
or more precisely its pull back via the geodesic normal coordinate chart, $\varphi_i^{-1} \equiv \exp_{f_i}^{-1}:X\supset B(x_i,\varrho) \to B(0,\varrho) \subset \RR^d$, where $x^{-1} := (x_1,-x_2,\ldots,-x_d)/|x|^2$. Thus,
\[
B\left(0,2\sqrt{\lambda_i}\right)\less \{0\}
\cong
\RR^d\less \bar B\left(0, 1/2\sqrt{\lambda_i}\right),
\quad r\theta \mapsto r^{-1}\bar\theta.
\]
We now compose the preceding inversion with the rescaling map, $\RR^d \ni z \mapsto y \equiv \delta_{\lambda_i}^{-1}(z) \equiv \lambda_i z \in \RR^d$,
\[
\RR^d\less \bar B\left(0,1/2\sqrt{\lambda_i}\right)
\cong
\RR^d\less \bar B\left(0, \sqrt{\lambda_i}/2\right),
\quad z \mapsto \lambda_i z,
\]
to give the rescaled inversion map,
\[
B\left(0,2\sqrt{\lambda_i}\right)\less \{0\}
\cong
\RR^d\less \bar B\left(0, \sqrt{\lambda_i}/2\right),
\quad r\theta \mapsto \lambda_i r^{-1}\bar\theta.
\]
The rescaled inversion map restricts to an orientation-reversing diffeomorphism of the annulus,
\[
\RR^d \supset \Omega\left(0; \sqrt{\lambda_i}/2, 2\sqrt{\lambda_i}\right)
\cong
\Omega\left(0; \sqrt{\lambda_i}/2, 2\sqrt{\lambda_i}\right) \subset \RR^d,
\]
and hence of
\[
X \supset \Omega\left(x_i; \sqrt{\lambda_i}/2, 2\sqrt{\lambda_i}\right)
\cong
\Omega\left(x_i; \sqrt{\lambda_i}/2, 2\sqrt{\lambda_i}\right) \subset X,
\]
after composition with the restriction of the geodesic normal coordinate chart,
\[
\varphi_i^{-1}: X \supset \Omega\left(x_i; \sqrt{\lambda_i}/2, 2\sqrt{\lambda_i}\right)
\to
\Omega\left(0; \sqrt{\lambda_i}/2, 2\sqrt{\lambda_i}\right) \subset \RR^d,
\]
and its inverse.

We now pull back the metric $g$ on the ball $B(x_i, 2\sqrt{\lambda_i})$ by the composition of the inverse of the geodesic normal coordinate chart and the inversion map,
\[
\iota^*\varphi_i^*g(z)
=
g_{\mu\nu}(z)\,dz^\mu dz^\nu,
\quad\text{for } |z| > 1/2\sqrt{\lambda_i}.
\]
Now pulling back by the rescaling map as well,
\[
\delta_{\lambda_i}^{-1,*}\iota^*\varphi_i^*g(y)
=
\lambda_i^2 g_{\mu\nu}(\lambda_i y)\,dy^\mu dy^\nu,
\quad\text{for } |y| > \sqrt{\lambda_i}/2,
\]
we define
\[
g_{\RR^d}^i(y) := \lambda_i^{-2}\delta_{\lambda_i}^{-1,*}\iota^*\varphi_i^*g(y),
\quad\text{for } |y| > \sqrt{\lambda_i}/2,
\]
and therefore,
\[
g_{\RR^d}^i(y)
=
g_{\mu\nu}(\lambda_i y)\,dy^\mu dy^\nu, \quad\text{for } |y| > \sqrt{\lambda_i}/2.
\]
Just as in the construction \eqref{eq:Neck_metric_emanating_from_annulus_around_xi_in_X} of the metric $g_X^{\tube,i}$, over the annulus $X \supset \Omega(x_i; \sqrt{\lambda_i}/2, 2\sqrt{\lambda_i}) \cong \Omega(0; \sqrt{\lambda_i}/2, 2\sqrt{\lambda_i}) \subset \RR^d$, the metric
\[
g_{\RR^d}^i(y) = e^{2t}\left((dt)^2 + h_{ab}(e^t\theta)\,d\theta^a d\theta^b\right)
\quad\text{on }
(-\log 2 + (\log\lambda_i)/2, \,\log 2 + (\log\lambda_i)/2) \times S^{d-1},
\]
for $y = e^t\theta$ is again conformally equivalent to a metric on the cylinder,
\[
ds^2 = (dt)^2 + h_{ab}(e^t\theta)\,d\theta^a d\theta^b
\quad\text{on }
(-\log 2 + (\log\lambda_i)/2, \,\log 2 + (\log\lambda_i)/2) \times S^{d-1},
\]
that approximates the standard metric on the cylinder,
\[
ds_{\cyl}^2 = (dt)^2 + (d\theta)^2
\quad\text{on }
(-\log 2 + (\log\lambda_i)/2, \,\log 2 + (\log\lambda_i)/2) \times S^{d-1}.
\]
Therefore, by analogy with the definition \eqref{eq:Neck_metric_emanating_from_annulus_around_xi_in_X} of the metric $g_X^{\tube,i}$, we conformally bend the metric $g_{\RR^d}^i(y)$ \emph{downwards} into a tube emanating from a small $(d-1)$-dimensional sphere centered at the south pole $s \in S^d$,
\begin{equation}
\label{eq:Almost_euclidean_neck_metric_emanating_from_annulus_around_s_in_Sd}
g_{\RR^d}^{\tube,i}(y)
:=
\begin{cases}
e^{2t}\left((dt)^2 + h_{ab}(e^t\theta)\,d\theta^a d\theta^b\right)
= g_{\RR^d}^i(y) &\text{for } 4\sqrt{\lambda_i} < |y| <\infty,
\\
\kappa_i(t)^2\left((dt)^2 + h_{ab}(e^t\theta)\,d\theta^a d\theta^b\right)
&\text{for } 2\sqrt{\lambda_i} \leq |y| \leq 4\sqrt{\lambda_i},
\\
4\lambda_i\left((dt)^2 + h_{ab}(e^t\theta)\,d\theta^a d\theta^b\right)
&\text{for } \sqrt{\lambda_i}/2 < |y| < 2\sqrt{\lambda_i}.
\end{cases}
\end{equation}
On the other hand, we can construct an approximation to the round metric of radius one on $S^d\less\varphi_s(B(0, \sqrt{\lambda_i}/2))$ by replacing the standard Euclidean metric, $\delta$, in the expression \eqref{eq:Standard_round_metric_radius_one_sphere_stereographic_coordinates} for $g_{S^d}$ by $g_{\RR^d}^i$, that is
\[
g_{S^d}^i(y) := \frac{4}{(1 + |y|^2)^2}g_{\RR^d}^i(y),
\quad\text{for } |y| > \sqrt{\lambda_i}/2.
\]
Over the annulus $\Omega(0; 2\sqrt{\lambda_i}/, 4\sqrt{\lambda_i}) \subset \RR^d$, we may interpolate between the square of the conformal factor, $2/(1 + |y|^2)$, and the constant, $1$, via the function
\[
\zeta_i(t)
:=
\begin{cases}
2/(1 + e^{2t}) &\text{for } \log 4 + (\log\lambda_i)/2 \leq t < \infty,
\\
2\beta_i(t)/(1 + e^{2t}) + (1-\beta_i(t))
&\text{for } \log 2 + (\log\lambda_i)/2 \leq t \leq \log 4 + (\log\lambda_i)/2,
\\
1 &\text{for } -\log 2 + (\log\lambda_i)/2 < t < \log 2 + (\log\lambda_i)/2,
\end{cases}
\]
where $y = e^t\theta$ and $\beta_i$ is as in \eqref{eq:beta_i_tube}. By combining these conformal factors, we obtain a smooth metric for $y = e^t\theta \in \RR^d$ obeying $|y| > \sqrt{\lambda_i}/2$ such that
\begin{equation}
\label{eq:Almost_round_neck_metric_emanating_from_annulus_around_s_in_Sd}
g_{S^d}^{\tube,i}(y)
:=
\begin{cases}
(4e^{2t}/(1 + e^{2t})^2)\left((dt)^2 + h_{ab}(e^t\theta)\,d\theta^a d\theta^b\right)
&\text{for } 4\sqrt{\lambda_i} < |y| <\infty,
\\
\kappa_i(t)^2\zeta_i(t)^2\left((dt)^2 + h_{ab}(e^t\theta)\,d\theta^a d\theta^b\right)
&\text{for } 2\sqrt{\lambda_i} \leq |y| \leq 4\sqrt{\lambda_i},
\\
4\lambda_i\left((dt)^2 + h_{ab}(e^t\theta)\,d\theta^a d\theta^b\right)
&\text{for } \sqrt{\lambda_i}/2 < |y| < 2\sqrt{\lambda_i}.
\end{cases}
\end{equation}
This is a smooth metric that is approximately equal to the round metric of radius one on $S^d\less \varphi_s(B(0, 4\sqrt{\lambda_i}))$, approximately cylindrical on $\Omega(0; \sqrt{\lambda_i}/2, 2\sqrt{\lambda_i})$, and conformally equivalent to $g$ on the ball $X \supset B(x_i; 2\sqrt{\lambda_i}) \cong S^d\less \varphi_s(B(0, \sqrt{\lambda_i}/2)) \subset S^d$.

By construction, the smooth metrics $g_{S^d}^{\tube,i}$ on each copy of $S^d\less\varphi_s(B(0,\sqrt{\lambda_i}/2))$ match $g_X^{\tube,i}$ on the cylinder,
\[
\Omega(0; \sqrt{\lambda_i}/2, 2\sqrt{\lambda_i})
\cong
(-\log 2 + (\log\lambda_i)/2, \,\log 2 + (\log\lambda_i)/2) \times S^{d-1},
\]
and so we have constructed the desired smooth Riemannian metric, $g^\#$, on $X\#_{i=1}^l S^d$.

\subsubsection{Iterated conformal blow up of a Riemannian metric}
\label{subsubsec:Iterated_local_conformal_blow-up_Riemannian_metric}
The construction in Sections \ref{subsubsec:Construction_neck_metric_emanating_from_manifold} and \ref{subsubsec:Construction_neck_metric_emanating_from_south_pole_sphere} may be iterated as often as required to give smooth Riemannian metrics, $g^\#$, on connected sums, $X\#_\sT S^d$, corresponding to trees, $\sT$, of arbitrary height arising in bubble-tree data provided by Definition \ref{defn:Open_bubble_tree_neighborhood}.

\subsection{Equivalence of iterated conformal blow-ups of Sobolev connections and iterated conformal blow-ups of Riemannian metrics}
\label{subsubsec:Iterated_conformal_blowup_Sobolev_connection_Riemannian_metric}
As we noted in Section \ref{subsec:Introduction_bubble_trees_and_Riemannian_metrics_connected_sums}, the process of locally rescaling a sequence of connections, $\{A^m\}_{m\in\NN}$ on $P$, as in our Definition \ref{defn:Donaldson_Kronheimer_4-4-2_Sobolev_connections_bubble_tree} of bubble-tree convergence, is equivalent to the process of constructing a sequence of locally rescaled Riemannian metrics, $\{g^m\}_{m\in\NN}$ on $X \cong X\#_\sT S^4$ from the bubble-tree data for $\{A^m\}_{m\in\NN}$, just as in Section \ref{subsec:Construction_Riemannian_metric_connected_sum_for_bubble-tree_data}. This alternative view of bubbling via the sequence of pairs of Riemannian metrics and connections, $\{(A^m, g^m)\}_{m\in\NN}$ on $(P, X\#_\sT S^4)$, is convenient since each metric $g^m$ is smooth on $X\#_\sT S^4$ by virtue of the explicit tube construction joining copies of $S^4$ to one another and to $X$ as specified by the tree, $\sT$. In particular, each metric $g^m$ is conformally equivalent to $g$ on $X$ via a smooth diffeomorphism, $h^m:X \cong X\#_\sT S^4$, so $g^m = h^{m,*}g$. Away from the neck regions in the connected sum, $X\#_\sT S^4$, each metric $g^m$ coincides with $g$ on $X$ and is $C^2$-close to the standard round metric, $g_\round$, of radius one on each copy of $S^4$.

We can thus use the bubble-tree data to rescale a sequence of smooth connections, $\{A^m\}_{m\in\NN}$, via a sequence of conformal diffeomorphisms, $\{h^m\}_{m\in\NN} \subset \Conf(X,g)$, and produce a sequence of smooth connections, $\{h^{m,*}A^m\}_{m\in\NN}$, on $(h^{m,*}P, X\#_\sT S^4)\cong (P,X)$. For the purpose of keeping track of the data used to rescale $\{A^m\}_{m\in\NN}$ and define $\{g^m\}_{m\in\NN}$, it is convenient to make the

\begin{defn}[Iterated conformal blow-up of a Sobolev connection and a Riemannian metric defined by bubble-tree neighborhood data]
\label{defn:Iterated_local_conformal_blowup_Sobolev_connection_Riemannian_metric}
Let $G$ be a compact Lie group and $P$ be a principal $G$-bundle over a closed, connected, four-dimensional, smooth manifold, $X$, and endowed with a smooth Riemannian metric, $g$. Suppose $\eps, \rho \in (0, 1]$ and $R \in [1,\infty)$. Let $A$ be a Sobolev connection on $P$ of class $W^{\bar k,\bar p}$, with $\bar p\geq 2$ and integer $\bar k\geq 1$ obeying $(\bar k+1)\bar p>4$. If $A$ belongs to a $W_\loc^{1,2}$ bubble-tree $(\eps,\rho,R)$ open neighborhood (Definition \ref{defn:Open_bubble_tree_neighborhood}), let
\begin{align*}
\bz &:= (z_1,z_2,\ldots,z_l,z_{11},z_{12},\ldots),
\\
\blambda &:= (\lambda_1,\lambda_2,\ldots,\lambda_l,\lambda_{11},\lambda_{12},\ldots),
\end{align*}
denote the finite sequences of mass centers and scales associated with $A$, corresponding to all vertices in the finite tree, $\sT$, defined by the bubble-tree neighborhood, $\sU$. This data and the construction in Section \ref{subsec:Construction_Riemannian_metric_connected_sum_for_bubble-tree_data} define a conformal diffeomorphism, $\tilde h_{\bz,\blambda} \in \Conf(X,g)$, and
\[
\tilde h_{\bz,\blambda}^{-1,*}A \quad\text{on}\quad \tilde h_{\bz,\blambda}^{-1,*}P
\quad\text{and}\quad
\tilde h_{\bz,\blambda}^{-1,*}g \quad\text{on}\quad X\#_\sT S^4,
\]
the iterated conformal blow-up of the connection, $A$ on $P$, and Riemannian metric, $g$ on $X$, respectively.

When $L=1$, the construction of the Riemannian metric, $\tilde h_{\bz,\blambda}^{-1,*}g$, can be obtained from Section \ref{subsec:Construction_Riemannian_metric_connected_sum_for_bubble-tree_data} using the reference points, $\{x_i\}_{i=1}^l, \{x_i\}_{i=1}^l \subset X$, local mass centers, $\{\bar x_i\}_{i=1}^l, \{x_i\}_{i=1}^l \subset X$, oriented orthonormal frames, $f_i$ for $(TX)_{x_i}$, and local scales, $\{\lambda_i\}_{i=1}^l \subset (0,1]$, supplied by Definition \ref{defn:Open_bubble_tree_neighborhood}. For $1 \leq i \leq l$, parallel translation of $f_i$ along the geodesic joining $x_i$ to $\bar x_i$ defines an oriented orthonormal frames $\bar f_i$ for $(TX)_{\bar x_i}$ and an exponential map, $\varphi_{\bar x_i} = \exp_{\bar f_i}: \RR^4 \supset B(0,\varrho) \cong B(\bar x_i, \varrho) \subset X$, where $\varrho = \Inj(X,g)$. The definition of $\tilde h_{\bz,\blambda}^{-1,*}g$ when $L \geq 2$ follows by iterating the preceding construction over copies of $S^4$ in place of $X$, corresponding to each vertex in the tree $\sT$ and using the data supplied by Definition \ref{defn:Open_bubble_tree_neighborhood}.

When $L=1$, the connection $\tilde h_{\bz,\blambda}^{-1,*}A$ is defined by the data used to construct $\tilde h_{\bz,\blambda}^{-1,*}g$ and, in addition, parameters $\rho \in (0, \Inj(X,g)]$ and $R \in [1, \infty)$ supplied by Definition \ref{defn:Open_bubble_tree_neighborhood}. If we omit the explicit diffeomorphisms identifying the small annuli $\Omega(\bar x_i; \sqrt{\lambda_i}/2, 2\sqrt{\lambda_i}) \subset X$ with the necks joining $S^4$ to $X$ near each point $\bar x_i \in X$, then the expression for the connection, $\tilde h_{\bz,\blambda}^{-1,*}A$, simplifies to
\begin{equation}
\label{eq:Iterated_conformal_blowup_Sobolev_connection}
\tilde h_{\bz,\blambda}^{-1,*}A
=
\begin{cases}
A &\text{on } P \restriction X \less B(\bar x_i,R\lambda_i/2),
\\
\varphi_n^{-1,*}\delta_{\lambda_i}^{-1,*}\varphi_{\bar x_i}^*A
&\text{on } \varphi_n^{-1,*}\delta_{\lambda_i}^{-1,*}\varphi_{\bar x_i}^*P \restriction S^4 \less \varphi_s(B(0,/2R)), \quad 1 \leq i \leq l,
\end{cases}
\end{equation}
Note that, for $1 \leq i \leq l$,
\[
\varphi_n(B(0,2\rho/\lambda_i)) = S^4 \less \varphi_s(B(0,\lambda_i/2\rho))
\quad\text{and}\quad
\varphi_n^{-1,*}\delta_{\lambda_i}^{-1,*}\varphi_{\bar x_i}^*A
=
\tilde\delta_{\lambda_i}^{-1,*}\varphi_n^{-1,*}\varphi_{\bar x_i}^*A.
\]
The definition of $\tilde h_{\bz,\blambda}^{-1,*}A$ when $L \geq 2$ follows by iterating the preceding construction over copies of $S^4$ in place of $X$, corresponding to each vertex in the tree $\sT$ and using the data supplied by Definition \ref{defn:Open_bubble_tree_neighborhood}

When there can be no confusion as to meaning, we write $P$ for the pulled-back bundle, $\tilde h_{\bz,\blambda}^{-1,*}P$ on $X\#_\sT S^4$, and abbreviate $\tilde h_{\bz,\blambda}^{-1,*}A$ and $\tilde h_{\bz,\blambda}^{-1,*}g$ by $A^\#$ and $g^\#$, respectively.
\end{defn}

\subsection{Convergence of the Riemannian metric on the connected sum}
\label{subsec:Convergence_Riemannian_metric_connected_sum}
When restricted to pre-compact open subsets, $U \Subset S^d\less\{s\}$, of the spheres, $S^d$, in the connected sum $X\#_{i=1}^l S^d$, the Riemannian metric, $g^\#$, will be approximately but not exactly equal to the standard round metric, $g_\round = g_{S^d}$, of radius one on $S^d$. Indeed, that is a key feature of the construction as we want the metric $g^\#$ to remain within the conformal equivalence class of $g$ in order to preserve the Yang-Mills property of $A$, even when $g$ is not locally flat. However, it is not difficult to see that the metric $g^\#$ converges in $C_0^\infty(S^4\less\{s\})$ to $g_{S^d}$ for each copy of $S^d$ as $\lambda_i \searrow 0$, as we now review from \cite[Lemma 3.12]{FeehanGeometry}.\footnote{We correct a typographical error in the statement of \cite[Lemma 3.12]{FeehanGeometry}, where the multiplicative factors of $\lambda_i$ appearing on the right-hand side of the two inequalities should be adjusted.}

With respect to a geodesic local normal coordinate chart, $\varphi^{-1}: X \supset B(x_0,\varrho) \cong B(0,\varrho) \subset \RR^d$, defined by a $g$-orthonormal oriented frame $f$ for $(TX)_{x_0}$, we have
\[
\varphi^*g(x) = g_{\mu\nu}(x)\,dx^\mu\otimes dx^\nu, \quad |x| < \varrho.
\]
Applying the inverse of the rescaling map, $\delta_\lambda: \RR^d \ni x \mapsto y = x/\lambda \in \RR^d$, so $x = \lambda y$, we have
\[
\delta_\lambda^{-1,*}\varphi^*g(y) = \lambda^2 g_{\mu\nu}(\lambda y)\,dy^\mu\otimes dy^\nu, \quad |y| < \varrho/\lambda,
\]
and thus we define the rescaled Riemannian metric on $B(0,\varrho/\lambda)$ by
\[
g^\lambda(y)
:=
\lambda^{-2}\delta_\lambda^{-1,*}\varphi^*g(y)
=
g_{\mu\nu}(\lambda y)\,dy^\mu\otimes dy^\nu, \quad |y| < \varrho/\lambda.
\]
We recall from Aubin \cite[Definition 1.24, Proposition 1.25, and Corollary 1.32]{Aubin_1998} that
\begin{equation}
\label{eq:Riemannian_metric_components_in_geodesic_normal_coordinates}
\Gamma_{\mu\nu}^\gamma(x_0) = 0,  \quad \Gamma_{\mu\nu}^\gamma(x) = O(r), \quad\hbox{and}\quad g_{\mu\nu}(x) = \delta_{\mu\nu} + O(r^2),
\end{equation}
where $r := \dist_g(x, x_0)$ and $\Gamma_{\mu\nu}^\gamma$ are the Christoffel symbols for $g$ with respect to the local coordinates, $x^\mu$. Therefore,
\begin{gather*}
|g_{\mu\nu}(x) - \delta_{\mu\nu}| \leq C_0|x|^2, \quad \left|\frac{\partial g_{\mu\nu}}{\partial x^\alpha}(y)\right| \leq C_1|x|,
\\
\hbox{and}\quad \left|\frac{\partial^k g_{\mu\nu}}{\partial x^{\alpha_1}\cdots \partial x^{\alpha_k}}(x)\right| \leq C_k, \quad |x| < \varrho,
\end{gather*}
for any integer $k \geq 2$, where $C_k = C_k(g) \in [1,\infty)$. Thus,
\begin{gather*}
|(g^\lambda)_{\mu\nu}(y) - \delta_{\mu\nu}| \leq C_0\lambda^2|y|^2, \quad \left|\frac{\partial(g^\lambda)_{\mu\nu}}{\partial y^\alpha}(y)\right| \leq C_1\lambda^2|y|,
\\
\hbox{and}\quad \left|\frac{\partial^k(g^\lambda)_{\mu\nu}}{\partial y^{\alpha_1}\cdots \partial y^{\alpha_k}}(y)\right| \leq C_k\lambda^k,
\quad |y| < \varrho/\lambda,
\end{gather*}
and for any integer $k \geq 3$. Therefore, $g^\lambda$ converges in $C_0^\infty(\RR^d)$ to the standard Euclidean metric, $\delta$, on $\RR^d$ as $\lambda\searrow 0$ and hence the approximately round metric, $g_{S^d}^\lambda$, of radius one on $S^4\less \varphi_s(B(0,\lambda/\varrho))$,
\[
\varphi_n^*g_{S^d}^\lambda(y)
:=
\frac{4}{(1+|y|^2)^2} g_{\mu\nu}(\lambda y)\,dy^\mu\otimes dy^\nu, \quad |y| < \varrho/\lambda,
\]
converges in $C_0^\infty(S^d\less\{s\})$ to the standard round metric, $g_{S^d}$, of radius one on $S^d$, as desired.

\subsection{Quasi-conformally invariant Sobolev norms}
\label{subsec:Quasi-conformally_invariant_Sobolev_norms}
From Taubes \cite[Section 2]{TauPath}, \cite[Section 3, Part 3]{TauFrame}, we recall the definition of certain Sobolev norms for sections of the vector bundle $T^*S^4\otimes\ad P$ over $S^4$ that are equivalent to the usual norm on $W_{A,g_\round}^{1,2}(S^4;\Lambda^1\otimes\ad P)$ but whose behavior under the action of the group $\Conf(S^4)$ of conformal diffeomorphisms of $S^4$ can be more easily described. Such properties were employed by the author in \cite[Sections 3 and 4]{FeehanGeometry}.

We first digress to recall the description of $\Conf(S^d)$ for $d\geq 3$. Let $\RR^{p,q} = (\RR^{p+q}, g^{p,q})$, for $p, q \in \NN$, where $g^{p,q}$ is the bilinear form \cite[Section 1.1]{Schottenloher_2008}
\[
g^{p,q}(\xi,\eta) := \sum_{i=1}^p \xi^i \eta^i - \sum_{i=p+1}^{p+q} \xi^i\eta^i, \quad\forall\, \xi, \eta \in \RR^{p+q}.
\]
The $d$-dimensional sphere, $S^d \subset \RR^{d+1,0}$, obtains its standard round metric of radius one by the inclusion in $\RR^{d+1,0}$, and is the conformal compactification of $\RR^{d,0}$. Recall from \cite[Definition 2.1]{Schottenloher_2008} that the conformal group $\Conf(\RR^{p,q})$ is the connected component containing the identity in the group of conformal diffeomorphisms of the conformal compactification of $\RR^{p,q}$. According to \cite[Theorem 2.9]{Schottenloher_2008}, when $p+q>2$ we have
\[
\Conf(\RR^{p,q}) \cong \SO(p+1,q+1),
\]
and thus $\Conf(S^d) = \Conf(\RR^{d,0}) = \SO(d+1,1)$.

For any $\lambda\in\RR_+ = (0,\infty)$, let $\lambda(\cdot)$ be the dilation of $\RR^d$ given by $x\mapsto \lambda x$ and for any $z\in\RR^d$, let $z(\cdot)$ be the translation of $\RR^d$ defined by $x\mapsto x-z$. If $\lambda(\cdot)$ and $z(\cdot)$ again denote the conformal diffeomorphisms of $S^d$ induced by the chart $\varphi_n^{-1}:S^d\less\{s\}\cong\RR^d$, then the group $\SO(d)\times \RR \times \RR^d$ of rotations, dilations, and translations of $\RR^d$ is identified with the subgroup $\Conf_s(S^d) \subset \Conf(S^d)$ of diffeomorphisms which fix the south pole $s\in S^d$ \cite[p. 346]{TauPath}. Indeed, the finite generators of $\Conf(S^d)$ are dilations, translations, rotations and special conformal transformations, where the latter can be understood as an inversion, followed by a translation, and followed again by an inversion \cite[Section 2.1 and Table 2.1]{Blumenhagen_Plauschinn_2009}, \cite[Theorem 1.9]{Schottenloher_2008}.

Let $P$ be a principal $G$-bundle over $S^d$ with its standard round metric $g = g_\round$ of radius one. The group $\Conf(S^d)$ acts on $\sA(P)$ and $\Aut(P)$ by pullback (compare \cite[p. 342]{TauFrame}),
\begin{align*}
\sA(P) \times \Conf(S^4) \ni (A, h) &\mapsto h^*A \in \sA(P),
\\
\Aut(P) \times \Conf(S^4) \ni (u, h) &\mapsto h^*u \in \Aut(P),
\end{align*}
and thus descends to an action on $\sB(P,g)$,
\[
\sB(P,g) \times \Conf(S^4) \ni ([A], h) \mapsto [h^*A] \in \sB(P,g).
\]
We now specialize to the case $d = 4$ and consider the definition of Sobolev norms for sections of $T^*S^4\otimes\ad P$ that are equivalent to the standard norm, $\|\cdot\|_{W_{A,g}^{1,2}(S^4)}$, but have more easily described conformal invariance properties.

Let $A$ be a $W^{1,2}$ connection on a principal $G$-bundle $P$ over $S^4$ with its standard round metric $g = g_\round$ of radius one. Let $\delta = g_{\euclid}$ be the flat metric on $S^4\less\{s\} \cong \RR^4$ obtained by pullback of the standard Euclidean metric on $\RR^4$ via the conformal diffeomorphism $\varphi_n^{-1}: S^4\less\{s\}\to \RR^4$. Let $\nabla_A^g$ denote the covariant derivative on $T^*S^4\otimes\ad P$ defined by the connection $A$ and metric $g$, while $\nabla_A^\delta$ denotes the covariant derivative on $T^*S^4\otimes\ad P \restriction S^4\less\{s\}$ defined by $A$ and $\delta$. We define the usual $W^{1,2}$ norm on $C^\infty$ sections $a$ of $T^*S^4\otimes\ad P$ by
\[
\|a\|_{W_{A,g}^{1,2}(S^4)} := \|\nabla_A^g a\|_{L^2(S^4,g)} + \|a\|_{L^2(S^4,g)}.
\]
Similarly, if $a$ is a $C^\infty$ section of $T^*S^4\otimes\ad P$ and has compact support in $S^4\less\{s\}$, define
\begin{align*}
|a|_A &:=  \|\nabla_A^\delta a\|_{L^2(S^4,\delta)},
\\
\|a\|_{W_{A,\delta}^{1,2}(S^4)}
&:=
\|\nabla_A^\delta a\|_{L^2(S^4,\delta)} + \|a\|_{L^2(S^4,\delta)}.
\end{align*}
The properties of $|\cdot|_A$ and $\|\cdot\|_{W_{A,\delta}^{1,2}(S^4)}$ are described by two lemmata of Taubes \cite{TauPath, TauFrame}.

\begin{lem}
\label{lem:Taubes_1984_path_proposition_2-4}
\cite[Proposition 2.4]{TauPath}
There is a universal constant $z \in [1,\infty)$ with the following significance. If $A$ is a $W^{1,2}$ connection on a principal $G$-bundle $P$ over $S^4$ with its standard round metric of radius one, then the following hold:
\begin{enumerate}
\item $|\cdot|_A$ extends to a continuous norm on $\Omega^1(S^4; \ad P) = C^\infty(S^4; T^*S^4\otimes\ad P)$.

\item The norm $|\cdot|_A$ is $\Conf_s(S^4)$-invariant:
    \[
    |h^*a|_{h^*A}=|a|_A, \quad\forall\, h\in \Conf_s(S^4) \text{ and } a \in \Omega^1(S^4; \ad P).
    \]

\item If $a \in \Omega^1(S^4;\ad P)$, then
\begin{align*}
z^{-1}\|a\|_{W_{A,g}^{1,2}(S^4)}
&\leq
|a|_A
\leq
z\|a\|_{W_{A,g}^{1,2}(S^4)},
\\
z^{-1}\|a\|_{W_{A,g}^{1,2}(S^4)}
&\leq
\|a\|_{W_{A,\delta}^{1,2}(S^4)}
\leq
z\|a\|_{W_{A,g}^{1,2}(S^4)}.
\end{align*}
\end{enumerate}
\end{lem}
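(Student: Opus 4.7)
The three claims hinge on relating two first-order Sobolev norms for $\ad P$-valued $1$-forms on $S^4$: one built from the flat metric $\delta = \varphi_n^* g_{\mathrm{euclid}}$, one from the round metric $g = \varsigma^2 \delta$ with $\varsigma(y) = 2/(1+|y|^2)$. The central mechanism is the dimensional coincidence that, in real dimension four, the integrand $|\nabla_A a|^2\,dV$ for a $1$-form $a$ is pointwise conformally invariant, because $(0,2)$-tensors obey $|T|_{\lambda^2 h}^2\,dV_{\lambda^2 h} = |T|_h^2\,dV_h$. This identity lets me rewrite
\[
|a|_A^2 \;=\; \int_{\RR^4}|\nabla_A^\delta a|_\delta^2\,dV_\delta \;=\; \int_{S^4}|\nabla_A^\delta a|_g^2\,dV_g,
\]
which will be a priori finite for every $a \in \Omega^1(S^4;\ad P)$ once the integrand is seen to decay sufficiently rapidly near $s$.

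For the norm equivalence in (c), I would split $\nabla_A^\delta a = \nabla_A^g a + T$, where $T = (\nabla^\delta - \nabla^g)a$ depends only on the conformal factor $\varsigma$ via the standard formula for the difference of Levi-Civita Christoffel symbols. A direct algebraic computation in a $g$-orthonormal frame yields $|T|_g^2 \leq 4|d\log\varsigma|_g^2\,|a|_g^2 = 4|y|^2|a|_g^2$, so that
\[
|a|_A^2 \;\leq\; 2\|\nabla_A^g a\|_{L^2(g)}^2 + 8\int_{S^4}|y|^2|a|_g^2\,dV_g.
\]
Since $|y|^2 \sim \dist_g(\cdot,s)^{-2}$ near $s$, the weighted integral is controlled by Hardy's inequality in dimension four together with the pointwise Kato inequality $|d|a|_g| \leq |\nabla_A^g a|_g$: a cutoff $\chi$ supported in a small geodesic ball near $s$ and Hardy applied to $f = |\chi a|_g$ bound $\int |y|^2|\chi a|_g^2\,dV_g$ by $C\|a\|_{W_{A,g}^{1,2}(S^4)}^2$, while the complement is controlled by the boundedness of $|y|^2$ on $\supp(1-\chi)$. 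For the reverse inequality the decisive step is controlling $\|a\|_{L^2(g)}$ by $|a|_A$: apply Kato to $\nabla_A^\delta$, then the critical Sobolev embedding $\dot W^{1,2}(\RR^4)\hookrightarrow L^4(\RR^4)$ to the scalar $|a|_\delta$ (which decays like $|y|^{-2}$ at infinity for smooth $a$ on $S^4$) to get $\|a\|_{L^4(\delta)} \leq c_0|a|_A$; conformal invariance of the $L^4$ norm of $1$-forms in dimension four gives $\|a\|_{L^4(\delta)} = \|a\|_{L^4(g)}$, and H\"older on the compact $(S^4,g)$ produces $\|a\|_{L^2(g)} \leq z|a|_A$. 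The companion decomposition $\|\nabla_A^g a\|_{L^2(g)}^2 \leq 2|a|_A^2 + C\int |y|^2|a|_g^2\,dV_g$ combined with the weighted bound above and an absorption step closes the lower bound.

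The conformal invariance in (b) is the most elementary piece: elements of $\Conf_s(S^4)$ correspond via $\varphi_n$ to similitudes $h(y) = \mu Ry + z$ of $\RR^4$ with $R \in \SO(4)$, $\mu > 0$, $z \in \RR^4$, so $h^*\delta = \mu^2\delta$. The Levi-Civita connection is unchanged by a constant conformal rescaling, hence $\nabla^{h^*\delta} = \nabla^\delta$ and naturality gives $h^*(\nabla_A^\delta a) = \nabla_{h^*A}^\delta(h^*a)$; combining the scaling law $|h^*T|_\delta^2 = \mu^4(|T|_\delta^2 \circ h)$ for $(0,2)$-tensors with the change of variables $d^4y = \mu^{-4}d^4x$ under $x = h(y)$ yields $|h^*a|_{h^*A}^2 = |a|_A^2$. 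Claim (a) is then automatic: linearity and homogeneity of $|\cdot|_A$ are immediate from the definition, continuity and positive-definiteness come from the equivalence in~(c), and the extension to all of $\Omega^1(S^4;\ad P)$ is guaranteed by the finiteness observation of the first paragraph. The principal technical obstacle, where care must be taken to preserve universal constants, is the absorption step closing the lower bound in~(c); the equivalence of $\|\cdot\|_{W_{A,\delta}^{1,2}}$ and $\|\cdot\|_{W_{A,g}^{1,2}}$ follows by the same pattern applied to the lower-order $L^2$ parts, restricted to sections compactly supported in $S^4\setminus\{s\}$ on which $\|a\|_{L^2(\delta)}$ is finite.
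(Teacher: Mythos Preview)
The paper does not give its own proof of this lemma; it is simply quoted from Taubes \cite[Proposition 2.4]{TauPath}. Your sketch is therefore not competing against anything in the paper, and the overall strategy you outline---conformal invariance of $\int|\nabla_A a|^2\,dV$ for $(0,2)$-tensors in dimension four, the difference tensor $T=(\nabla^\delta-\nabla^g)a$ controlled by $|d\log\varsigma|_g^2|a|_g^2=|y|^2|a|_g^2$, Kato plus critical Sobolev $\dot W^{1,2}(\RR^4)\hookrightarrow L^4(\RR^4)$ for the $L^2(g)$ lower bound, and the similitude description of $\Conf_s(S^4)$ for part (b)---is the natural one and is essentially correct.

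There is, however, one genuine soft spot: the ``absorption step'' for the lower bound in (c). You propose to feed the weighted bound $\int|y|^2|a|_g^2\,dV_g\le C\|a\|_{W_{A,g}^{1,2}}^2$ (obtained via Hardy in the round metric near $s$) back into $\|\nabla_A^g a\|_{L^2(g)}^2\le 2|a|_A^2+C'\int|y|^2|a|_g^2\,dV_g$ and absorb. But both constants here are universal and not small: the sharp Hardy constant in dimension four is $1$, the asymptotic $|y|^2\sim 4\,\dist_g(\cdot,s)^{-2}$ contributes a further factor, and your $C'\approx 4$ from the difference-tensor estimate multiplies all of this. The product is far from being $<1$, and shrinking the cutoff ball does not help since Hardy is scale-invariant. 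The fix is to bound the weighted integral \emph{directly} by $|a|_A^2$ rather than by $\|a\|_{W_{A,g}^{1,2}}^2$: convert to flat coordinates via $\int_{S^4}|y|^2|a|_g^2\,dV_g=\int_{\RR^4}|y|^2\varsigma^2|a|_\delta^2\,d^4y$ with $|y|^2\varsigma^2=4|y|^2/(1+|y|^2)^2\le 4\min(|y|^2,|y|^{-2})$; on $\{|y|\ge 1\}$ use the flat Hardy inequality (exactly Lemma~\ref{lem:Sobolev_embedding_critical_exponent_spaces_Euclidean} of the paper with $d=4$) and Kato to bound by $4|a|_A^2$, and on $\{|y|\le 1\}$ use H\"older and your already-established $\|a\|_{L^4(\delta)}\le c_0|a|_A$. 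This removes the need for any absorption. Your remark restricting the $\|\cdot\|_{W_{A,\delta}^{1,2}}$ equivalence to compactly supported sections is well taken, since $\|a\|_{L^2(\delta)}$ genuinely diverges logarithmically for smooth $a$ not vanishing at $s$.
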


\begin{lem}
\label{lem:Taubes_1988_3-1}
\cite[Lemma 3.1]{TauFrame}
Assume the hypotheses and notation of Lemma \ref{lem:Taubes_1984_path_proposition_2-4}. If $h \in \Conf(S^4)$ and $a\in\Omega^1(S^4;\ad P)$, then
\[
z^{-1}\|a\|_{W_{A,g}^{1,2}(S^4)}
\leq
\|h^*a\|_{W_{h^*A,g}^{1,2}(S^4)}
\leq
z\|a\|_{W_{A,g}^{1,2}(S^4)}.
\]
\end{lem}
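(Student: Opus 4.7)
The plan is to reduce to the case already covered by Lemma \ref{lem:Taubes_1984_path_proposition_2-4}, where $h$ fixes the south pole $s \in S^4$. The key observation is that the rotation group $\SO(5) \subset \Conf(S^4)$ acts transitively on $S^4$, so given any $h \in \Conf(S^4)$, I can set $p := h(s)$ and choose $R \in \SO(5)$ with $R(s) = p$. Then $h_1 := R^{-1} \circ h$ satisfies $h_1(s) = s$, giving the decomposition
\[
h = R \circ h_1, \qquad R \in \SO(5), \quad h_1 \in \Conf_s(S^4).
\]
Since $R$ is a $g_\round$-isometry ($R^*g = g$), pullback by $R$ is an isometry of the Sobolev spaces: $\|R^*b\|_{W_{R^*A,g}^{1,2}(S^4)} = \|b\|_{W_{A,g}^{1,2}(S^4)}$ for any $b \in \Omega^1(S^4;\ad P)$ and connection $A$ on $P$; this follows immediately from the change-of-variables formula together with the naturality identity $\nabla_{R^*A}^{R^*g}(R^*b) = R^*(\nabla_A^g b)$.

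Next I would chain the three comparisons supplied by Lemma \ref{lem:Taubes_1984_path_proposition_2-4}. Writing $h^*a = h_1^*(R^*a)$ and $h^*A = h_1^*(R^*A)$, I estimate
\begin{align*}
\|h^*a\|_{W_{h^*A,g}^{1,2}(S^4)}
&\leq z\,|h_1^*(R^*a)|_{h_1^*(R^*A)} \quad\text{(equivalence of norms)}
\\
&= z\,|R^*a|_{R^*A} \quad\text{($\Conf_s(S^4)$-invariance of $|\cdot|_{(\cdot)}$)}
\\
&\leq z^2\|R^*a\|_{W_{R^*A,g}^{1,2}(S^4)} \quad\text{(equivalence of norms)}
\\
&= z^2\|a\|_{W_{A,g}^{1,2}(S^4)} \quad\text{($R$ is a $g$-isometry)}.
\end{align*}
The lower bound follows by applying the upper bound to $h^{-1}$ in place of $h$ and $h^*a$ in place of $a$, yielding $\|a\|_{W_{A,g}^{1,2}(S^4)} \leq z^2\|h^*a\|_{W_{h^*A,g}^{1,2}(S^4)}$. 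Finally I absorb the resulting factor $z^2$ into a new universal constant, still denoted $z$, which matches the statement of the lemma.

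There is no real analytical obstacle here; the entire argument is a book-keeping exercise once one notices that $\SO(5)$-transitivity on $S^4$ lets one separate an arbitrary conformal transformation into a \emph{round-isometry} part (for which the Sobolev norm is manifestly preserved) and a \emph{south-pole-fixing} part (already treated in Lemma \ref{lem:Taubes_1984_path_proposition_2-4}). The only mild subtlety to verify carefully is the naturality of the covariant derivative under pullback by a diffeomorphism that is simultaneously a metric isometry and a bundle pullback, which ensures that isometries act by genuine $W^{1,2}$-isometries rather than just bounded equivalences.
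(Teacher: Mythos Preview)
Your argument is correct. The paper does not supply its own proof of this lemma; it simply cites \cite[Lemma~3.1]{TauFrame}. Your decomposition $h = R\circ h_1$ with $R\in\SO(5)$ an isometry and $h_1\in\Conf_s(S^4)$ is exactly the standard reduction, and the chain of inequalities using parts (2) and (3) of Lemma~\ref{lem:Taubes_1984_path_proposition_2-4} is clean and valid. The only cosmetic point is that your argument produces the constant $z^2$ rather than $z$, but since both lemmas assert only the existence of \emph{some} universal constant, relabeling $z^2$ as $z$ is harmless, as you note.
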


\begin{rmk}[Variant of the $W^{1,2}$ norm on sections of $T^*S^4\otimes\ad P$]
\label{rmk:Taubes_1988_3-1}
A combination of the Kato Inequality \eqref{eq:FU_6-20_first-order_Kato_inequality} in the sequel and the Sobolev embedding $W^{1,2}(S^4;\RR) \hookrightarrow L^4(S^4;\RR)$ given by \eqref{eq:Sobolev_embedding_manifold_bounded_geometry} yields a universal constant $z_0 \in [1,\infty)$ such that
\[
\|a\|_{L^4(S^4,g)} \leq z_0\|a\|_{W_{A,g}^{1,2}(S^4)},
\]
and thus, since $\|a\|_{L^2(S^4,g)} \leq \vol(S^4)^{1/4}\|a\|_{L^4(S^4,g)}$, the norm $\|a\|_{W_{A,g}^{1,2}(S^4)}$ may be replaced by the equivalent norm,
\[
\|a\|_{W_{A,g}^{1,2;4}(S^4)} := \|\nabla_A^g a\|_{L^2(S^4,g)} + \|a\|_{L^4(S^4,g)},
\]
in the statements of Lemmas \ref{lem:Taubes_1984_path_proposition_2-4} and \ref{lem:Taubes_1988_3-1}.
\end{rmk}

\section{Global $W^{1,2}$ distance functions and bubble-tree neighborhoods}
\label{sec:Global W12_metrics_bubble-tree_neighborhoods}
We begin in Section \ref{subsec:Pointwise_estimates_local_connection_one-forms_radial_gauge} by reviewing the well-known pointwise estimates for local connection one-forms in radial gauge in terms of their curvatures. In Section \ref{subsec:Pointwise_decay_curvature_Yang-Mills_connection_Ldover2-small_energy_annulus} we derive pointwise decay estimates for the curvature of a Yang-Mills connection with $L^{d/2}$-small energy over an annulus in Euclidean space and hence obtain pointwise estimates for local connection one-forms in radial gauge. The construction of suitable bubble-tree open neighborhoods in $\Crit(P,g,\sC)$ requires a good understanding of the behavior of center and scale maps and certain key properties are discussed in Section \ref{subsec:Centers_and_scales_for_bubble_tree_neighborhood_design}. We estimate the global $W^{1,2}$ distance between pairs of Yang-Mills connections according to whether their gauge-equivalence classes belong to a \emph{fine} or \emph{coarse} $W_\loc^{1,2}$ bubble-tree open neighborhood. In Section \ref{subsec:W12_bubble-tree_convergence_sequences_Yang-Mills_connections_fine}, we discuss the simpler case where the pair belongs to a \emph{fine} $W_\loc^{1,2}$ bubble-tree neighborhood and obtain a global $W_{A,g}^{1,2}(X)$ bound between $u(A')$ and $A$, when $A$ and $A'$ are $g$-Yang-Mills connections on a principal $G$-bundle $P$ over $X$ and $u \in \Aut(P)$. In Section \ref{subsec:W12_bubble-tree_convergence_sequences_Yang-Mills_connections_coarse}, we analyze the more difficult case where the pair belongs to a \emph{coarse} $W_\loc^{1,2}$ bubble-tree neighborhood. We now obtain a global $\overline{W}_{\sU,g}^{1,2}(X\#_\sT S^4)$ bound between \emph{local} conformal blow-ups of $u(A')$ and $A$, where a modification of the standard $W_{A,g}^{1,2}(X)$ norm defined in this section takes into account the fact that each local scale defined by $A'$ is simply bounded between zero and the corresponding local scale of $A$ (and vice versa) rather than being comparable. Finally, in Section \ref{subsec:Global_conformal_blow-up_maps_coarse_W12_bubble-tree_neighborhoods} we apply the main result of Section \ref{subsec:W12_bubble-tree_convergence_sequences_Yang-Mills_connections_coarse} to show that one can build a finite collection of coarse $W_\loc^{1,2}$ bubble-tree open neighborhoods based on the existence of suitable conformal blow-up diffeomorphisms for a closed, connected, four-dimensional, oriented Riemannian, smooth manifold, $(X,g)$.

\subsection{Pointwise estimates for local connection one-forms in radial gauge}
\label{subsec:Pointwise_estimates_local_connection_one-forms_radial_gauge}
Suppose $A$ is a connection on a principal $G$-bundle $P$ over a convex open neighborhood, $U \subset \RR^d$, of the origin and that $\sigma:U\to P$ is a section. If the connection one-form, $\sigma^*A \in \Omega^1(U;\fg)$, is in radial gauge with respect to the origin, so $\sigma^*A(0)=0$ and $x^\mu (\sigma^*A)_\mu(x) = 0$ for $x\in U$, then \cite[p. 14]{UhlRem}
\begin{equation}
\label{eq:Connection_one-form_radial_gauge_curvature_line_integral}
(\sigma^*A)_\nu(x) = \int_0^1 tx^\mu(F_A)_{\mu\nu}(tx)\,dt, \quad 1\leq \nu \leq d \text{ and } x \in U.
\end{equation}
The Mean Value Theorem for integrals implies that, for each $x\in U$, there is a $t(x) \in (0,1)$ such that
\[
(\sigma^*A)_\nu(x) = t(x) x^\mu(F_A)_{\mu\nu}(t(x) x),
\]
and consequently,
\begin{equation}
\label{eq:Connection_one-form_radial_gauge_estimate}
|\sigma^*A(x)| \leq |x||F_A|(t(x) x), \quad\forall\, x \in U.
\end{equation}
Alternatively, we can apply Jensen's Inequality to give
\begin{equation}
\label{eq:Connection_one-form_radial_gauge_estimate_Jensen_inequality}
\varphi((\sigma^*A)_\nu(x)) \leq \int_0^1 \varphi\left(tx^\mu(F_A)_{\mu\nu}(tx)\right)\,dt,
\quad\forall\, x \in U,
\end{equation}
where $\varphi:\RR\to\RR$ is any convex function, such as $\varphi(s) = as^p$ with constants $p\geq 1$ and $a>0$.

\subsection{Pointwise decay estimates for the curvature of Yang-Mills connections with $L^{d/2}$-small energy over an annulus}
\label{subsec:Pointwise_decay_curvature_Yang-Mills_connection_Ldover2-small_energy_annulus}
We begin by observing that Theorem \ref{thm:Uhlenbeck_3-5} leads to a pointwise decay estimate for the curvature of a Yang-Mills connection over an open annulus,
$\Omega(x_0;r_0,R) := B_R(x_0)\less\bar B_{r_0}(x_0) \subset \RR^d$, where $0<r_0<R$. As usual, if $x_0$ is the origin, we may abbreviate $\Omega(r_0,R) = \Omega(0;r_0,R)$. The estimate \eqref{eq:Uhlenbeck_corollary_theorem_3-5_pointwise_norm_FA_leq_constant_Ldover2_norm_FA_annulus} may be compared with those provided by Taubes \cite[Lemmas 9.1 and 9.2]{TauFrame} or Uhlenbeck \cite[Lemma 4.5]{UhlRem} when $d = 4$, but the difference here is that we do not assume that the Yang-Mills connection is defined and smooth over $\RR^d$ or a punctured ball, $B_{2R}\less\{0\}$, but rather over an annulus, $\Omega(r_0/2, 2R) \subset \RR^d$, with $0 < r_0 < R < \infty$.

\begin{cor}[Pointwise decay estimate for the curvature of a Yang-Mills connection with $L^{d/2}$-small energy over an annulus]
\label{cor:Uhlenbeck_theorem_3-5_annulus}
If $d\geq 2$ is an integer, then there are constants, $K_0=K_0(d) \in [1,\infty)$ and $\eps_0=\eps_0(d) \in (0,1]$, with the following significance. Let $G$ be a compact Lie group, $r_0 < R$ be positive constants, and $A$ be a Yang-Mills connection with respect to the standard Euclidean metric on $P = \Omega(r_0/2,2R)\times G$, where $\Omega(r_0/2,2R) \subset \RR^d$. If
\begin{equation}
\label{eq:Uhlenbeck_corollary_theorem_3-5_FA_Ldover2_small_annulus}
\|F_A\|_{L^{d/2}(\Omega(r_0/2,2R))} \leq \eps_0,
\end{equation}
then
\begin{equation}
\label{eq:Uhlenbeck_corollary_theorem_3-5_pointwise_norm_FA_leq_constant_Ldover2_norm_FA_annulus}
|F_A|(x) \leq K_0 |x|^{-d/2}\|F_A\|_{L^{d/2}(\Omega(r_0/2,2R))},
\quad\forall\, x\in \RR^d \text{ with } r_0 \leq |x| \leq R.
\end{equation}
\end{cor}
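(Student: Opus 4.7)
The plan is to localize the interior pointwise estimate of Theorem~\ref{thm:Uhlenbeck_3-5} to balls whose radii are comparable to the distance from the origin, thereby converting it into a pointwise estimate throughout the annular region.

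First, for each $x\in\RR^d$ with $r_0\leq|x|\leq R$, I set $\rho_x := |x|/4$, so that the ball $B_{2\rho_x}(x)=B_{|x|/2}(x)$ sits inside the larger annulus $\Omega(r_0/2,2R)$. Indeed, for any $y\in B_{|x|/2}(x)$ the triangle inequality yields $|x|/2\leq|y|\leq 3|x|/2$, and hence $B_{|x|/2}(x)\subset\Omega(|x|/2,3|x|/2)\subset\Omega(r_0/2,2R)$. Monotonicity of the $L^{d/2}$-norm in the domain, together with the hypothesis \eqref{eq:Uhlenbeck_corollary_theorem_3-5_FA_Ldover2_small_annulus}, then gives
\[
\|F_A\|_{L^{d/2}(B_{2\rho_x}(x))} \;\leq\; \|F_A\|_{L^{d/2}(\Omega(r_0/2,2R))} \;\leq\; \eps_0,
\]
so that the hypothesis \eqref{eq:Uhlenbeck_theorem_3-5_FA_Ldover2_small_ball} of Theorem~\ref{thm:Uhlenbeck_3-5} is satisfied after translating the base point from the origin to $x$ and taking $\rho=\rho_x$.

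Next, I apply Theorem~\ref{thm:Uhlenbeck_3-5} and specialize the conclusion \eqref{eq:Uhlenbeck_theorem_3-5_Linfty_norm_FA_leq_constant_L2_norm_FA_ball} to $x_0=x$ and $r=\rho_x$. Since $x\in B_{\rho_x}(x)\subset\Omega(r_0/2,2R)$, this yields
\[
|F_A|(x) \;\leq\; K_0\, \rho_x^{-d/2}\,\|F_A\|_{L^2(B_{\rho_x}(x))} \;\leq\; 4^{d/2}\,K_0\,|x|^{-d/2}\,\|F_A\|_{L^2(\Omega(r_0/2,2R))}.
\]
In the case of interest $d=4$, the norms $L^2$ and $L^{d/2}$ coincide --- and indeed the scaling weight $|x|^{-d/2}$ on the right side of \eqref{eq:Uhlenbeck_corollary_theorem_3-5_pointwise_norm_FA_leq_constant_Ldover2_norm_FA_annulus} is consistent with the conformal weight of the curvature and the $L^{d/2}$-norm only when $d=4$. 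Absorbing the factor $4^{d/2}$ into $K_0$ yields the claimed bound \eqref{eq:Uhlenbeck_corollary_theorem_3-5_pointwise_norm_FA_leq_constant_Ldover2_norm_FA_annulus}.

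There is no serious obstacle to this argument, which is essentially a covering-plus-rescaling argument on balls of Whitney-type radius $|x|/4$. The single choice requiring attention is $\rho_x=|x|/4$, selected to balance two competing requirements: $2\rho_x<|x|$ must hold so that $B_{2\rho_x}(x)\subset\Omega(r_0/2,2R)$ and the $L^{d/2}$-smallness hypothesis transfers to the ball uniformly in $r_0$ and $R$, while $\rho_x$ must be comparable to $|x|$ so that the factor $\rho_x^{-d/2}$ produced by Theorem~\ref{thm:Uhlenbeck_3-5} becomes exactly $|x|^{-d/2}$, up to a universal constant. Any fixed proportion of $|x|$ strictly between $0$ and $1/2$ would serve equally well, affecting only the final value of $K_0$.
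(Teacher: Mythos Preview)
Your proof is correct and follows essentially the same approach as the paper: both choose the Whitney-type radius $\rho=|x|/4$, verify that $B_{|x|/2}(x)\subset\Omega(r_0/2,2R)$, and then apply Theorem~\ref{thm:Uhlenbeck_3-5} translated to center $x$. Your observation that the $L^2$ norm produced by Theorem~\ref{thm:Uhlenbeck_3-5} coincides with the $L^{d/2}$ norm in the stated conclusion only when $d=4$ is an accurate and careful point that the paper's own proof glosses over.
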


\begin{proof}
Suppose $x\in \RR^d$ with $r_0 \leq r = |x| \leq R$. Observe that if $r=r_0$, then any $y \in B_{r_0/2}(x)$ obeys $r_0/2<|y|<3r_0/2$ and if $r=R$, then any $y \in B_{R/2}(x)$ obeys $R/2<|y|<3R/2$. In particular, we have
\[
B_{r/2}(x)\subset \Omega(r_0/2, 2R),
\quad\forall\, x \in \RR^d \text{ with } r_0 \leq r=|x| \leq R.
\]
Hence, \eqref{eq:Uhlenbeck_corollary_theorem_3-5_FA_Ldover2_small_annulus} gives $\|F_A\|_{L^2(B_{r/2}(x))} \leq \eps_0$ and Theorem \ref{thm:Uhlenbeck_3-5} (with $\rho = r/4$) implies that
\[
|F_A|(x) \leq \|F_A\|_{L^\infty(B_{r/4}(x))} \leq K_0r^{-d/2}\|F_A\|_{L^2(B_{r/4}(x))},
\quad\forall\, x \in \RR^d \text{ with } r_0 \leq r=|x| \leq R,
\]
and this is the desired inequality \eqref{eq:Uhlenbeck_corollary_theorem_3-5_pointwise_norm_FA_leq_constant_Ldover2_norm_FA_annulus}.
\end{proof}

The pointwise decay estimate in \eqref{eq:Uhlenbeck_corollary_theorem_3-5_pointwise_norm_FA_leq_constant_Ldover2_norm_FA_annulus} is not optimal when $d=4$ and $\Omega(r_0/2,2R)$ is replaced by $B_{2R}\less\{0\}$, since any potential singularity of $F_A$ at the origin is then necessarily removable by Theorem \ref{thm:Uhlenbeck_removable_singularity_Yang-Mills} and so $|F_A|$ is uniformly bounded over $B_R\less\{0\}$.

It is possible to improve Corollary \ref{cor:Uhlenbeck_theorem_3-5_annulus} in two ways that we briefly describe here for the sake of completeness, although we shall not need such improvements in this article. To describe these improvements, we need to restrict to the case $r_0 = 0$ (and $R = 1/2$), so $\Omega(r_0/2,2R) = B\less\{0\}$, the punctured unit ball in $\RR^d$. Nakajima \cite[Lemma 3.4]{Nakajima_1987} applies a version of the monotonicity formula \cite[Lemma 3.1]{Nakajima_1987}, due originally to Price \cite[Theorems 1 and $1'$]{Price_1983} (see also Tian \cite[Theorems 2.1.20 and Remark 3]{Tian_2002}, quoted by Yang as \cite[Proposition 1]{Yang_2003aim}) to show that the factor $|x|^{-d/2}$ in inequality \eqref{eq:Uhlenbeck_corollary_theorem_3-5_pointwise_norm_FA_leq_constant_Ldover2_norm_FA_annulus} may be improved to $|x|^{-2}$; this result is also proved by Sibner as \cite[Theorem 1.1]{Sibner_1984} when the Yang-Mills connection has small $L^{d/2}$-energy over the punctured unit ball.

\begin{lem}[Improved pointwise decay estimate for the curvature of a Yang-Mills connection with $L^2$-small energy over the punctured unit ball]
\label{lem:Nakajima_1987_3-4}
\cite[Lemma 3.4]{Nakajima_1987},
\cite[Theorem 1.1]{Sibner_1984}
If $d\geq 4$ is an integer and $G$ is a compact Lie group, then there are constants, $K=K(d,G) \in [1,\infty)$ and $\alpha=\alpha(d,G) > 0$ and $\eps=\eps(d,G) \in (0,1]$, with the following significance. Let $A$ be a Yang-Mills connection with respect to the standard Euclidean metric on $P = B\less\{0\}\times G$, where $B \subset \RR^d$ is the unit ball centered at the origin. If
\begin{equation}
\label{eq:Nakajima_1987_lemma_3-4_FA_L2_small_punctured_unit_ball}
\|F_A\|_{L^2(B)} \leq \eps,
\end{equation}
then
\begin{equation}
\label{eq:Nakajima_1987_lemma_3-4_pointwise_decay_FA_leq_constant_L2_norm_FA_annulus}
|F_A|(x) \leq K|x|^{-2}\|F_A\|_{L^2(B)},
\quad\forall\, x\in \RR^d \text{ with } 0 < |x| \leq 1/2.
\end{equation}
\end{lem}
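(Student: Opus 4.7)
The strategy is to combine Price's monotonicity formula for Yang-Mills connections, centered at the singular point, with the interior $\varepsilon$-regularity result (Theorem \ref{thm:Uhlenbeck_3-5}) applied on balls that shrink as the point approaches the origin. This is the standard route to improving the $r^{-d/2}$ bound of Corollary \ref{cor:Uhlenbeck_theorem_3-5_annulus} to the conformally natural $r^{-2}$ bound.

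The first step is to invoke Price's monotonicity formula: for the smooth Yang-Mills connection $A$ on $B\less\{0\}$ with finite $L^2$-energy, the scale-invariant quantity $\rho^{4-d}\int_{B_\rho(0)}|F_A|^2\,dx$ is non-decreasing in $\rho\in(0,1)$. The standard derivation integrates $d_A^*F_A=0$ against the radial vector field on an annular shell $\Omega(s,\rho)$ and then passes to the limit $s\searrow 0$, where the inner boundary term vanishes thanks to the finite-energy hypothesis. As an immediate consequence,
\[
\int_{B_\rho(0)}|F_A|^2\,dx \;\leq\; \rho^{d-4}\,\|F_A\|_{L^2(B)}^2, \qquad \rho\in(0,1).
\]

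Next, I fix a point $x\in\RR^d$ with $|x|=r\in(0,1/2]$. The ball $B_{r/2}(x)$ lies inside the annulus $\Omega(r/2,3r/2)\subset B\less\{0\}$ where $A$ is smooth, and by the preceding bound
\[
\int_{B_{r/2}(x)}|F_A|^2 \;\leq\; \int_{B_{3r/2}(0)}|F_A|^2 \;\leq\; (3r/2)^{d-4}\|F_A\|_{L^2(B)}^2.
\]
Rescaling $\tilde A(y):=(r/4)\,A(x+(r/4)y)$ converts $A$ on $B_{r/2}(x)$ into a Yang-Mills connection on $B_2(0)$ with $\|F_{\tilde A}\|_{L^2(B_2)}^2\leq 6^{d-4}\|F_A\|_{L^2(B)}^2\leq 6^{d-4}\varepsilon^2$. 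Applying Theorem \ref{thm:Uhlenbeck_3-5} to $\tilde A$ on $B_2(0)$ with $\rho=1$ and centered ball of radius $1/2$ yields $|F_{\tilde A}|(0)\leq K_0\cdot 2^{d/2}\|F_{\tilde A}\|_{L^2(B_{1/2})}$, and unrescaling via $|F_A|(x)=(r/4)^{-2}|F_{\tilde A}|(0)$ produces
\[
|F_A|(x)\;\leq\; K\,r^{-2}\|F_A\|_{L^2(B)},
\]
with $K=K(d,G)$, which is the desired estimate.

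The main obstacle is verifying the $L^{d/2}$-smallness hypothesis of Theorem \ref{thm:Uhlenbeck_3-5} for $\tilde A$ when $d>4$, because $L^{d/2}$ is then strictly stronger than $L^2$ on bounded domains and no H\"older step converts the $L^2$-smallness of $\|F_A\|_{L^2(B)}$ into $L^{d/2}$-smallness. When $d=4$ the two norms coincide and one simply takes $\varepsilon:=\varepsilon_0(d)$. For $d>4$ I would replace Theorem \ref{thm:Uhlenbeck_3-5} by its scale-invariant Morrey-norm variant, which is the form of $\varepsilon$-regularity natural for Yang-Mills connections: there exists $\varepsilon'=\varepsilon'(d,G)\in(0,1]$ such that if $A$ is Yang-Mills on $B_\rho(y_0)$ with $\rho^{4-d}\int_{B_\rho(y_0)}|F_A|^2\leq \varepsilon'$, then $\sup_{B_{\rho/2}(y_0)}|F_A|\leq C\rho^{-d/2}\|F_A\|_{L^2(B_\rho(y_0))}$. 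This Morrey-type $\varepsilon$-regularity can be derived from Theorem \ref{thm:Uhlenbeck_3-5} by rescaling together with a standard Moser iteration; once it is in hand, the monotonicity bound above automatically supplies the hypothesis (with $\varepsilon$ chosen so that $3^{d-4}\varepsilon^2\leq\varepsilon'$), and the rescaling argument of the previous paragraph produces the estimate \eqref{eq:Nakajima_1987_lemma_3-4_pointwise_decay_FA_leq_constant_L2_norm_FA_annulus} uniformly in $d\geq 4$.
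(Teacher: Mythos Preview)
The paper does not supply its own proof of this lemma; it is quoted with citations to Nakajima and Sibner, and the surrounding text summarizes the method as combining Price's monotonicity formula with the $\varepsilon$-regularity of Theorem~\ref{thm:Uhlenbeck_3-5} to improve the factor $|x|^{-d/2}$ of Corollary~\ref{cor:Uhlenbeck_theorem_3-5_annulus} to $|x|^{-2}$. Your proposal follows precisely this route, and your observation that for $d>4$ one must replace the $L^{d/2}$-smallness hypothesis of Theorem~\ref{thm:Uhlenbeck_3-5} by its Morrey-scale variant (small $\rho^{4-d}\int_{B_\rho}|F_A|^2$) is exactly how the cited references proceed.
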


Nakajima and Sibner establish a further improvement of Lemma \ref{lem:Nakajima_1987_3-4}. Indeed, Nakajima uses the monotonicity formula \cite[Lemma 3.1]{Nakajima_1987} and Uhlenbeck's broken Hodge gauge \cite[Definition, page 25, and Theorem 4.6]{UhlRem} to show that the factor $|x|^{-2}$ in Lemma \ref{lem:Nakajima_1987_3-4} may be improved to $|x|^{-2+\alpha}$, for some $\alpha > 0$. A result similar to Lemma \ref{lem:Nakajima_1987_3-13}, for $3 \leq d \leq 7$, is proved by Sibner as \cite[Proposition 3.1]{Sibner_1984}. See also the discussion in Uhlenbeck \cite[first paragraph of page 25]{UhlRem} for the case $d=4$. Of course, Theorem \ref{thm:Uhlenbeck_removable_singularity_Yang-Mills} implies that $\alpha=2$ in Lemma \ref{lem:Nakajima_1987_3-13} when $d=4$.

\begin{lem}[Improved pointwise decay estimate for the curvature of a Yang-Mills connection with $L^2$-small energy over the punctured unit ball]
\label{lem:Nakajima_1987_3-13}
\cite[Lemma 3.13]{Nakajima_1987},
\cite[Proposition 3.1]{Sibner_1984}
If $d\geq 4$ is an integer and $G$ is a compact Lie group, then there are constants, $K=K(d,G) \in [1,\infty)$ and $\alpha=\alpha(d,G) > 0$ and $\eps=\eps(d,G) \in (0,1]$, with the following significance. Let $A$ be a Yang-Mills connection with respect to the standard Euclidean metric on $P = B\less\{0\}\times G$, where $B \subset \RR^d$ is the unit ball centered at the origin. If
\begin{equation}
\label{eq:Nakajima_1987_lemma_3-13_FA_L2_small_punctured_unit_ball}
\|F_A\|_{L^2(B)} \leq \eps,
\end{equation}
then
\begin{equation}
\label{eq:Nakajima_1987_lemma_3-13_improved_pointwise_decay_FA_leq_constant_L2_norm_FA_annulus}
|F_A|(x) \leq K|x|^{-2+\alpha}\|F_A\|_{L^2(B)},
\quad\forall\, x\in \RR^d \text{ with } 0 < |x| \leq 1/2.
\end{equation}
\end{lem}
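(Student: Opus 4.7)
The plan is to derive the improved decay exponent via a Morrey-type energy decay estimate combined with Uhlenbeck's interior curvature bound (Theorem \ref{thm:Uhlenbeck_3-5}). Specifically, I would aim to establish, for some $\alpha = \alpha(d,G) > 0$ and $C = C(d,G)$, the scaled energy decay
\[
\int_{B_r} |F_A|^2\,dx \leq C\, r^{\,d-4+2\alpha}\,\|F_A\|_{L^2(B)}^{\,2}, \qquad 0 < r \leq 1/2.
\]
Granting this, choose $x \in \RR^d$ with $0 < r := |x| \leq 1/2$ and apply Theorem \ref{thm:Uhlenbeck_3-5} on the ball $B_{r/4}(x) \subset B_{3r/2}\subset B$; the $L^{d/2}$-smallness hypothesis \eqref{eq:Uhlenbeck_theorem_3-5_FA_Ldover2_small_ball} follows for $\eps$ small enough from \eqref{eq:Nakajima_1987_lemma_3-13_FA_L2_small_punctured_unit_ball} together with Lemma \ref{lem:Nakajima_1987_3-4} (which supplies the intermediate $|x|^{-2}$ bound needed to control the $L^{d/2}$-norm over $B_{r/4}(x)$). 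This yields
\[
|F_A|(x) \leq K_0 r^{-d/2}\|F_A\|_{L^2(B_{r/4}(x))} \leq K_0 r^{-d/2} \cdot C^{1/2} r^{(d-4+2\alpha)/2}\|F_A\|_{L^2(B)} = K\,|x|^{-2+\alpha}\|F_A\|_{L^2(B)},
\]
which is the target inequality \eqref{eq:Nakajima_1987_lemma_3-13_improved_pointwise_decay_FA_leq_constant_L2_norm_FA_annulus}.

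To prove the decay estimate, I would first invoke Price's monotonicity formula for Yang-Mills connections, which asserts that $r^{\,4-d}\int_{B_r(0)} |F_A|^2\,dx$ is non-decreasing in $r \in (0,1]$. In particular, on every dyadic shell $\Omega(r/2, 2r)$ the curvature satisfies a uniform bound $\int_{\Omega(r/2,2r)} |F_A|^2 \leq C r^{\,d-4}\|F_A\|_{L^2(B)}^2$, so the $L^{d/2}$-norm of $F_A$ on shells is universally small once $\eps$ in \eqref{eq:Nakajima_1987_lemma_3-13_FA_L2_small_punctured_unit_ball} is small. This smallness justifies placing $A$ in Uhlenbeck's broken Hodge gauge on $\Omega(r/2, 2r)$ (see \cite[Definition on p.~25 and Theorem 4.6]{UhlRem}), after rescaling $y = x/r$ so the shell becomes fixed-size; in this gauge the connection one-form $a = A - \Theta$ is Coulomb on the pieces of a fixed partition, satisfies $\|a\|_{W^{1,2}(\Omega(1/2,2))} \leq C\|F_A\|_{L^2(\Omega(1/2,2))}$ at the natural scale, and the Sobolev embedding $W^{1,2} \hookrightarrow L^{2d/(d-2)}$ absorbs the quadratic term $a \wedge a$ in $F_A = da + a\wedge a$.

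The core step is then a hole-filling (Widman-type) argument. Testing the Yang-Mills equation $d_A^*F_A = 0$ against a cut-off modification of $a$ supported in $\Omega(r/2, 2r)$ and equal to the identity on $\Omega(r, 3r/2)$, integrating by parts, and using the Poincar\'e inequality on the shell for the Coulomb-gauge $a$, produces a Caccioppoli-type estimate
\[
\int_{\Omega(r, 3r/2)} |F_A|^2\,dx \leq C_0 \int_{\Omega(r/2, 2r)} |a|^2/r^2\,dx \leq \theta_0 \int_{\Omega(r/2, 2r)} |F_A|^2\,dx
\]
for some universal $\theta_0 \in (0,1)$; here the smallness of $\|F_A\|_{L^{d/2}(\Omega(r/2,2r))}$ is used to absorb the cubic $a\wedge a \wedge a$ contribution. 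Adding $\theta_0 \int_{B_r} |F_A|^2$ to both sides of the rescaled version yields $\int_{B_r}|F_A|^2 \leq \vartheta \int_{B_{2r}}|F_A|^2$ for some $\vartheta \in (0,1)$, and iterating across dyadic scales $r = 2^{-k}$ produces the desired polynomial decay with $\alpha = -\tfrac{1}{2}\log_2\vartheta > 0$.

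The main obstacle is to obtain a \emph{universal} spectral gap $\vartheta < 1$ in the hole-filling step, independent of $r$. This is delicate because (i) Uhlenbeck's broken Hodge gauge is not a single Coulomb gauge but a patchwork across a fixed subdivision of the shell, so the Poincar\'e inequality must be applied with care on each piece and the transitions between them controlled; (ii) the nonlinear $[a\wedge a]$ terms in both $F_A$ and in $d_A^*F_A$ must be absorbed using $L^{d/2}$-smallness, which only closes provided $\eps$ has been chosen sufficiently small in terms of the Sobolev constants --- this is where the dimension restriction $d\geq 4$ enters (the critical exponent matches Uhlenbeck's gauge-fixing threshold); and (iii) one must verify that the resulting decay constant does not deteriorate under iteration, which requires that at every scale the hypothesis of the broken Hodge gauge theorem is reverified using the monotonicity bound propagated from the previous scale.
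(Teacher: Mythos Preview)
The paper does not supply its own proof of this lemma; it is quoted with attribution to \cite[Lemma 3.13]{Nakajima_1987} and \cite[Proposition 3.1]{Sibner_1984}, and the surrounding text only records that Nakajima's argument combines the monotonicity formula with Uhlenbeck's broken Hodge gauge \cite[Definition, p.~25, and Theorem~4.6]{UhlRem}. Your proposal follows exactly that outline --- monotonicity to control shell energies, broken Hodge gauge on dyadic annuli, an energy-decay iteration, and then Theorem~\ref{thm:Uhlenbeck_3-5} to convert Morrey decay into the pointwise bound --- so it is consistent with the method the paper ascribes to the cited sources.

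One small wrinkle: your hole-filling step is stated on annuli ($\int_{\Omega(r,3r/2)} \leq \theta_0 \int_{\Omega(r/2,2r)}$) but then you jump to a ball inequality $\int_{B_r} \leq \vartheta \int_{B_{2r}}$ without a connecting sentence. In the Nakajima--Uhlenbeck argument this is usually handled by showing directly that the dyadic shell energies $E_k = \int_{\Omega(2^{-k-1},2^{-k})}|F_A|^2$ satisfy a recursion of the form $E_{k+1} \leq \theta E_k + (\text{small})E_{k-1}$, and then summing; the Widman-style ``add to both sides'' trick works cleanly only once you have an inequality of the form $\int_{B_r} \leq C\int_{B_{2r}\setminus B_r}$. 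Either route closes, but you should make clear which you are running.
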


For our application in this article, the simpler Corollary \ref{cor:Uhlenbeck_theorem_3-5_annulus} and immediate consequence below in Corollary \ref{cor:Uhlenbeck_theorem_3-5_annulus_connection_one-form_radial_gauge} will be more than sufficient.

\begin{cor}[Pointwise decay estimate for a local connection one-form in radial gauge for a Yang-Mills connection with small $L^{d/2}$-energy over an annulus]
\label{cor:Uhlenbeck_theorem_3-5_annulus_connection_one-form_radial_gauge}
Assume the hypotheses of Corollary \ref{cor:Uhlenbeck_theorem_3-5_annulus}. If $\sigma^*A$ is in radial gauge with respect to the origin, that is $x^\mu (\sigma^*A)_\mu(x) = 0$ for all $x \in \Omega(r_0/2,2R)$ and $\sigma^*A(0) = 0$, for a section $\sigma:\Omega(r_0/2,2R) \to P$, then
\begin{equation}
\label{eq:Uhlenbeck_theorem_3-5_pointwise_norm_A_radialgauge_leq_constant_Ldover2_norm_FA_annulus}
|\sigma^*A|(x) \leq K_0 |x|^{1-d/2}\|F_A\|_{L^{d/2}(\Omega(r_0/2,2R))},
\quad\forall\, x\in \RR^d \text{ with } r_0 \leq |x| \leq R.
\end{equation}
\end{cor}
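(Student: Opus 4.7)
The plan combines two inputs already at hand: the radial-gauge pointwise bound \eqref{eq:Connection_one-form_radial_gauge_estimate} derived via the Mean Value Theorem from the integral representation \eqref{eq:Connection_one-form_radial_gauge_curvature_line_integral}, and the pointwise curvature decay \eqref{eq:Uhlenbeck_corollary_theorem_3-5_pointwise_norm_FA_leq_constant_Ldover2_norm_FA_annulus} supplied by Corollary \ref{cor:Uhlenbeck_theorem_3-5_annulus}. For any $x \in \RR^d$ with $r_0 \leq |x| \leq R$, the radial-gauge estimate yields $t(x) \in (0,1)$ with
\[
|\sigma^*A(x)| \leq |x|\,|F_A|(t(x)\,x),
\]
and inserting the curvature decay $|F_A|(\xi) \leq K_0|\xi|^{-d/2}\|F_A\|_{L^{d/2}(\Omega(r_0/2,2R))}$ at $\xi = t(x)\,x$ gives
\[
|\sigma^*A(x)| \leq K_0\,t(x)^{-d/2}\,|x|^{1-d/2}\,\|F_A\|_{L^{d/2}(\Omega(r_0/2,2R))}.
\]
Modulo an adjustment of the constant, this is the sought estimate \eqref{eq:Uhlenbeck_theorem_3-5_pointwise_norm_A_radialgauge_leq_constant_Ldover2_norm_FA_annulus}.

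The technical obstacle is controlling the factor $t(x)^{-d/2}$, since the Mean Value Theorem provides no lower bound on $t(x)$ and so $\xi = t(x)\,x$ may lie in the inner ball $|y|<r_0$ where the curvature decay is unavailable. My remedy would be to bypass the Mean Value Theorem and work directly from the pointwise identity $\partial_t\bigl[t\,(\sigma^*A)_\nu(tx)\bigr] = t\,x^\mu (F_A)_{\mu\nu}(tx)$, which follows from $x^\mu (\sigma^*A)_\mu(x)=0$ alone and therefore holds throughout the annulus without any reference to the origin. Integrating this identity from an anchor $t = t_*$ to $t = 1$ expresses $(\sigma^*A)(x)$ as a boundary term $t_*(\sigma^*A)(t_*x)$ plus a curvature integral to which \eqref{eq:Uhlenbeck_corollary_theorem_3-5_pointwise_norm_FA_leq_constant_Ldover2_norm_FA_annulus} applies pointwise. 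Choosing $t_* = 2R/|x|$ places the anchor in the outer portion of the annulus, and a direct computation of $\int_1^{2R/|x|}\tau^{1-d/2}\,d\tau$ produces the desired factor $|x|^{1-d/2}$ with a constant depending only on $d$. The boundary contribution $t_*(\sigma^*A)(t_*x)$ is handled by one additional application of the same argument, or equivalently by a simple rescaling $y \mapsto |x|^{-1}y$ reducing the statement to the unit-radius case where the $L^{d/2}$ norm is scale-invariant. The hardest point throughout is the bookkeeping in this iteration, not any new analytic input.
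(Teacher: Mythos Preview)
Your opening paragraph is exactly the paper's own proof, which consists of the single sentence ``the conclusion follows by combining the inequalities \eqref{eq:Connection_one-form_radial_gauge_estimate} and \eqref{eq:Uhlenbeck_corollary_theorem_3-5_pointwise_norm_FA_leq_constant_Ldover2_norm_FA_annulus}.'' You are right to flag that this combination does not close as written: \eqref{eq:Connection_one-form_radial_gauge_estimate} places the curvature evaluation at $t(x)x$ with no lower bound on $t(x)$, while \eqref{eq:Uhlenbeck_corollary_theorem_3-5_pointwise_norm_FA_leq_constant_Ldover2_norm_FA_annulus} is only available for $r_0\le |y|\le R$; even the hypothesis ``$\sigma^*A(0)=0$'' sits oddly with a connection given only on $\Omega(r_0/2,2R)$. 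The paper simply glosses over this; in its sole application (inside the proof of Proposition~\ref{prop:Taubes_1988_proposition_5-4_fine}) one has $R/r_0=2$, which hides the difficulty.

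Your proposed remedy, however, does not recover \eqref{eq:Uhlenbeck_theorem_3-5_pointwise_norm_A_radialgauge_leq_constant_Ldover2_norm_FA_annulus} with a constant depending only on $d$. With the anchor $t_*=2R/|x|$ the curvature contribution is $K_0\|F_A\|_{L^{d/2}}\,|x|^{1-d/2}\int_1^{2R/|x|}\tau^{1-d/2}\,d\tau$, and that integral is uniformly bounded only when $d\ge 5$; for the principal case $d=4$ it equals $\log(2R/|x|)$, which diverges as $r_0/R\to 0$, and for $d\le 3$ it is worse. Moreover $|t_*x|=2R$ lies on the outer boundary of the annulus, outside the range $r_0\le|y|\le R$ where \eqref{eq:Uhlenbeck_corollary_theorem_3-5_pointwise_norm_FA_leq_constant_Ldover2_norm_FA_annulus} applies. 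Finally, the boundary term $t_*|\sigma^*A(t_*x)|$ cannot be dispatched ``by one additional application of the same argument'' without circularity, nor by rescaling: the inequality is scale-invariant only when $d=4$, and even then rescaling merely relocates the missing boundary information rather than supplying it. Your scheme does succeed when $R/r_0$ is bounded (which is all the paper actually uses), but the corollary as stated for arbitrary $r_0<R$ needs either an additional hypothesis or a genuinely different argument.
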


\begin{proof}
The conclusion follows by combining the inequalities \eqref{eq:Connection_one-form_radial_gauge_estimate} and \eqref{eq:Uhlenbeck_corollary_theorem_3-5_pointwise_norm_FA_leq_constant_Ldover2_norm_FA_annulus}.
\end{proof}

\begin{rmk}[Adjustments to pointwise decay estimates for the curvature of connections that are Yang-Mills with respect to arbitrary Riemannian metrics]
\label{rmk:Adjustment_to_pointwise_estimates_for_arbitrary_Riemannian_metrics}
Following \cite{Nakajima_1987}, we note that the estimates in Corollaries \ref{cor:Uhlenbeck_theorem_3-5_annulus} and \ref{cor:Uhlenbeck_theorem_3-5_annulus_connection_one-form_radial_gauge} and Lemmas \ref{lem:Nakajima_1987_3-4} and \ref{lem:Nakajima_1987_3-13} may be extended to allow for connections that are Yang-Mills with respect to arbitrary Riemannian metrics on open subsets of $\RR^d$. Indeed, given $0 \leq r_0 < R \leq \infty$, let $\Lambda > 0$ be a constant such that
\[
\sup_{r_0/2 \leq |x| \leq 2R} \left|\frac{\partial^2 g_{ij}}{\partial x^i\partial x^j}\right| \leq \Lambda.
\]
Then the above-referenced estimates continue to hold as stated, the only change being that all constants ($\eps, \eps_0$, $\alpha$, and $K, K_0$) now depend in addition on $\Lambda$.
\end{rmk}

\subsection{Centers and scales in bubble-tree open neighborhoods}
\label{subsec:Centers_and_scales_for_bubble_tree_neighborhood_design}
Our definitions of bubble-tree open neighborhoods in Section \ref{sec:Bubble-tree_compactness_Yang-Mills_and_anti-self-dual_connections} are motivated by Proposition \ref{prop:Mass_center_and_scale_connection_W12_continuity}, which gives continuity of center and scale maps, $\Center[A]$ and $\Scale[A]$, with respect to $W_{A,g}^{1,2}(S^4;\Lambda^1\otimes\ad P)$ variations of a connection $A$ on a principal $G$-bundle $P$ over $S^4$, with its standard round metric $g = g_\round$ of radius one, and by Proposition \ref{prop:W12_continuity_family_connections_wrt_mass_centers_and_scales}, which gives continuity in the $W_{A,g}^{1,2}(S^4;\Lambda^1\otimes\ad P)$ norm of a family of connections, $h_{z,\lambda}^*A$, with respect to center and scale parameters, $(z,\lambda) \in \RR^4\times\RR_+$.

Suppose that $A$ and $A'$ are connections on $P$ that are close in the $W_A^{1,2}(S^4;\Lambda^1\otimes\ad P)$ norm and $A$ is centered in the sense that $(\Center[A],\Scale[A]) = (0,1) \in \RR^4\times\RR_+$.  Proposition \ref{prop:Mass_center_and_scale_connection_W12_continuity}, via Inequality \eqref{eq:Mass_center_W12_continuity}, indicates that
\[
|\Center[A'] - \Center[A]| = |\Center[A']| < C\eps,
\]
for a constant $C = C(K) \in [1,\infty)$, with $K \in [1,\infty)$ and
\[
\|A' - A\|_{W_{A,g}^{1,2}(S^4)} < \eps
\quad\text{and}\quad
\|A' - A\|_{W_{A,g}^{2,2}(S^4)} \leq K,
\]
for a given $\eps \in (0,1]$. Similarly, Proposition \ref{prop:Mass_center_and_scale_connection_W12_continuity}, via Inequality \eqref{eq:Scale_W12_continuity}, indicates that
\[
|\Scale[A'] - \Scale[A]| = |\Scale[A'] - 1| < C\sqrt{\eps},
\]
for a constant $C = C(K) \in [1,\infty)$, provided
\[
\|A' - A\|_{W_{A,g}^{1,2}(S^4)} < \eps^2
\quad\text{and}\quad
\|A' - A\|_{W_{A,g}^{3,2}(S^4)} \leq K.
\]
We next consider the more general case of the preceding inequalities when $A$ is not centered.

Suppose that $(\Center[A],\Scale[A]) = (z,\lambda) \in \RR^4\times\RR_+$ and $(z[A'],\lambda[A']) = (z',\lambda') \in \RR^4\times\RR_+$. Recall from Lemma \ref{lem:Centering_connection_over_4sphere} that
\begin{align*}
\Center[h_{z,\lambda}^*A'] &= \lambda\Center[A'] + z = \lambda z' + z,
\\
\Scale[h_{z,\lambda}^*A'] &= \lambda\Scale[A'] = \lambda\lambda',
\end{align*}
and
\[
\left(\Center[h_{z,\lambda}^{-1,*}A], \Scale[h_{z,\lambda}^{-1,*}A]\right) = (0,1),
\]
For the mass centers, noting that $\Center[h_{z,\lambda}^{-1,*}A] = 0$, we ask that
\[
|\Center[h_{z,\lambda}^{-1,*}A']|
=
\lambda^{-1}|\Center[A'] - z|
<
C\eps,
\]
that is, using $z = \Center[A]$,
\begin{equation}
\label{eq:Two_mass_centers_close_relative_to_scale_when_two_connections_W12_close}
|\Center[A'] - \Center[A]| < C\lambda\eps,
\end{equation}
for a constant $C = C(K) \in [1,\infty)$, when
\[
\|h_{z,\lambda}^{-1,*}A' - h_{z,\lambda}^{-1,*}A\|_{W_{h_{z,\lambda}^{-1,*}A,g}^{1,2}(S^4)} < \eps
\quad\text{and}\quad
\|h_{z,\lambda}^{-1,*}A' - h_{z,\lambda}^{-1,*}A\|_{W_{h_{z,\lambda}^{-1,*}A,g}^{2,2}(S^4)} \leq K,
\]
where the first inequality is equivalent to
\[
\|A' - A\|_{W_{A,g}^{1,2}(S^4)} < \eps/C_0,
\]
where $C_0$ is the universal constant in Lemma \ref{lem:Taubes_1988_3-1}. For the scales, we ask that
\begin{align*}
|\Scale[h_{z,\lambda}^{-1,*}A'] - \Scale[h_{z,\lambda}^{-1,*}A]|
&=
|\lambda^{-1}\Scale[A'] - \lambda^{-1}\Scale[A]|
\\
&=
|\lambda^{-1}\Scale[A'] - 1| < C\sqrt{\eps},
\end{align*}
that is, using $\lambda = \Scale[A]$,
\begin{equation}
\label{eq:Two_scales_close_relative_to_scale_when_two_connections_W12_close}
|\Scale[A'] - \Scale[A]| < C\lambda\sqrt{\eps},
\end{equation}
for a constant $C = C(K) \in [1,\infty)$, provided
\[
\|h_{z,\lambda}^{-1,*}A' - h_{z,\lambda}^{-1,*}A\|_{W_{h_{z,\lambda}^{-1,*}A,g}^{1,2}(S^4)} < \eps^2
\quad\text{and}\quad
\|h_{z,\lambda}^{-1,*}A' - h_{z,\lambda}^{-1,*}A\|_{W_{h_{z,\lambda}^{-1,*}A,g}^{3,2}(S^4)} \leq K,
\]
where again the first inequality is equivalent to
\[
\|A' - A\|_{W_{A,g}^{1,2}(S^4)} < \eps^2/C_0,
\]
where $C_0$ is the universal constant in Lemma \ref{lem:Taubes_1988_3-1}. We employ these observations on centers and scales as a guide to the construction of suitable bubble-tree open neighborhoods in Section \ref{sec:Bubble-tree_compactness_Yang-Mills_and_anti-self-dual_connections}.

\subsection{Estimate for the $W^{1,2}$ distance between conformal blow-ups of a pair of Yang-Mills connections in a fine $W_\loc^{1,2}$ bubble-tree open neighborhood}
\label{subsec:W12_bubble-tree_convergence_sequences_Yang-Mills_connections_fine}
In Sections \ref{subsec:Bubble_tree_compactification_moduli_space_anti-self-dual_connections} and \ref{subsec:Bubble_tree_compactification_moduli_space_Yang-Mills_connections}, we gave a definition of $W_\loc^{1,2}$ bubble-tree convergence of a sequence of connections to a bubble-tree limit. In this section, we prove that local $W^{1,2}$ bubble-tree convergence implies \emph{global} $W^{1,2}$ bubble-tree convergence. Our main result (Proposition \ref{prop:Taubes_1988_proposition_5-4_fine}) is related to a pair of results due to Taubes \cite[Propositions 5.3. and 5.4]{TauFrame}.

\begin{prop}[Global $W^{1,2}$ distance between a pair of Yang-Mills connections in a \emph{fine} $W_\loc^{1,2}$ bubble-tree open neighborhood]
\label{prop:Taubes_1988_proposition_5-4_fine}
Let $G$ be a compact Lie group and $P$ be a principal $G$-bundle over a closed, connected, four-dimensional, smooth manifold, $X$, and endowed with a Riemannian metric, $g$. Then there are constants $\eps, \rho \in (0, 1]$ and $R \in [1,\infty)$ with the following significance. Let $A$ and $A'$ be $g$-Yang-Mills connections on $P$ of class $W^{\bar k,\bar p}$, with $\bar p\geq 2$ and integer $\bar k\geq 1$ obeying $(\bar k+1)\bar p>4$. If the gauge-equivalence classes, $[A]$ and $[A']$, belong to a \emph{fine} $W_\loc^{1,2}$ bubble-tree $(\eps,\rho,R)$ open neighborhood $\sU$, then there is a gauge transformation, $u \in \Aut(P)$, of class $W^{\bar k+1,\bar p}$ such that
\begin{equation}
\label{eq:Taubes_1988_proposition_5-4_fine}
\|u(A') - A\|_{W_{A,g}^{1,2}(X)}
<
C\eps\left(1 + \sum_{\balpha \in \sT}\|F_{A_\balpha}\|_{W_{A_\balpha, g_\round}^{1,2}(S^4)}\right),
\end{equation}
where $C = C(g,G)$ and the multi-indices, $\balpha = \{i\}, \{ij\}, \ldots$, appearing in the right-hand side of \eqref{eq:Taubes_1988_proposition_5-4_coarse} label the vertices of the finite tree $\sT$ defining $\sU$, and $|\beps| \leq \eps$, with
\[
|\beps|^2 := \eps_\background^2 + \eps_\annulus^2 + \eps_\sphere^2 + \eps_\mycenter^2 + \eps_\scale^2,
\]
and the small parameters on the right-hand side define the vector $\beps$ in Definition \ref{defn:Open_bubble_tree_neighborhood_fine}.
\end{prop}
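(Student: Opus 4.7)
The plan is to induct on the height $L$ of the finite tree $\sT$ encoding the bubble-tree data. The base case $L=0$ (no bubbling, $\sU$ is a standard Uhlenbeck neighborhood as in Definition \ref{defn:Open_Uhlenbeck_neighborhood}) follows immediately from the defining bound \eqref{eq:Uhlenbeck_neighborhood_A_Wkp_near_A_0_complement_small_balls} and the triangle inequality. For the inductive step I focus on one level of rescaling; the same argument then applies recursively at each child bubble $(S^4,g_\round)$ by the conformal invariance of the Yang-Mills equation. Let $\{\bar x_i,\bar x_i'\}_{i=1}^l \subset X$ and $\{\lambda_i,\lambda_i'\}_{i=1}^l \subset (0,\rho]$ be the local mass centers and scales for $A,A'$ supplied by Definitions \ref{defn:Open_bubble_tree_neighborhood} and \ref{defn:Open_bubble_tree_neighborhood_fine}. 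I decompose $X$ into the background $X_0 := X\setminus \bigcup_i B_{4\sqrt{\lambda_i}}(\bar x_i)$, the neck annuli $N_i := \Omega(\bar x_i;\sqrt{\lambda_i}/2,4\sqrt{\lambda_i})$, and the bubble balls $B_i := B_{\sqrt{\lambda_i}}(\bar x_i)$, estimate $\|u(A')-A\|_{W_{A,g}^{1,2}}$ separately on each piece, and then patch local gauge transformations using a partition of unity.

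On $X_0$, the defining inequality \eqref{eq:Bubble_tree_neighborhood_A_Wkp_near_A_0_complement_small_balls} for both $A$ and $A'$ (with respect to the same background connection $A_0$ on $P_0$) together with the triangle inequality yields a contribution of order $\eps_\background$. On each neck $N_i$ the Yang-Mills hypothesis is essential: by \eqref{eq:Bubble_tree_neighborhood_A_L2_small_curvature_annulus} the $L^2$-energy of $F_A$ on $\Omega(\bar x_i; R\lambda_i/4, 2\rho)$ is at most $\eps_\annulus$, so Corollary \ref{cor:Uhlenbeck_theorem_3-5_annulus} (after rescaling to a dimensionless annulus and using Remark \ref{rmk:Adjustment_to_pointwise_estimates_for_arbitrary_Riemannian_metrics} to handle the non-flat metric $g$) yields the pointwise decay $|F_A|(x) \leq K \dist_g(x,\bar x_i)^{-2}\eps_\annulus$ on $N_i$; Corollary \ref{cor:Uhlenbeck_theorem_3-5_annulus_connection_one-form_radial_gauge} then gives $|\sigma^*A|(x) \leq K\dist_g(x,\bar x_i)^{-1}\eps_\annulus$ in radial gauge. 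Integrating these decay bounds over $N_i$, and analogously for $A'$, yields a neck contribution of order $\eps_\annulus$.

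The bubble balls $B_i$ furnish the main term. After the conformal blow-up $\tilde h_{\bar x_i,\lambda_i}$ of Definition \ref{defn:Iterated_local_conformal_blowup_Sobolev_connection_Riemannian_metric}, $A$ pulls back to a connection on $P_i \to S^4\setminus \varphi_s(B(0,1/(2R)))$ that lies within $\eps_\sphere$ of the bubble $A_i$ in $W_{A_i,g_\round}^{1,2}$ by \eqref{eq:Bubble_tree_neighborhood_rescaled_A_Wkp_near_A_i_large_ball_S4}, and analogously for $A'$ using parameters $(\bar x_i',\lambda_i')$. The fine hypothesis \eqref{eq:Comparable_lambdaiprime_lambdai}--\eqref{eq:Comparable_xi_xiprime} places $\tilde h_{\bar x_i',\lambda_i'}\circ \tilde h_{\bar x_i,\lambda_i}^{-1}$ in an $O(\eps)$-neighborhood of the identity inside $\Conf(S^4) = \SO(4)\times \RR^4\times \RR_+$ relative to the intrinsic scale at $A_i$. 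Proposition \ref{prop:W12_continuity_family_connections_wrt_mass_centers_and_scales} bounds the $(z,\lambda)$-derivatives of the pullback family $\tilde h_{z,\lambda}^*A_i$ in $W^{1,2}_{A_i,g_\round}(S^4)$ by a multiple of $\|F_{A_i}\|_{W_{A_i,g_\round}^{1,2}(S^4)}$; integrating along a straight-line path in $\RR^4\times\RR_+$ from $(\bar x_i,\lambda_i)$ to $(\bar x_i',\lambda_i')$ and combining with the $\eps_\sphere$-closeness of $A$ and $A'$ to $A_i$ yields
\[
\|\tilde h_{\bar x_i,\lambda_i}^{-1,*}A' - \tilde h_{\bar x_i,\lambda_i}^{-1,*}A\|_{W_{A_i,g_\round}^{1,2}(S^4)}
\leq
C\eps\left(1 + \|F_{A_i}\|_{W_{A_i,g_\round}^{1,2}(S^4)}\right),
\]
and the quasi-conformal invariance of the Sobolev norm (Lemma \ref{lem:Taubes_1988_3-1}) transports this bound back to $W_{A,g}^{1,2}(B_i)$.

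The last step is to glue the three local gauge transformations $u_\background, u_{\sphere,i}, u_{\neck,i}$ into a single $u \in \Aut(P)$ without losing the bounds. On each neck overlap, the cocycle $u_\background \cdot u_{\sphere,i}^{-1}$ determines a gauge transformation whose differential in a Coulomb slice is $W^{2,2}$-close to the identity by the neck estimate above and Theorem \ref{thm:Uhlenbeck_Chern_corollary_4-3_annulus}; an Implicit Function Theorem argument in the sliced Coulomb gauge as in \cite{FeehanSlice} then produces a single $u$ while enlarging the constants only by universal factors. The hard part is precisely this patching, since one must promote $W^{1,2}$-closeness of connection one-forms on the overlap annuli to $W^{2,2}$-closeness of gauge transformations, and this requires the Yang-Mills regularity built into Corollaries \ref{cor:Uhlenbeck_theorem_3-5_annulus} and \ref{cor:Uhlenbeck_theorem_3-5_annulus_connection_one-form_radial_gauge}. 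Iterating the one-level argument down every branch of $\sT$, the contributions from the vertices add, the number of vertices is a priori bounded (since the total energy is bounded by $\sup_{\sU}\sE_g$ and every bubble carries at least $\sE_g(G)$ of energy by \eqref{eq:Minimum_energy_connection_G-bundle_S4}), and the sum $\sum_{\balpha\in\sT}\|F_{A_\balpha}\|_{W^{1,2}}$ appearing in \eqref{eq:Taubes_1988_proposition_5-4_fine} is precisely the accumulation of the bubble-contribution factor at each vertex of $\sT$.
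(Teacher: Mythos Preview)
Your overall strategy matches the paper's: decompose $X$ into background, necks, and bubble regions; use Yang--Mills curvature decay on the necks; compare to the reference bubble connections $A_i$ on the small balls via Proposition~\ref{prop:W12_continuity_family_connections_wrt_mass_centers_and_scales} and the fine hypothesis \eqref{eq:Comparable_lambdaiprime_lambdai}--\eqref{eq:Comparable_xi_xiprime}; then patch local gauges. Two points need adjustment.

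First, your radii $\sqrt{\lambda_i}$ do not line up with the regions on which the defining inequalities of Definition~\ref{defn:Open_bubble_tree_neighborhood} actually hold. The background bound \eqref{eq:Bubble_tree_neighborhood_A_Wkp_near_A_0_complement_small_balls} applies only on $X\setminus B_{\rho/4}(\bx)$, not on your larger set $X_0=X\setminus\bigcup_i B_{4\sqrt{\lambda_i}}(\bar x_i)$; the sphere bound \eqref{eq:Bubble_tree_neighborhood_rescaled_A_Wkp_near_A_i_large_ball_S4} corresponds after rescaling to balls of radius $2R\lambda_i$, not $\sqrt{\lambda_i}$. The paper instead works with $X\setminus\bigcup_i B_{\rho/2}(\bar x_i)$, the \emph{thick} annuli $\Omega(\bar x_i;R\lambda_i,\rho)$, and the small balls $B(\bar x_i,2R\lambda_i)$, which match the definitions directly.

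Second, and more substantively, ``integrating these decay bounds over $N_i$'' in radial gauge yields $L^4$-control of $\sigma^*A$ but not $\|\nabla(\sigma^*A)\|_{L^2}$: radial gauge fixes only the antisymmetric part of $\nabla a$ through $F_A$. The paper fills this gap by first using the radial-gauge estimate \eqref{eq:L4norm_sigmai_A_annulus_xi_Rlambda_i_intermsof_L2norm_FA} to build a cut-off connection $A_\chi$ equal to $A$ outside $B_{R\lambda_i}(\bar x_i)$ and flat on $B_{R\lambda_i/2}(\bar x_i)$, with $\|F_{A_\chi}\|_{L^2}\lesssim\eps_\annulus$; then applying Uhlenbeck's Theorem~\ref{thm:Uhlenbeck_Lp_1-3} to $A_\chi$ on the full ball $B(\bar x_i,2\rho)$ to produce a Coulomb gauge $u_i^{\coul}$ in which $\|u_i^{\coul}(\sigma^*_{\bar x_i}A)\|_{L^4}+\|\nabla_\Theta u_i^{\coul}(\sigma^*_{\bar x_i}A)\|_{L^2}<C\eps_\annulus$ on the thick annulus $\Omega(\bar x_i;R\lambda_i,\rho)$. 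Doing the same for $A'$ gives both small individually (hence their difference small) on a common annulus, and these Coulomb gauges are also what get spliced in the final patching step, in place of your Implicit Function Theorem appeal.
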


\begin{proof}
For the sake of exposition in the proof of Proposition \ref{prop:Taubes_1988_proposition_5-4_fine}, we shall assume that the tree $\sT$ defining the bubble-tree open neighborhood $\sU$ has height one and that $l \geq 1$ in the Definition \ref{defn:Open_bubble_tree_neighborhood} but $l_i=0$ for $1 \leq i \leq l$; the conclusion for the general case of a tree of height greater than one is obtained by induction.

\begin{step}[Local $W^{1,2}$ estimates for $A$ relative to the bubble-tree limit]
\label{step:proof_theorem_Taubes_1988_proposition_5-4_W12_estimate_A_rel_limit}
To estimate the following differences,
\[
\|u(A') - A\|_{W_A^{1,2}(X)},
\]
for a suitable $W^{\bar k+1,\bar p}$ gauge transformation, $u \in \Aut(P)$, to be determined, we shall separately consider the following subsets of $X$:
\begin{enumerate}
  \item Complement of balls, $X - \cup_{i=1}^l B_{\rho/4}(x_i) \supset X - \cup_{i=1}^l B_{\rho/2}(\bar x_i)$, where we denote $B_r(\bx) := \cup_{i=1}^l B_r(x_i)$ for $r \in (0,\Inj(X,g))$, and
      \[
      \|u_0(A) - A_0\|_{W_{A_0}^{1,2}(X - B_{\rho/4}(\bx))} < \eps_\background,
      \]
      so that
\begin{equation}
\label{eq:Convergence_X_minus_balls_center_x_i_radius_rho}
\|u_0(A) - A_0\|_{W_{A_0}^{1,2}(X - B_{\rho/2}(\bar\bx))} < \eps_\background,
\end{equation}
and the $W^{\bar k+1,\bar p}$ gauge transformation, $u_0$ over $X - \cup_{i=1}^l B_{\rho/4}(x_i)$, is supplied by Definition \ref{defn:Open_bubble_tree_neighborhood}.

  \item Annuli, $\Omega(\bar x_i; R\lambda_i/4, 2\rho)$, where for a fixed $\rho \in (0,1]$ and $R \in [1,\infty)$ and scales $\lambda_i$ obeying the constraint \eqref{eq:Taubes_1988_4-11a_analogue_mass_center_scale_constraint}, namely
\[
0 < R\lambda_i < \rho
\quad\text{and}\quad
4\rho < 1 \wedge \Inj(X,g) \wedge \min_{i\neq j} \dist_g(x_i,x_j),
\]
we have
\begin{equation}
\label{eq:Convergence_annuli_center_x_i_inner_radius_Rlambda_i_outer_radius_2rho}
\|F_A\|_{L^2(\Omega(\bar x_i; R\lambda_i/4, 2\rho))} < \eps_\annulus,
\quad\text{for } 1 \leq i \leq l;
\end{equation}

  \item Balls, $B(\bar x_i, 2R\lambda_i)$ for $1 \leq i \leq l$, where
\begin{equation}
\label{eq:Convergence_ball_center_x_i_radius_Rlambda_i}
\begin{aligned}
{}& \left\|u_i(A) - \varphi_{\bar x_i}^{-1,*}\delta_{\lambda_i}^*\varphi_n^*A_i\right\|_{L^4(B(\bar x_i, 2R\lambda_i))}
\\
&\quad + \left\|\nabla_{\varphi_{\bar x_i}^{-1,*}\delta_{\lambda_i}^*\varphi_n^*A_i}^g
\left(u_i(A) - \varphi_{\bar x_i}^{-1,*}\delta_{\lambda_i}^*\varphi_n^*A_i\right)\right\|_{L^2(B(\bar x_i, 2R\lambda_i))} < \eps_\sphere,
\end{aligned}
\end{equation}
noting that $\varphi_{\bar x_i}^{-1}: X \supset B(\bar x_i, 2R\lambda_i) \cong B(0, 2R\lambda_i) \subset \RR^4$, where
\[
\varphi_{\bar x_i} \equiv \exp_{\bar f_i}:\RR^4\supset B_\varrho(0) \cong B_\varrho(\bar x_i) \subset X
\]
is the inverse geodesic normal coordinate chart defined by a frame $\bar f_i$ for $(TX)_{\bar x_i}$ obtained by parallel translation via the $g$-Levi-Civita connection of the frame $f_i$ for $(TX)_{x_i}$, and $\varrho := \Inj(X,g)$, and $\varphi_n:\RR^4\cong S^4\less\{s\}$ is given by \eqref{eq:ConformalDiffeo}, and recalling from \eqref{eq:CenterScaleDiffeo} that $\delta_{\lambda_i} \in \Conf(\RR^4)$ and $\tilde\delta_{\lambda_i} = \varphi_n\circ\delta_{\lambda_i}\circ\varphi_n^{-1} \in \Conf_s(S^4)$, and the $W^{\bar k+1,\bar p}$ gauge transformations, $u_i$ for $1\leq i\leq l$, are supplied by Definition \ref{defn:Open_bubble_tree_neighborhood}. In writing \eqref{eq:Convergence_ball_center_x_i_radius_Rlambda_i}, we have applied Lemma \ref{lem:Taubes_1988_3-1} to relate the $W^{1,2}$ norms for $\ad P$-valued one-forms over $B(\bar x_i, 2R\lambda_i) \subset X$ in \eqref{eq:Convergence_ball_center_x_i_radius_Rlambda_i} and their conformal blow-ups over $S^4 \less \varphi_s(B(0,1/2R))$, as described by Definition \ref{defn:Open_bubble_tree_neighborhood}.
\end{enumerate}

The crux of the proof is therefore to derive Sobolev estimates for $A$ (and $A'$) over the \emph{thick} annuli, $\Omega(\bar x_i; R\lambda_i, \rho) \subset X$, for $1 \leq i \leq l$. Corollary \ref{cor:Uhlenbeck_theorem_3-5_annulus} provides the pointwise decay estimate, for $1 \leq i \leq l$, over those annuli,
\[
|\varphi_{\bar x_i}^*F_A|(x)\leq K_0|x|^{-2}\|F_A\|_{L^2(\Omega(\bar x_i; R\lambda_i/2, 2\rho))},
\quad\forall\, x\in \RR^4 \text{ with } R\lambda_i \leq |x| \leq \rho.
\]
This is the where we use the assumption that $A$ is Yang-Mills.

By \eqref{eq:Connection_one-form_radial_gauge_estimate} and Corollary \ref{cor:Uhlenbeck_theorem_3-5_annulus}, noting that \eqref{eq:Convergence_annuli_center_x_i_inner_radius_Rlambda_i_outer_radius_2rho} holds, we have
\begin{align*}
|\varphi_{\bar x_i}^*\sigma_{\bar x_i}^*A|(x)
&\leq
K_0|x|^{-1}\|F_A\|_{L^2(\Omega(\bar x_i; R\lambda_i/4, 2R\lambda_i))}
\\
&\leq \frac{1}{2}K_0R^{-1}\lambda_i^{-1}
\|F_A\|_{L^2(\Omega(\bar x_i; R\lambda_i/4, 2R\lambda_i))},
\quad\forall\, x\in \RR^4 \text{ with } R\lambda_i/2 \leq |x| \leq R\lambda_i,
\end{align*}
where we require that the local connection one-form, $\sigma_{\bar x_i}^*A$, be in radial gauge with respect to the point $\bar x_i \in X$. Hence,
\begin{equation}
\label{eq:L4norm_sigmai_A_annulus_xi_Rlambda_i_intermsof_L2norm_FA}
\|\sigma_{\bar x_i}^*A\|_{L^4(\Omega(\bar x_i; R\lambda_i/2, R\lambda_i))}
\leq C\|F_A\|_{L^2(\Omega(\bar x_i; R\lambda_i/4, 2R\lambda_i))},
\end{equation}
for a universal positive constant, $C \in [1,\infty)$.

Given a point, $x_0 \in X$, and a constant, $r \in (0,\Inj(X,g)]$, we define a smooth cut-off function, $\chi_{x_0,r}:X\to [0,1]$, by setting
\begin{equation}
\label{eq:Cutoff_function_zero_on_half-ball_one_on_manifold_minus_ball}
\chi_{x_0,r}(x)
:=
\chi(\dist_g(x,x_0)/r), \quad\forall\, x\in X,
\end{equation}
where $\chi:\RR\to [0,1]$ is a smooth function such that $\chi(t)=1$ for $t\geq 1$ and $\chi(t)=0$ for $t\leq 1/2$.  Thus, we have
\[
\chi_{x_0,r}(x)
=
\begin{cases}
1 &\text{for } x\in X - B_r(x_0),
\\
0 &\text{for } x\in B_{r/2}(x_0).
\end{cases}
\]
An elementary calculation yields (see \cite[Lemma 5.8]{FLKM1} for example),
\begin{equation}
\label{eq:L4_bound_dchi}
\|d\chi_{x_0,r}\|_{L^4(X)} \leq C,
\end{equation}
where the constant $C=C(g) \in [1, \infty)$ is independent of $x_0 \in X$ and $r \in (0,\varrho)$, but rather depends only on the fixed universal choice of $\chi$ via $|d\chi| \leq 4$, and the injectivity radius, $\varrho = \Inj(X,g)$.

We denote $\chi_i = \chi_{\bar x_i, R\lambda_i}$ as in \eqref{eq:Cutoff_function_zero_on_half-ball_one_on_manifold_minus_ball} and define a cut-off connection on $P$ by
\begin{equation}
\label{eq:Cutoff_connection_equal_to_A_over_complement_balls_and_product_over_balls}
A_\chi
:=
\begin{cases}
A,  &\text{over } X - \cup_{i=1}^l B(\bar x_i, R\lambda_i),
\\
\Theta + \chi_i\sigma_{\bar x_i}^*A,  &\text{over } \Omega(\bar x_i; R\lambda_i/2, R\lambda_i),
\text{ for } 1 \leq i \leq l,
\\
\Theta,  &\text{over } B(\bar x_i, R\lambda_i/2), \text{ for } 1 \leq i \leq l,
\end{cases}
\end{equation}
where $\Theta$ is the product connection on $B(\bar x_i,\varrho)\times G$. Over the \emph{thin} annuli, $\Omega(\bar x_i; R\lambda_i/2, R\lambda_i)$, we have
\[\
F_{A_\chi} = \chi_iF_A + d\chi_i \wedge \sigma_{\bar x_i}^*A + \chi_i(\chi_i - 1) \sigma_{\bar x_i}^*A \wedge \sigma_{\bar x_i}^*A.
\]
Therefore,
\begin{align*}
\|F_{A_\chi}\|_{L^2(\Omega(\bar x_i; R\lambda_i/2, R\lambda_i))}
&\leq
\|F_A\|_{L^2(\Omega(\bar x_i; R\lambda_i/2, R\lambda_i))}
\\
&\quad + \|d\chi_i\|_{L^4(X)}
\|\sigma_{\bar x_i}^*A\|_{L^4(\Omega(\bar x_i; R\lambda_i/2, R\lambda_i))}
+ 2\|\sigma_{\bar x_i}^*A\|_{L^4(\Omega(\bar x_i; R\lambda_i/2, R\lambda_i))}^2,
\end{align*}
and thus by \eqref{eq:L4norm_sigmai_A_annulus_xi_Rlambda_i_intermsof_L2norm_FA} and \eqref{eq:L4_bound_dchi},
\begin{equation}
\label{eq:L2norm_FAchii_annulus_xi_Rlambda_i_intermsof_L2norm_FA}
\|F_{A_\chi}\|_{L^2(\Omega(\bar x_i; R\lambda_i/2, R\lambda_i))}
\leq
C\|F_A\|_{L^2(\Omega(\bar x_i; R\lambda_i/2, R\lambda_i))},
\end{equation}
where $C \in [1,\infty)$ is a universal constant, noting that
\[
\|F_A\|_{L^2(\Omega(\bar x_i; R\lambda_i/2, R\lambda_i))}
\leq
\|F_A\|_{L^2(\Omega(\bar x_i; R\lambda_i/2, 2\rho))} < \eps_\annulus \in (0, 1]
\quad\text{(by \eqref{eq:Convergence_annuli_center_x_i_inner_radius_Rlambda_i_outer_radius_2rho})}.
\]
Since $F_{A_\chi} = 0$ on $B(\bar x_i, R\lambda_i/2)$, for $1 \leq i \leq l$, then \eqref{eq:L2norm_FAchii_annulus_xi_Rlambda_i_intermsof_L2norm_FA} implies that
\[
\|F_{A_\chi}\|_{L^2(B(\bar x_i, 2\rho))} \leq C\|F_A\|_{L^2(B(\bar x_i, 2\rho))} < C\eps_\annulus.
\]
Hence, for small enough $\eps_\annulus = \eps_\annulus(G) \in (0,1]$, we can apply Theorem \ref{thm:Uhlenbeck_Lp_1-3} and Remark \ref{rmk:Uhlenbeck_theorem_1-3_Wkp} to find a $W^{\bar k+1,\bar p}$ gauge transformation, $u_i^{\coul} \equiv u_i^{\coulomb}$, of $B(\bar x_i, 2\rho)\times G$ taking $\sigma_{\bar x_i}^*A_\chi$ to Coulomb gauge with respect to the product connection, so $d_\Theta^*u_i^{\coul}(\sigma_{\bar x_i}^*A_\chi) = 0$ over $B(\bar x_i, 2\rho)$, and obeying
\begin{align*}
\|u_i^{\coul}(\sigma_{\bar x_i}^*A_\chi)\|_{L^4(B(\bar x_i, 2\rho))}
+ \|\nabla_\Theta u_i^{\coul}(\sigma_{\bar x_i}^*A_\chi)\|_{L^2(B(\bar x_i, 2\rho))}
&\leq
C\|F_{A_\chi}\|_{L^2(B(\bar x_i, 2\rho))}
\\
&\leq C\|F_A\|_{L^2(\Omega(\bar x_i; R\lambda_i/2, 2\rho))}
\\
&< C\eps_\annulus \quad\text{(by \eqref{eq:Convergence_annuli_center_x_i_inner_radius_Rlambda_i_outer_radius_2rho})},
\end{align*}
where the second inequality follows from \eqref{eq:L2norm_FAchii_annulus_xi_Rlambda_i_intermsof_L2norm_FA} and the fact that $F_{A_\chi} = 0$ on $B(\bar x_i, R\lambda_i/2)$. In particular, over the annuli, $\Omega(\bar x_i; R\lambda_i, \rho)$, where $A_\chi = A$, we have
\begin{equation}
\label{eq:Convergence_annulus_center_x_i_inner_radius_Rlambda_i_outer_radius_rho}
\|u_i^{\coul}(\sigma_{\bar x_i}^*A)\|_{L^4(\Omega(\bar x_i; R\lambda_i, \rho))}
+ \|\nabla_\Theta u_i^{\coul}(\sigma_{\bar x_i}^*A)\|_{L^2(\Omega(\bar x_i; R\lambda_i, \rho))}
<
C\eps_\annulus, \quad\text{for } 1 \leq i \leq l.
\end{equation}
This completes Step \ref{step:proof_theorem_Taubes_1988_proposition_5-4_W12_estimate_A_rel_limit}.
\end{step}

\begin{step}[Local $W^{1,2}$ estimates for $A'$ relative to the bubble-tree limit]
\label{step:proof_theorem_Taubes_1988_proposition_5-4_W12_estimate_Aprime_rel_limit}
Naturally, for suitable gauge transformations, we also obtain the analogous estimates for the connection $A'$, with its mass centers $\bar x_i' \in X$ and scales $\lambda_i'$ obeying $0 < R\lambda_i' < \rho$ as in \eqref{eq:Taubes_1988_4-11a_analogue_mass_center_scale_constraint}.

First, over the complement of balls, $X - \cup_{i=1}^l B_{\rho/4}(x_i) \supset X - \cup_{i=1}^l B_{\rho/2}(\bar x_i')$, we have
\[
\|v_0(A') - A_0\|_{W_{A_0}^{1,2}(X - B_{\rho/4}(\bx))} < \eps_\background,
\]
so that
\begin{equation}
\label{eq:Convergence_X_minus_balls_center_x_i_radius_rho_prime}
\|v_0(A') - A_0\|_{W_{A_0}^{1,2}(X - B_{\rho/2}(\bar\bx'))} < \eps_\background,
\end{equation}
where the $W^{\bar k+1,\bar p}$ gauge transformation, $v_0$ over $X - \cup_{i=1}^l B_{\rho/4}(x_i')$, is supplied by Definition \ref{defn:Open_bubble_tree_neighborhood}.

Second, over the annuli $\Omega(\bar x_i'; R\lambda_i', \rho)$, we have
\begin{equation}
\label{eq:Convergence_annulus_center_x_i_inner_radius_Rlambda_i_outer_radius_rho_prime}
\|v_i^{\coul}(\sigma_{\bar x_i'}^*A')\|_{L^4(\Omega(\bar x_i'; R\lambda_i', \rho))}
+ \|\nabla_\Theta v_i^{\coul}(\sigma_{\bar x_i'}^*A')\|_{L^2(\Omega(\bar x_i'; R\lambda_i', \rho))}
<
C\eps_\annulus, \quad\text{for } 1 \leq i \leq l,
\end{equation}
with $W^{\bar k+1,\bar p}$ gauge transformations $v_i^{\coul}$ over $\Omega(\bar x_i'; R\lambda_i/2, 2\rho)$, and $\sigma_{\bar x_i'}^*A'$ is in radial gauge with respect to the point $\bar x_i' \in X$, the Coulomb gauge condition, $d_\Theta^*v_i^{\coul}(\sigma_{\bar x_i'}^*A') = 0$, holds, and $C \in [1,\infty)$ is a universal constant.

Third, over the small balls, $B(\bar x_i', 2R\lambda_i')$ for $1 \leq i \leq l$, we have
\begin{equation}
\label{eq:Convergence_ball_center_x_i_radius_Rlambda_i_prime}
\begin{aligned}
{}& \left\|v_i(A') - \varphi_{\bar x_i'}^{-1,*}\delta_{\lambda_i'}^*\varphi_n^*A_i\right\|_{L^4(B(\bar x_i', 2R\lambda_i'))}
\\
&\quad + \left\|\nabla_{\varphi_{\bar x_i'}^{-1,*}\delta_{\lambda_i'}^*\varphi_n^*A_i}
\left(v_i(A') - \varphi_{\bar x_i'}^{-1,*}\delta_{\lambda_i'}^*\varphi_n^*A_i\right)\right\|_{L^2(B(\bar x_i', 2R\lambda_i'))} < \eps_\sphere,
\end{aligned}
\end{equation}
noting that $\varphi_{\bar x_i'}^{-1}: X \supset B(\bar x_i', 2R\lambda_i') \cong B(0, 2R\lambda_i') \subset \RR^4$ and
\[
\varphi_{\bar x_i'} \equiv \exp_{\bar f_i'}:\RR^4\supset B_\varrho(0) \cong B_\varrho(\bar x_i') \subset X
\]
is the geodesic normal coordinate chart defined by a frame $\bar f_i'$ for $(TX)_{\bar x_i'}$ obtained by parallel translation via the $g$-Levi-Civita connection of the frame $f_i$ for $(TX)_{x_i}$. The $W^{\bar k+1,\bar p}$ gauge transformations $v_i$ over $B(\bar x_i', 2R\lambda_i')$ are supplied by Definition \ref{defn:Open_bubble_tree_neighborhood}. This completes Step \ref{step:proof_theorem_Taubes_1988_proposition_5-4_W12_estimate_Aprime_rel_limit}.
\end{step}

\begin{step}[Local $W^{1,2}$ estimates for $A$ and $A'$ relative to the bubble-tree limit over common open subsets]
\label{step:proof_theorem_Taubes_1988_proposition_5-4_W12_estimate_A_and_Aprime_common}
According to Definition \ref{defn:Open_bubble_tree_neighborhood}, we have
\begin{subequations}
\label{eq:Distance_bar_xi_and_bar_xiprime}
\begin{align}
\dist_g(\bar x_i', \bar x_i) &< \eps_\mycenter\lambda_i,
\\
|\lambda_i' - \lambda_i| &< \eps_\scale\lambda_i,
\quad\text{for } i = 1,\ldots,l,
\end{align}
\end{subequations}
where $\eps_\mycenter, \eps_\scale \in (0, 1]$. For example, choosing $\eps_\mycenter = \eps_\scale = 1/16$ will suffice. Therefore, we can slightly shrink the open subsets used in Steps \ref{step:proof_theorem_Taubes_1988_proposition_5-4_W12_estimate_A_rel_limit} and \ref{step:proof_theorem_Taubes_1988_proposition_5-4_W12_estimate_Aprime_rel_limit} to obtain estimates for both $A$ an $A'$ and gauge transformations defined over common open subsets.

When $g$ is locally flat near the points $x_i \in X$ and we may isometrically identify $\psi_i: X \supset B(x_i,\varrho) \cong B(0,\varrho) \subset \RR^4$ via the choice of frame $f_i$ for $(TX)_{x_i}$ and inverse geodesic normal local coordinate chart $\psi_i$, then the inverse geodesic normal local coordinate charts, $\varphi_{\bar x_i}$ centered at $\bar x_i$ and $\varphi_{\bar x_i'}$ centered at $\bar x_i'$, are related by (compare \cite[Equation (3.18)]{FeehanGeometry})
\begin{equation}
\label{eq:Charts_as_geodesic_normal_coordinate_charts_and_translations}
\varphi_{\bar x_i}(x) = \psi_i(x + \psi_i^{-1}(\bar x_i))
\quad\text{and}\quad
\varphi_{\bar x_i'}(x) = \psi_i(x + \psi_i^{-1}(\bar x_i')), \quad\forall\, x \in B(0,\varrho/2) \subset \RR^4,
\end{equation}
and thus, setting $\bar z_i := \psi_i^{-1}(\bar x_i) \in \RR^4$ and $\bar z_i' := \psi_i^{-1}(\bar x_i') \in \RR^4$, we have
\[
\varphi_{\bar x_i}(x) = \psi_i\circ \tau_{-\bar z_i}(x)
\quad\text{and}\quad
\varphi_{\bar x_i'}(x) = \psi_i\circ \tau_{-\bar z_i'}(x), \quad\forall\, x \in B(0,\varrho/2) \subset \RR^4.
\]
When $g$ is not locally flat near $x_i$, we may either take the preceding relations as definitions of the inverse coordinate charts, $\varphi_{\bar x_i}$ and $\varphi_{\bar x_i'}$, or note that these relations hold approximately with a small error depending on $\dist_g(\bar x_i, \bar x_i')$. We shall take the first approach and use the coordinate charts \eqref{eq:Charts_as_geodesic_normal_coordinate_charts_and_translations} in Definition \ref{defn:Open_bubble_tree_neighborhood_fine}.

First, over the complement of balls, $X - \cup_{i=1}^l B_{\rho/2}(\bar x_i)$, from the inequalities
from \eqref{eq:Convergence_X_minus_balls_center_x_i_radius_rho} and \eqref{eq:Convergence_X_minus_balls_center_x_i_radius_rho_prime} (which also holds over a larger subset), we obtain
\begin{subequations}
\label{eq:Convergence_X_minus_balls_center_x_i_radius_rho_common}
\begin{align}
\label{eq:Convergence_X_minus_balls_center_x_i_radius_rho_common_A}
\|u_0(A) - A_0\|_{W_{A_0}^{1,2}(X - B_{\rho/2}(\bar\bx))} &< \eps_\background,
\\
\label{eq:Convergence_X_minus_balls_center_x_i_radius_rho_common_Aprime}
\|v_0(A') - A_0\|_{W_{A_0}^{1,2}(X - B_{\rho/2}(\bar\bx))} &< \eps_\background.
\end{align}
\end{subequations}
Second, over the annuli $\Omega(\bar x_i; 3R\lambda_i/2, 3\rho/4)$, from \eqref{eq:Convergence_annulus_center_x_i_inner_radius_Rlambda_i_outer_radius_rho}, \eqref{eq:Convergence_annulus_center_x_i_inner_radius_Rlambda_i_outer_radius_rho_prime}, and \eqref{eq:Distance_bar_xi_and_bar_xiprime}, we obtain
\begin{subequations}
\label{eq:Convergence_annulus_center_x_i_inner_radius_Rlambda_i_outer_radius_rho_common}
\begin{multline}
\label{eq:Convergence_annulus_center_x_i_inner_radius_Rlambda_i_outer_radius_rho_common_A}
\|u_i^{\coul}(\sigma_{\bar x_i}^*A)\|_{L^4(\Omega(\bar x_i; 3R\lambda_i/2, 3\rho/4))}
\\
+ \|\nabla_\Theta u_i^{\coul}(\sigma_{\bar x_i}^*A)\|_{L^2(\Omega(\bar x_i; 3R\lambda_i/2, 3\rho/4))}
<
C\eps_\annulus, \quad\text{for } 1 \leq i \leq l,
\end{multline}
\begin{multline}
\label{eq:Convergence_annulus_center_x_i_inner_radius_Rlambda_i_outer_radius_rho_common_Aprime}
\|v_i^{\coul}(\sigma_{\bar x_i'}^*A')\|_{L^4(\Omega(\bar x_i; 3R\lambda_i/2, 3\rho/4))}
\\
+ \|\nabla_\Theta v_i^{\coul}(\sigma_{\bar x_i'}^*A')\|_{L^2(\Omega(\bar x_i; 3R\lambda_i/2, 3\rho/4))}
<
C\eps_\annulus, \quad\text{for } 1 \leq i \leq l.
\end{multline}
\end{subequations}
Third, over the small balls, $B(\bar x_i, 7R\lambda_i/4)$ for $1 \leq i \leq l$, from
\eqref{eq:Convergence_ball_center_x_i_radius_Rlambda_i}, \eqref{eq:Convergence_ball_center_x_i_radius_Rlambda_i_prime}, and \eqref{eq:Distance_bar_xi_and_bar_xiprime}, we obtain
\begin{subequations}
\label{eq:Convergence_ball_center_x_i_radius_Rlambda_i_common}
\begin{multline}
\label{eq:Convergence_ball_center_x_i_radius_Rlambda_i_common_A}
\left\|u_i(A) - \varphi_{\bar x_i}^{-1,*}\delta_{\lambda_i}^*\varphi_n^*A_i\right\|_{L^4(B(\bar x_i, 7R\lambda_i/4))}
\\
+ \left\|\nabla_{\varphi_{\bar x_i}^{-1,*}\delta_{\lambda_i}^*\varphi_n^*A_i}
\left(u_i(A) - \varphi_{\bar x_i}^{-1,*}\delta_{\lambda_i}^*\varphi_n^*A_i\right)\right\|_{L^2(B(\bar x_i, 7R\lambda_i/4))} < \eps_\sphere,
\end{multline}
\begin{multline}
\label{eq:Convergence_ball_center_x_i_radius_Rlambda_i_common_Aprime}
\left\|v_i(A') - \varphi_{\bar x_i'}^{-1,*}\delta_{\lambda_i'}^*\varphi_n^*A_i\right\|_{L^4(B(\bar x_i, 7R\lambda_i/4))}
\\
+ \left\|\nabla_{\varphi_{\bar x_i'}^{-1,*}\delta_{\lambda_i'}^*\varphi_n^*A_i}
\left(v_i(A') - \varphi_{\bar x_i'}^{-1,*}\delta_{\lambda_i'}^*\varphi_n^*A_i\right)\right\|_{L^2(B(\bar x_i, 7R\lambda_i/4))} < \eps_\sphere.
\end{multline}
\end{subequations}
The estimate \eqref{eq:Convergence_ball_center_x_i_radius_Rlambda_i_common_Aprime} can be improved using \eqref{eq:Distance_bar_xi_and_bar_xiprime}, our redefinition \eqref{eq:Charts_as_geodesic_normal_coordinate_charts_and_translations} of the local coordinate charts, and Proposition \ref{prop:W12_continuity_family_connections_wrt_mass_centers_and_scales} which provides $W^{1,2}$ estimates caused by variations in the scale and mass center parameters in a family of $W^{2,2}$ connections, $\tilde h_{z_i,\lambda_i}^*A_i = (\tilde\delta_{\lambda_i}\circ \tilde\tau_{z_i})^*A_i$, over $S^4$ to give
\begin{align*}
\left\|\varphi_{\bar x_i}^{-1,*}\delta_{\lambda_i}^*\varphi_n^*A_i
-  \varphi_{\bar x_i'}^{-1,*}\delta_{\lambda_i'}^*\varphi_n^*A_i\right\|_{L^4(B(\bar x_i, 7R\lambda_i/4))}
&\leq C_1(\eps_\mycenter + \eps_\scale),
\\
\left\| \nabla_{\varphi_{\bar x_i}^{-1,*}\delta_{\lambda_i}^*\varphi_n^*A_i}
\left(\varphi_{\bar x_i}^{-1,*}\delta_{\lambda_i}^*\varphi_n^*A_i
- \varphi_{\bar x_i'}^{-1,*}\delta_{\lambda_i'}^*\varphi_n^*A_i \right) \right\|_{L^2(B(\bar x_i, 7R\lambda_i/4))}
&\leq C_2(\eps_\mycenter + \eps_\scale),
\end{align*}
for $1 \leq i \leq l$, where
\[
C_1 = C\|F_{A_i}\|_{L^4(S^4,g_\round)}
\quad\text{and}\quad
C_2 = C\|F_{A_i}\|_{W_{A_i,g_\round}^{1,2}(S^4)},
\]
for a universal constant $C \in [1,\infty)$, using Inequalities \eqref{eq:L4norm_bound_derivative_connection_wrt_scales},
\eqref{eq:L4norm_bound_derivative_connection_wrt_mass_centers},
\eqref{eq:L2norm_nabla_bound_derivative_connection_wrt_mass_centers},
\eqref{eq:L2norm_nabla_bound_derivative_connection_wrt_scales},
\eqref{eq:L4distance_connections_wrt_scales},
\eqref{eq:L4distance_connections_wrt_mass_centers},
\eqref{eq:L2nabla_distance_connections_wrt_mass_centers},
and \eqref{eq:L2nabla_distance_connections_wrt_scales}.

Therefore, we may replace \eqref{eq:Convergence_ball_center_x_i_radius_Rlambda_i_common_Aprime} by
\begin{multline}
\label{eq:Convergence_ball_center_x_i_radius_Rlambda_i_common_Aprime_original_center_scale}
\left\|v_i(A') - \varphi_{\bar x_i}^{-1,*}\delta_{\lambda_i}^*\varphi_n^*A_i\right\|_{L^4(B(\bar x_i, 7R\lambda_i/4))}
\\
+ \left\|\nabla_{\varphi_{\bar x_i}^{-1,*}\delta_{\lambda_i}^*\varphi_n^*A_i}
\left(v_i(A') - \varphi_{\bar x_i}^{-1,*}\delta_{\lambda_i}^*\varphi_n^*A_i\right)\right\|_{L^2(B(\bar x_i, 7R\lambda_i/4))}
\\
< C(\eps_\mycenter + \eps_\scale)\|F_{A_i}\|_{W_{A_i,g_\round}^{1,2}(S^4)} + C\eps_\sphere,
\end{multline}
for $1 \leq i \leq l$. By combining \eqref{eq:Convergence_X_minus_balls_center_x_i_radius_rho_common}, \eqref{eq:Convergence_ball_center_x_i_radius_Rlambda_i_common_A}, and \eqref{eq:Convergence_ball_center_x_i_radius_Rlambda_i_common_Aprime_original_center_scale}, we obtain
\begin{equation}
\label{eq:Convergence_manifold_complement_balls_common_A_minus_Aprime}
\|u_0(A) - v_0(A')\|_{W_{A_0}^{1,2}(X - B_{\rho/2}(\bar\bx))} < 2\eps_\background,
\end{equation}
and
\begin{multline}
\label{eq:Convergence_small_balls_common_A_minus_Aprime}
\|u_i(A) - v_i(A')\|_{L^4(B(\bar x_i, 7R\lambda_i/4))}
\\
+ \left\|\nabla_{\varphi_{\bar x_i}^{-1,*}\delta_{\lambda_i}^*\varphi_n^*A_i}
\left(u_i(A) - v_i(A')\right)\right\|_{L^2(B(\bar x_i, 7R\lambda_i/4))}
\\
< C(\eps_\mycenter + \eps_\scale)\|F_{A_i}\|_{W_{A_i,g_\round}^{1,2}(S^4)} + C\eps_\sphere,
\end{multline}
while the local connection one-forms $u_i^{\coul}(\sigma_{\bar x_i}^*A)$ and $v_i^{\coul}(\sigma_{\bar x_i'}^*A')$ over the annuli $\Omega(\bar x_i; 3R\lambda_i/2, 3\rho/4)$ are each small individually by \eqref{eq:Convergence_annulus_center_x_i_inner_radius_Rlambda_i_outer_radius_rho_common}. This completes Step \ref{step:proof_theorem_Taubes_1988_proposition_5-4_W12_estimate_A_and_Aprime_common}.
\end{step}

This completes our derivation of the norm bounds of the differences.

\begin{step}[Construction of the global gauge transformation $u$]
\label{step:proof_theorem_Taubes_1988_proposition_5-4_construction_global_gauge_transformation}
It remains to patch the local gauge transformations employed in the local estimates \eqref{eq:Convergence_manifold_complement_balls_common_A_minus_Aprime}, \eqref{eq:Convergence_small_balls_common_A_minus_Aprime}, and \eqref{eq:Convergence_annulus_center_x_i_inner_radius_Rlambda_i_outer_radius_rho_common} and construct the $W^{\bar k+1,\bar p}$ global gauge transformation, $u \in \Aut(P)$, as the global estimate \eqref{eq:Taubes_1988_proposition_5-4_fine} then follows almost immediately.

Indeed, we may use \eqref{eq:Convergence_X_minus_balls_center_x_i_radius_rho}
to replace the covariant derivative, $\nabla_{A_0}$, by the nearby covariant derivative $\nabla_{u_0(A)}$ in \eqref{eq:Convergence_manifold_complement_balls_common_A_minus_Aprime} to give
\begin{equation}
\label{eq:Convergence_manifold_complement_balls_common_A_minus_Aprime_W12A}
\|u_0(A) - v_0(A')\|_{W_{u_0(A)}^{1,2}(X - B_{\rho/2}(\bar\bx))} < C\eps_\background,
\end{equation}
for a universal constant $C \in [1, \infty)$. Second, we may use \eqref{eq:Convergence_ball_center_x_i_radius_Rlambda_i} to replace the covariant derivative, $\nabla_{\varphi_{\bar x_i}^{-1,*}\delta_{\lambda_i}^*\varphi_n^*A_i}$, by the nearby covariant derivative $\nabla_{u_i(A)}$ in \eqref{eq:Convergence_small_balls_common_A_minus_Aprime} to give
\begin{multline}
\label{eq:Convergence_small_balls_common_A_minus_Aprime_W12A}
\|u_i(A) - v_i(A')\|_{L^4(B(\bar x_i, 7R\lambda_i/4))}
\\
+ \left\|\nabla_{u_i(A)}
\left(u_i(A) - v_i(A')\right)\right\|_{L^2(B(\bar x_i, 7R\lambda_i/4))}
\\
< C(\eps_\mycenter + \eps_\scale)\|F_{A_i}\|_{W_{A_i,g_\round}^{1,2}(S^4)} + C\eps_\sphere.
\end{multline}
Third, we may replace the covariant derivative, $\nabla_\Theta$, by the nearby covariant derivative, $\nabla_{u_i^{\coul}(\sigma_{\bar x_i}^*A)}$, in \eqref{eq:Convergence_annulus_center_x_i_inner_radius_Rlambda_i_outer_radius_rho_common} to give
\begin{subequations}
\label{eq:Convergence_annulus_center_x_i_inner_radius_Rlambda_i_outer_radius_rho_common_W12A}
\begin{multline}
\label{eq:Convergence_annulus_center_x_i_inner_radius_Rlambda_i_outer_radius_rho_common_A_W12A}
\|u_i^{\coul}(\sigma_{\bar x_i}^*A)\|_{L^4(\Omega(\bar x_i; 3R\lambda_i/2, 3\rho/4))}
\\
+ \|\nabla_{u_i^{\coul}(\sigma_{\bar x_i}^*A)} u_i^{\coul}(\sigma_{\bar x_i}^*A)\|_{L^2(\Omega(\bar x_i; 3R\lambda_i/2, 3\rho/4))}
<
C\eps_\annulus, \quad\text{for } 1 \leq i \leq l,
\end{multline}
\begin{multline}
\label{eq:Convergence_annulus_center_x_i_inner_radius_Rlambda_i_outer_radius_rho_common_Aprime_W12A}
\|v_i^{\coul}(\sigma_{\bar x_i'}^*A')\|_{L^4(\Omega(\bar x_i; 3R\lambda_i/2, 3\rho/4))}
\\
+ \|\nabla_{u_i^{\coul}(\sigma_{\bar x_i}^*A)} v_i^{\coul}(\sigma_{\bar x_i'}^*A')\|_{L^2(\Omega(\bar x_i; 3R\lambda_i/2, 3\rho/4))}
<
C\eps_\annulus, \quad\text{for } 1 \leq i \leq l.
\end{multline}
\end{subequations}
Observe that all covariant derivatives in the preceding estimates are defined by the connection $A$, as desired for the estimate \eqref{eq:Taubes_1988_proposition_5-4_fine} to give, modulo the construction of global gauge transformation, $u \in \Aut(P)$,
\[
\|\nabla_A^g(u(A') - A)\|_{L^2(X,g)} + \|u(A') - A\|_{L^4(X,g)}
<
C\eps\left(1 + \sum_{\balpha \in \sT}\|F_{A_\balpha}\|_{W_{A_\balpha, g_\round}^{1,2}(S^4)}\right).
\]
As usual, replacement of the $L^4(X,g)$ norm in the preceding bound by the $W_{A,g}^{1,2}(X)$ norm is obtained via the standard Sobolev embedding, $W_g^{1,2}(X) \hookrightarrow L^4(X,g)$ given by \eqref{eq:Sobolev_embedding_W_1_2_into_L4} and the Kato Inequality \eqref{eq:FU_6-20_first-order_Kato_inequality}, and thus we obtain the desired estimate \eqref{eq:Taubes_1988_proposition_5-4_fine}.

The global gauge transformation, $u$ over $X$, is obtained by splicing the following local gauge transformations over the indicated overlapping open subsets of $X$:
\begin{enumerate}
\item $u_0^{-1}\circ v_0$ over $X \less \cup_{i=1}^l B_{\rho/2}(\bar x_i)$;

\item $\tau_{\bar x_i}^{-1}\circ (u_i^{\coul})^{-1}\circ v_i^{\coul}\circ \tau_{\bar x_i'}$ over $\Omega(\bar x_i; 3R\lambda_i/2, 3\rho/4)$, for $1 \leq i \leq l$, where $\tau_{\bar x_i}: P\restriction B(\bar x_i,\varrho) \cong B(\bar x_i,\varrho) \times G$ is the local trivialization corresponding to the radial-gauge local section, $\sigma_{\bar x_i}: B(\bar x_i,\varrho) \to P$ and $\tau_{\bar x_i'}: P\restriction B(\bar x_i',\varrho) \cong B(\bar x_i',\varrho) \times G$ is the local trivialization corresponding to the radial-gauge local section, $\sigma_{\bar x_i'}: B(\bar x_i',\varrho) \to P$.

\item $u_i^{-1}\circ v_i$ over $B(\bar x_i, 7R\lambda_i/4)$, for $1 \leq i \leq l$.
\end{enumerate}
This splicing is possible, just as in the construction of gauge transformations of $P$ over $X\less\{x_1,\ldots,x_l\}$ in the proof of Theorem \ref{thm:Sedlacek_4-3_Yang-Mills}, because over the annuli $\Omega(\bar x_i; 7R\lambda_i/4, 3\rho/4)$, the $L^2$ norms of the curvatures, $F_A$ and $F_{A'}$, of the Yang-Mills connections, $A$ and $A'$, are bounded by a small constant, $\eps_\annulus$, by virtue of  \eqref{eq:Convergence_annuli_center_x_i_inner_radius_Rlambda_i_outer_radius_2rho}, and hence each of the preceding local gauge transformations are close to the identity map over the overlaps. Replacement of the above local gauge transformations by the spliced gauge transformation, $u$, introduces small error in the estimates on overlaps but those are controlled by a universal multiple (say $C$) of the constants $\eps_\background$, $\eps_\annulus$, or $\eps_\sphere$ (for example, see \cite[Lemma 6.5]{Feehan_yangmillsenergygapflat} and its proof). This completes Step \ref{step:proof_theorem_Taubes_1988_proposition_5-4_construction_global_gauge_transformation}.
\end{step}

This completes the proof of Proposition \ref{prop:Taubes_1988_proposition_5-4_fine}.
\end{proof}

\subsection{Estimate for the $W^{1,2}$ distance between conformal blow-ups of a pair of Yang-Mills connections in a coarse $W_\loc^{1,2}$ bubble-tree open neighborhood}
\label{subsec:W12_bubble-tree_convergence_sequences_Yang-Mills_connections_coarse}
In Proposition \ref{prop:Taubes_1988_proposition_5-4_fine}, we consider the case where the gauge-equivalence classes of $g$-Yang-Mills connections, $[A]$ and $[A']$, belong to a \emph{fine} bubble-tree open neighborhood $\sU$ (Definition \ref{defn:Open_bubble_tree_neighborhood_fine}). Thus, if the tree $\sT$ defining that neighborhood has height one then, for each $i \in \{1,\ldots,l\}$, the local mass centers, $\bar x_i$ and $\bar x_i'$, and local scales, $\lambda_i$ and $\lambda_i'$, are close relative to $\min\{\lambda_i, \lambda_i'\}$ (as discussed in Section \ref{subsec:Centers_and_scales_for_bubble_tree_neighborhood_design}). We now turn to the case where $[A]$ and $[A']$ belong to a \emph{coarse} bubble-tree open neighborhood (Definition \ref{defn:Open_bubble_tree_neighborhood_coarse}), and so the local mass centers, $\bar x_i$ and $\bar x_i'$, are close relative to $\min\{\lambda_i, \lambda_i'\}$ for each $i \in \{1,\ldots,l\}$, but the local scales $\lambda_i$ and $\lambda_i'$ are not assumed to be close relative to $\min\{\lambda_i, \lambda_i'\}$.

We begin by recording estimates for the $g$-Yang-Mills connections $A$ and $A'$ in Proposition \ref{prop:Taubes_1988_proposition_5-4_fine} after rescaling with respect to their local scales to give connections over the complement in $S^4$ of small balls around the south pole. The proofs of these estimates are consequences of the proof of Proposition \ref{prop:Taubes_1988_proposition_5-4_fine}.

\begin{lem}[$W^{1,2}$ estimates for rescaled Yang-Mills connections over $S^4$]
\label{lem:Taubes_1988_proposition_5-4_W12_estimates_A_and_Aprime_sphere}
There is a universal constant $C \in [1,\infty)$ with the following significance. Continue the hypotheses and notation of the proof of Proposition \ref{prop:Taubes_1988_proposition_5-4_fine}, except that we instead allow $A$ and $A'$ to represent points $[A]$ and $[A']$ belonging to a \emph{coarse} $W_\loc^{1,2}$ bubble-tree $(\eps,\rho,R)$ neighborhood $\sU$. Denote $B_r = B(0,r) \subset \RR^4$. If the tree $\sT$ defining $\sU$ has height one, then over  $S^4-\varphi_s(B_{1/2R})$ we have,
\begin{multline}
\label{eq:Convergence_sphere_ball_center_x_i_radius_Rlambda_i}
\|\varphi_n^{-1,*}\delta_{\lambda_i}^{-1,*}\varphi_{\bar x_i}^*u_i(A) - A_i\|_{L^4(S^4-\varphi_s(B_{1/2R}))}
\\
+ \left\|\nabla_{A_i}^{g_\round}
\left(\varphi_n^{-1,*}\delta_{\lambda_i}^{-1,*}\varphi_{\bar x_i}^*u_i(A) - A_i\right)\right\|_{L^2(S^4-\varphi_s(B_{1/2R}))}
< C\eps_\sphere,
\end{multline}
with $W^{\bar k+1,\bar p}$ gauge transformations $u_i$ over the balls $X \supset B(\bar x_i, 2R\lambda_i) \cong S^4-\varphi_s(B_{1/2R})$, and
\begin{multline}
\label{eq:Convergence_sphere_ball_center_x_i_radius_Rlambda_i_prime}
\|\varphi_n^{-1,*}\delta_{\lambda_i'}^{-1,*}\varphi_{\bar x_i'}^*v_i(A') - A_i\|_{L^4(S^4-\varphi_s(B_{1/2R}))}
\\
+ \left\|\nabla_{A_i}^{g_\round}
\left(\varphi_n^{-1,*}\delta_{\lambda_i'}^{-1,*}\varphi_{\bar x_i'}^*v_i(A') - A_i\right)\right\|_{L^2(S^4-\varphi_s(B_{1/2R}))}
< C\eps_\sphere,
\end{multline}
with $W^{\bar k+1,\bar p}$ gauge transformations $v_i$ over the balls $X \supset B(\bar x_i', 2R\lambda_i') \cong S^4-\varphi_s(B_{1/2R})$. The analogous estimates hold over each copy of $S^4$ corresponding to a vertex in a tree $\sT$ of height greater than one.
\end{lem}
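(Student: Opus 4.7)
The approach here is to observe that both \eqref{eq:Convergence_sphere_ball_center_x_i_radius_Rlambda_i} and \eqref{eq:Convergence_sphere_ball_center_x_i_radius_Rlambda_i_prime} are essentially direct reformulations of the bubble-tree neighborhood estimate \eqref{eq:Bubble_tree_neighborhood_rescaled_A_Wkp_near_A_i_large_ball_S4} from Definition \ref{defn:Open_bubble_tree_neighborhood}, applied separately to $A$ and $A'$, and so the coarse rather than fine character of $\sU$ plays no role (those hypotheses govern the relationship between the local centers and scales of $A$ and $A'$, not the closeness of each connection to its own bubble-tree limit). The plan is therefore to read the conclusion off from the definition, with the only real work consisting of translating between the $W_{A_i}^{1,2}$ norm used in Definition \ref{defn:Open_bubble_tree_neighborhood} and the $L^4$-plus-$L^2$-of-covariant-derivative norm appearing in the lemma statement.

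First, I would apply Definition \ref{defn:Open_bubble_tree_neighborhood} to the point $[A]\in\sU$ at vertex $i$ of the tree $\sT$. This supplies a $W^{\bar k+1,\bar p}$ gauge transformation $u_i$ for which \eqref{eq:Bubble_tree_neighborhood_rescaled_A_Wkp_near_A_i_large_ball_S4} gives
\[
\|u_i(\varphi_n^{-1,*}\delta_{\lambda_i}^{-1,*}\varphi_{\bar x_i}^*A) - A_i\|_{W_{A_i}^{1,2}(S^4\less \varphi_s(B(0,1/2R)))} < \eps_\sphere.
\]
After identifying $u_i$ with its own pullback to $S^4$, the connection $u_i(\varphi_n^{-1,*}\delta_{\lambda_i}^{-1,*}\varphi_{\bar x_i}^*A)$ coincides with $\varphi_n^{-1,*}\delta_{\lambda_i}^{-1,*}\varphi_{\bar x_i}^*u_i(A)$ appearing on the left-hand side of \eqref{eq:Convergence_sphere_ball_center_x_i_radius_Rlambda_i}. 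To convert the $W_{A_i}^{1,2}$ norm into the $L^4$-plus-$L^2$-of-gradient form, I would apply the Sobolev embedding $W_{g_\round}^{1,2}(S^4)\hookrightarrow L^4(S^4,g_\round)$ from \eqref{eq:Sobolev_embedding_manifold_bounded_geometry} together with the Kato inequality \eqref{eq:FU_6-20_first-order_Kato_inequality}, as in Remark \ref{rmk:Taubes_1988_3-1}, to bound the $L^4$ term by the $W^{1,2}$ norm with a universal constant; the $L^2$ bound on $\nabla_{A_i}^{g_\round}$ of the difference is part of the $W_{A_i}^{1,2}$ norm by definition. This yields \eqref{eq:Convergence_sphere_ball_center_x_i_radius_Rlambda_i}, and the derivation of \eqref{eq:Convergence_sphere_ball_center_x_i_radius_Rlambda_i_prime} is identical with $A,\bar x_i,\lambda_i,u_i$ replaced by $A',\bar x_i',\lambda_i',v_i$.

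For a tree $\sT$ of height greater than one, the extension is by induction on the levels: at each non-root vertex $\balpha\in\sT$, the extended Definition \ref{defn:Open_bubble_tree_neighborhood} (for iterated rescalings) supplies an iterated conformal blow-up of the given connection that, on the copy of $S^4$ attached to $\balpha$, is $W^{1,2}$-close to its limiting Yang-Mills connection $A_\balpha$, and the same Kato-plus-Sobolev conversion applies in that local $S^4$ chart. The only non-routine point is bookkeeping of the composition of rescaling, translation, and stereographic maps at each vertex, but this involves no ideas beyond those already used for the single-level case, so there is no genuine obstacle to the proof.
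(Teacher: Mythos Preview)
Your proposal is correct and follows essentially the same approach as the paper: both derive the estimates directly from the bubble-tree neighborhood condition \eqref{eq:Bubble_tree_neighborhood_rescaled_A_Wkp_near_A_i_large_ball_S4} in Definition \ref{defn:Open_bubble_tree_neighborhood}, with the only work being a norm conversion. The paper takes a slightly different route, starting from the already-derived estimate \eqref{eq:Convergence_ball_center_x_i_radius_Rlambda_i} over the ball $B(\bar x_i,2R\lambda_i)\subset X$ (in $L^4+\nabla L^2$ form) and transferring to $S^4$ via conformal invariance of the $L^4$ norm on one-forms and $L^2$ norm on two-tensors, then replacing $\nabla_{A_i}^\delta$ by $\nabla_{A_i}^{g_\round}$ at the cost of a universal constant; your direct use of Sobolev embedding plus Kato on $S^4$ achieves the same end without the detour through $X$.
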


\begin{proof}
We apply \eqref{eq:Convergence_ball_center_x_i_radius_Rlambda_i} to give estimates over large balls in $S^4$,
\begin{equation}
\label{eq:Diffeomorphism_small_ball_in_X_around_xi_large_ball_in_S4_around_n}
\begin{aligned}
X \supset B(\bar x_i, 2R\lambda_i) \cong B(0,2R\lambda_i) &\cong B(0,2R)
\\
&\cong \varphi_n(B(0,2R)) = S^4-\varphi_s(B(0,1/2R)) \subset S^4,
\end{aligned}
\end{equation}
given by the diffeomorphisms $\varphi_{\bar x_i}^{-1}$, $\delta_{\lambda_i}$, and $\varphi_n$, respectively. We observe that \eqref{eq:Convergence_ball_center_x_i_radius_Rlambda_i} is equivalent, for $1 \leq i \leq l$, to the inequality,
\begin{multline}
\label{eq:Convergence_sphere_ball_center_x_i_radius_Rlambda_i_nabla_euclidean}
\|\varphi_n^{-1,*}\delta_{\lambda_i}^{-1,*}\varphi_{\bar x_i}^*u_i(A) - A_i\|_{L^4(S^4-\varphi_s(B_{1/2R}))}
\\
+ \left\|\nabla_{A_i}^\delta
\left(\varphi_n^{-1,*}\delta_{\lambda_i}^{-1,*}\varphi_{\bar x_i}^*u_i(A) - A_i\right)\right\|_{L^2(S^4-\varphi_s(B_{1/2R}))}
< \eps_\sphere,
\end{multline}
where we exploit the conformal invariance of the $L^4$ norm on sections of $T^*X$ and $L^2$ norm on sections of $T^*X\otimes T^*X$ and the behavior of the Levi-Civita connections with respect to the standard Euclidean metric $\delta = g^{\euclid}$ on $\RR^4$. Replacing $\nabla_{A_i}^\delta$ in \eqref{eq:Convergence_sphere_ball_center_x_i_radius_Rlambda_i_nabla_euclidean} by $\nabla_{A_i}^{g_\round}$, defined by the Levi-Civita connection for the standard round metric $g_\round$ of radius one on $S^4$, we obtain \eqref{eq:Convergence_sphere_ball_center_x_i_radius_Rlambda_i} where $C \in [1,\infty)$ is a universal constant. By applying \eqref{eq:Convergence_ball_center_x_i_radius_Rlambda_i_prime}, a similar argument yields the estimate \eqref{eq:Convergence_sphere_ball_center_x_i_radius_Rlambda_i_prime}.
\end{proof}

By combining the argument of Step \ref{step:proof_theorem_Taubes_1988_proposition_5-4_W12_estimate_A_and_Aprime_common} of the proof of Proposition \ref{prop:Taubes_1988_proposition_5-4_fine} and the proof of Lemma \ref{lem:Taubes_1988_proposition_5-4_W12_estimates_A_and_Aprime_sphere} and remembering that we now drop the assumption that $|\lambda_i'/\lambda_i - 1| < \eps_\scale$ (because this property does not hold for local scales defined by points $[A]$ and $[A']$ in a coarse bubble-tree neighborhood), we can adjust for the fact that the local centers $\bar x_i$ of $A$ and $\bar x_i'$ of $A'$ do not coincide but their distance is bounded by $\eps_\mycenter$. We thus obtain

\begin{lem}[$W^{1,2}$ estimates for the difference between a pair of rescaled Yang-Mills connections over $S^4$]
\label{lem:Taubes_1988_proposition_5-4_W12_estimates_A_and_Aprime_sphere_difference}
There is a universal constant $C \in [1,\infty)$ with the following significance. Continue the hypotheses and notation of the proof of Lemma \ref{lem:Taubes_1988_proposition_5-4_W12_estimates_A_and_Aprime_sphere}. If the tree $\sT$ defining $\sU$ has height one, then over $S^4-\varphi_s(B_{4/7R})$ we have
\begin{multline}
\label{eq:Convergence_sphere_common_A_minus_Aprime}
\|\varphi_n^{-1,*}\delta_{\lambda_i}^{-1,*}\varphi_{\bar x_i}^*u_i(A) - \varphi_n^{-1,*}\delta_{\lambda_i'}^{-1,*}\varphi_{\bar x_i}^*v_i(A')\|_{L^4(S^4-\varphi_s(B_{4/7R}))}
\\
+ \left\|\nabla_{A_i} \left(\varphi_n^{-1,*}\delta_{\lambda_i}^{-1,*}\varphi_{\bar x_i}^*u_i(A)
- \varphi_n^{-1,*}\delta_{\lambda_i'}^{-1,*}\varphi_{\bar x_i}^*v_i(A')\right)\right\|_{L^2(S^4-\varphi_s(B_{4/7R}))}
\\
< C\eps_\mycenter\|F_{A_i}\|_{W_{A_i,g_\round}^{1,2}(S^4)} + C\eps_\sphere,
\end{multline}
with $W^{\bar k+1,\bar p}$ gauge transformations $u_i$ over the balls $X \supset B(\bar x_i, 7R\lambda_i/4) \cong S^4-\varphi_s(B_{4/7R})$ and $v_i$ over the balls $X \supset B(\bar x_i, 7R\lambda_i'/4) \cong S^4-\varphi_s(B_{4/7R})$, for $1 \leq i \leq l$. The analogous estimates hold over each copy of $S^4$ corresponding to a vertex in a tree $\sT$ of height greater than one.
\end{lem}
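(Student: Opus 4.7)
The plan is to derive \eqref{eq:Convergence_sphere_common_A_minus_Aprime} by inserting the limit connection $A_i$ between the two terms via the triangle inequality. Lemma \ref{lem:Taubes_1988_proposition_5-4_W12_estimates_A_and_Aprime_sphere} already bounds
\[
\|\varphi_n^{-1,*}\delta_{\lambda_i}^{-1,*}\varphi_{\bar x_i}^*u_i(A) - A_i\|
\quad\text{and}\quad
\|\varphi_n^{-1,*}\delta_{\lambda_i'}^{-1,*}\varphi_{\bar x_i'}^*v_i(A') - A_i\|
\]
in the $W_{A_i,g_\round}^{1,2}$ norm over $S^4-\varphi_s(B_{1/2R})$ by $C\eps_\sphere$. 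The only genuinely new quantity is the \emph{center-replacement discrepancy}
\[
E_i := \varphi_n^{-1,*}\delta_{\lambda_i'}^{-1,*}\varphi_{\bar x_i}^*v_i(A') - \varphi_n^{-1,*}\delta_{\lambda_i'}^{-1,*}\varphi_{\bar x_i'}^*v_i(A'),
\]
which must supply the $C\eps_\mycenter\|F_{A_i}\|_{W_{A_i,g_\round}^{1,2}(S^4)}$ contribution.

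Using the identification \eqref{eq:Charts_as_geodesic_normal_coordinate_charts_and_translations} and writing $\varphi_{\bar x_i} = \psi_i\circ\tau_{-\bar z_i}$, $\varphi_{\bar x_i'} = \psi_i\circ\tau_{-\bar z_i'}$, a direct change-of-variable computation shows that
\[
(\delta_{\lambda_i'}^{-1}\circ\varphi_{\bar x_i}^{-1})\circ(\delta_{\lambda_i'}^{-1}\circ\varphi_{\bar x_i'}^{-1})^{-1} = \tau_{\Delta z},\qquad \Delta z := (\bar z_i - \bar z_i')/\lambda_i',
\]
a Euclidean translation, and the coarse-neighborhood hypothesis (Definition \ref{defn:Open_bubble_tree_neighborhood_coarse}) gives $|\Delta z| \leq \eps_\mycenter \min\{\lambda_i,\lambda_i'\}/\lambda_i' \leq \eps_\mycenter$, independently of the (possibly large) ratio $\lambda_i/\lambda_i'$. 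After pulling back to $S^4$ via $\varphi_n^{-1,*}$, this becomes an element $\tilde\tau_{\Delta z}\in\Conf_s(S^4)$, and therefore $E_i = \tilde\tau_{\Delta z}^*B_i - B_i$ with $B_i := \varphi_n^{-1,*}\delta_{\lambda_i'}^{-1,*}\varphi_{\bar x_i'}^*v_i(A')$.

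Next I apply the $W^{1,2}$ translation continuity provided by Proposition \ref{prop:W12_continuity_family_connections_wrt_mass_centers_and_scales}, in particular the inequalities \eqref{eq:L4distance_connections_wrt_mass_centers} and \eqref{eq:L2nabla_distance_connections_wrt_mass_centers}, to obtain
\[
\|E_i\|_{L^4(S^4,g_\round)} + \|\nabla_{B_i}^{g_\round}E_i\|_{L^2(S^4,g_\round)} \leq C|\Delta z|\,\|F_{B_i}\|_{W_{B_i,g_\round}^{1,2}(S^4)}.
\]
Since $B_i$ is $W^{1,2}$-close to $A_i$ by \eqref{eq:Convergence_sphere_ball_center_x_i_radius_Rlambda_i_prime}, one has $\|F_{B_i}\|_{W_{B_i,g_\round}^{1,2}(S^4)} \leq C\|F_{A_i}\|_{W_{A_i,g_\round}^{1,2}(S^4)} + C\eps_\sphere$, and $\nabla_{B_i}^{g_\round}$ may be replaced by $\nabla_{A_i}^{g_\round}$ at the cost of another $O(\eps_\sphere)$ error, exactly as in Step \ref{step:proof_theorem_Taubes_1988_proposition_5-4_construction_global_gauge_transformation} of the proof of Proposition \ref{prop:Taubes_1988_proposition_5-4_fine}. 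The shrinkage of the domain from $S^4-\varphi_s(B_{1/2R})$ to $S^4-\varphi_s(B_{4/7R})$ is precisely the buffer needed so that $\tilde\tau_{\Delta z}\bigl(S^4-\varphi_s(B_{4/7R})\bigr)\subset S^4-\varphi_s(B_{1/2R})$ whenever $\eps_\mycenter$ is sufficiently small relative to $R$, making the individual estimates of Lemma \ref{lem:Taubes_1988_proposition_5-4_W12_estimates_A_and_Aprime_sphere} applicable on the translated domain.

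Assembling these three contributions by the triangle inequality over $S^4-\varphi_s(B_{4/7R})$ yields \eqref{eq:Convergence_sphere_common_A_minus_Aprime}. For a tree $\sT$ of height greater than one, the argument is iterated vertex by vertex on each interior copy of $S^4$, with Proposition \ref{prop:W12_continuity_family_connections_wrt_mass_centers_and_scales} applying verbatim in that setting. The main technical obstacle is not analytic but bookkeeping: the two conformal blow-ups $\delta_{\lambda_i}^{-1}$ and $\delta_{\lambda_i'}^{-1}$ act at incomparable scales, so one cannot directly bridge via any single $W^{1,2}$-neighborhood of $A_i$ that simultaneously controls both rescalings; the decisive point, which makes the coarse case nevertheless tractable, is that $|\Delta z|$ depends only on the \emph{ratio} $\dist_g(\bar x_i,\bar x_i')/\lambda_i'$, so the center-shift estimate is invariant under the precise value of $\lambda_i/\lambda_i'$ and Proposition \ref{prop:W12_continuity_family_connections_wrt_mass_centers_and_scales} can be applied on the canonical sphere $(S^4,g_\round)$ rather than the rescaled base.
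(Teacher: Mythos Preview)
Your proposal is correct and follows essentially the same route as the paper. The paper's proof is only a one-sentence pointer to combining Step~\ref{step:proof_theorem_Taubes_1988_proposition_5-4_W12_estimate_A_and_Aprime_common} of the proof of Proposition~\ref{prop:Taubes_1988_proposition_5-4_fine} (which invokes Proposition~\ref{prop:W12_continuity_family_connections_wrt_mass_centers_and_scales} for the center/scale variation of the family $\tilde h_{z,\lambda}^*A_i$) with the proof of Lemma~\ref{lem:Taubes_1988_proposition_5-4_W12_estimates_A_and_Aprime_sphere}, after discarding the $\eps_\scale$ contribution; you have unwound exactly those ingredients, including the identification of the center shift as a translation $\tilde\tau_{\Delta z}\in\Conf_s(S^4)$ with $|\Delta z|\le\eps_{\mycenter}$ and the domain shrinkage to $S^4-\varphi_s(B_{4/7R})$ mirroring the passage from $B(\bar x_i,2R\lambda_i)$ to $B(\bar x_i,7R\lambda_i/4)$ in Step~\ref{step:proof_theorem_Taubes_1988_proposition_5-4_W12_estimate_A_and_Aprime_common}. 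The only cosmetic difference is that you apply the translation-continuity estimate to $B_i=\varphi_n^{-1,*}\delta_{\lambda_i'}^{-1,*}\varphi_{\bar x_i'}^*v_i(A')$ and then pass to $A_i$, whereas the paper in Step~\ref{step:proof_theorem_Taubes_1988_proposition_5-4_W12_estimate_A_and_Aprime_common} applies it directly to $A_i$; these are interchangeable up to the same $O(\eps_\sphere)$ error you already account for.
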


We now establish the definitions we shall need to state and prove a version of Proposition \ref{prop:Taubes_1988_proposition_5-4_fine} for coarse bubble-tree neighborhoods. In our development of a generalization of Proposition \ref{prop:Taubes_1988_proposition_5-4_fine} for a pair of gauge-equivalence classes of $g$-Yang-Mills connections, $[A]$ and $[A']$, belonging to a \emph{coarse} $W_\loc^{1,2}$ bubble-tree neighborhood, there are different possible choices for a suitable $W^{1,2}$ Sobolev norm to measure the $W^{1,2}$ distance between the locally conformally blown-up Yang-Mills connections, $\tilde h_{\bz,\blambda}^{-1,*}A$ and $\tilde h_{\bz',\blambda'}^{-1,*}A'$:
\begin{enumerate}
  \item The standard $W^{1,2}$ Sobolev norm defined by the connection $A^\# = \tilde h_{\bz,\blambda}^{-1,*}A$ and Riemannian metric $g^\# = h_{\bz,\blambda}^{-1,*}g$ on $X\#_\sT S^4$ and its Levi-Civita connection $\nabla^{g^\#}$. The metric $g^\#$ is conformally equivalent to $g$ via the diffeomorphism $h_{\bz,\blambda}: X \cong X\#_\sT S^4$, as described in Section \ref{subsec:Construction_Riemannian_metric_connected_sum_for_bubble-tree_data}.
  \item An \adhoc $W^{1,2}$ Sobolev norm defined by the connection $A_0$ on $P_0$ over $X$ and its metric $g$ and the connections $A_i$ on $P_i$ over $S^4$ and its metric $g_\round$ for $1 \leq i \leq l$ in the case of height-one tree, $\sT = \{1,\ldots,l\}$, with similar comments in the case of trees, $\sT$, of height greater than one underlying a bubble-tree neighborhood $\sU$.
\end{enumerate}
The standard $W^{1,2}$ norm for elements of $\Omega^1(X;\ad P)$ defined by $(A^\#, g^\#)$ is more elegant, but the \adhoc $W^{1,2}$ norm defined by the bubble-tree data underlying $\sU$ is more convenient for certain calculations and estimates. For example, the standard $W^{1,2}$ norm,
\[
\|h_{\bz,\blambda}^{-1,*}a\|_{W_{A^\#, g^\#}^{1,2}(X)}, \quad\text{for } a \in \Omega^1(X;\ad P),
\]
depends on the choices of diffeomorphisms in Section \ref{subsec:Construction_Riemannian_metric_connected_sum_for_bubble-tree_data} that conformally identify the small annuli in $X$ centered at points $\bar x_i \in X$ with the thin necks joining copies of $S^4$ to $X$, for $1 \leq i \leq l$, with similar remarks applying to trees, $\sT$, of height greater than one; the \adhoc Sobolev $W^{1,2}$ norm does not depend on such choices.

More seriously, we shall need to compare $A$ over the annulus $\Omega(\bar x_i; R\lambda_i/2, 2\rho)$ with $A'$ over the annulus $\Omega(\bar x_i; R\lambda_i'/2, 2\rho)$, for $1 \leq i \leq l$; although concentric, these annuli are \emph{not} conformally equivalent (even when $g$ is flat). When $0 < \lambda_i' \ll \lambda_i$ for $1 \leq i \leq l$, as is the case when $A$ and $A'$ belong to a coarse bubble-tree neighborhood, the required comparison becomes more complicated than in the much simpler case where $|\lambda_i' - \lambda_i| < \eps_\scale\min\{\lambda_i, \lambda_i'\}$ for $1 \leq i \leq l$, when $A$ and $A'$ belong to a fine bubble-tree neighborhood and which we explored in Section \ref{subsec:W12_bubble-tree_convergence_sequences_Yang-Mills_connections_fine}. Fortunately, because the $L^2$ energies of $A$ and $A'$ over the necks $\Omega(\bar x_i; R\lambda_i/2, 2\rho)$ and $\Omega(\bar x_i; R\lambda_i'/2, 2\rho)$ are both small for each $1 \leq i \leq l$, the different possible choices of \adhoc $W^{1,2}$ norm used to compare $A$ and $A'$ will be equivalent and will lead to the same conclusions.

\begin{defn}[An \adhoc Sobolev $W^{1,2}$ norm for $\ad P$-valued one-forms defined by bubble-tree neighborhood data and conformal blow-ups]
\label{defn:W12_norm_via_bubble-tree_data_connected-sum_local_conformal_blow-up}
Continue the notation of Definition \ref{defn:Iterated_local_conformal_blowup_Sobolev_connection_Riemannian_metric}. Suppose first that the tree, $\sT$, specified by the bubble-tree neighborhood data has height one, so $\sT = \{1,\ldots,l\}$ and $\lambda_i \in (0,1]$ and $z_i = \varphi_i^{-1}(\bar x_i) \in \RR^4$ for $1 \leq i \leq l$. We suppress explicit notational dependence of the maps $\tilde h_{z_i,\lambda_i}$ on the oriented, orthonormal frames, $f_i$ for $(TX)_{x_i}$ for $1 \leq i \leq l$ and $f_0$ for $(TS^4)_n$, and parameters $R \in [1,\infty)$ and $\rho \in (0,1]$ prescribed by the data for the bubble-tree neighborhood, $\sU$, and define
\[
X\#_{h_{\bz,\blambda}}S^4 := X\mathop{\#}\limits_{i=1}^l {}_{h_{z_i,\lambda_i}}S^4,
\]
which we may abbreviate by $X\#_{i=1}^l S^4$ or even $X\#_\sT S^4$. We continue to denote by $P$ the pull back of the bundle, $P$, from $X$ to $X\#_{h_{\bz,\blambda}}S^4$ via the diffeomorphisms between those manifolds. For $a \in \Omega^1(X; \ad P)$ and $\rho \in (0,\Inj(X,g))$, we define
\begin{multline}
\label{eq:W12_norm_via_bubble-tree_data_connected-sum_local_conformal_blow-up}
\|h_{\bz,\blambda}^{-1,*}a\|_{\overline{W}_{\sU, g}^{1,2}(X\#_{i=1}^l S^4)}^2
:=
\|a\|_{W_{A_0, g}^{1,2}(X \less \cup_{i=1}^l B(\bar x_i,\rho/2))}^2
\\
+
\sum_{i=1}^l\|\tilde h_{z_i,\lambda_i}^{-1,*}a \|_{W_{A_i, g_\round}^{1,2}(S^4 \less \varphi_s(B(0,\lambda_i/2\rho)))}^2.
\end{multline}
More generally, the definition of the norm $\|h_{\bz,\blambda}^{-1,*}a\|_{\overline{W}_{\sU, g}^{1,2}(X\#_\sT S^4)}$, corresponding to connected sums $X\#_\sT S^4 = X\#_{h_{\bz,\blambda}}S^4$ defined by trees, $\sT$, of height greater than one, is obtained inductively from the definition \eqref{eq:W12_norm_via_bubble-tree_data_connected-sum_local_conformal_blow-up} for the case of a tree of height one.
\end{defn}

\begin{defn}[An \adhoc Sobolev $W^{1,2}$ distance between conformal blow-ups of a pair of connections in a coarse $W_\loc^{1,2}$ bubble-tree neighborhood]
\label{defn:W12_norm_via_bubble-tree_data_connected-sum_Aprime_minus_A_local_conformal_blow-up}
Continue the notation of Definition \ref{defn:W12_norm_via_bubble-tree_data_connected-sum_local_conformal_blow-up}. Suppose first that the tree, $\sT$, specified by the bubble-tree neighborhood data has height one and $\sT = \{1,\ldots,l\}$. For a pair of $W^{\bar k,\bar p}$ connections $A$ and $A'$ on a principal $G$-bundle over $X$ that represent points in a coarse bubble-tree neighborhood, $\sU \subset \sB(P,g)$, we define a corresponding pair of locally conformally blown-up connections, $\tilde h_{\bz,\blambda}^{-1,*}A$ and $\tilde h_{\bz',\blambda'}^{-1,*}A'$ over $X\#_{i=1}^l S^4$, via \eqref{eq:Iterated_conformal_blowup_Sobolev_connection} in Definition \ref{defn:Iterated_local_conformal_blowup_Sobolev_connection_Riemannian_metric}.

We define a $W^{1,2}$ distance between the pair of locally conformally blown-up connections over $X\#_{i=1}^l S^4$ by
\begin{multline}
\label{eq:W12_norm_via_bubble-tree_data_connected-sum_Aprime_minus_A_local_conformal_blow-up}
\|\tilde h_{\bz',\blambda'}^{-1,*}A' - \tilde h_{\bz,\blambda}^{-1,*}A\|_{\overline{W}_{\sU,g}^{1,2}(X\#_{i=1}^l S^4)}^2
:=
\|A' - A\|_{W_{A_0, g}^{1,2}(X \less \cup_{i=1}^l B(\bar x_i,\rho/2))}^2
\\
+ \sum_{i=1}^l\|\tilde h_{z_i',\lambda_i'}^{-1,*}A' - \tilde h_{z_i,\lambda_i}^{-1,*}A
\|_{W_{A_i, g_\round}^{1,2}(S^4 \less \{s\})}^2,
\end{multline}
where the pulled-back connections, $\tilde h_{z_i,\lambda_i}^{-1,*}A$ and $\tilde h_{z_i',\lambda_i'}^{-1,*}A'$, are defined, respectively, on open subsets of $S^4\less\{s\}$ that are conformally equivalent to $B(\bar x_i, 2\rho)$, namely when $\lambda_i' < \lambda_i$,
\begin{multline*}
B(0,2\rho/\lambda_i) \cong \varphi_n(B(0,2\rho/\lambda_i)) = S^4 \less \varphi_s(B(0,\lambda_i/2\rho))
\\
\subset S^4 \less \varphi_s(B(0,\lambda_i'/2\rho)) = \varphi_n(B(0,2\rho/\lambda_i')) \cong B(0,2\rho/\lambda_i'),
\end{multline*}
and with the reverse inclusion when $\lambda_i' > \lambda_i$. Over the complement of the ball,
\[
\RR^4\less B(0,2\rho/\lambda_i) \cong \varphi_n(\RR^4\less B(0,2\rho/\lambda_i)) = \varphi_s(B(0,\lambda_i/2\rho)) \less\{s\},
\]
where at most one of the two pulled-back connections is defined when $\lambda_i' < \lambda_i$, they may be compared by one of three methods described in Definition \ref{defn:Comparing_local_conformal_blow-ups_pair_connections_near_southern_poles}.

More generally, the definition of the norm
\[
\|\tilde h_{\bz',\blambda'}^{-1,*}A' - \tilde h_{\bz,\blambda}^{-1,*}A\|_{\overline{W}_{\sU,g}^{1,2}(X\#_\sT S^4)},
\]
corresponding to connected sums $X\#_\sT S^4$ defined by trees, $\sT$, of height greater than one, is obtained inductively from the definition \eqref{eq:W12_norm_via_bubble-tree_data_connected-sum_Aprime_minus_A_local_conformal_blow-up} for the case of a tree of height one.
\end{defn}

To complete Definition \ref{defn:W12_norm_via_bubble-tree_data_connected-sum_local_conformal_blow-up}, we need to explain the meaning of the last term in \eqref{eq:W12_norm_via_bubble-tree_data_connected-sum_Aprime_minus_A_local_conformal_blow-up}, as neither connection is defined over all of $S^4\less\{s\}$. There are several equivalent methods, as we explain in the

\begin{defn}[Comparing conformal blow-ups of a pair of connections near the southern poles of $S^4$ in the \adhoc Sobolev $W^{1,2}$ norm]
\label{defn:Comparing_local_conformal_blow-ups_pair_connections_near_southern_poles}
Continue the notation of Definition \ref{defn:W12_norm_via_bubble-tree_data_connected-sum_Aprime_minus_A_local_conformal_blow-up}. Suppose first that the tree, $\sT$, specified by the bubble-tree neighborhood data has height one and so $\sT = \{1,\ldots,l\}$.

\begin{method}[Cutting off]
Choose fiber points $p_i \in P|_{x_i}$, for $1 \leq i \leq l$. Over the annulus $\varphi_s(\Omega(0;\sqrt{\lambda_i'}/2, 2\sqrt{\lambda_i'})) \subset S^4 \less\{s\}$, apply a cutting-off construction similar to \eqref{eq:Cutoff_connection_equal_to_A_over_complement_balls_and_product_over_balls} to replace the pulled-back connection, $\tilde h_{z_i',\lambda_i'}^{-1,*}A'$, by a cut-off connection that is equal to $\tilde h_{z_i',\lambda_i'}^{-1,*}A'$ on $S^4 \less \varphi_s(B(0; 2\sqrt{\lambda_i'}))$ and equal to the product connection, $\Theta$, on $\varphi_s(B(0;\sqrt{\lambda_i'}/2))$.

Similarly, over the annulus $\varphi_s(\Omega(0;\sqrt{\lambda_i}/2, 2\sqrt{\lambda_i})) \subset S^4 \less\{s\}$, apply \eqref{eq:Cutoff_connection_equal_to_A_over_complement_balls_and_product_over_balls} to replace the pulled-back connection, $\tilde h_{z_i,\lambda_i}^{-1,*}A$, by a cut-off connection that is equal to $\tilde h_{z_i,\lambda_i}^{-1,*}A$ on $S^4 \less \varphi_s(B(0; 2\sqrt{\lambda_i}))$ and equal to the product connection, $\Theta$, on $\varphi_s(B(0;\sqrt{\lambda_i}/2))$.

The norm \eqref{eq:W12_norm_via_bubble-tree_data_connected-sum_Aprime_minus_A_local_conformal_blow-up} is now well-defined since the difference of the two cut-off connections is defined on $\RR^4 = S^4\less\{s\}$.
\end{method}

\begin{method}[Coning off] We adapt a construction employed by Parker \cite[Equation (1.12)]{ParkerHarmonic} in the context of harmonic maps of a Riemann surface into a target Riemannian manifold.

Let $x_0 \in X$, choose $p_0 \in P_{x_0}$, and use the connection, $A$, to parallel translate $p_0$ to a fiber point $\bar p_0 \in P_{\bar x_0}$. Let $\sigma_{\bar x_0}:B(\bar x_0,\varrho) \to P$ be the corresponding local section of $P$ defined by parallel translation via $A$ of points in the fiber, $P_{\bar x_0}$, along radial geodesics emanating from $\bar x_0$.

We apply rescaling to $\varphi_{x_0}^*\sigma_{x_0}^*A(x)$ for $|x| < \varrho$, so $x = \delta_\lambda^{-1}(\tilde x) = \lambda \tilde x$ for $\tilde x \in \RR^4$ and we rescale $\varphi_{x_0}^*\sigma_{x_0}^*A$ on $B(0,\varrho)$ as
\[
\delta_\lambda^{-1,*}\varphi_{x_0}^*\sigma_{x_0}^*A(\tilde x),
\quad |\tilde x| < \varrho/\lambda.
\]
Let $\tilde y := \iota(\tilde x) = \tilde x^{-1} = \lambda x^{-1} = \lambda y$, where $y  := \iota(x) := x^{-1}$ for $x \in \RR^4\less\{0\}$. Therefore, $\tilde x = \tilde y^{-1} = \iota(\tilde y)$ and the rescaled local connection one-form, $\varphi_{x_0}^*\sigma_{x_0}^*A$ on $B(0,\varrho)$, is equivalent to
\[
\iota^*\delta_\lambda^{-1,*}\varphi_{x_0}^*\sigma_{x_0}^*A(\tilde y)
=
\delta_\lambda^*\iota^*\varphi_{x_0}^*\sigma_{x_0}^*A(\tilde y),
\quad |\tilde y| > \lambda/\varrho,
\]
using the fact that $\iota(\delta_\lambda(z)) = \lambda z^{-1} = \delta_\lambda^{-1}(\iota(z))$, for $z\in \RR^4$.

Given $a \in \Omega^1(\RR^4\less B(0,r_0); \fg)$ for some $r_0 > 0$ and employing polar coordinates, $y = r\theta$ with $r \in \RR_+$ and $\theta \in S^3$, we define
\begin{equation}
\label{eq:Coned-off_local_one-form}
\hat a(y)
:=
\begin{cases}
r r_0^{-1}a(r_0\theta), & r \leq r_0,
\\
a(r\theta), & r > r_0.
\end{cases}
\end{equation}
It follows from \cite[Theorem 7.8]{GilbargTrudinger} that if $a \in W_{\loc}^{1,p}(\RR^4\less B(0,r_0); \Lambda^1\otimes\fg)$ for $p \geq 1$, then $\hat a \in W_{\loc}^{1,p}(\RR^4; \Lambda^1\otimes\fg)$.

We now apply the preceding coning off procedure to the local connection one-forms, $a_i := \iota^*\delta_{\lambda_i}^{-1,*}\varphi_{\bar x_i}^*\sigma_{\bar x_i}^*A$ on $\RR^4\less B(0,2\lambda_i/\varrho)$, and $a_i' := \iota^*\delta_{\lambda_i'}^{-1,*}\varphi_{\bar x_i}^*\sigma_{\bar x_i'}^*A'$ on $\RR^4\less B(0,2\lambda_i'/\varrho)$ to define $\hat a_i \in W_{\loc}^{1,2}(\RR^4; \Lambda^1\otimes\fg)$ and $\hat a_i' \in W_{\loc}^{1,2}(\RR^4; \Lambda^1\otimes\fg)$, respectively. (Note that we use the same inverse geodesic normal coordinate chart, $\varphi_{\bar x_i}$, to pull back each local connection one-form from $B(\bar x_i,\varrho/2) \subset X$ to $B(0,\varrho/2) \subset \RR^4$.) The norm \eqref{eq:W12_norm_via_bubble-tree_data_connected-sum_Aprime_minus_A_local_conformal_blow-up} is now well-defined since the two coned-off local connection one-forms are defined on $\RR^4 = S^4\less\{s\}$.
\end{method}

\begin{method}[Non-conformal stretching]
For a given index, $i \in \{1,\ldots,l\}$, we may suppose without loss of generality that $\lambda_i' < \lambda_i$. Choose constants $\alpha_i \in [1,\infty)$ such that $(R\lambda_i/4\rho)^{\alpha_i} = R\lambda_i'/4\rho$, where $0 < R\lambda_i'/4\rho \leq R\lambda_i/4\rho \leq 1$, and define local stretching maps on $\RR^4\less\{0\}$ by $\cS_i(x) := 2\rho(r/2\rho)^{\alpha_i}\theta$, where $r = |x|$ and $\theta  = r^{-1}x \in S^3$ for $x \in \RR^4 \less \{0\}$. Then $\cS_i$ is a diffeomorphism from $\Omega(0; R\lambda_i/2, 2\rho)$ onto $\Omega(0; R\lambda_i'/2,2\rho)$. Transfer these local diffeomorphisms to $X$ via the local coordinate charts, $\varphi_{\bar x_i}$, and denote the resulting maps by $\tilde\cS_i$, for $1 \leq i \leq l$. Replace \eqref{eq:W12_norm_via_bubble-tree_data_connected-sum_Aprime_minus_A_local_conformal_blow-up} by
\begin{multline}
\label{eq:W12_norm_via_bubble-tree_data_connected-sum_Aprime_minus_A_local_conformal_blow-up_and_stretching}
\|\tilde h_{\bz',\blambda'}^{-1,*}A' - \tilde h_{\bz,\blambda}^{-1,*}A\|_{\overline{W}_{\sU,g}^{1,2}(X\#_{i=1}^l S^4)}^2
\\
:=
\|A' - A\|_{W_{A_0, g}^{1,2}(X \less \cup_{i=1}^l B(\bar x_i,\rho/2))}^2
+
\sum_{i=1}^l\|A' - \tilde\cS_i^{-1,*}A\|_{W_{A_0, g}^{1,2}(\Omega(\bar x_i; R\lambda_i'/2,2\rho))}^2
\\
+ \sum_{i=1}^l\|\tilde h_{z_i',\lambda_i'}^{-1,*}A' - \tilde h_{z_i,\lambda_i}^{-1,*}A
\|_{W_{A_i, g_\round}^{1,2}(S^4 \less \varphi_s(B(0,1/2R)))}^2.
\end{multline}
\end{method}

The preceding constructions extend \mutatis to the case of trees, $\sT$, of height greater than one.
\end{defn}

In practice, the three methods described in Definition \ref{defn:Comparing_local_conformal_blow-ups_pair_connections_near_southern_poles} for completing Definition \ref{defn:W12_norm_via_bubble-tree_data_connected-sum_Aprime_minus_A_local_conformal_blow-up} are equivalent in our application since the connections $A$ and $A'$ will be $g$-Yang-Mills and have small $L^2$ energies, respectively, over the annuli $\Omega(\bar x_i; R\lambda_i/2,2\rho)$ and $\Omega(\bar x_i'; R\lambda_i'/4,4\rho)$ and therefore over the concentric annuli, $\Omega(\bar x_i; R\lambda_i/2,2\rho) \subset \Omega(\bar x_i; R\lambda_i'/2,2\rho)$. This follows because $\dist_g(\bar x_i', \bar x_i) < \eps_\mycenter\lambda_i'$ and $|\bar z_i' - \bar z_i| < \eps_\mycenter\lambda_i'$ by definition of the coarse $W_\loc^{1,2}$ bubble-tree open neighborhood when $0 < \lambda_i' \leq \lambda_i$. Recall that $\bar z_i' = \varphi_{x_i}^{-1}(\bar x_i')$ and $\bar z_i = \varphi_{x_i}^{-1}(\bar x_i)$, where the local geodesic normal coordinate chart, $\varphi_{x_i}^{-1} \equiv \exp_{f_i}^{-1}: X \supset B(x_i,\varrho) \cong B(0,\varrho) \subset \RR^4$, is defined by the frame, $f_i$, for $(TX)_{x_i}$, and its exponential map.

\begin{prop}[Estimate for the Sobolev $W^{1,2}$ distance between conformal blow-ups of a pair of connections in a coarse  $W_\loc^{1,2}$ bubble-tree neighborhood]
\label{prop:Taubes_1988_proposition_5-4_coarse}
Let $G$ be a compact Lie group and $X$ be a closed, connected, four-dimensional, oriented, smooth manifold endowed with a smooth Riemannian metric, $g$. Then there is a constant, $C = C(g,G) \in [1, \infty)$, with the following significance. Let $P$ be a principal $G$-bundle over $X$. Then there are constants $\eps, \rho \in (0, 1]$ and $R \in [1,\infty)$ with the following significance. Let $A$ and $A'$ be $g$-Yang-Mills connections on $P$ of class $W^{\bar k,\bar p}$, with $\bar p\geq 2$ and integer $\bar k\geq 1$ obeying $(\bar k+1)\bar p>4$. If the gauge-equivalence classes, $[A]$ and $[A']$, belong to a \emph{coarse} $W_\loc^{1,2}$ bubble-tree $(\eps,\rho,R)$ open neighborhood, $\sU \subset \sB(P,g)$, then there are a gauge transformation, $u \in \Aut(P)$, of class $W^{\bar k+1,\bar p}$ and conformal blow-up diffeomorphisms, $\tilde h_{\bz,\blambda}$ and $\tilde h_{\bz',\blambda'}$, of $(X,g)$ such that
\begin{equation}
\label{eq:Taubes_1988_proposition_5-4_coarse}
\|\tilde h_{\bz',\blambda'}^{-1,*}u(A') - \tilde h_{\bz,\blambda}^{-1,*}A\|_{\overline{W}_{\sU,g}^{1,2}(X\#_\sT S^4)}
\\
<
C\eps\left(1 + \sum_{\balpha \in \sT}\|F_{A_\balpha}\|_{W_{A_\balpha, g_\round}^{1,2}(S^4)}\right),
\end{equation}
where the norm on the left-hand side of \eqref{eq:Taubes_1988_proposition_5-4_coarse} is defined by \eqref{eq:W12_norm_via_bubble-tree_data_connected-sum_Aprime_minus_A_local_conformal_blow-up} in Definition \ref{defn:W12_norm_via_bubble-tree_data_connected-sum_Aprime_minus_A_local_conformal_blow-up}, and the multi-indices, $\balpha = \{i\}, \{ij\}, \ldots$, appearing in the right-hand side of \eqref{eq:Taubes_1988_proposition_5-4_coarse} label the vertices of the finite tree $\sT$ defined by $\sU$. The pulled-back connection, $\tilde h_{\bz,\blambda}^{-1,*}A$, on the pull-back of $P$ over $X \#_\sT S^4$ by $\tilde h_{\bz,\blambda}:X \cong X \#_\sT S^4$ is Yang-Mills with respect to the pulled-back Riemannian metric, $\tilde h_{\bz,\blambda}^{-1,*}g$, that is conformally equivalent to $g$ and the analogous comments apply to $\tilde h_{\bz',\blambda'}^{-1,*}A$ and $\tilde h_{\bz',\blambda'}^{-1,*}g$.
\end{prop}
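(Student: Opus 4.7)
The plan is to follow the architecture of the proof of Proposition \ref{prop:Taubes_1988_proposition_5-4_fine}, with the crucial modification that, since the scales $\lambda_i$ and $\lambda_i'$ are no longer assumed to be comparable, each of $A$ and $A'$ must be pulled back to its own copy of $S^4$ using its \emph{own} center and scale, rather than attempting to compare $A$ and $A'$ in a common chart around $\bar x_i\in X$. For notational simplicity I would first treat a height-one tree, $\sT=\{1,\ldots,l\}$, and deduce the general case by an obvious induction on the level of rescaling.

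The ad hoc norm \eqref{eq:W12_norm_via_bubble-tree_data_connected-sum_Aprime_minus_A_local_conformal_blow-up} splits the estimate into a bulk contribution from $X\less\bigcup_{i=1}^l B(\bar x_i,\rho/2)$ and a contribution from each sphere $S^4\less\{s\}$. The bulk contribution is handled verbatim as in Step~\ref{step:proof_theorem_Taubes_1988_proposition_5-4_W12_estimate_A_and_Aprime_common} of the proof of Proposition \ref{prop:Taubes_1988_proposition_5-4_fine}: because $\dist_g(\bar x_i,\bar x_i')<\eps_\mycenter\min\{\lambda_i,\lambda_i'\}$, both centers lie deep inside $B(x_i,\rho/4)$, and the estimate \eqref{eq:Convergence_X_minus_balls_center_x_i_radius_rho_common} carries over to yield a bulk bound of $C\eps_\background$. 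For the part supported on each $S^4$ away from the south pole, I would apply Lemma \ref{lem:Taubes_1988_proposition_5-4_W12_estimates_A_and_Aprime_sphere} \emph{separately} to $A$ and $A'$ with their respective scales to get $W_{A_i,g_\round}^{1,2}$-bounds between $\tilde h_{z_i,\lambda_i}^{-1,*}u_i(A)$ and $A_i$, and between $\tilde h_{z_i',\lambda_i'}^{-1,*}v_i(A')$ and $A_i$, on $S^4\less\varphi_s(B(0,1/2R))$; combining by the triangle inequality and correcting for the small discrepancy in centers via Proposition \ref{prop:W12_continuity_family_connections_wrt_mass_centers_and_scales} (exactly as in the derivation of \eqref{eq:Convergence_ball_center_x_i_radius_Rlambda_i_common_Aprime_original_center_scale}) produces a contribution of size $C\eps_\sphere+C\eps_\mycenter\|F_{A_i}\|_{W_{A_i,g_\round}^{1,2}(S^4)}$, which accounts for the curvature-dependent term on the right-hand side of \eqref{eq:Taubes_1988_proposition_5-4_coarse}.

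The main technical obstacle is the behavior near the south pole of each sphere. Say $\lambda_i'<\lambda_i$; then $\tilde h_{z_i',\lambda_i'}^{-1,*}A'$ is defined on a strictly larger subset of $S^4$ than $\tilde h_{z_i,\lambda_i}^{-1,*}A$, and in order to evaluate the norm on $S^4\less\{s\}$ both blown-up connections must be extended across small punctured balls centered at $s$. Here I would invoke the small-curvature hypothesis \eqref{eq:Bubble_tree_neighborhood_A_L2_small_curvature_annulus}: by conformal invariance of the $L^2$-norm on two-forms in dimension four, each blown-up Yang-Mills connection has $L^2$-curvature at most $\eps_\annulus$ on a large annulus surrounding $s$. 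Corollary \ref{cor:Uhlenbeck_theorem_3-5_annulus} then supplies pointwise decay for these curvatures, Theorem \ref{thm:Uhlenbeck_Lp_1-3} gives a local Coulomb-gauge trivialization in which the blown-up connection is $W^{1,2}$-close to the product connection $\Theta$, and the cutting-off construction \eqref{eq:Cutoff_connection_equal_to_A_over_complement_balls_and_product_over_balls} (used in the cutting-off scheme of Definition \ref{defn:Comparing_local_conformal_blow-ups_pair_connections_near_southern_poles}) then extends each blown-up connection across $s$ with an error of size $C\eps_\annulus$. Any of the three schemes of Definition \ref{defn:Comparing_local_conformal_blow-ups_pair_connections_near_southern_poles} would do, and the small-curvature hypothesis is precisely what makes them equivalent in the coarse setting.

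Finally, just as in Step~\ref{step:proof_theorem_Taubes_1988_proposition_5-4_construction_global_gauge_transformation} of the fine proof, I would splice the local gauge transformations $u_0^{-1}\circ v_0$ on the bulk, the Coulomb-gauge transformations on each annular neck, and $u_i^{-1}\circ v_i$ on each sphere (together with iterated versions at higher levels of rescaling) into a single global $u\in\Aut(P)$ of class $W^{\bar k+1,\bar p}$; the overlap errors incurred by the splicing are controlled, as usual, by a universal multiple of $\eps_\background+\eps_\annulus+\eps_\sphere$, which is absorbed into the right-hand side of \eqref{eq:Taubes_1988_proposition_5-4_coarse}. The Yang-Mills property of $\tilde h_{\bz,\blambda}^{-1,*}A$ with respect to $\tilde h_{\bz,\blambda}^{-1,*}g$, and likewise for $A'$, is automatic from the conformal invariance of the Yang-Mills equation in dimension four.
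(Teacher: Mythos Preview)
Your proposal is correct and takes essentially the same approach as the paper: follow the fine proof for the bulk and neck regions, and on each sphere compare the two blown-up connections to the common reference $A_i$ using their own scales. The paper packages your sphere-estimate step (Lemma \ref{lem:Taubes_1988_proposition_5-4_W12_estimates_A_and_Aprime_sphere} applied separately plus triangle inequality plus center correction) into Lemma \ref{lem:Taubes_1988_proposition_5-4_W12_estimates_A_and_Aprime_sphere_difference} and simply cites it, and leaves the south-pole extension implicit in Definition \ref{defn:Comparing_local_conformal_blow-ups_pair_connections_near_southern_poles}, whereas you spell both of these out explicitly; the content is the same.
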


\begin{proof}
The argument is the same as that for Proposition \ref{prop:Taubes_1988_proposition_5-4_fine}, except that we apply Lemma \ref{lem:Taubes_1988_proposition_5-4_W12_estimates_A_and_Aprime_sphere_difference} to estimate the norms of the differences between the pairs of connections over each copy of $S^4$ rather than each ball in $X$ and appeal to the Definition \ref{defn:W12_norm_via_bubble-tree_data_connected-sum_Aprime_minus_A_local_conformal_blow-up} of the norm on the left-hand-side of the inequality \eqref{eq:Taubes_1988_proposition_5-4_coarse}.
\end{proof}

\subsection{Conformal blow-up diffeomorphisms and coarse $W_\loc^{1,2}$ bubble-tree neighborhoods}
\label{subsec:Global_conformal_blow-up_maps_coarse_W12_bubble-tree_neighborhoods}
Given Proposition \ref{prop:Taubes_1988_proposition_5-4_coarse}, we can build a finite collection of coarse $W_\loc^{1,2}$ bubble-tree open neighborhoods based on the existence of suitable conformal blow-up diffeomorphisms for a closed, connected, four-dimensional, oriented Riemannian, smooth manifold, $(X,g)$.

\begin{cor}[Conformal blow-up diffeomorphisms and coarse $W_\loc^{1,2}$ bubble-tree open neighborhoods]
\label{cor:Taubes_1988_proposition_5-4_coarse_conformal}
Let $G$ be a compact Lie group and $P$ be a principal $G$-bundle over a closed, connected, four-dimensional, oriented, smooth manifold, $X$, endowed with a smooth Riemannian metric, $g$. Let $\sU \subset \sB(P,g)$ be a coarse $W_\loc^{1,2}$ bubble-tree $(\eps,\rho,R)$ open neighborhood of $[A]$ in $\sB(P,g)$, with finite tree $\sT$ defined by $\sU$. Then the open subset $\sU' \subset \sU$ of points $[A'] \in \sB(P,g)$ such that
\begin{equation}
\label{eq:Coarse_W12_bubble-tree_neighborhood_via_conformal_blow-up_map}
\|h^{\prime,-1,*}u(A') -h^{-1,*} A\|_{\overline{W}_{\sU, g}^{1,2}(X\#_\sT S^4)} < \eps,
\end{equation}
for conformal blow-up diffeomorphisms, $h, h' \in \Conf(X,g)$, and $C^\infty$ gauge transformations, $u \in \Aut(P)$, is a coarse $W_\loc^{1,2}$ bubble-tree open neighborhood of $[A]$ in $\sB(P,g)$. Moreover, if $\sC \Subset [0,\infty)$ is a compact subset, then $\Crit(P,g,\sC)$ has a finite cover by neighborhoods of the form $\sU'\cap\Crit(P,g,\sC)$ defined by \eqref{eq:Coarse_W12_bubble-tree_neighborhood_via_conformal_blow-up_map}.
\end{cor}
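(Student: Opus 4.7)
The plan is to verify the two assertions in turn: first that $\sU'$ meets the conditions in Definition \ref{defn:Open_bubble_tree_neighborhood} together with the coarseness refinement in Definition \ref{defn:Open_bubble_tree_neighborhood_coarse}, and second that the compactness from Section \ref{subsec:Bubble_tree_compactification_moduli_space_Yang-Mills_connections} produces the finite subcover.

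For the first assertion, I would begin by observing that because $\sU' \subset \sU$, any representative $A'$ of a point $[A'] \in \sU'$ already inherits candidate local mass centers $\{\bar x_i'\}$, scales $\{\lambda_i'\}$, and local gauge transformations from the Uhlenbeck/bubble-tree neighborhood data defining $\sU$. The background estimate \eqref{eq:Uhlenbeck_neighborhood_A_Wkp_near_A_0_complement_small_balls} and the small-curvature bound \eqref{eq:Uhlenbeck_neighborhood_curvature_density_A_small_balls_near_weight} over annuli follow directly from controlling $\|h^{\prime,-1,*}u(A') - h^{-1,*}A\|_{\overline{W}_{\sU,g}^{1,2}(X\#_\sT S^4)}$ and the corresponding properties for $[A]$, using the splitting of the norm given by \eqref{eq:W12_norm_via_bubble-tree_data_connected-sum_local_conformal_blow-up}. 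To get the condition \eqref{eq:Bubble_tree_neighborhood_rescaled_A_Wkp_near_A_i_large_ball_S4} for $A'$ over each $S^4$ factor, I would invoke the triangle inequality together with the corresponding bound for $A$ provided by its membership in $\sU$.

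The coarseness refinement — the constraint \eqref{eq:Comparable_xi_xiprime} on mass centers — is obtained by applying Proposition \ref{prop:Mass_center_and_scale_connection_W12_continuity} fibrewise over each copy of $S^4$ associated to a vertex of $\sT$. Specifically, the norm bound \eqref{eq:Coarse_W12_bubble-tree_neighborhood_via_conformal_blow-up_map} controls $\|\tilde h_{z_i',\lambda_i'}^{-1,*}A' - \tilde h_{z_i,\lambda_i}^{-1,*}A\|_{W_{A_i,g_\round}^{1,2}(S^4\less\{s\})}$ for each vertex, and inequality \eqref{eq:Mass_center_W12_continuity} then bounds $|\Center[\tilde h_{z_i',\lambda_i'}^{-1,*}A'] - \Center[\tilde h_{z_i,\lambda_i}^{-1,*}A]|$ by a multiple of $\eps$. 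Translating this back through Lemma \ref{lem:Centering_connection_over_4sphere} gives $\dist_g(\bar x_i',\bar x_i) < \eps_\mycenter \min\{\lambda_i,\lambda_i'\}$ after relabeling $\eps_\mycenter$, which is precisely \eqref{eq:Comparable_xi_xiprime}. No analogous bound on the ratio $\lambda_i'/\lambda_i$ is required, which is why this gives a coarse rather than a fine neighborhood.

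For the second assertion, I would argue by compactness of $\overline{\Crit}^{\,\tau}(P,g,\sC)$ given by Corollary \ref{cor:Donaldson_Kronheimer_4-4-3_G_Yang-Mills_bubble_tree}. Around each bubble-tree point $[\bA] \in \overline{\Crit}^{\,\tau}(P,g,\sC)$, Proposition \ref{prop:Taubes_1988_proposition_5-4_coarse} shows that the collection of $[A']$ satisfying a small $\overline{W}_{\sU,g}^{1,2}(X\#_\sT S^4)$ distance to $[\bA]$ (after suitable conformal blow-ups and gauge transformations) contains an open bubble-tree neighborhood of $[\bA]$. These neighborhoods, being open in the bubble-tree topology, form an open cover of the compact space $\overline{\Crit}^{\,\tau}(P,g,\sC)$, from which one extracts a finite subcover. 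Restricting each chosen neighborhood to $\Crit(P,g,\sC)$ yields the asserted finite cover.

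The main obstacle I expect is the bookkeeping in the coarseness step: Proposition \ref{prop:Mass_center_and_scale_connection_W12_continuity} controls centers and scales in terms of the $W_{A,g_\round}^{1,2}(S^4)$ norm of a variation of a \emph{single} fixed connection on $P\to S^4$, whereas the estimate \eqref{eq:Coarse_W12_bubble-tree_neighborhood_via_conformal_blow-up_map} compares \emph{two} rescaled connections whose scales $\lambda_i, \lambda_i'$ need not be comparable. Handling the non-overlap of the domains of $\tilde h_{z_i,\lambda_i}^{-1,*}A$ and $\tilde h_{z_i',\lambda_i'}^{-1,*}A'$ near the south pole — which is precisely what Definition \ref{defn:Comparing_local_conformal_blow-ups_pair_connections_near_southern_poles} was set up to address — requires care; one relies on the smallness of the $L^2$ energy of both connections over the corresponding annuli in $X$ so that the choice of extension (cutoff, coning, or stretching) is immaterial and the resulting center estimate is uniform with respect to $\lambda_i/\lambda_i'$.
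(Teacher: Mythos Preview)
Your proposal is correct and matches the paper's approach: the second assertion via Proposition \ref{prop:Taubes_1988_proposition_5-4_coarse} and bubble-tree compactness is exactly the paper's argument. For the first assertion the paper is terser than you --- it simply observes that since $\sU' \subset \sU$ and the conditions in Definitions \ref{defn:Open_bubble_tree_neighborhood} and \ref{defn:Open_bubble_tree_neighborhood_coarse} are imposed elementwise or pairwise, they are inherited automatically by any subset, so your detour through Proposition \ref{prop:Mass_center_and_scale_connection_W12_continuity} and Lemma \ref{lem:Centering_connection_over_4sphere} to re-derive \eqref{eq:Comparable_xi_xiprime} is unnecessary.
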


\begin{proof}
The fact that $\sU'$ as specified by \eqref{eq:Coarse_W12_bubble-tree_neighborhood_via_conformal_blow-up_map} is a coarse $W_\loc^{1,2}$ bubble-tree open neighborhood follows from Definitions \ref{defn:Open_bubble_tree_neighborhood} and \ref{defn:Open_bubble_tree_neighborhood_coarse}. Proposition \ref{prop:Taubes_1988_proposition_5-4_coarse} ensures that every pair of points $[A], [A'] \in \Crit(P,g,\sC)$ belonging to a coarse $W_\loc^{1,2}$ bubble-tree $(\eps',\rho,R)$ open neighborhood obey an inequality of the form \eqref{eq:Coarse_W12_bubble-tree_neighborhood_via_conformal_blow-up_map} with $\eps = C\eps'$. Corollary \ref{cor:Donaldson_Kronheimer_4-4-3_Yang-Mills_L2-energy_compact_range} ensures that $\overline{\Crit}^{\,\tau}(P,g,\sC)$, and thus $\Crit(P,g,\sC)$, has a finite cover by coarse $W_\loc^{1,2}$ bubble-tree open neighborhoods.
\end{proof}

\section{{\L}ojasiewicz-Simon gradient inequality and the bubble-tree compactification}
\label{sec:Lojasiewicz-Simon gradient inequality_bubble-tree_compactification}
We begin in Section \ref{subsec:Lojasiewicz-Simon gradient inequality_Banach} by reviewing the abstract {\L}ojasiewicz-Simon gradient inequality \cite[Theorem 2.4.5]{Huang_2006} for an analytic potential function on an open subset of a Banach space. In Section \ref{subsec:Lojasiewicz-Simon gradient inequality_Yang-Mills_arbitrary_dimensional_manifolds}, we review the {\L}ojasiewicz-Simon gradient inequality from our monograph \cite{Feehan_yang_mills_gradient_flow} for the Yang-Mills $L^2$-energy functional over closed, Riemannian, smooth manifolds of arbitrary dimension. We discuss in Section \ref{subsec:Lojasiewicz-Simon gradient inequality_Yang-Mills_4-manifolds_and_conformal_invariance} the specialization of the {\L}ojasiewicz-Simon gradient inequality for the Yang-Mills $L^2$-energy functional to the case of four-dimensional manifolds and the weaker, quasi-conformally invariant version that will suffice for our proof of the main results of this article. In Section \ref{subsec:Lojasiewicz-Simon constants_Yang-Mills_W12_variation_connections}, we examine the variation of the {\L}ojasiewicz-Simon constants with respect to a $W^{1,2}$ variation of the Yang-Mills connection over a four-dimensional manifold. Finally, in Section \ref{subsec:Completion_proof_main_theorems}, we complete the proofs of the main results of this article by applying our {\L}ojasiewicz-Simon gradient inequality and our conclusion that one can choose a single {\L}ojasiewicz-Simon triple of constants, $(Z,\sigma,\theta)$, that is valid for all points $[A] \in \Crit(P,g,\sC)$, using the bubble-tree compactification of $\Crit(P,g,\sC)$ discussed in Sections \ref{subsec:Bubble-tree_convergence_Yang-Mills_connections}, \ref{subsec:Bubble_tree_compactification_moduli_space_anti-self-dual_connections}, and \ref{subsec:Bubble_tree_compactification_moduli_space_Yang-Mills_connections}.

\subsection{{\L}ojasiewicz-Simon gradient inequality for an analytic potential function on an open subset of a Banach space}
\label{subsec:Lojasiewicz-Simon gradient inequality_Banach}
We recall the following generalization of Simon's infinite-dimensional version \cite[Theorem 3]{Simon_1983} of the {\L}ojasiewicz gradient inequality \cite{Lojasiewicz_1965}.

\begin{thm}[Abstract {\L}ojasiewicz-Simon gradient inequality with dual Banach space gradient norm]
\label{thm:Huang_2-4-5}
\cite[Proposition 3.3]{Huang_Takac_2001}
\cite[Theorem 2.4.5]{Huang_2006}
Let $\sX$ be a Banach space and $\sH$ a Hilbert space such that the embeddings $\sX \hookrightarrow \sH \hookrightarrow \sX'$ are continuous. Let $\sE:\sX\to\RR$ be an analytic function and let $\varphi$ be a critical point of $\sE$, that is, $\sE'(\varphi) = 0$. Assume that $\sE''(\varphi):\sX\to \sX'$ is a linear, Fredholm operator of index zero. Then there are positive constants, $c$, $\sigma$, and
$\theta \in [1/2, 1)$ such that
\begin{equation}
\label{eq:Simon_2-2_dualspacenorm}
\|\sE'(u)\|_{\sX'} \geq c|\sE(u) - \sE(\varphi)|^\theta, \quad \forall\, u \in \sU \hbox{ such that } \|u-\varphi\|_\sX < \sigma.
\end{equation}
\end{thm}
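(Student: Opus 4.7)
The plan is to reduce the infinite-dimensional gradient inequality to the classical finite-dimensional {\L}ojasiewicz inequality via a Lyapunov-Schmidt decomposition, following the template introduced by Simon \cite{Simon_1983} and refined by Huang and Tak\'a{\v c}. Since $L := \sE''(\varphi) \in \sL(\sX, \sX')$ is Fredholm of index zero, I would split
\begin{equation*}
\sX = \sK \oplus \sX_1, \qquad \sX' = \Ran(L) \oplus \sN,
\end{equation*}
where $\sK := \Ker(L)$ is finite-dimensional and $\dim \sN = \dim \sK$. I would choose the decomposition compatibly with the rigged Hilbert structure $\sX \hookrightarrow \sH \hookrightarrow \sX'$: identify $\sK$ as a finite-dimensional subspace of $\sH$, take $\sN \subset \sX'$ as the image of $\sK$ under the inclusion $\sH \hookrightarrow \sX'$, and take $\sX_1$ as the $\sH$-orthogonal complement of $\sK$ in $\sX$. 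With this choice the natural pairing $\sX' \times \sX \to \RR$ makes $\sN$ annihilate $\sX_1$, and $L|_{\sX_1}: \sX_1 \to \Ran(L)$ is a Banach space isomorphism by the open mapping theorem. Let $P: \sX' \to \Ran(L)$ and $Q = I - P: \sX' \to \sN$ be the associated projections.

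Next, writing $u = \varphi + v + w$ with $v \in \sK$ and $w \in \sX_1$, I would solve the equation $F(v,w) := P(\sE'(\varphi + v + w)) = 0$. Because $F$ is analytic near $(0,0)$ and $\partial_w F(0,0) = L|_{\sX_1}$ is an isomorphism onto $\Ran(L)$, the analytic implicit function theorem produces a unique analytic map $w: U \to \sX_1$ on a neighborhood $U$ of $0 \in \sK$ satisfying $w(0) = 0$ and $F(v, w(v)) = 0$. I would then define the reduced analytic functional $f: U \to \RR$ by $f(v) := \sE(\varphi + v + w(v))$ on a finite-dimensional open set, to which the classical finite-dimensional {\L}ojasiewicz inequality \cite{Lojasiewicz_1965} applies: there exist $c_0, \sigma_0 > 0$ and $\theta \in [1/2, 1)$ such that
\begin{equation*}
|\nabla f(v)| \geq c_0 |f(v) - f(0)|^\theta, \qquad \|v\|_\sX < \sigma_0.
\end{equation*}

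The crucial computation is that for $\xi \in \sK$ the directional derivative equals $df(v)(\xi) = \langle \sE'(\varphi + v + w(v)), \xi + w'(v)\xi\rangle$, and since the Lyapunov-Schmidt equation forces $\sE'(\varphi + v + w(v)) \in \sN$ while $w'(v)\xi \in \sX_1$, the orthogonality of $\sN$ and $\sX_1$ collapses this to $\langle Q\sE'(u), \xi\rangle$. Therefore $|\nabla f(v)| \leq C\|\sE'(u)\|_{\sX'}$. Combining this with the finite-dimensional {\L}ojasiewicz bound and using the isomorphism $L|_{\sX_1}$ plus analyticity of $\sE$ to control the complementary quantities $\|w - w(v)\|_\sX \lesssim \|P\sE'(u)\|_{\sX'}$ and $|\sE(u) - f(v)| \lesssim \|w - w(v)\|_\sX^2$, one assembles the claimed estimate
\begin{equation*}
\|\sE'(u)\|_{\sX'} \geq c |\sE(u) - \sE(\varphi)|^\theta
\end{equation*}
on a ball $\{\|u - \varphi\|_\sX < \sigma\}$, absorbing the quadratic terms into the {\L}ojasiewicz exponent using $\theta \geq 1/2$. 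The main obstacle in this plan is the choice of complements ensuring $\sN$ annihilates $\sX_1$; this is where the rigged Hilbert structure is essential and is why the hypothesis $\sX \hookrightarrow \sH \hookrightarrow \sX'$ (rather than bare Banach duality) is imposed. A second delicate point is tracking analyticity through the implicit function step in order to validate the passage to the finite-dimensional {\L}ojasiewicz inequality, which requires the full analytic (not merely $C^k$) version of the implicit function theorem on Banach spaces.
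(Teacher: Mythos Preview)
The paper does not supply its own proof of this statement: Theorem~\ref{thm:Huang_2-4-5} is quoted verbatim from \cite[Proposition~3.3]{Huang_Takac_2001} and \cite[Theorem~2.4.5]{Huang_2006}, and the exposition in Section~\ref{subsec:Lojasiewicz-Simon gradient inequality_Banach} immediately moves on to its application. Your Lyapunov--Schmidt reduction sketch is precisely the argument given in those cited references, so there is nothing to compare against in the present paper; your outline is correct and matches the standard proof.
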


We next recall our application of Theorem \ref{thm:Huang_2-4-5} to the case of the Yang-Mills $L^2$-energy functional and generalization of R\r{a}de's \cite[Proposition 7.2]{Rade_1992}, where a {\L}ojasiewicz-Simon gradient inequality is proved for the Yang-Mills $L^2$-energy functional over closed Riemannian manifolds of dimensions two or three.

\subsection{{\L}ojasiewicz-Simon gradient inequality for the Yang-Mills $L^2$-energy functional over closed manifolds of arbitrary dimension}
\label{subsec:Lojasiewicz-Simon gradient inequality_Yang-Mills_arbitrary_dimensional_manifolds}
Our {\L}ojasiewicz-Simon gradient inequality for the Yang-Mills $L^2$-energy functional is one of the key technical ingredients underlying the proof of Theorem \ref{mainthm:Discreteness_Yang-Mills_energies}. We begin by recalling its statement from our monograph \cite{Feehan_yang_mills_gradient_flow}.

\begin{thm}[{\L}ojasiewicz-Simon gradient inequality for the Yang-Mills $L^2$-energy functional]
\label{thm:Rade_proposition_7-2}
\cite[Theorem 21.8]{Feehan_yang_mills_gradient_flow}
Let $(X,g)$ be a closed, Riemannian, smooth manifold of dimension $d$, and $G$ be a compact Lie group, $A_\myref$ a connection of class $C^\infty$, and $A_\ym$ a $g$-Yang-Mills connection of class $W^{\bar k,\bar p}$, with integer $\bar k\geq 1$ and $\bar p \geq 1$ obeying $(\bar k+1)\bar p > d$, on a principal $G$-bundle, $P$, over $X$. If $d\geq 2$ and $p \in (1,\infty)$ obey one of the following conditions,
\begin{enumerate}
\item $d \leq 2 \leq 4$ and $p = 2$, or
\item $d \geq 5$ and $p \geq \max\{d/3, 4d/(d+4)\}$,
\end{enumerate}
then there are positive constants $Z \in [1, \infty)$, $\sigma \in (0,1]$, and $\theta \in [1/2,1)$, depending on the gauge-equivalence classes $[A_\myref]$ and $[A_\ym]$, $g$, $G$, $p$, $P$, and $X$ with the following significance.  If $A$ is a $W^{\bar k,\bar p}$ Sobolev connection on $P$ and
\begin{equation}
\label{eq:Rade_7-1_neighborhood}
\|A - A_\ym\|_{W^{1,p}_{A_\myref}(X)} < \sigma,
\end{equation}
then
\begin{equation}
\label{eq:Rade_7-1}
\|d_A^{*,g}F_A\|_{W^{-1,p'}_{A_\myref}(X)} \geq Z|\sE_g(A) - \sE_g(A_\ym)|^\theta,
\end{equation}
where $p'\in (1,\infty)$ is the dual exponent defined by $1/p+1/p'=1$ and $\sE_g(A)$ is given by \eqref{eq:Yang-Mills_energy_functional}.
\end{thm}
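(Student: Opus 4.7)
The strategy is to apply the abstract {\L}ojasiewicz-Simon gradient inequality of Theorem \ref{thm:Huang_2-4-5} to a suitably augmented, gauge-fixed version of the Yang-Mills energy functional. The principal difficulty is that $\sE_g$ is gauge-invariant, so its Hessian at $A_\ym$ has an infinite-dimensional kernel containing $d_{A_\ym}\Omega^0(X;\ad P)$, and is therefore never Fredholm on the na\"ive Banach space of all one-forms. I would proceed in four steps.

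\textbf{Step 1: Setup of Banach spaces.} Set $\sX := W^{1,p}_{A_\myref}(X; \Lambda^1 \otimes \ad P)$ and $\sH := L^2(X; \Lambda^1 \otimes \ad P)$, so $\sX' \cong W^{-1,p'}_{A_\myref}(X; \Lambda^1\otimes\ad P)$. The restrictions on $(d,p)$ in the statement are chosen precisely to guarantee continuous Sobolev embeddings $\sX \hookrightarrow \sH \hookrightarrow \sX'$ and, more importantly, continuous Sobolev multiplication results sufficient to ensure that the nonlinear operator $a \mapsto d_{A_\ym+a}^{*,g}F_{A_\ym+a}$ is well-defined and real-analytic as a map $\sX \to \sX'$. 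Since $A_\myref$ is smooth and $A_\ym$ is of class $W^{\bar k, \bar p}$ with $(\bar k+1)\bar p > d$ (hence continuous), the norms defined via $A_\myref$ and $A_\ym$ are equivalent on a $W^{1,p}_{A_\myref}$-neighborhood of $A_\ym$, so we may substitute $A_\ym$ for $A_\myref$ freely in what follows.

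\textbf{Step 2: Augmentation to restore the Fredholm property.} Define the augmented gradient map
\begin{equation*}
\tilde{\sM}(a) := d_{A_\ym + a}^{*,g}F_{A_\ym + a} + d_{A_\ym}d_{A_\ym}^{*,g}a,
\end{equation*}
whose linearization at $a=0$ is the generalized Hodge Laplacian
\begin{equation*}
\tilde L_{A_\ym} = d_{A_\ym}^{*,g}d_{A_\ym} + d_{A_\ym}d_{A_\ym}^{*,g} + \{F_{A_\ym},\,\cdot\,\} = \Delta_{A_\ym} + \{F_{A_\ym},\,\cdot\,\}.
\end{equation*}
This is a second-order elliptic operator on $\Omega^1(X;\ad P)$, so standard $L^p$-elliptic theory on closed manifolds shows that $\tilde L_{A_\ym}:\sX \to \sX'$ is Fredholm of index zero. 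Moreover $\tilde\sM$ is the gradient of the augmented analytic functional
\begin{equation*}
\tilde\sE_g(a) := \sE_g(A_\ym + a) + \|d_{A_\ym}^{*,g}a\|_{L^2(X,g)}^2,
\end{equation*}
which differs from $\sE_g(A_\ym+a)$ by a polynomial (hence analytic) functional of $a$.

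\textbf{Step 3: Application of the abstract inequality and Coulomb gauge reduction.} Applying Theorem \ref{thm:Huang_2-4-5} to $\tilde\sE_g$ at the critical point $a = 0$ yields constants $Z \in [1,\infty)$, $\sigma \in (0,1]$, and $\theta \in [1/2,1)$ such that
\begin{equation*}
\|\tilde\sM(a)\|_{\sX'} \geq Z\,|\tilde\sE_g(a) - \sE_g(A_\ym)|^\theta
\quad\text{whenever } \|a\|_\sX < \sigma.
\end{equation*}
Given an arbitrary $W^{\bar k,\bar p}$ connection $A$ satisfying \eqref{eq:Rade_7-1_neighborhood}, apply Uhlenbeck's Coulomb gauge theorem (Theorem \ref{thm:Uhlenbeck_Lp_1-3} and its consequences, applied relative to $A_\ym$ rather than the product connection) to produce a gauge transformation $u$ such that $a := u(A) - A_\ym$ satisfies the Coulomb condition $d_{A_\ym}^{*,g}a = 0$ and $\|a\|_\sX$ is controlled by a universal multiple of $\|A-A_\ym\|_\sX$. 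Under this Coulomb condition, $\tilde\sM(a) = d_{u(A)}^{*,g}F_{u(A)}$ and $\tilde\sE_g(a) = \sE_g(u(A))$, and the desired inequality \eqref{eq:Rade_7-1} follows from the gauge-invariance of $\sE_g$ and of the dual Sobolev norm on $d_A^{*,g}F_A$ (after shrinking $\sigma$ to absorb the constant from the Uhlenbeck estimate).

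\textbf{Main obstacle.} The hard part is Step 2, specifically verifying that the map $\tilde\sM:\sX\to\sX'$ is a well-defined real-analytic map and that its linearization $\tilde L_{A_\ym}$ is Fredholm of index zero between $W^{1,p}$ and $W^{-1,p'}$ with these exact Sobolev exponents. This requires a sequence of Sobolev multiplication estimates of the form $W^{1,p} \cdot W^{1,p} \hookrightarrow L^p$ and $W^{1,p}\cdot L^p \hookrightarrow W^{-1,p'}$, and these dictate precisely the dichotomy in the hypotheses: the case $d \leq 4$ collapses to $p=2$ where $W^{1,2} \hookrightarrow L^4$ gives everything cleanly, while for $d \geq 5$ one must take $p \geq \max\{d/3, 4d/(d+4)\}$ to keep the quadratic and cubic terms arising from $F_{A_\ym+a} = F_{A_\ym} + d_{A_\ym}a + a\wedge a$ in the right function spaces. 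Once these multiplication and $L^p$-elliptic estimates are in place, Fredholmness of $\tilde L_{A_\ym}$ is standard and the application of Theorem \ref{thm:Huang_2-4-5} is essentially formal.
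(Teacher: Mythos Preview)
The paper does not supply its own proof of this theorem; it is quoted verbatim from the author's monograph \cite[Theorem 21.8]{Feehan_yang_mills_gradient_flow}, with the surrounding text only indicating that it is an ``application of Theorem \ref{thm:Huang_2-4-5}''. Your outline --- apply the abstract {\L}ojasiewicz--Simon inequality to a gauge-augmented functional whose Hessian is the elliptic operator $\Delta_{A_\ym} + \{F_{A_\ym},\cdot\}$, then pass to a Coulomb slice --- is exactly the standard route and matches that indication.

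One small correction to Step~3: you write that the result follows from ``gauge-invariance of \ldots\ the dual Sobolev norm on $d_A^{*,g}F_A$'', but the $W^{-1,p'}_{A_\myref}$ norm is defined through a \emph{fixed} reference connection $A_\myref$ and is not gauge-invariant. What actually saves you is that the Coulomb gauge transformation $u$ is $W^{2,p}$-close to the identity when $\|A-A_\ym\|_{W^{1,p}_{A_\myref}}$ is small, so the action of $u$ on $W^{1,p}_{A_\myref}$ (and by duality on $W^{-1,p'}_{A_\myref}$) has operator norm close to~$1$; the discrepancy between $\|d_{u(A)}^{*,g}F_{u(A)}\|_{W^{-1,p'}_{A_\myref}}$ and $\|d_A^{*,g}F_A\|_{W^{-1,p'}_{A_\myref}}$ is then absorbed into $Z$ after a further shrinking of $\sigma$. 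With that adjustment your argument is correct.
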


\subsection{The {\L}ojasiewicz-Simon gradient inequality for the Yang-Mills $L^2$-energy functional over closed four-dimensional manifolds and conformal invariance}
\label{subsec:Lojasiewicz-Simon gradient inequality_Yang-Mills_4-manifolds_and_conformal_invariance}
We now specialize Theorem \ref{thm:Rade_proposition_7-2} to the case of four-dimensional manifolds and make observations concerning the quasi-invariance property of the {\L}ojasiewicz-Simon triple of constants with respect to conformal diffeomorphisms of the manifold or conformal changes in the Riemannian metric. We first take $d=4$ and $p=2$ and thus $p'=2$ in Theorem \ref{thm:Rade_proposition_7-2} to give

\begin{thm}[{\L}ojasiewicz-Simon gradient inequality for the Yang-Mills $L^2$-energy functional in dimension four]
\label{thm:Rade_proposition_7-2_d_is_4}
Let $(X,g)$ be a closed, four-dimensional, Riemannian, smooth manifold, and $G$ be a compact Lie group, $A_\myref$ a connection of class $C^\infty$, and $A_\ym$ a $g$-Yang-Mills connection of class $W^{\bar k,\bar p}$, with integer $\bar k\geq 1$ and $\bar p \geq 2$ obeying $(\bar k+1)\bar p > 4$, on a principal $G$-bundle, $P$, over $X$. Then there are positive constants $Z \in [1,\infty)$, $\sigma\in (0,1]$, and $\theta \in [1/2,1)$, depending on the gauge-equivalence classes, $[A_\myref]$ and $[A_\ym]$, and $g$, $G$, $P$, and $X$ with the following significance.  If $A$ is a $W^{\bar k,\bar p}$ Sobolev connection on $P$ and
\begin{equation}
\label{eq:Rade_7-1_neighborhood_d_is_4}
\|A - A_\ym\|_{W_{A_\myref, g}^{1,2}(X)} < \sigma,
\end{equation}
then
\begin{equation}
\label{eq:Rade_7-1_d_is_4}
\|d_A^{*,g}F_A\|_{W_{A_\myref,g}^{-1,2}(X)} \geq Z|\sE_g(A) - \sE_g(A_\ym)|^\theta.
\end{equation}
\end{thm}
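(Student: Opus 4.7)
The statement is the direct specialization of Theorem \ref{thm:Rade_proposition_7-2} to the parameters $d=4$ and $p=2$, so my plan is simply to verify the admissibility of these parameters and then invoke the general theorem.

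First I would observe that the pair $(d,p) = (4,2)$ satisfies the first of the two admissible conditions in Theorem \ref{thm:Rade_proposition_7-2} (reading it as $2 \leq d \leq 4$ and $p=2$), so there is no need to re-examine the case $d\geq 5$ where the Sobolev exponent condition $p \geq \max\{d/3,\, 4d/(d+4)\}$ becomes nontrivial. Next I would compute the H\"older conjugate: since $p=2$ we have $p' = 2$ as well, so the target space of the gradient in the conclusion \eqref{eq:Rade_7-1} specializes from $W^{-1,p'}_{A_\myref}(X)$ to the Hilbert space $W^{-1,2}_{A_\myref,g}(X)$, matching the norm appearing on the right-hand side of \eqref{eq:Rade_7-1_d_is_4}. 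Likewise, the neighborhood condition \eqref{eq:Rade_7-1_neighborhood} becomes \eqref{eq:Rade_7-1_neighborhood_d_is_4}.

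I would then simply apply Theorem \ref{thm:Rade_proposition_7-2} with these parameters to the given smooth reference connection $A_\myref$, the Yang-Mills connection $A_\ym$ of class $W^{\bar k,\bar p}$, and the principal $G$-bundle $P \to (X,g)$. The hypothesis on $(\bar k,\bar p)$ in Theorem \ref{thm:Rade_proposition_7-2_d_is_4} is exactly $(\bar k+1)\bar p > 4$, which is the hypothesis of Theorem \ref{thm:Rade_proposition_7-2} in dimension $d=4$, so that the affine space of $W^{\bar k,\bar p}$ connections is a well-defined Banach manifold and $A$ has a continuous representative. This produces the triple of constants $(Z,\sigma,\theta) \in [1,\infty) \times (0,1] \times [1/2,1)$ depending only on the data listed in the statement, and yields the gradient inequality \eqref{eq:Rade_7-1_d_is_4} in the required form.

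Since the content of the theorem is entirely contained in Theorem \ref{thm:Rade_proposition_7-2}, there is no genuine obstacle here; all the real work (the Fredholm analysis of the Hessian $\sE''_g(A_\ym)$, the reduction to a finite-dimensional variational problem via a Lyapunov-Schmidt decomposition, and the appeal to the abstract \L ojasiewicz--Simon inequality in the form of Theorem \ref{thm:Huang_2-4-5}) has already been carried out in \cite[Theorem 21.8]{Feehan_yang_mills_gradient_flow}. The only substantive point to record is that in dimension four the exponent choice $p=2$ is critical from the point of view of Sobolev embedding (since $W^{1,2}(X) \hookrightarrow L^4(X)$ is the borderline embedding for quadratic curvature nonlinearities), which is precisely why this case is separated from the higher-dimensional case in the statement of Theorem \ref{thm:Rade_proposition_7-2}; the self-dual pairing $p = p' = 2$ will later allow the conformal invariance considerations of Section \ref{subsec:Lojasiewicz-Simon gradient inequality_Yang-Mills_4-manifolds_and_conformal_invariance} to take a particularly clean form.
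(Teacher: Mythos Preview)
Your proposal is correct and matches the paper's own treatment: the paper introduces Theorem \ref{thm:Rade_proposition_7-2_d_is_4} simply by stating ``We first take $d=4$ and $p=2$ and thus $p'=2$ in Theorem \ref{thm:Rade_proposition_7-2} to give'' and then records the specialized statement without further proof. Your additional verification that $(d,p)=(4,2)$ satisfies the first admissibility condition and that $p'=2$ yields the claimed norms is exactly the routine check that justifies this specialization.
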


The norm on the left-hand side of the inequality \eqref{eq:Rade_7-1_d_is_4} is \emph{not} invariant (or quasi-invariant) with respect to conformal diffeomorphisms of $(X,g)$, but it is bounded above by a norm that is conformally invariant. To see this and keep track of the constants depending on the Riemannian metric, $g$, we recall that $W_g^{1,2}(X) \hookrightarrow L^4(X,g)$ is a continuous embedding by \eqref{eq:Sobolev_embedding_manifold_bounded_geometry} and thus $(L^4(X,g))' = L^{4/3}(X,g) \hookrightarrow (W_g^{1,2}(X))' = W^{-1,2}(X,g)$ and
\[
\|f\|_{W_g^{-1,2}(X)} \leq z_1\|f\|_{L^{4/3}(X,g)} \leq z_1(\vol_g(X))^{1/2}\|f\|_{L^4(X,g)},
\]
where $z_1 = z_1(g) \in [1,\infty)$ is the Sobolev embedding constant for $W_g^{1,2}(X) \hookrightarrow L^4(X,g)$, namely
\begin{equation}
\label{eq:Sobolev_embedding_W_1_2_into_L4}
\|f\|_{W_g^{1,2}(X)} \leq z_1\|f\|_{L^4(X,g)}.
\end{equation}
Consequently, for the same constants, we have
\begin{equation}
\label{eq:Sobolev_embedding_W_minus_1_2_into_L4over3_into_L4}
\|a\|_{W_{A,g}^{-1,2}(X)} \leq z_1\|a\|_{L^{4/3}(X,g)} \leq z_1(\vol_g(X))^{1/2}\|a\|_{L^4(X,g)},
\quad \forall\, a \in W_{A,g}^{-1,2}(X; \Lambda^1\otimes\ad P).
\end{equation}
Taking the supremum below over all $b \in W_{A,g}^{1,2}(X; \Lambda^1\otimes\ad P)\less\{0\}$,
\begin{align*}
\|a\|_{W_{A,g}^{-1,2}(X)}
&:= \sup_{b \neq 0} \frac{(a, b)_{L^2(X,g)}}{\|b\|_{W_{A,g}^{1,2}(X)}}
\\
&\leq \sup_{b \neq 0} \frac{\|a\|_{L^{4/3}(X,g)} \|b\|_{L^4(X,g)}}{\|b\|_{W_{A,g}^{1,2}(X)}}
\\
&\leq \sup_{b \neq 0} \frac{z_1\|a\|_{L^{4/3}(X,g)} \||b|\|_{W_g^{1,2}(X)}}{\|b\|_{W_{A,g}^{1,2}(X)}}
\quad\text{(by \eqref{eq:Sobolev_embedding_W_1_2_into_L4})}
\\
&\leq \sup_{b \neq 0} \frac{z_1\|a\|_{L^{4/3}(X,g)} \|b\|_{W_{A,g}^{1,2}(X)}}{\|b\|_{W_{A,g}^{1,2}(X)}}
\quad\text{(by the Kato Inequality \eqref{eq:FU_6-20_first-order_Kato_inequality})}
\\
&= z_1\|a\|_{L^{4/3}(X,g)}
\\
&\leq z_1(\vol_g(X))^{3/4 - 1/4} \|a\|_{L^4(X,g)} \quad\text{(by \cite[Equation (7.8)]{GilbargTrudinger})},
\end{align*}
and this proves \eqref{eq:Sobolev_embedding_W_minus_1_2_into_L4over3_into_L4}. The latter inequality immediately yields the following special case of Theorem \ref{thm:Rade_proposition_7-2_d_is_4}.

\begin{cor}[{\L}ojasiewicz-Simon gradient inequality for the Yang-Mills $L^2$-energy functional in dimension four]
\label{cor:Rade_proposition_7-2_d_is_4_conformally_invariant}
Assume the hypotheses of Theorem \ref{thm:Rade_proposition_7-2_d_is_4}, let $(Z,\sigma,\theta)$ denote the corresponding {\L}ojasiewicz-Simon triple of constants, and let
\[
Z_0 := Zz_1^{-1}(\vol_g(X))^{-1/2}
\]
If $A$ is a $W^{\bar k,\bar p}$ Sobolev connection on $P$ that obeys \eqref{eq:Rade_7-1_neighborhood_d_is_4}, then
\begin{equation}
\label{eq:Rade_7-1_conformally_invariant}
\|d_A^{*,g}F_A\|_{L^4(X,g)} \geq Z_0|\sE_g(A) - \sE_g(A_\ym)|^\theta.
\end{equation}
\end{cor}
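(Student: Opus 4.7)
The plan is to deduce the corollary directly from Theorem \ref{thm:Rade_proposition_7-2_d_is_4} by comparing the dual Sobolev norm $W_{A_\myref,g}^{-1,2}(X)$ with the $L^4(X,g)$ norm on the gradient $d_A^{*,g}F_A$, via the embedding estimate \eqref{eq:Sobolev_embedding_W_minus_1_2_into_L4over3_into_L4} derived immediately above the corollary. Since Theorem \ref{thm:Rade_proposition_7-2_d_is_4} already provides the full {\L}ojasiewicz-Simon inequality in the stronger (smaller) norm $\|\cdot\|_{W_{A_\myref,g}^{-1,2}(X)}$, nothing of substance needs to be re-proved; the corollary is a purely formal consequence.

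First, I would observe that $d_A^{*,g}F_A \in L^4(X,g;\Lambda^1\otimes\ad P)$ whenever $A$ is of class $W^{\bar k,\bar p}$ with $(\bar k+1)\bar p > 4$, so that the right-hand side of the target inequality \eqref{eq:Rade_7-1_conformally_invariant} makes sense. Next, applying the estimate \eqref{eq:Sobolev_embedding_W_minus_1_2_into_L4over3_into_L4} with $a = d_A^{*,g}F_A$ yields
\[
\|d_A^{*,g}F_A\|_{W_{A_\myref,g}^{-1,2}(X)}
\leq
z_1\,(\vol_g(X))^{1/2}\,\|d_A^{*,g}F_A\|_{L^4(X,g)},
\]
where $z_1=z_1(g)$ is the Sobolev embedding constant for $W_g^{1,2}(X)\hookrightarrow L^4(X,g)$ as in \eqref{eq:Sobolev_embedding_W_1_2_into_L4}.

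Finally, under the hypothesis \eqref{eq:Rade_7-1_neighborhood_d_is_4}, Theorem \ref{thm:Rade_proposition_7-2_d_is_4} provides the lower bound
\[
\|d_A^{*,g}F_A\|_{W_{A_\myref,g}^{-1,2}(X)} \geq Z\,|\sE_g(A) - \sE_g(A_\ym)|^\theta.
\]
Chaining this with the previous inequality and dividing through by $z_1(\vol_g(X))^{1/2}$ gives
\[
\|d_A^{*,g}F_A\|_{L^4(X,g)}
\geq
\frac{Z}{z_1(\vol_g(X))^{1/2}}\,|\sE_g(A) - \sE_g(A_\ym)|^\theta
= Z_0\,|\sE_g(A) - \sE_g(A_\ym)|^\theta,
\]
which is the desired inequality \eqref{eq:Rade_7-1_conformally_invariant}. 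There is no genuine obstacle: the only content of the proof is the direction of the embedding, namely that the dual-space norm controls the $L^4$ norm from below (equivalently, $L^{4/3}\hookrightarrow W^{-1,2}$ via duality with $W^{1,2}\hookrightarrow L^4$), which is precisely what was verified in the derivation of \eqref{eq:Sobolev_embedding_W_minus_1_2_into_L4over3_into_L4} using the Kato inequality \eqref{eq:FU_6-20_first-order_Kato_inequality} and the interpolation bound \cite[Equation (7.8)]{GilbargTrudinger}.
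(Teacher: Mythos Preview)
Your proof is correct and follows exactly the paper's approach: the paper simply states that inequality \eqref{eq:Sobolev_embedding_W_minus_1_2_into_L4over3_into_L4} immediately yields the corollary from Theorem \ref{thm:Rade_proposition_7-2_d_is_4}, and you have spelled out precisely that chain of inequalities.
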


This is also a convenient point at which to record the following consequence of the Kato Inequality \eqref{eq:FU_6-20_first-order_Kato_inequality} and Sobolev Embedding \eqref{eq:Sobolev_embedding_domain} in the form $W^{1,2}(U) \hookrightarrow L^4(U)$, where $U \subset \RR^4$ is an open subset obeying the interior cone condition.

\begin{lem}[Embedding into $L^4$ for the Sobolev $W^{1,2}$ norm for $\ad P$-valued one-forms defined by bubble-tree neighborhood data and conformal blow-ups]
\label{lem:W12_norm_via_bubble-tree_data_connected-sum_local_conformal_blow-up_L4_embedding}
Let $(X,g)$ be a closed, four-dimensional, Riemannian, smooth manifold. Then there is a constant, $C = C(g) \in [1,\infty)$ with the following significance. Assume the notation of Definition \ref{defn:W12_norm_via_bubble-tree_data_connected-sum_local_conformal_blow-up}. If $a \in W_{A,g}^{1,2}(X;\Lambda^1\otimes\ad P)$, then
\begin{equation}
\label{eq:W12_norm_via_bubble-tree_data_connected-sum_local_conformal_blow-up_L4_embedding}
\|a\|_{L^4(X,g)}
=
\|h_{\bz,\blambda}^{-1,*}a\|_{L^4(X\#_\sT S^4, h_{\bz,\blambda}^{-1,*}g)}
\leq
C\|h_{\bz,\blambda}^{-1,*}a\|_{\overline{W}_{\sU, g}^{1,2}(X\#_\sT S^4)}.
\end{equation}
\end{lem}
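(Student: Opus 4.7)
The plan is to prove the asserted equality and inequality separately, with the former reducing to a change-of-variables argument and the latter proceeding by induction on the height of the tree $\sT$, combining conformal invariance of the $L^4$ norm on one-forms in dimension four with the Sobolev embedding and the Kato inequality.

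For the equality $\|a\|_{L^4(X,g)} = \|h_{\bz,\blambda}^{-1,*}a\|_{L^4(X\#_\sT S^4, h_{\bz,\blambda}^{-1,*}g)}$, observe that by construction $h_{\bz,\blambda}:X \to X\#_\sT S^4$ is a diffeomorphism and $h_{\bz,\blambda}^{-1,*}g$ is the push-forward of $g$, so $h_{\bz,\blambda}^{-1}:(X\#_\sT S^4, h_{\bz,\blambda}^{-1,*}g)\to (X,g)$ is a (tautological) isometry, under which all $L^p$ norms of tensor fields are preserved. This step requires no conformal considerations.

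For the inequality, I would proceed by induction on the height $L$ of $\sT$. The base case $L=0$ (only the root vertex) is immediate: $X\#_\sT S^4 = X$, the map $h_{\bz,\blambda}$ is the identity, the norm $\overline{W}_{\sU,g}^{1,2}$ reduces to $W_{A_0,g}^{1,2}(X)$, and the inequality then follows by combining the standard Sobolev embedding \eqref{eq:Sobolev_embedding_W_1_2_into_L4} with the Kato inequality \eqref{eq:FU_6-20_first-order_Kato_inequality} applied to $a$ (which converts $\|\,|a|\,\|_{W_g^{1,2}}$ into the covariant norm $\|a\|_{W_{A_0,g}^{1,2}}$). For the inductive step at height one with first-level vertices indexed by $1\leq i\leq l$, I would exploit the conformal invariance of the $L^4$ integrand $|a|_g^4\,d\vol_g$ on a four-manifold to decompose
\begin{equation*}
\|h_{\bz,\blambda}^{-1,*}a\|_{L^4(X\#_\sT S^4,\, h_{\bz,\blambda}^{-1,*}g)}^4
=
\|a\|_{L^4(X\less\cup_{i=1}^l B(\bar x_i,\rho/2),\, g)}^4
+ \sum_{i=1}^l \|\tilde h_{z_i,\lambda_i}^{-1,*}a\|_{L^4(S^4\less\varphi_s(B(0,\lambda_i/2\rho)),\, g_\round)}^4,
\end{equation*}
using that $\tilde h_{z_i,\lambda_i}$ restricted to $B(\bar x_i,\rho/2)\subset X$ conformally identifies it with $S^4\less\varphi_s(B(0,\lambda_i/2\rho))$. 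On each piece I would then apply the Sobolev embedding $W^{1,2}\hookrightarrow L^4$ with Kato to bound the corresponding $L^4$ norm by the $W_{A_0,g}^{1,2}$-norm (respectively $W_{A_i,g_\round}^{1,2}$-norm) over the same subdomain. The inductive hypothesis then applies to each sphere piece whose associated subtree has positive height, reproducing the recursive structure of Definition \ref{defn:W12_norm_via_bubble-tree_data_connected-sum_local_conformal_blow-up}.

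The main technical obstacle will be that the Sobolev embedding constant on a subdomain with boundary depends \emph{a priori} on the shape of that subdomain. Here, however, the subdomains $X\less\cup_{i=1}^l B(\bar x_i,\rho/2)$ and $S^4\less\varphi_s(B(0,\lambda_i/2\rho))$ are all obtained from $(X,g)$ or $(S^4,g_\round)$ by excising geodesic balls with smooth boundary, and the excised regions have bounded Lipschitz geometry (the balls are geodesic). By a Stein-type extension to the ambient closed manifold, followed by the global embedding on $(X,g)$ or $(S^4,g_\round)$, the embedding constant can be controlled uniformly by a quantity depending only on $g$; the constants from the sphere pieces are even universal since $g_\round$ is fixed. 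This yields the uniform constant $C=C(g)$ asserted in the lemma, completing the proof.
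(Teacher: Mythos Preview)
Your proposal is correct and follows the approach the paper itself indicates: the paper does not give a detailed proof of this lemma but simply records it as ``a consequence of the Kato Inequality \eqref{eq:FU_6-20_first-order_Kato_inequality} and Sobolev Embedding \eqref{eq:Sobolev_embedding_domain} in the form $W^{1,2}(U)\hookrightarrow L^4(U)$, where $U\subset\RR^4$ is an open subset obeying the interior cone condition.'' Your argument fleshes this out faithfully, with the added care of the inductive structure on tree height and the uniformity of the embedding constant. One minor correction: your displayed decomposition of the $L^4$ norm should be an inequality $\leq$ rather than an equality, since the pieces $X\less\cup_i B(\bar x_i,\rho/2)$ and (the preimages of) $S^4\less\varphi_s(B(0,\lambda_i/2\rho))$ overlap by design in Definition~\ref{defn:W12_norm_via_bubble-tree_data_connected-sum_local_conformal_blow-up}; this does not affect the argument since only the upper bound is needed.
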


\subsection{Variation of the {\L}ojasiewicz-Simon constants for the Yang-Mills $L^2$-energy functional with respect to a $W^{1,2}$ variation of the Yang-Mills connection}
\label{subsec:Lojasiewicz-Simon constants_Yang-Mills_W12_variation_connections}
Though not required for the proof of Theorem \ref{mainthm:Discreteness_Yang-Mills_energies}, it is interesting to examine the behavior of the {\L}ojasiewicz-Simon triple of constants with respect to a small $W^{1,2}$ variation of the Yang-Mills connection $A_\ym$ on a principal $G$-bundle over a closed four-dimensional manifold, such as hold within fine $W_\loc^{1,2}$ bubble-tree open neighborhoods in $\Crit(P,g,\sC)$.

\begin{lem}[{\L}ojasiewicz-Simon constants and $W^{1,2}$ variations of Yang-Mills connections]
\label{lem:Lojasiewicz-Simon_constants_dependence_W12_variation_Yang-Mills_connection}
Assume the hypotheses of Corollary \ref{cor:Rade_proposition_7-2_d_is_4_conformally_invariant} and let $(Z,\sigma,\theta)$ be the corresponding {\L}ojasiewicz-Simon triple of constants. If $A_\ym'$ is another $g$-Yang-Mills connection of class $W^{\bar k,\bar p}$ on $P$ such that
\[
\|A_\ym' - A_\ym\|_{W_{A_\myref,g}^{1,2}(X)} < \frac{1}{2\sqrt{2}} (1\wedge \sigma),
\]
then $\sE_g(A_\ym') = \sE_g(A_\ym)$ and $(Z, \sigma/(2\sqrt{2}), \theta)$ is a {\L}ojasiewicz-Simon triple for $A_\ym'$.
\end{lem}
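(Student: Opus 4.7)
The plan is a two-line argument that uses the hypothesis in two ways: first to force energy equality via the gradient inequality at $A_\ym$, and then to transfer the inequality to the new base point $A_\ym'$ by a triangle inequality on the Sobolev distance.

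First, I would verify part (a), namely $\sE_g(A_\ym') = \sE_g(A_\ym)$. Since
\[
\|A_\ym' - A_\ym\|_{W_{A_\myref,g}^{1,2}(X)} < \tfrac{1}{2\sqrt{2}}(1 \wedge \sigma) \leq \sigma,
\]
the connection $A_\ym'$ lies in the neighborhood \eqref{eq:Rade_7-1_neighborhood_d_is_4} centered at $A_\ym$, so Theorem \ref{thm:Rade_proposition_7-2_d_is_4} (with $A = A_\ym'$) yields
\[
\|d_{A_\ym'}^{*,g} F_{A_\ym'}\|_{W_{A_\myref,g}^{-1,2}(X)} \geq Z\,|\sE_g(A_\ym') - \sE_g(A_\ym)|^\theta.
\]
But $A_\ym'$ is itself a $g$-Yang-Mills connection, so $d_{A_\ym'}^{*,g} F_{A_\ym'} = 0$ a.e.\ on $X$ by \eqref{eq:Yang-Mills_equation}, forcing the right-hand side to vanish and hence $\sE_g(A_\ym') = \sE_g(A_\ym)$.

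Next, for part (b), let $A$ be a $W^{\bar k,\bar p}$ connection on $P$ obeying
\[
\|A - A_\ym'\|_{W_{A_\myref,g}^{1,2}(X)} < \frac{\sigma}{2\sqrt{2}}.
\]
Combining this with the hypothesis on $A_\ym' - A_\ym$ and the triangle inequality for the $W_{A_\myref,g}^{1,2}$ norm gives
\[
\|A - A_\ym\|_{W_{A_\myref,g}^{1,2}(X)} < \frac{\sigma}{2\sqrt{2}} + \frac{1 \wedge \sigma}{2\sqrt{2}} \leq \frac{\sigma}{\sqrt{2}} < \sigma,
\]
so $A$ still lies in the original {\L}ojasiewicz-Simon neighborhood \eqref{eq:Rade_7-1_neighborhood_d_is_4} around $A_\ym$. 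Applying Theorem \ref{thm:Rade_proposition_7-2_d_is_4} at $A_\ym$ and then substituting $\sE_g(A_\ym) = \sE_g(A_\ym')$ from part (a) yields
\[
\|d_A^{*,g}F_A\|_{W_{A_\myref,g}^{-1,2}(X)} \geq Z\,|\sE_g(A) - \sE_g(A_\ym)|^\theta = Z\,|\sE_g(A) - \sE_g(A_\ym')|^\theta,
\]
which is precisely the {\L}ojasiewicz-Simon gradient inequality with triple $(Z, \sigma/(2\sqrt{2}), \theta)$ at $A_\ym'$.

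There is no real obstacle here; the only point requiring any care is checking that the chosen radius $\sigma/(2\sqrt{2})$ together with the hypothesis on $\|A_\ym' - A_\ym\|_{W^{1,2}}$ leaves strict room inside the original ball of radius $\sigma$ (since the sum of the two radii is $\sigma/\sqrt{2} < \sigma$), which is the reason for the factor $1/(2\sqrt{2})$ rather than $1/2$.
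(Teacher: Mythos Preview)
Your proof is correct and follows essentially the same approach as the paper: apply the gradient inequality at $A_\ym$ to $A_\ym'$ to force energy equality, then use a triangle inequality to show that any $A$ within radius $\sigma/(2\sqrt{2})$ of $A_\ym'$ lies within radius $\sigma$ of $A_\ym$ and hence inherits the inequality with $\sE_g(A_\ym)$ replaced by $\sE_g(A_\ym')$. The only cosmetic difference is that the paper passes through squared norms (using $(a+b)^2 \leq 2a^2 + 2b^2$, which is the source of the $\sqrt{2}$ factors) rather than the direct triangle inequality you use; your route is slightly cleaner and yields the same radius.
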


\begin{proof}
Let $A$ be a connection of class $W^{\bar k,\bar p}$ on $P$. Suppressing explicit dependence on the Riemannian metric, $g$, for brevity, we have
\begin{align*}
\|A - A_\ym\|_{W_{A_\myref}^{1,2}(X)}^2
&=
\|\nabla_{A_\myref}(A - A_\ym)\|_{L^2(X)}^2 + \|A -A_\ym\|_{L^2(X)}^2
\\
&\leq \left(\|\nabla_{A_\myref}(A - A_\ym')\|_{L^2(X)} + \|\nabla_{A_\myref}(A_\ym' - A_\ym)\|_{L^2(X)}\right)^2
\\
&\quad + \left(\|A - A_\ym'\|_{L^2(X)} + \|A_\ym' - A_\ym\|_{L^2(X)} \right)^2
\\
&\leq 2\|\nabla_{A_\myref}(A - A_\ym')\|_{L^2(X)}^2 + 2\|A - A_\ym'\|_{L^2(X)}^2
\\
&\quad + 2\|\nabla_{A_\myref}(A_\ym' - A_\ym)\|_{L^2(X)}^2 + \|A_\ym' - A_\ym\|_{L^2(X)}^2
\\
&= 2\|A - A_\ym'\|_{W_{A_\myref}^{1,2}(X)}^2 + 2\|A_\ym' - A_\ym\|_{W_{A_\myref}^{1,2}(X)}^2.
\end{align*}
Suppose that
\[
\|A_\ym' - A_\ym\|_{W_{A_\myref}^{1,2}(X)} < \eps,
\]
for some $\eps \in (0,1]$ to be determined. The previous inequality then gives
\[
\|A - A_\ym\|_{W_{A_\myref}^{1,2}(X)}^2
<
2\|A - A_\ym'\|_{W_{A_\myref}^{1,2}(X)}^2 + 2\eps^2,
\]
and thus, taking square roots,
\[
\|A - A_\ym\|_{W_{A_\myref}^{1,2}(X)}
<
\sqrt{2}\|A - A_\ym'\|_{W_{A_\myref}^{1,2}(X)} + \sqrt{2}\eps.
\]
Hence, choosing $\eps := (1 \wedge \sigma)/(2\sqrt{2})$ and supposing that $\|A - A_\ym'\|_{W_{A_\myref}^{1,2}(X)} < \sigma/(2\sqrt{2})$ yields,
\[
\|A - A_\ym\|_{W_{A_\myref}^{1,2}(X)}
\leq
\sqrt{2}\|A - A_\ym'\|_{W_{A_\myref}^{1,2}(X)} + \frac{\sigma}{2}
<
\sigma.
\]
Applying Corollary \ref{cor:Rade_proposition_7-2_d_is_4_conformally_invariant} for $A_\ym$ and the {\L}ojasiewicz-Simon radius $\sigma[A_\ym]$ gives
\[
\|d_A^{*,g}F_A\|_{L^4(X)} \geq Z|\sE_g(A) - \sE_g(A_\ym)|^\theta,
\]
with constants $Z = Z[A_\ym]$ and $\theta = \theta[A_\ym]$. If $A = A_\ym'$, then $d_{A_\ym'}^{*,g}F_{A_\ym'} = 0$ and so the preceding inequality implies that $\sE_g(A_\ym') = \sE_g(A_\ym)$. Consequently, for a general Sobolev connection, $A$, the preceding inequality gives
\[
\|d_A^{*,g}F_A\|_{L^4(X)} \geq Z|\sE_g(A) - \sE_g(A_\ym')|^\theta,
\]
and thus $(Z,\sigma/(2\sqrt{2}),\theta)$ is a {\L}ojasiewicz-Simon triple for $[A_\ym']$.
\end{proof}

\subsection{{\L}ojasiewicz-Simon gradient inequality for the Yang-Mills $L^2$-energy functional and coarse $W_\loc^{1,2}$ bubble-tree neighborhoods}
\label{subsec:Lojasiewicz-Simon gradient_inequality_Yang-Mills_functional_coarse_bubble-tree_neighborhood}
We have the following version of the {\L}ojasiewicz-Simon gradient inequality for the Yang-Mills $L^2$-energy functional in Corollary \ref{cor:Rade_proposition_7-2_d_is_4_conformally_invariant}, with norm 
$\|\cdot\|_{W_{A_\myref, g^\#}^{1,2}(X\#_\sT S^4)}$ on $\Omega^1(X\#_\sT S^4;\ad P)$ replaced by the norm $\|\cdot\|_{\overline{W}_{\sU, g}^{1,2}(X\#_\sT S^4)}$.

\begin{prop}[{\L}ojasiewicz-Simon gradient inequality adapted to a coarse $W_\loc^{1,2}$ bubble-tree neighborhood]
\label{prop:Lojasiewicz-Simon_gradient_inequality_coarse_bubble-tree_neighborhood_conformal_blow-up}
Let $G$ be a compact Lie group and $P$ be a principal $G$-bundle over a closed, connected, four-dimensional, oriented, smooth manifold, $X$, endowed with a smooth Riemannian metric, $g$. Let $A_\ym$ be a $g$-Yang-Mills connection of class $W^{\bar k,\bar p}$, with integer $\bar k\geq 1$ and $\bar p \geq 2$ obeying $(\bar k+1)\bar p > 4$, on a principal $G$-bundle, $P$, over $X$, let $\sC \Subset [0,\infty)$ be a compact subset, and let $\sU \subset \Crit(P,g,\sC)$ be a coarse $W_\loc^{1,2}$ bubble-tree open neighborhood of $[A_\ym]$ provided by Corollary \ref{cor:Taubes_1988_proposition_5-4_coarse_conformal}. Then there are positive constants $Z \in [1,\infty)$, $\sigma\in (0,1]$, and $\theta \in [1/2,1)$, depending on $\sU$, the gauge-equivalence class, $[A_\ym]$, and $g$, $G$, $P$, and $X$ with the following significance. Suppose $A$ is a $W^{\bar k,\bar p}$ Sobolev connection on $P$ and that $[A]$ and $[A_\ym]$ belong to a coarse $W_\loc^{1,2}$ bubble-tree open neighborhood $\sU \subset \sB(P,g)$ such that there are conformal blow-up diffeomorphisms, $f, h \in \Conf(X,g)$, and a $C^\infty$ gauge transformation, $u \in \Aut(P)$ with the property that
\begin{equation}
\label{eq:Rade_7-1_coarse_bubble-tree_neighborhood}
\|f^{-1,*}u(A) - h^{-1,*} A_\ym\|_{\overline{W}_{\sU, g}^{1,2}(X\#_\sT S^4)} < \sigma,
\end{equation}
where $\sT$ is the finite tree defined by $\sU$. Then $A$ obeys
\begin{equation}
\label{eq:Rade_7-1_coarse_bubble-tree_gradient_inequality}
\|d_A^{*,g}F_A\|_{L^4(X,g)} \geq Z|\sE_g(A) - \sE_g(A_\ym)|^\theta.
\end{equation}
\end{prop}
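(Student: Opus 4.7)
The plan is to pull the problem back to an application of Corollary \ref{cor:Rade_proposition_7-2_d_is_4_conformally_invariant} on the connected sum $(X\#_\sT S^4, g^\#)$, and then to convert the conclusion back to $(X, g)$ via the conformal invariance of the Yang--Mills $L^2$-energy functional and of the $L^4$ norm on one-forms in dimension four. Concretely, I would set $g^\# := h^{-1,*}g$ and $A_\ym^\# := h^{-1,*}A_\ym$, so that the conformal invariance of the Yang--Mills equation in dimension four guarantees that $A_\ym^\#$ is a $W^{\bar k, \bar p}$ Yang--Mills connection on the pulled-back bundle $P^\# := h^{-1,*}P$ over the closed, four-dimensional, smooth Riemannian manifold $(X\#_\sT S^4, g^\#)$. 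I would then apply Corollary \ref{cor:Rade_proposition_7-2_d_is_4_conformally_invariant} to $A_\ym^\#$ with a fixed smooth reference connection $A_\myref^\#$ to obtain a {\L}ojasiewicz--Simon triple $(Z_0, \sigma_0, \theta)$ valid on $(X\#_\sT S^4, g^\#)$.

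With that triple in hand, I would take as the test connection $B := f^{-1,*}u(A)$ on $P^\#$ and invoke both the conformal invariance of the $L^4$ norm on sections of $\Lambda^1\otimes\ad P$ and the $\Conf(X,g)$-invariance of $\sE_g$ to compute
\[
\|d_B^{*, g^\#} F_B\|_{L^4(X\#_\sT S^4, g^\#)} = \|d_A^{*,g} F_A\|_{L^4(X, g)}, \quad \sE_{g^\#}(B) = \sE_g(A), \quad \sE_{g^\#}(A_\ym^\#) = \sE_g(A_\ym).
\]
These identities turn the output of Corollary \ref{cor:Rade_proposition_7-2_d_is_4_conformally_invariant} for $A_\ym^\#$ on $(X\#_\sT S^4, g^\#)$ into the desired inequality \eqref{eq:Rade_7-1_coarse_bubble-tree_gradient_inequality} for $A$ and $A_\ym$ on $(X,g)$, provided that the hypothesis \eqref{eq:Rade_7-1_coarse_bubble-tree_neighborhood} on the \emph{ad hoc} norm $\overline{W}^{1,2}_{\sU, g}$ can be upgraded to the standard Sobolev bound $\|B - A_\ym^\#\|_{W^{1,2}_{A_\myref^\#, g^\#}(X\#_\sT S^4)} < \sigma_0$ required by the {\L}ojasiewicz--Simon inequality.

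This last step is the principal obstacle, and where most of the work lies. The \emph{ad hoc} norm of Definition \ref{defn:W12_norm_via_bubble-tree_data_connected-sum_Aprime_minus_A_local_conformal_blow-up} splits $X\#_\sT S^4$ into a body portion $X \setminus \bigcup_i B(\bar x_i, \rho/2)$ measured with metric $g$ and reference $A_0$, together with rescaled sphere portions $S^4 \setminus \varphi_s(B(0, \lambda_i/2\rho))$ measured with $g_\round$ and reference $A_i$, whereas $W^{1,2}_{A_\myref^\#, g^\#}$ is a single global Sobolev norm. My plan is to choose $A_\myref^\#$ to agree with $A_0$ on the body and (up to gauge) with $A_i$ on each bubble away from the neck regions, and then to observe that the construction of Section \ref{sec:Bubble_trees_and_Riemannian_metrics_connected_sums} gives $g^\# = g$ on $X \setminus \bigcup_i B(\bar x_i, 4\sqrt{\lambda_i})$ and that $g^\#$ is uniformly $C^2$-close to $g_\round$ on each sphere outside a small neighborhood of the south pole (see Section \ref{subsec:Convergence_Riemannian_metric_connected_sum}), so that the two norms are pointwise comparable on those pieces with universal constants. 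On the thin neck annuli in between, I would apply the curvature smallness \eqref{eq:Bubble_tree_neighborhood_A_L2_small_curvature_annulus} from Definition \ref{defn:Open_bubble_tree_neighborhood} together with Corollary \ref{cor:Uhlenbeck_theorem_3-5_annulus_connection_one-form_radial_gauge} to estimate $B - A_\ym^\#$ in a common Coulomb gauge, in the spirit of the proof of Proposition \ref{prop:Taubes_1988_proposition_5-4_fine}. Combining these, I would deduce $\|B - A_\ym^\#\|_{W^{1,2}_{A_\myref^\#, g^\#}} \leq C\,\|B - A_\ym^\#\|_{\overline{W}^{1,2}_{\sU, g}}$ for a constant $C = C(\sU, g, A_\ym)$, and then choose $\sigma := \sigma_0/C$ and $Z := Z_0$ to conclude.
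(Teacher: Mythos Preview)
Your overall architecture is exactly the paper's: pull both connections to the connected sum via their respective conformal blow-up diffeomorphisms, apply a {\L}ojasiewicz--Simon inequality there, and pull the conclusion back to $(X,g)$ using the conformal invariance of $\sE_g$ and of the $L^4$ norm on one-forms.

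The genuine difference is in how you verify the hypothesis of the {\L}ojasiewicz--Simon inequality on the connected sum. You propose to treat Corollary~\ref{cor:Rade_proposition_7-2_d_is_4_conformally_invariant} as a black box on $(X\#_\sT S^4,g^\#)$ and then prove a norm comparison
\[
\|B-A_\ym^\#\|_{W^{1,2}_{A_\myref^\#,g^\#}(X\#_\sT S^4)}
\leq C\,\|B-A_\ym^\#\|_{\overline{W}^{1,2}_{\sU,g}(X\#_\sT S^4)}.
\]
The paper instead reopens the proof of Theorem~\ref{thm:Rade_proposition_7-2} (via the abstract Theorem~\ref{thm:Huang_2-4-5}) and runs it directly with the \emph{ad hoc} norm $\overline{W}^{1,2}_{\sU,g}$, invoking Lemma~\ref{lem:W12_norm_via_bubble-tree_data_connected-sum_local_conformal_blow-up_L4_embedding} to supply the only analytic ingredient that is actually needed, namely the embedding $\overline{W}^{1,2}_{\sU,g}\hookrightarrow L^4$. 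This sidesteps your norm-comparison step entirely.

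Your route is workable but the comparison is more delicate than your sketch suggests. In the coarse setting the scales $\lambda_i'$ of $A$ and $\lambda_i$ of $A_\ym$ are \emph{not} comparable, so the two blow-ups $B$ and $A_\ym^\#$ are defined on genuinely different portions of each $S^4\setminus\{s\}$ and must be extended (by one of the mechanisms in Definition~\ref{defn:Comparing_local_conformal_blow-ups_pair_connections_near_southern_poles}) before you can even write down $\|B-A_\ym^\#\|_{W^{1,2}_{g^\#}}$. You then need $C$ uniform over all such $[A]\in\sU$, which forces you to track carefully how $g^\#$ (built from $\lambda_i$) interacts with the sphere portion of $B$ (built from $\lambda_i'$) across the mismatched annuli. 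This can be done with the quasi-conformal invariance of Lemma~\ref{lem:Taubes_1988_3-1} and the annular smallness estimates you cite, but it is real work. The paper's route via the $L^4$ embedding of Lemma~\ref{lem:W12_norm_via_bubble-tree_data_connected-sum_local_conformal_blow-up_L4_embedding} is cheaper precisely because the $L^4$ norm on one-forms is exactly conformally invariant, so no neck comparison is needed.
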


\begin{proof}
The gradient inequality,
\[
\|d_{f^{-1,*}u(A)}^{*,f^{-1,*}g}F_{f^{-1,*}u(A)}\|_{L^4(X\#_\sT S^4,f^{-1,*}g)}
\geq
Z|\sE_{f^{-1,*}g}(f^{-1,*}u(A)) - \sE_{h^{-1,*} g}(h^{-1,*} A_\ym)|^\theta,
\]
follows from the proof of Theorem \ref{thm:Rade_proposition_7-2} in \cite{Feehan_yang_mills_gradient_flow} using Theorem \ref{thm:Huang_2-4-5}, Theorem \ref{thm:Rade_proposition_7-2_d_is_4}, and Corollary \ref{cor:Rade_proposition_7-2_d_is_4_conformally_invariant}, together with Lemma \ref{lem:W12_norm_via_bubble-tree_data_connected-sum_local_conformal_blow-up_L4_embedding}. The invariance of the $L^4$ norm on one-forms and $L^2$ norm on two-forms with respect to conformal diffeomorphisms of a four-dimensional Riemannian manifold then yield \eqref{eq:Rade_7-1_coarse_bubble-tree_gradient_inequality}.
\end{proof}

\subsection{Completion of the proofs of Theorems \ref{mainthm:Discreteness_Yang-Mills_energies} and \ref{mainthm:Separation_strata_Yang-Mills_connections}}
\label{subsec:Completion_proof_main_theorems}
As a consequence of Corollary \ref{cor:Taubes_1988_proposition_5-4_coarse_conformal} and Proposition \ref{prop:Lojasiewicz-Simon_gradient_inequality_coarse_bubble-tree_neighborhood_conformal_blow-up} we can conclude the

\begin{proof}[Proof of Theorem \ref{mainthm:Discreteness_Yang-Mills_energies}]
Corollary \ref{cor:Taubes_1988_proposition_5-4_coarse_conformal} ensures that $\Crit(P,g,\sC)$ has a finite cover by coarse $W_\loc^{1,2}$ bubble-tree open neighborhoods, $\sU'\cap\Crit(P,g,\sC)$ defined by the inequality \eqref{eq:Rade_7-1_coarse_bubble-tree_neighborhood}, of points $[A_\ym] \in \Crit(P,g,\sC)$. If $[A] \in \sU'\cap\Crit(P,g,\sC)$, then $A$ satisfies \eqref{eq:Rade_7-1_coarse_bubble-tree_neighborhood} and hence Proposition \ref{prop:Lojasiewicz-Simon_gradient_inequality_coarse_bubble-tree_neighborhood_conformal_blow-up} implies that $A$ obeys the {\L}ojasiewicz-Simon gradient inequality \eqref{eq:Rade_7-1_coarse_bubble-tree_gradient_inequality}. But $A$ also obeys the Yang-Mills equation, $d_A^{*,g}F_A = 0$, and hence the inequality \eqref{eq:Rade_7-1_coarse_bubble-tree_gradient_inequality} implies that $\sE_g(A) = \sE_g(A_\ym)$. This proves the main result of this article, Theorem \ref{mainthm:Discreteness_Yang-Mills_energies}.
\end{proof}

\begin{proof}[Proof of Theorem \ref{mainthm:Separation_strata_Yang-Mills_connections}]
From inequality \eqref{eq:W12_distance_between_critical_sets_connections_small}, there exist $[A] \in \Crit(P,g,c)$ and $[A'] \in \Crit(P,g,c')$ such that
\[
\|u(A') - A\|_{W_{A,g}^{1,2}(X)} < \delta,
\]
for some $u \in \Aut(P)$. If $\delta = \sigma[A]$, then the {\L}ojasiewicz-Simon gradient inequality given by Corollary \ref{cor:Rade_proposition_7-2_d_is_4_conformally_invariant} implies that $c' = \sE_g(A') = \sE_g(A) = c$. Hence, the conclusion would follow if there were a positive constant, $\sigma_0$, with the stated dependencies such that $\sigma[A] \geq \sigma_0$ for all $[A] \in \Crit(P,g,\sC)$.

To reach the desired conclusion, we shall instead apply the {\L}ojasiewicz-Simon gradient inequality given by Proposition \ref{prop:Lojasiewicz-Simon_gradient_inequality_coarse_bubble-tree_neighborhood_conformal_blow-up}, where the dependencies of its {\L}ojasiewicz-Simon triple of constants can be tracked more easily. For $\eps[A_\ym] > 0$, Corollary \ref{cor:Taubes_1988_proposition_5-4_coarse_conformal} implies that $\Crit(P,g,\sC)$ has a finite cover by coarse $W_\loc^{1,2}$ bubble-tree open neighborhoods, $\sU'\cap\Crit(P,g,\sC)$ defined by the inequality \eqref{eq:Coarse_W12_bubble-tree_neighborhood_via_conformal_blow-up_map}, of points $[A_\ym] \in \Crit(P,g,\sC)$. If $[A] \in \sU'\cap\Crit(P,g,\sC)$, then there are conformal blow-up diffeomorphisms, $f, h \in \Conf(X,g)$, and a $C^\infty$ gauge transformation, $u \in \Aut(P)$ with the property that
\[
\|f^{-1,*}u(A) - h^{-1,*} A_\ym\|_{\overline{W}_{\sU, g}^{1,2}(X\#_\sT S^4)} < \eps,
\]
where $\sT$ is the finite tree defined by $\sU'$. Applying the proof of Lemma \ref{lem:Lojasiewicz-Simon_constants_dependence_W12_variation_Yang-Mills_connection}, it follows \mutatis that if $(Z[A_\ym], \sigma[A_\ym], \theta[A_\ym])$ is a {\L}ojasiewicz-Simon triple for $[A_\ym]$ in Proposition \ref{prop:Lojasiewicz-Simon_gradient_inequality_coarse_bubble-tree_neighborhood_conformal_blow-up}, then $(Z[A_\ym], \sigma[A_\ym]/4, \theta[A_\ym])$ serves as a {\L}ojasiewicz-Simon triple for $[A]$ in Proposition \ref{prop:Lojasiewicz-Simon_gradient_inequality_coarse_bubble-tree_neighborhood_conformal_blow-up} when $\eps[A_\ym] = (1 \wedge \sigma[A_\ym])/4$ in the preceding inequality. Consequently,
\[
\sigma[A] \geq \frac{\sigma[A_\ym]}{4}, \quad\forall\, [A] \in \sU' \cap \Crit(P,g,\sC).
\]
From Definitions \ref{defn:W12_norm_via_bubble-tree_data_connected-sum_Aprime_minus_A_local_conformal_blow-up} and \ref{defn:Comparing_local_conformal_blow-ups_pair_connections_near_southern_poles}, we find that
\[
\|u(A') - A\|_{\overline{W}_{\sU,g}^{1,2}(X\#_\sT S^4)} \leq C\|u(A') - A\|_{W_{A,g}^{1,2}(X)},
\]
where $C = C(g) \in [1,\infty)$ and the identity map on $X$ in place of the pair of conformal blow-up diffeomorphisms in those definitions. If we choose $\delta = (1 \wedge \sigma[A_\ym])/(4C)$, then
 \[
\|u(A') - A\|_{\overline{W}_{\sU,g}^{1,2}(X\#_\sT S^4)} < \frac{\sigma[A_\ym]}{4},
\]
and Proposition \ref{prop:Lojasiewicz-Simon_gradient_inequality_coarse_bubble-tree_neighborhood_conformal_blow-up} (with $f$ and $h$ replaced by the identity map on $X$) implies that $c' = \sE_g(A') = \sE_g(A) = c'$. The conclusion in Theorem \ref{mainthm:Separation_strata_Yang-Mills_connections} now follows by finiteness of the open cover for $\overline{\Crit}^{\,\tau}(P,g,\sC)$ by coarse $W_\loc^{1,2}$ bubble-tree open neighborhoods of the form provided by Corollary \ref{cor:Taubes_1988_proposition_5-4_coarse_conformal}.
\end{proof}

\appendix

\section{A weighted Sobolev embedding on Euclidean space}
\label{sec:Weighted_Sobolev_embedding_Euclidean_space}
The following Sobolev embedding result generalizes that of \cite[Lemma 5.2]{FeehanSlice}, which assumes $p=2$ and $d=4$; similar embeddings appear as \cite[Equation (3.4)]{TauStable} for $p=2$ and $d=4$ and \cite[Definition 4.2 and Lemma 5.4 (a)]{ParkerTaubes} for $d = 3$. Our proof of \cite[Lemma 5.2]{FeehanSlice} contained a small typographical error\footnote{The integrals over $\RR$ should be replaced by integrals over the half-line, $[0,\infty)$.} and so we include the details here for the sake of completeness.

\begin{lem}[Weighted Sobolev embedding on Euclidean space]
\label{lem:Sobolev_embedding_critical_exponent_spaces_Euclidean}
Let $d \geq 3$ be an integer and $\RR^d$ have its standard Euclidean metric. If $f\in W^{1,d-2}(\RR^d;\CC)$, then
\[
\sup_{x_0\in\RR^d}\||x_0 - \cdot\,|^{-1} f\|_{L^{d-2}(\RR^d)} \leq \frac{d-2}{2}\|\cov f\|_{L^{d-2}(\RR^d)}.
\]
\end{lem}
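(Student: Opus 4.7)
The inequality is a version of the classical Hardy inequality with sharp constant $p/(d-p)$ applied with $p = d-2$ (so $p/(d-p) = (d-2)/2$), combined with the Kato inequality to handle the fact that $f$ is complex-valued. The plan is to reduce to the case $x_0 = 0$ by translation invariance of Lebesgue measure and of $\|\cov f\|_{L^{d-2}}$, then to replace $f$ by $|f|$ using the pointwise Kato inequality $|\cov |f|| \leq |\cov f|$ almost everywhere, and finally to establish the Hardy-type estimate for non-negative scalar functions via integration by parts against an explicit radial vector field.

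More concretely, after reduction it suffices to prove $\||x|^{-1}u\|_{L^{d-2}(\RR^d)} \leq \tfrac{d-2}{2}\|\cov u\|_{L^{d-2}(\RR^d)}$ for non-negative $u \in C_c^\infty(\RR^d \less \{0\})$; the general case follows by standard density arguments (truncating $f$ near the origin with a smooth cut-off and mollifying, then passing to the limit using that the singularity $|x|^{-(d-2)}$ is locally integrable and controlled by the $L^{d-2}$ norm of $\cov f$). For smooth compactly supported $u$, set $p := d-2$ and use the divergence identity
\[
\divg\!\left(\frac{x}{|x|^p}\right) = \frac{d-p}{|x|^p} = \frac{2}{|x|^{d-2}} \quad\text{on } \RR^d \less \{0\},
\]
which is verified by direct computation. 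Multiplying by $u^p$ and integrating by parts yields
\[
(d-p)\int_{\RR^d} \frac{u^p}{|x|^p}\,dx = -p\int_{\RR^d} u^{p-1}\,\frac{x\cdot \cov u}{|x|^p}\,dx.
\]

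The estimate $|x\cdot \cov u| \leq |x|\,|\cov u|$ combined with H\"older's inequality with conjugate exponents $p/(p-1)$ and $p$ gives
\[
(d-p)\int_{\RR^d} \frac{u^p}{|x|^p}\,dx
\leq p\left(\int_{\RR^d}\frac{u^p}{|x|^p}\,dx\right)^{(p-1)/p}
\left(\int_{\RR^d}|\cov u|^p\,dx\right)^{1/p},
\]
from which the desired inequality follows after dividing both sides by the first factor on the right. The only real subtlety will be justifying the integration by parts: the vector field $x/|x|^p$ is singular at the origin, but if $u$ is compactly supported away from $0$ there is no issue, and for general $u \in W^{1,d-2}(\RR^d;\CC)$ one introduces a radial cut-off $\chi_\eps$ vanishing on $B_\eps(0)$ and equal to $1$ outside $B_{2\eps}(0)$, showing that the boundary contributions vanish as $\eps \searrow 0$ because $u$ lies in $L^{d-2}_{\loc}$ while the surface measure of $\partial B_\eps$ scales as $\eps^{d-1}$ against a density singular like $\eps^{-(d-1)}$, giving an $o(1)$ boundary term once a uniform $L^p$ bound on $|x|^{-1}u$ has been established (for instance by first applying the argument to $u_\delta := (|u|^2 + \delta)^{1/2} - \sqrt{\delta}$, which is bounded near $0$, and letting $\delta \searrow 0$).
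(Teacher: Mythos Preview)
Your proof is correct and is essentially the same Hardy-inequality argument as the paper's: both integrate by parts radially (you via the divergence identity $\divg(x/|x|^p)=(d-p)/|x|^p$, the paper via $r\,dr=\tfrac12\,d(r^2)$ in polar coordinates) and then apply H\"older with exponents $p/(p-1)$ and $p$, which yields the identical core identity $\int r^{2-d}u^{d-2}\,dx=-\tfrac{d-2}{2}\int r^{3-d}u^{d-3}\partial_r u\,dx$. The only cosmetic differences are that the paper treats $d=3$ separately (since the H\"older exponent $(d-2)/(d-3)$ degenerates there, though your inequality also collapses trivially at $p=1$), and the paper handles the passage to general $f$ by splitting real $f=f^+-f^-$ and citing \cite[Lemmas 7.5--7.8]{GilbargTrudinger}, whereas you use the Kato inequality $|\nabla|f||\le|\nabla f|$ to reduce directly to nonnegative scalar functions.
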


\begin{proof}
Suppose first that $f\in C_0^\infty(\RR^d)$ and $d \geq 3$ and $f \geq 0$ on $\RR^d$. Let $x = (r,\theta)$ denote polar coordinates centered at a point $x_0\in\RR^d$, so $r = |x-x_0|$ and $dx = r^{d-1}\,dr d\theta$. Then
\begin{align*}
\int_{\RR^d}r^{2-d} f^{d-2} \,dx
&=
\int_{S^{d-1}}\int_0^\infty r f^{d-2}\,dr d\theta
=
\frac{1}{2}\int_{S^{d-1}}\int_0^\infty \frac{dr^2}{dr }f^{d-2} \,dr d\theta
\\
&= - \left(\frac{d-2}{2}\right)\int_{S^{d-1}}\int_0^\infty r^2 f^{d-3} \frac{\partial f}{\partial r}\,dr d\theta,
\end{align*}
via integration by parts and hence,
\[
\int_{\RR^d}r^{2-d} f^{d-2} \,dx
=
- \left(\frac{d-2}{2}\right)\int_{\RR^d} r^{3-d} f^{d-3} \frac{\partial f}{\partial r}\,dx.
\]
If $d = 3$, then the preceding identity simplifies to
\[
\int_{\RR^3}r^{-1} f \,dx
=
- \frac{1}{2}\int_{\RR^3} \frac{\partial f}{\partial r}\,dx,
\]
and the conclusion follows since $r = |x-x_0|$, for this class of function, $f$, by taking the supremum over all $x_0 \in \RR^3$.

We now consider the case $d \geq 4$. By applying the H\"older Inequality with exponent $\mu \in (1, 2]$ such that $(d-3)\mu = d-2$, thus $\mu = (d-2)/(d-3)$, and dual exponent $\mu' \in [2,\infty)$ defined by $1/\mu + 1/\mu' = 1$, thus $1/\mu' = 1 - (d-3)/(d-2) = 1/(d-2)$ and $\mu' = d-2$, we see that
\begin{align*}
\int_{\RR^d} r^{2-d}f^{d-2}\,dx
&=
- \left(\frac{d-2}{2}\right)\int_{\RR^d} r^{3-d} f^{d-3} \frac{\partial f}{\partial r}\, dx
\\
&\leq \frac{d-2}{2}\left(\int_{\RR^d} r^{(3-d)\mu} f^{(3-d)\mu} \,dx\right)^{1/\mu}
\left(\int_{\RR^d} |\cov f|^{\mu'}\,dx\right)^{1/\mu'}
\\
&= \frac{d-2}{2}\left(\int_{\RR^d} r^{d-2} f^{d-2} \,dx\right)^{(d-3)/(d-2)}
\left(\int_{\RR^d} |\cov f|^{d-2}\,dx\right)^{1/(d-2)}.
\end{align*}
Therefore, provided $f \not\equiv 0$ on $\RR^d$, we have
\[
\left( \int_{\RR^d} r^{2-d}f^{d-2}\,dx \right)^{1/(d-2)}
\leq
\frac{d-2}{2}\left(\int_{\RR^d} |\cov f|^{d-2}\,dx\right)^{1/(d-2)}.
\]
Again for this class of function, $f$, the conclusion follows by taking the supremum over all $x_0 \in \RR^d$. For the general case of $f \in W^{1, d-2}(\RR^d;\RR)$, one splits $f = f^+ - f^-$, where $f^+ = \max\{f,0\}$ and $f^- = \max\{-f, 0\}$, and proceeds as in the proof of \cite[Theorem 7.8]{GilbargTrudinger}, using \cite[Lemmas 7.5, 7.6, and 7.8]{GilbargTrudinger}. The extension to the case of $f \in W^{1, d-2}(\RR^d;\CC)$ is trivial.
\end{proof}

%
%

\bibliography{master,mfpde}

\def\cprime{$'$} \def\cprime{$'$} \def\cprime{$'$} \def\cprime{$'$}
  \def\polhk#1{\setbox0=\hbox{#1}{\ooalign{\hidewidth
  \lower1.5ex\hbox{`}\hidewidth\crcr\unhbox0}}} \def\cprime{$'$}
  \def\cprime{$'$} \def\cprime{$'$}
  \def\lfhook#1{\setbox0=\hbox{#1}{\ooalign{\hidewidth
  \lower1.5ex\hbox{'}\hidewidth\crcr\unhbox0}}} \def\cprime{$'$}
  \def\cprime{$'$} \def\cprime{$'$} \def\cprime{$'$} \def\cprime{$'$}
\providecommand{\bysame}{\leavevmode\hbox to3em{\hrulefill}\thinspace}
\providecommand{\MR}{\relax\ifhmode\unskip\space\fi MR }
\providecommand{\MRhref}[2]{%
  \href{http://www.ams.org/mathscinet-getitem?mr=#1}{#2}
}
\providecommand{\href}[2]{#2}
\begin{thebibliography}{100}

\bibitem{AdamsFournier}
R.~A. Adams and J.~J.~F. Fournier, \emph{Sobolev spaces}, second ed.,
  Elsevier/Academic Press, Amsterdam, 2003. \MR{2424078 (2009e:46025)}

\bibitem{Atiyah_Bott_1983}
M.~F. Atiyah and R.~Bott, \emph{The {Y}ang-{M}ills equations over {R}iemann
  surfaces}, Philos. Trans. Roy. Soc. London Ser. A \textbf{308} (1983),
  523--615. \MR{702806 (85k:14006)}

\bibitem{AHS}
M.~F. Atiyah, N.~J. Hitchin, and I.~M. Singer, \emph{Self-duality in
  four-dimensional {R}iemannian geometry}, Proc. Roy. Soc. London Ser. A
  \textbf{362} (1978), no.~1711, 425--461. \MR{506229 (80d:53023)}

\bibitem{Aubin}
T.~Aubin, \emph{Nonlinear analysis on manifolds. {M}onge-{A}mp\`ere equations},
  Springer, New York, 1982. \MR{681859 (85j:58002)}

\bibitem{Aubin_1998}
\bysame, \emph{Some nonlinear problems in {R}iemannian geometry}, Springer,
  Berlin, 1998. \MR{1636569 (99i:58001)}

\bibitem{Babadshanjan_Habermann_1991}
F.~Babadshanjan and L.~Habermann, \emph{A family of metrics on the moduli space
  of {BPST}-instantons}, Ann. Global Anal. Geom. \textbf{9} (1991), 245--252.
  \MR{1143405 (92i:58030)}

\bibitem{Bleecker_1981}
D.~Bleecker, \emph{Gauge theory and variational principles}, Global Analysis
  Pure and Applied Series A, vol.~1, Addison-Wesley Publishing Co., Reading,
  Mass., 1981. \MR{643361 (83h:53049)}

\bibitem{Blumenhagen_Plauschinn_2009}
R.~Blumenhagen and E.~Plauschinn, \emph{Introduction to conformal field theory:
  with applications to string theory}, Lecture Notes in Physics, vol. 779,
  Springer-Verlag, Berlin, 2009. \MR{2492295 (2011a:81219)}

\bibitem{Bor_1992}
G.~Bor, \emph{Yang-{M}ills fields which are not self-dual}, Comm. Math. Phys.
  \textbf{145} (1992), 393--410. \MR{1162805 (93e:58030)}

\bibitem{Bor_Montgomery_1990}
G.~Bor and R.~Montgomery, \emph{{${\rm SO}(3)$} invariant {Y}ang-{M}ills fields
  which are not self-dual}, Hamiltonian systems, transformation groups and
  spectral transform methods ({M}ontreal, {PQ}, 1989), Univ. Montr\'eal,
  Montreal, QC, 1990, pp.~191--198. \MR{1110384 (92f:58041)}

\bibitem{Bourguignon_Lawson_1981}
J-P. Bourguignon and H.~B. Lawson, Jr., \emph{Stability and isolation phenomena
  for {Y}ang-{M}ills fields}, Comm. Math. Phys. \textbf{79} (1981), 189--230.
  \MR{612248 (82g:58026)}

\bibitem{Bourguignon_Lawson_Simons_1979}
J-P. Bourguignon, H.~B. Lawson, Jr., and J.~Simons, \emph{Stability and gap
  phenomena for {Y}ang-{M}ills fields}, Proc. Nat. Acad. Sci. U.S.A.
  \textbf{76} (1979), 1550--1553. \MR{526178 (80h:53028)}

\bibitem{Brendle_2003arxiv}
S.~Brendle, \emph{On the construction of solutions to the {Y}ang-{M}ills
  equations in higher dimensions}, arXiv:math/0302093.

\bibitem{Chaohao_1980}
G.~Chaohao, \emph{Conformally flat spaces and solutions to {Y}ang-{Y}ills
  equations}, Phys. Rev. D \textbf{21} (1980), no.~4, 970--971.

\bibitem{Chaohao_1981}
\bysame, \emph{On classical {Y}ang-{Y}ills fields}, Physics Reports -- Review
  Section of Physics Letters \textbf{80} (1981), no.~4, 251--337.

\bibitem{Chavel}
I.~Chavel, \emph{Eigenvalues in {R}iemannian geometry}, Pure and Applied
  Mathematics, vol. 115, Academic Press Inc., Orlando, FL, 1984. \MR{768584
  (86g:58140)}

\bibitem{Christ_Kiselev_1998}
M.~Christ and A.~Kiselev, \emph{Absolutely continuous spectrum for
  one-dimensional {S}chr\"odinger operators with slowly decaying potentials:
  some optimal results}, J. Amer. Math. Soc. \textbf{11} (1998), 771--797.
  \MR{1621882 (99g:34166)}

\bibitem{Christ_Kiselev_Remling_1997}
M.~Christ, A.~Kiselev, and C.~Remling, \emph{The absolutely continuous spectrum
  of one-dimensional {S}chr\"odinger operators with decaying potentials}, Math.
  Res. Lett. \textbf{4} (1997), 719--723. \MR{1484702 (99b:34139)}

\bibitem{Daskalopoulos_Wentworth_2004}
G.~D. Daskalopoulos and R.~A. Wentworth, \emph{Convergence properties of the
  {Y}ang-{M}ills flow on {K}\"ahler surfaces}, J. Reine Angew. Math.
  \textbf{575} (2004), 69--99. \MR{2097548 (2005m:53116)}

\bibitem{Daskalopoulos_Wentworth_2007}
\bysame, \emph{On the blow-up set of the {Y}ang-{M}ills flow on {K}\"ahler
  surfaces}, Math. Z. \textbf{256} (2007), 301--310. \MR{2289876 (2008h:53122)}

\bibitem{Deift_Killip_1999}
P.~Deift and R.~Killip, \emph{On the absolutely continuous spectrum of
  one-dimensional {S}chr\"odinger operators with square summable potentials},
  Comm. Math. Phys. \textbf{203} (1999), 341--347. \MR{1697600 (2000c:34223)}

\bibitem{Diestel_graph_theory4}
R.~Diestel, \emph{Graph theory}, fourth ed., Graduate Texts in Mathematics,
  vol. 173, Springer, Heidelberg, 2010. \MR{2744811 (2011m:05002)}

\bibitem{Dodziuk_Min-Oo_1982}
J.~Dodziuk and M.~Min-Oo, \emph{An {$L_{2}$}-isolation theorem for
  {Y}ang-{M}ills fields over complete manifolds}, Compositio Math. \textbf{47}
  (1982), 165--169. \MR{677018 (84b:53033b)}

\bibitem{Doi_Kobayashi_1990}
H.~Doi and K.~Kobayashi, \emph{Asymptotic behavior of three {R}iemannian
  metrics on the moduli space of {$1$}-instantons over a definite
  {$4$}-manifold}, Hiroshima Math. J. \textbf{20} (1990), 613--618. \MR{1083429
  (92f:58028)}

\bibitem{DonASD}
S.~K. Donaldson, \emph{Anti self-dual {Y}ang-{M}ills connections over complex
  algebraic surfaces and stable vector bundles}, Proc. London Math. Soc. (3)
  \textbf{50} (1985), 1--26. \MR{765366 (86h:58038)}

\bibitem{DonConn}
\bysame, \emph{Connections, cohomology and the intersection forms of
  four-manifolds}, J. Differential Geom. \textbf{24} (1986), 275--341.

\bibitem{DonPoly}
\bysame, \emph{Polynomial invariants for smooth four-manifolds}, Topology
  \textbf{29} (1990), 257--315.

\bibitem{DK}
S.~K. Donaldson and P.~B. Kronheimer, \emph{The geometry of four-manifolds},
  Oxford Mathematical Monographs, The Clarendon Press, Oxford University Press,
  New York, 1990, Oxford Science Publications. \MR{1079726 (92a:57036)}

\bibitem{Feehan_yangmillsenergygap}
P.~M.~N. Feehan, \emph{Energy gap for {Y}ang-{M}ills connections, {I}:
  Four-dimensional closed {R}iemannian manifolds}, arXiv:1412.4114, 16 pages.

\bibitem{Feehan_yangmillsenergygapflat}
\bysame, \emph{Energy gap for {Y}ang-{M}ills connections, {II}: Arbitrary
  closed {R}iemannian manifolds}, \url{mpim-bonn.mpg.de/preblob/5563} and
  arXiv:1502.00668, 30 pages.

\bibitem{Feehan_yang_mills_gradient_flow}
\bysame, \emph{Global existence and convergence of smooth solutions to
  {Y}ang-{M}ills gradient flow over compact four-manifolds}, arXiv:1409.1525,
  xvi+425 pages.

\bibitem{FeehanGeometry}
\bysame, \emph{Geometry of the ends of the moduli space of anti-self-dual
  connections}, J. Differential Geom. \textbf{42} (1995), 465--553,
  arXiv:1504.05741. \MR{1367401 (97d:58034)}

\bibitem{FeehanSlice}
\bysame, \emph{Critical-exponent {S}obolev norms and the slice theorem for the
  quotient space of connections}, Pacific J. Math. \textbf{200} (2001),
  71--118, arXiv:dg-ga/9711004.

\bibitem{FLKM1}
P.~M.~N. Feehan and T.~G. Leness, \emph{{D}onaldson invariants and
  wall-crossing formulas. {I}: Continuity of gluing and obstruction maps},
  arXiv:math/9812060.

\bibitem{Feehan_Leness_monopolegluingbook}
\bysame, \emph{Nonlinear elliptic partial differential equations and
  applications to gluing {$\rm SO(3)$} monopoles and anti-self-dual
  connections}, book in preparation, based in part on arXiv:math/9812060 and
  arXiv:math/9907107.

\bibitem{FU}
D.~S. Freed and K.~K. Uhlenbeck, \emph{Instantons and four-manifolds}, second
  ed., Mathematical Sciences Research Institute Publications, vol.~1,
  Springer-Verlag, New York, 1991. \MR{1081321 (91i:57019)}

\bibitem{FrM}
R.~Friedman and J.~W. Morgan, \emph{Smooth four-manifolds and complex
  surfaces}, Springer, Berlin, 1994.

\bibitem{Gerhardt_2010}
C.~Gerhardt, \emph{An energy gap for {Y}ang-{M}ills connections}, Comm. Math.
  Phys. \textbf{298} (2010), 515--522. \MR{2669447 (2012d:58019)}

\bibitem{GilbargTrudinger}
D.~Gilbarg and N.~Trudinger, \emph{Elliptic partial differential equations of
  second order}, second ed., Springer, New York, 1983.

\bibitem{Gritsch_2000}
U.~Gritsch, \emph{Morse theory for the {Y}ang-{M}ills functional via
  equivariant homotopy theory}, Trans. Amer. Math. Soc. \textbf{352} (2000),
  no.~8, 3473--3493. \MR{1695023 (2000m:58027)}

\bibitem{Groisser_1990}
D.~Groisser, \emph{The geometry of the moduli space of {${\bf C}{\rm P}^2$}
  instantons}, Invent. Math. \textbf{99} (1990), 393--409. \MR{1031907
  (90m:58023)}

\bibitem{GroisserParkerSphere}
D.~Groisser and T.~H. Parker, \emph{The {R}iemannian geometry of the
  {Y}ang-{M}ills moduli space}, Comm. Math. Phys. \textbf{112} (1987),
  663--689. \MR{910586 (89b:58024)}

\bibitem{GroisserParkerGeometryDefinite}
\bysame, \emph{The geometry of the {Y}ang-{M}ills moduli space for definite
  manifolds}, J. Differential Geom. \textbf{29} (1989), 499--544. \MR{992329
  (90f:58021)}

\bibitem{Huang_2006}
S.-Z. Huang, \emph{Gradient inequalities}, Mathematical Surveys and Monographs,
  vol. 126, American Mathematical Society, Providence, RI, 2006, With
  applications to asymptotic behavior and stability of gradient-like systems.
  \MR{2226672 (2007b:35035)}

\bibitem{Huang_Takac_2001}
S.-Z. Huang and P.~Tak{\'a}{\v{c}}, \emph{Convergence in gradient-like systems
  which are asymptotically autonomous and analytic}, Nonlinear Anal.
  \textbf{46} (2001), 675--698. \MR{1857152 (2002f:35125)}

\bibitem{Jost_riemannian_geometry_geometric_analysis}
J.~Jost, \emph{Riemannian geometry and geometric analysis}, sixth ed.,
  Universitext, Springer, Heidelberg, 2011. \MR{2829653}

\bibitem{Kato}
T.~Kato, \emph{Perturbation theory for linear operators}, second ed., Springer,
  New York, 1984.

\bibitem{Killip_Simon_2009}
R.~Killip and B.~Simon, \emph{Sum rules and spectral measures of
  {S}chr\"odinger operators with {$L^2$} potentials}, Ann. of Math. (2)
  \textbf{170} (2009), 739--782. \MR{2552106 (2011f:34156)}

\bibitem{Kobayashi}
S.~Kobayashi, \emph{Differential geometry of complex vector bundles},
  Publications of the Mathematical Society of Japan, vol.~15, Princeton
  University Press, Princeton, NJ, 1987, Kan{\^o} Memorial Lectures, 5.
  \MR{909698 (89e:53100)}

\bibitem{Kobayashi_Nomizu_v1}
S.~Kobayashi and K.~Nomizu, \emph{Foundations of differential geometry. {V}ol
  {I}}, Interscience Publishers, a division of John Wiley \& Sons, New
  York-London, 1963. \MR{0152974 (27 \#2945)}

\bibitem{KMStructure}
P.~B. Kronheimer and T.~S. Mrowka, \emph{Embedded surfaces and the structure of
  {D}onaldson's polynomial invariants}, J. Differential Geom. \textbf{41}
  (1995), 573--734. \MR{1338483 (96e:57019)}

\bibitem{Krylov_LecturesSobolev}
N.~V. Krylov, \emph{Lectures on elliptic and parabolic equations in {S}obolev
  spaces}, American Mathematical Society, Providence, RI, 2008.

\bibitem{Lojasiewicz_1965}
S.~{\L}ojasiewicz, \emph{Ensembles semi-analytiques},  (1965), Publ. Inst.
  Hautes Etudes Sci., Bures-sur-Yvette, preprint, 112 pages,
  \url{perso.univ-rennes1.fr/michel.coste/Lojasiewicz.pdf}.

\bibitem{Matumoto_1989}
T.~Matumoto, \emph{Three {R}iemannian metrics on the moduli space of
  {BPST}-instantons over {$S^4$}}, Hiroshima Math. J. \textbf{19} (1989),
  221--224. \MR{1009672 (91a:58038)}

\bibitem{Milnor_morse_theory}
J.~W. Milnor, \emph{Morse theory}, Based on lecture notes by M. Spivak and R.
  Wells. Annals of Mathematics Studies, No. 51, Princeton University Press,
  Princeton, N.J., 1963. \MR{0163331 (29 \#634)}

\bibitem{MilnorStasheff}
J.~W. Milnor and J.~D. Stasheff, \emph{Characteristic classes}, Princeton Univ.
  Press, Princeton, NJ, 1974.

\bibitem{Min-Oo_1982}
M.~Min-Oo, \emph{An {$L_{2}$}-isolation theorem for {Y}ang-{M}ills fields},
  Compositio Math. \textbf{47} (1982), 153--163. \MR{0677017 (84b:53033a)}

\bibitem{MorganMrowkaPoly}
J.~W. Morgan and T.~S. Mrowka, \emph{A note on {D}onaldson's polynomial
  invariants}, Internat. Math. Res. Notices (1992), 223--230. \MR{1191573
  (93m:57032)}

\bibitem{Morrey}
C.~B. Morrey, Jr., \emph{Multiple integrals in the calculus of variations},
  Classics in Mathematics, Springer-Verlag, Berlin, 2008, Reprint of the 1966
  edition. \MR{2492985}

\bibitem{Munkres2}
J.~R. Munkres, \emph{Topology: a first course}, second ed., Prentice-Hall,
  Englewood Cliffs, N.J., 2000.

\bibitem{Naboko_1986}
S.~N. Naboko, \emph{On the dense point spectrum of {S}chr\"odinger and {D}irac
  operators}, Theoret. Math. Physics \textbf{68} (1986), no.~1, 646--653,
  original reference Teoret. Mat. Fiz. 68 (1986), no. 1, 18–28. \MR{875178
  (88h:81029)}

\bibitem{Nakajima_1987}
H.~Nakajima, \emph{Removable singularities for {Y}ang-{M}ills connections in
  higher dimensions}, J. Fac. Sci. Univ. Tokyo Sect. IA Math. \textbf{34}
  (1987), 299--307. \MR{914024 (89d:58029)}

\bibitem{Nakajima_1988}
\bysame, \emph{Compactness of the moduli space of {Y}ang-{M}ills connections in
  higher dimensions}, J. Math. Soc. Japan \textbf{40} (1988), 383--392.
  \MR{945342 (89g:58050)}

\bibitem{ParkerTaubes}
T.~Parker and C.~H. Taubes, \emph{On {W}itten's proof of the positive energy
  theorem}, Comm. Math. Phys. \textbf{84} (1982), 223--238. \MR{661134
  (83m:83020)}

\bibitem{ParkerGauge}
T.~H. Parker, \emph{Gauge theories on four-dimensional {R}iemannian manifolds},
  Comm. Math. Phys. \textbf{85} (1982), 563--602. \MR{677998 (84b:58036)}

\bibitem{Parker_1992invent}
\bysame, \emph{Nonminimal {Y}ang-{M}ills fields and dynamics}, Invent. Math.
  \textbf{107} (1992), 397--420. \MR{1144429 (93b:58037)}

\bibitem{ParkerHarmonic}
\bysame, \emph{Bubble tree convergence for harmonic maps}, J. Differential
  Geom. \textbf{44} (1996), 595--633. \MR{1431008 (98k:58069)}

\bibitem{ParkerWolfson}
T.~H. Parker and J.~G. Wolfson, \emph{Pseudo-holomorphic maps and bubble
  trees}, J. Geom. Anal. \textbf{3} (1993), 63--98. \MR{1197017 (95c:58032)}

\bibitem{Peng_1995}
X-W. Peng, \emph{Asymptotic behavior of the {$L^2$}-metric on moduli spaces of
  {Y}ang-{M}ills connections}, Math. Z. \textbf{220} (1995), 127--158.
  \MR{1347161 (96f:58025)}

\bibitem{Peng_1996}
\bysame, \emph{Asymptotic behavior of the {$L^2$}-metric on moduli spaces of
  {Y}ang-{M}ills connections. {II}}, Math. Z. \textbf{222} (1996), 425--449.
  \MR{1400201 (97m:58028)}

\bibitem{Price_1983}
P.~Price, \emph{A monotonicity formula for {Y}ang-{M}ills fields}, Manuscripta
  Math. \textbf{43} (1983), 131--166. \MR{707042 (84m:58033)}

\bibitem{Reed_Simon_v1}
M.~Reed and B.~Simon, \emph{Methods of modern mathematical physics. {I}},
  second ed., Academic Press, New York, 1980, Functional analysis. \MR{751959
  (85e:46002)}

\bibitem{Remling_1998}
C.~Remling, \emph{The absolutely continuous spectrum of one-dimensional
  {S}chr\"odinger operators with decaying potentials}, Comm. Math. Phys.
  \textbf{193} (1998), 151--170. \MR{1620313 (99f:34123)}

\bibitem{Rade_1992}
J.~R\r{a}de, \emph{On the {Y}ang-{M}ills heat equation in two and three
  dimensions}, J. Reine Angew. Math. \textbf{431} (1992), 123--163. \MR{1179335
  (94a:58041)}

\bibitem{Rudin}
W.~Rudin, \emph{Functional analysis}, Mc{G}raw-{H}ill, New York, NY, 1973.

\bibitem{Sacks_Uhlenbeck_1981}
J.~Sacks and K.~K. Uhlenbeck, \emph{The existence of minimal immersions of
  {$2$}-spheres}, Ann. of Math. (2) \textbf{113} (1981), 1--24. \MR{604040
  (82f:58035)}

\bibitem{Sadun_1994}
L.~Sadun, \emph{A symmetric family of {Y}ang-{M}ills fields}, Comm. Math. Phys.
  \textbf{163} (1994), no.~2, 257--291. \MR{1284785 (95f:53059)}

\bibitem{Sadun_Segert_1991}
L.~Sadun and J.~Segert, \emph{Non-self-dual {Y}ang-{M}ills connections with
  nonzero {C}hern number}, Bull. Amer. Math. Soc. (N.S.) \textbf{24} (1991),
  163--170. \MR{1067574 (91m:58038)}

\bibitem{Sadun_Segert_1992cmp}
\bysame, \emph{Non-self-dual {Y}ang-{M}ills connections with quadrupole
  symmetry}, Comm. Math. Phys. \textbf{145} (1992), 363--391. \MR{1162804
  (94b:53056)}

\bibitem{Sadun_Segert_1992cpam}
\bysame, \emph{Stationary points of the {Y}ang-{M}ills action}, Comm. Pure
  Appl. Math. \textbf{45} (1992), 461--484. \MR{1161540 (93d:58034)}

\bibitem{Sadun_Segert_1993}
\bysame, \emph{Constructing non-self-dual {Y}ang-{M}ills connections on {$S^4$}
  with arbitrary {C}hern number}, Differential geometry: geometry in
  mathematical physics and related topics ({L}os {A}ngeles, {CA}, 1990), Proc.
  Sympos. Pure Math., vol.~54, Amer. Math. Soc., Providence, RI, 1993,
  pp.~529--537. \MR{1216561}

\bibitem{Schottenloher_2008}
M.~Schottenloher, \emph{A mathematical introduction to conformal field theory},
  second ed., Lecture Notes in Physics, vol. 759, Springer-Verlag, Berlin,
  2008. \MR{2492295 (2011a:81219)}

\bibitem{Sedlacek}
S.~Sedlacek, \emph{A direct method for minimizing the {Y}ang-{M}ills functional
  over {$4$}-manifolds}, Comm. Math. Phys. \textbf{86} (1982), 515--527.
  \MR{679200 (84e:81049)}

\bibitem{Shen_1982}
C.~L. Shen, \emph{The gap phenomena of {Y}ang-{M}ills fields over the complete
  manifold}, Math. Z. \textbf{180} (1982), 69--77. \MR{656222 (83k:53048)}

\bibitem{Sibley_2013arxiv}
B.~Sibley, \emph{Asymptotics of the {Y}ang-{M}ills flow for holomorphic vector
  bundles over k{\"a}hler manifolds: the canonical structure of the limit},
  arXiv:1206.5491v3.

\bibitem{Sibner_1984}
L.~M. Sibner, \emph{Removable singularities of {Y}ang-{M}ills fields in {${\bf
  R}^{3}$}}, Compositio Math. \textbf{53} (1984), 91--104. \MR{762308
  (86c:58151)}

\bibitem{SibnerSibnerUhlenbeck}
L.~M. Sibner, R.~J. Sibner, and K.~K. Uhlenbeck, \emph{Solutions to
  {Y}ang-{M}ills equations that are not self-dual}, Proc. Nat. Acad. Sci.
  U.S.A. \textbf{86} (1989), 8610--8613. \MR{1023811 (90j:58032)}

\bibitem{Simon_1997}
B.~Simon, \emph{Some {S}chr\"odinger operators with dense point spectrum},
  Proc. Amer. Math. Soc. \textbf{125} (1997), 203--208. \MR{1346989
  (97c:34179)}

\bibitem{Simon_1983}
L.~Simon, \emph{Asymptotics for a class of nonlinear evolution equations, with
  applications to geometric problems}, Ann. of Math. (2) \textbf{118} (1983),
  525--571. \MR{727703 (85b:58121)}

\bibitem{Steenrod}
N.~Steenrod, \emph{The topology of fibre bundles}, Princeton Landmarks in
  Mathematics, Princeton University Press, Princeton, NJ, 1999, Reprint of the
  1957 edition, Princeton Paperbacks. \MR{1688579 (2000a:55001)}

\bibitem{Stern_2010}
M.~Stern, \emph{Geometry of minimal energy {Y}ang-{M}ills connections}, J.
  Differential Geom. \textbf{86} (2010), 163--188. \MR{2772548 (2012h:53055)}

\bibitem{TauSelfDual}
C.~H. Taubes, \emph{Self-dual {Y}ang-{M}ills connections on non-self-dual
  {$4$}-manifolds}, J. Differential Geom. \textbf{17} (1982), 139--170.
  \MR{658473 (83i:53055)}

\bibitem{TauPath}
\bysame, \emph{Path-connected {Y}ang-{M}ills moduli spaces}, J. Differential
  Geom. \textbf{19} (1984), 337--392. \MR{755230 (85m:58049)}

\bibitem{TauIndef}
\bysame, \emph{Self-dual connections on {$4$}-manifolds with indefinite
  intersection matrix}, J. Differential Geom. \textbf{19} (1984), 517--560.
  \MR{755237 (86b:53025)}

\bibitem{TauFrame}
\bysame, \emph{A framework for {M}orse theory for the {Y}ang-{M}ills
  functional}, Invent. Math. \textbf{94} (1988), 327--402. \MR{958836
  (90a:58035)}

\bibitem{TauStable}
\bysame, \emph{The stable topology of self-dual moduli spaces}, J. Differential
  Geom. \textbf{29} (1989), 163--230. \MR{978084 (90f:58023)}

\bibitem{Taylor_2002}
G.~Taylor, \emph{A topological existence proof for {${\rm
  SO}(n)$}-anti-self-dual connections}, Topology Appl. \textbf{118} (2002),
  275--291. \MR{1874551 (2002i:57043)}

\bibitem{TianGTCalGeom}
G.~Tian, \emph{Gauge theory and calibrated geometry. {I}}, Ann. of Math. (2)
  \textbf{151} (2000), 193--268. \MR{1745014 (2000m:53074)}

\bibitem{Tian_2002}
\bysame, \emph{Elliptic {Y}ang-{M}ills equation}, Proc. Natl. Acad. Sci. USA
  \textbf{99} (2002), no.~24, 15281--15286 (electronic). \MR{1946764
  (2003m:58017)}

\bibitem{UhlLp}
K.~K. Uhlenbeck, \emph{Connections with {$L^{p}$} bounds on curvature}, Comm.
  Math. Phys. \textbf{83} (1982), 31--42. \MR{648356 (83e:53035)}

\bibitem{UhlRem}
\bysame, \emph{Removable singularities in {Y}ang-{M}ills fields}, Comm. Math.
  Phys. \textbf{83} (1982), 11--29. \MR{648355 (83e:53034)}

\bibitem{UhlChern}
\bysame, \emph{The {C}hern classes of {S}obolev connections}, Comm. Math. Phys.
  \textbf{101} (1985), 449--457. \MR{815194 (87f:58028)}

\bibitem{Urakawa_1988}
H.~Urakawa, \emph{Equivariant theory of {Y}ang-{M}ills connections over
  {R}iemannian manifolds of cohomogeneity one}, Indiana Univ. Math. J.
  \textbf{37} (1988), 753--788. \MR{982829 (90h:53033)}

\bibitem{Wang_1991}
H-Y. Wang, \emph{The existence of nonminimal solutions to the {Y}ang-{M}ills
  equation with group {${\rm SU}(2)$} on {$S^2\times S^2$} and {$S^1\times
  S^3$}}, J. Differential Geom. \textbf{34} (1991), 701--767. \MR{1139645
  (93b:58036)}

\bibitem{Wang_1997}
\bysame, \emph{Morse theory and non-minimal solutions to the {Y}ang-{M}ills
  equations}, Tsukuba J. Math. \textbf{21} (1997), 567--593. \MR{1603847
  (2000e:58023)}

\bibitem{Witten_1992}
E.~Witten, \emph{Two-dimensional gauge theories revisited}, J. Geom. Phys.
  \textbf{9} (1992), 303--368. \MR{1185834 (93m:58017)}

\bibitem{Xin_1984}
Y.~L. Xin, \emph{Remarks on gap phenomena for {Y}ang-{M}ills fields}, Sci.
  Sinica Ser. A \textbf{27} (1984), 936--942, arXiv:math/0203077. \MR{767626
  (86f:58041)}

\bibitem{Yang_2003aim}
B.~Yang, \emph{The uniqueness of tangent cones for {Y}ang-{M}ills connections
  with isolated singularities}, Adv. Math. \textbf{180} (2003), 648--691.
  \MR{2020554 (2004m:58026)}

\bibitem{Zhang_2004cmb}
X.~Zhang, \emph{A compactness theorem for {Y}ang-{M}ills connections}, Canad.
  Math. Bull. \textbf{47} (2004), 624--634. \MR{2099759 (2005h:53039)}

\end{thebibliography}
\bibliographystyle{amsplain}

\end{document}